\documentclass[a4paper,11pt]{amsart}
\usepackage[margin=3.5cm]{geometry}
\usepackage[english]{babel}
\usepackage{graphicx}
\usepackage{graphics}
\usepackage{amsmath}
\usepackage{amsfonts}
\usepackage{amssymb}
\usepackage{amsthm}
\usepackage{float}	
\usepackage{lmodern} \normalfont
\usepackage[T1]{fontenc}
\usepackage{tikz}
\usepackage{color}
\usepackage{hyperref}
\usepackage{array}
\usepackage{longtable}
\usepackage{changepage}
\usepackage{lscape}
\usepackage{enumitem}
\usetikzlibrary{cd}
\setlist[itemize]{leftmargin=11pt,itemsep=5pt, topsep=5pt}
\usetikzlibrary{positioning,calc}
\usetikzlibrary{shapes}
\DeclareFontShape{T1}{lmr}{bx}{sc} { <-> ssub * cmr/bx/sc }{}
\hypersetup{pdfstartview={XYZ null null 1.00}}

\newcommand\Graph[1]{\Gamma_{\{#1\}}}
\newcommand\typeone{K_5(1)}
\newcommand\typetwoa{K_4^a(0)}
\newcommand\typetwob{K_4^b(0)}

\setlength{\parindent}{0cm}

\newtheoremstyle{een}
{11pt}% measure of space to leave above the theorem. E.g.: 3pt
{11pt}% measure of space to leave below the theorem. E.g.: 3pt
{\slshape}% name of font to use in the body of the theorem
{}% measure of space to indent
{\sc}% name of head font
{.}% punctuation between head and body
{1mm}% space after theorem head
{}% Manually specify head

\newtheoremstyle{twee}
{11pt}% measure of space to leave above the theorem. E.g.: 3pt
{11pt}% measure of space to leave below the theorem. E.g.: 3pt
{}% name of font to use in the body of the theorem
{}% measure of space to indent
{\sc}% name of head font
{.}% punctuation between head and body
{1mm}% space after theorem head
{}% Manually specify head

\theoremstyle{een}
\newtheorem{theorem}{\textbf{Theorem}}
\newtheorem{proposition}{\textbf{Proposition}}
\newtheorem{lemma}{\textbf{Lemma}}
\newtheorem{corollary}{\textbf{Corollary}}
\newtheorem{definition}{\textbf{Definition}}

\theoremstyle{twee}

\newtheorem{remark}{\textbf{Remark}}
\newtheorem{remarkdp1}[remark]{\textbf{Remark -- analogy with geometry}}

\newtheorem*{theorem*}{\textbf{Theorem}}
\newtheorem*{proofofpropositionA}{\textbf{Proof of Proposition \ref{A}}}
\newtheorem*{proofofpropositionB}{\textbf{Proof of Proposition \ref{B}}}
\newtheorem*{proofofthm1}{\textbf{Proof of Theorem \ref{main}}}
\newtheorem*{proofofthm2}{\textbf{Proof of Theorem \ref{main2}}}

\newcounter{claimcounter}
\numberwithin{claimcounter}{theorem}

\makeatletter
\renewcommand{\section}{\@startsection
{section}% % the name
{1}% % the level
{0mm}% % the indent
{\baselineskip}% % the before skip
{0.5\baselineskip}% % the after skip
{\normalfont\large\bfseries}} % the style

\renewcommand{\subsection}{\@startsection
{subsection}% % the name
{2}% % the level
{0mm}% % the indent
{\baselineskip}% % the before skip
{0.5\baselineskip}% % the after skip
{\normalfont\large\bfseries}} % the style

\renewcommand{\subsubsection}{\@startsection
{subsubsection}% % the name
{3}% % the level
{0mm}% % the indent
{\baselineskip}% % the before skip
{0.5\baselineskip}% % the after skip
{\normalfont\normalsize\bfseries}} % the style

\newcommand{\addresseshere}{%
  \enddoc@text\let\enddoc@text\relax
}
\makeatother

\DeclareMathOperator{\Aut}{Aut}

\raggedbottom

\begin{document}

\title{The action of the Weyl group on the $E_8$ root system}

\author{Rosa Winter and Ronald van Luijk}
\address{MPI-MiS, Inselstrasse 22, 04103 Leipzig, Germany}
\email{rosa.winter@mis.mpg.de}
\address{Mathematisch Instituut, Niels Bohrweg 1, 2333 CA Leiden, The Netherlands}
\email{rvl@math.leidenuniv.nl}

\begin{abstract}
Let $\Gamma$ be the graph on the roots of the $E_8$ root system, where any two distinct vertices $e$ and $f$ are connected by an edge with color equal to the inner product of $e$ and $f$. For any set $c$ of colors, let $\Gamma_c$ be the subgraph of $\Gamma$ consisting of all the $240$ vertices, and all the edges whose color lies in $c$. We consider cliques, i.e., complete subgraphs, of $\Gamma$ that are either monochromatic, or of size at most $3$, or a maximal clique in $\Gamma_c$ for some color set $c$, or whose vertices are the vertices of a face of the $E_8$ root polytope. We prove that, apart from two exceptions, two such cliques are conjugate under the automorphism group of $\Gamma$ if and only if they are isomorphic as colored graphs. Moreover, for an isomorphism $f$ from one such clique $K$ to another, we give necessary and sufficient conditions for $f$ to extend to an automorphism of $\Gamma$, in terms of the restrictions of $f$ to certain special subgraphs of~$K$ of size at most~$7$.
\end{abstract}

\maketitle
%\tableofcontents
\section{Introduction}

Let $\Lambda$ be the $E_8$ lattice, that is, the unique positive-definite, even, unimodular lattice of dimension 8. More concretely, let $\Lambda$ be given by 
$$\Lambda=\left\{a\in \mathbb{Z}^8+\left\langle\left(\tfrac{1}{2},\tfrac{1}{2},\tfrac{1}{2},\tfrac{1}{2},\tfrac{1}{2},\tfrac{1}{2},\tfrac{1}{2},\tfrac{1}{2}\right)\right\rangle\;\Biggl\vert\;\sum_{i=1}^8a_i\in 2\mathbb{Z}\right\}.$$
Consider the $E_8$ root system $E$ in $\Lambda$ given by $$E=\{a\in\Lambda\;|\;\|a\|=\sqrt{2}\}.$$ 

In this artice we study a graph on the elements in $E$, which we call \textsl{roots}. By a \textsl{graph} we mean a pair $(V,D)$, where $V$ is a set of elements called \textsl{vertices}, and $D$ a subset of the powerset of $V$ of which every element has cardinality 2; elements in~$D$ are called \textsl{edges}, and the \textsl{size} of the graph is the cardinality of $V$. By a \textsl{colored graph} we mean a graph $(V,D)$ together with a map $\varphi\colon D\longrightarrow C$, where $C$ is any set, whose elements we call \textsl{colors}; for an element $d\in D$ we call $\varphi(d)$ its color. If $(V,D)$ is a colored graph with color function $\varphi$, we define a \textsl{colored subgraph} of $(V,D)$ to be a pair $(V',D')$ with a map $\varphi'$, such that $V'$ is a subset of $V$, while $D'$ is a subset of the intersection of $D$ with the powerset on $V'$, and $\varphi'$ is the restriction of $\varphi$ to $D'$. Finally, we define a \textsl{clique} of a colored graph to be a complete colored subgraph.

\vspace{11pt}

Let $\Gamma$ be the complete colored graph whose vertex set is $E$, of which the color function on the edge set is induced by the dot product. The different colors of the edges in~$\Gamma$ are $-2,-1,0,1$. For a subset $c\subseteq\{-2,-1,0,1\}$, we denote by $\Gamma_c$ the colored subgraph of $\Gamma$ with vertex set $E$ and all edges whose color is an element in~$c$. 

\vspace{11pt}

Let $W$ be the automorphism group of $\Gamma$. It is clear that if two cliques in $\Gamma$ are conjugate under the action of $W$, they must be isomorphic. The converse is not always true, and in general it can be hard to determine whether two cliques in $\Gamma$ are conjugate under the action of $W$. Dynkin and Minchenko studied in \cite{DynMin10} the bases of subsystems of $E_8$, and classified for which isomorphism classes of these bases being isomorphic implies being conjugate. They call these bases \textsl{normal}. In this article, we extend this classification to a large set of cliques in $\Gamma$ (more specifically, cliques of type I, II, III, or IV, as defined below). In Theorem \ref{main} we show that with two exceptions, two such cliques are isomorphic if and only if they are conjugate. One of the exceptions, which is the clique described in Theorem \ref{main} (i), is one of the bases (of the system $4A_1$) that was also found as not being normal in \cite[Theorem 4.7]{DynMin10}. Additionally, in \cite{DynMin10} the authors determine when a homomorphism of two bases of subsytems extends to a homomorphism of the whole root system. We answer the same question for cliques of type I, II, III, or IV in Theorem \ref{main2}.

\vspace{11pt}

Although the classification of different types of cliques and their orbits is a finite problem, because of the size of $\Gamma$ it is practically impossible to naively let a computer find and classify the cliques according to their $W$-orbit. In fact, we avoid using a computer for our computations as much as possible.

\vspace{11pt}

The $E_8$ root polytope is the convex polytope in $\mathbb{R}^8$ whose vertices are the roots in~$E$. By a \textsl{face} of the root polytope we mean a non-empty intersection of a hyperplane in $\mathbb{R}^8$ and the root polytope, such that the root polytope lies entirely on one side of the hyperplane. If the dimension of this intersection is $k$ then we call this a $k$-face, and a $7$-face is called a \textsl{facet}. We study the following cliques in $\Gamma$, and their orbits under the action of $W$.
\begin{itemize}
\item[](I) Monochromatic cliques
\item[](II) Cliques whose vertices are the vertices of a face of the $E_8$ root polytope
\item[](III) Cliques of size at most three
\item[](IV) For all $c\neq\{-1,0,1\}$, the maximal cliques in $\Gamma_c$ 
\end{itemize}

More specifically, we prove the following theorem. 

\begin{theorem}\label{main}
Let $K_1,\;K_2$ be two cliques in $\Gamma$ of types I, II, III, or IV. Then the following hold.
\begin{itemize}
\item[](i) If both $K_1$ and $K_2$ are of type I with color $0$ and of size $4$, then $K_1$ and $K_2$ are conjugate under the action of $W$ if and only if the vertices sum to an element in $2\Lambda$ for both $K_1$ and $K_2$, or for neither.
\item[](ii) If both $K_1$ and $K_2$ are of type I with color $1$ and of size $7$, then $K_1$ and $K_2$ are conjugate under the action of $W$ if and only if the vertices sum to an element in $2\Lambda$ for both $K_1$ and $K_2$, or for neither; this is equivalent to $K_1$ and $K_2$ both being maximal or both being non-maximal, respectively, under inclusion in $\Graph{1}$. 
\item[](iii) In all other cases, $K_1$ and $K_2$ are conjugate under the action of $W$ if and only if they are isomorphic as colored graphs.
\end{itemize}
\end{theorem}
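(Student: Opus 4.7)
The plan is to prove the theorem by case analysis on the type of the cliques $K_1, K_2$. The ``only if'' direction is automatic because $W$ acts by color-preserving automorphisms of $\Gamma$, so I focus throughout on the converse: isomorphic cliques are conjugate, except in the two situations (i) and (ii). The colored isomorphism class already determines which of the four types a clique can belong to, so I may treat the types separately and then combine.

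For type I, I would proceed color by color. Color $-2$ cliques are simply pairs $\{e,-e\}$, and $W$ is transitive on roots. For color $-1$ a Gram-matrix computation forces the size to be at most $3$; color $1$ permits larger cliques, and in both cases I would relate the clique to a sub-root-system of type $A$ and invoke the Dynkin--Minchenko classification of \cite{DynMin10} where it applies. Color $0$ cliques correspond to orthogonal sets, i.e.\ sublattices of type $nA_1$; here the residue class in $\Lambda/2\Lambda$ of the sum of the vertices is a $W$-invariant, since Weyl reflections preserve $\Lambda$, and for the sizes in (i) and (ii) it splits a single isomorphism class into exactly two $W$-orbits, while for all other sizes it is constant on isomorphism classes. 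Type III is handled by direct inspection: the isomorphism class of a clique of size at most $3$ is determined by the (multiset of) edge colors, and the transitivity of $W$ on the corresponding sets of pairs or triples is verified case by case. For type II, I would appeal to the classical classification of faces of the $E_8$ root polytope, matching each $W$-orbit of faces with an isomorphism class of vertex cliques. For type IV, I would enumerate the allowed color subsets $c$ and describe the maximal cliques in each $\Gamma_c$; since many of these coincide with cliques already treated under types I or II, the residual work is largely a consistency check.

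The main obstacle lies in the exceptional cases (i) and (ii). There I need to prove both that the $\Lambda/2\Lambda$-invariant genuinely takes two distinct values on a single isomorphism class, by exhibiting explicit cliques realising each value, and conversely that any two cliques sharing this invariant are $W$-conjugate. The latter is the substantive step, and I expect to carry it out by a constructive reflection argument: given $K_1, K_2$ with the same invariant, build an element of $W$ as a product of Weyl reflections that successively matches roots of $K_1$ to roots of $K_2$, while preserving an already-matched orthogonal sub-frame. A secondary technical point is keeping the analysis consistent across types, because many cliques fall under several of the labels I--IV simultaneously (for instance, a monochromatic clique can also be a facet or a maximal clique in some $\Gamma_c$). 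I would therefore state and prove the transitivity results at the level of individual cliques, and use the type labels only for organising the enumeration of isomorphism classes.
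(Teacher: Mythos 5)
Your overall skeleton matches the paper: reduce to a common type (the paper's Lemma \ref{step0}), handle the ``only if'' direction by $W$-equivariance, prove transitivity statements color by color for type I and size by size for type III, use the classification of faces for type II, and isolate the two exceptional cases via the class of the vertex sum in $\Lambda/2\Lambda$ (the paper's Propositions \ref{orbits} and \ref{max178} do exactly this, though by counting fibers of the map $\alpha\colon Z\to V_4$ and by Manin's two orbits of $7$-sequences rather than by a hands-on reflection-matching argument). One small slip: you attach the $2\Lambda$-invariant discussion entirely to color $0$, whereas case (ii) concerns color $1$ and size $7$; the invariant is the same but the transitivity input is different (Proposition \ref{transitief} versus Proposition \ref{orbits}).

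The genuine gap is your treatment of type IV. You assert that the maximal cliques in the various $\Gamma_c$ ``largely coincide with cliques already treated under types I or II'' so that only ``a consistency check'' remains. This is false for every $c$ containing $0$ together with another color besides $-2$. The maximal cliques in $\Gamma_{-1,0}$ have sizes $8$ through $12$ and fall into $23$ orbits; those in $\Gamma_{-2,-1,0}$ have sizes up to $16$ and fall into $32$ orbits; those in $\Gamma_{0,1}$ and $\Gamma_{-2,0,1}$ have sizes up to $36$, with $432$ orbits of maximal $29$-cliques alone. None of these are monochromatic or faces of the root polytope, and for them the claim ``isomorphic $\Rightarrow$ conjugate'' is the bulk of the theorem. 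Establishing it requires (a) bounding clique sizes (the paper uses Ramsey numbers $R(3,4)$ and $R(4,5)$), (b) enumerating orbit representatives by computer after reducing to cliques containing a fixed small conjugate subclique, (c) certifying that the list of representatives is \emph{complete} --- the paper does this with the double-counting Lemma \ref{HconjOfSsupA}, comparing the count of conjugates of each representative containing the fixed subclique against the total enumeration --- and (d) exhibiting colored-graph invariants (automorphism group order, number of orthogonal pairs, numbers of maximal monochromatic subcliques) that separate the representatives, so that non-conjugate representatives are genuinely non-isomorphic. Your proposal supplies no mechanism for any of (a)--(d), and without (c) in particular one cannot conclude that two isomorphic maximal cliques lie in the same orbit: they might both fail to appear in an incomplete list. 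So as written the argument does not close for type IV, which is precisely where the content of part (iii) lies.
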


Furthermore, we give conditions for an isomorphism of two cliques of types I, II, III or IV to extend to an automorphism of the lattice $\Lambda$. To this end we introduce the following complete graphs.  

\begin{center}
\resizebox{\columnwidth}{!}{
\begin{tabular}{ccccccccc}
& & \raisebox{1pt}{\begin{tikzpicture} [scale=0.3]
 \node [draw,circle,fill,inner sep=0pt,minimum size=4pt](t1) at (0,2) {};
 \node [draw,circle,fill,inner sep=0pt,minimum size=4pt](t2) at (0,6) {};
 \node [draw,circle,fill,inner sep=0pt,minimum size=4pt](t3) at (4,2) {};
 \node [draw,circle,fill,inner sep=0pt,minimum size=4pt](t3) at (4,6) {};
 \end{tikzpicture}}

&&&&
\raisebox{-2pt}{
\begin{tikzpicture} [scale=0.4]
  \foreach \x /\alph /\name in {347/a/$e_1$, 41/b/$e_2$, 90/c/$e_3$,  141/d/$e_4$,  193/e/$e_5$,  244/f/$e_6$, 295/g/$e_7$}{
  \node[circle,fill,inner sep=0,minimum size=0.01pt,draw,scale=0.35] (\alph) at (\x:2cm) {\name}; }

  \foreach \alpha in {a,b,c,d,e,f,g}%
  {%
  \foreach \alphb in {a,b,c,d,e,f}%
  {%
   \draw (\alpha) -- (\alphb);%
  }}
 \end{tikzpicture}}

 & & \\
 &&&&&&&&\\
 && A &&&& B && \\
&&&&&&&&\\
&&&&&&&&\\
\raisebox{7pt}{
\begin{tikzpicture}[scale=0.4]
\node [draw,circle,fill,inner sep=0pt,minimum size=4pt](t1) at (0,2) {};
 \node [draw,circle,fill,inner sep=0pt,minimum size=4pt](t2) at (0,5) {};
 \node [draw,circle,fill,inner sep=0pt,minimum size=4pt](t3) at (3,2) {};
 \node [draw,circle,fill,inner sep=0pt,minimum size=4pt](t4) at (3,5) {};
 \node [draw,circle,fill,inner sep=0pt,minimum size=4pt](t5) at (5,3.5) {};
\path[every node/.style={font=\sffamily\small}]
     (0,2) edge node [midway,left]{$\alpha$} (0,5)
     (3,2) edge node [midway,left]{$\alpha$} (3,5) ;
\end{tikzpicture} }

&&&&
 \begin{tikzpicture}[scale=0.4] 
\node [draw,circle,fill,inner sep=0pt,minimum size=4pt](t1) at (1,1.5) {};
 \node [draw,circle,fill,inner sep=0pt,minimum size=4pt](t5) at (4,1.5) {};
 \node [draw,circle,fill,inner sep=0pt,minimum size=4pt](t9) at (2.5,4) {};
 \node [draw,circle,fill,inner sep=0pt,minimum size=4pt](t1) at (1,-0.5) {};
 \node [draw,circle,fill,inner sep=0pt,minimum size=4pt](t5) at (4,-0.5) {};
\path[every node/.style={font=\sffamily\small}]
     (1,1.5) edge  (4,1.5)
     (4,1.5) edge (2.5,4)
     (2.5,4) edge (1,1.5)
     ;
\end{tikzpicture} 

& & & &\raisebox{7pt}{
 \begin{tikzpicture} [scale=0.4]
  \node [draw,circle,fill,inner sep=0pt,minimum size=4pt] at (-3,0) {};
  \foreach \x /\alph /\name in {18/a/$e_1$, 90/b/$e_2$, 162/c/$e_3$,  234/d/$e_4$,  309/e/$e_5$}{
  \node[circle,fill,inner sep=0,minimum size=0.01pt,draw,scale=0.35] (\alph) at (\x:2cm) {\name}; }

  \foreach \alpha in {a,b,c,d,e}%
  {%
  \foreach \alphb in {a,b,c,d}%
  {%
   \draw (\alpha) -- (\alphb);%
  }}
  \end{tikzpicture}}
 \\
 &&&&&&&&\\
 $C_{\alpha}$ &&&& D &&&& F \\
&&&&&&&&
 \end{tabular}}
\end{center}
Here $\alpha$ is either $-1$ or $1$, two disjoint vertices have an edge of color 0 between them, and all other edges have color $1$. 

\begin{theorem}\label{main2}
Let $K_1,\;K_2$ be two cliques in $\Gamma$ of types I, II, III, or IV, and let $f\colon K_1\longrightarrow K_2$ be an isomorphism between them. The following hold.
\begin{itemize}
\item[](i) The map $f$ extends to an automorphism of $\Lambda$ if and only if for every ordered sequence $S=(e_1,\ldots,e_r)$ of distinct roots in $K_1$ such that the colored graph on them induced by $\Gamma$ is isomorphic to A, B, $C_{\alpha}$, D, or F, its image $f(S)=(f(e_1),\ldots,f(e_r))$ is conjugate to $S$ under the action of $W$.
\item[](ii) If $S=(e_1,\ldots,e_r)$ is a sequence of distinct roots in $K_1$ such that the colored graph on them induced by $\Gamma$ is isomorphic to either $A$ or $B$, then $S$ and $f(S)$ are conjugate under the action of $W$ if and only if the sets $\{e_1,\ldots,e_r\}$ and $\{f(e_1),\ldots,f(e_r)\}$ are. 
\item[](iii) If $K_1$ and $K_2$ are maximal cliques, both in $\Graph{-1,0}$ or both in $\Graph{-2,-1,0}$, and $S=(e_1,\ldots,e_5)$ is a sequence of roots in $K_1$ such that the colored graph on them induced by $\Gamma$ is isomorphic to $C_{-1}$ with $e_1\cdot e_4=e_2\cdot e_5=-1$, then $S$ and $f(S)$ are conjugate under the action of $W$ if and only if both $e=e_1+e_2+e_3-e_4-e_5$ and $f(e)$ are in the set $\{2f_1+f_2\;|\;f_1,f_2\in E\}$, or neither are.
\item[](iv) If $K_1$ and $K_2$ are maximal cliques in $\Graph{-2,0,1}$, and $S~=~(e_1,\ldots,e_r)$ is a sequence of distinct roots in $K_1$ such that the colored graph $G$ on them induced by $\Gamma$ is isomorphic to $C_1$, $D$, or $F$, then $S$ and $f(S)$ are conjugate under the action of $W$ if and only if the sets $\{e_1,\ldots,e_r\}$ and $\{f(e_1),\ldots,f(e_r)\}$ are, or equivalently, if and only if the following hold.\\
\noindent $\bullet$ If $G\cong C_{1}$, both $\sum_{i=1}^5e_i$ and $\sum_{i=1}^5f(e_i)$ are in the set $\{2f_1+f_2\;|\;f_1,f_2\in~E\}$, or neither are. \\
$\bullet$ If $G\cong D$, both $\sum_{i=1}^5e_i$ and $\sum_{i=1}^5f(e_i)$ are in $\{2f_1+2f_2\;|\;f_1,f_2\in E\}$, or neither are. \\
$\bullet$ If $G\cong F$, then both $\sum_{i=1}^6e_i$ and $\sum_{i=1}^6f(e_i)$ are in $2\Lambda$, or neither are. 
\end{itemize}
\end{theorem}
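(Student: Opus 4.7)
The plan is the following. The forward implication of part (i) is immediate: if $f$ extends to some $\sigma\in W$, then $\sigma$ transports every ordered subtuple $S$ of $K_1$ to $f(S)$, exhibiting the required $W$-conjugacy. For the converse I would first apply Theorem~\ref{main} to reduce to the case $K_1=K_2=:K$ with $f\in\mathrm{Aut}(K)$. By that theorem, isomorphism already implies $W$-conjugacy outside of two exceptions: monochromatic $4$-cliques of color~$0$ and (non-maximal) monochromatic $7$-cliques of color~$1$, where an additional invariant — whether the vertex-sum lies in $2\Lambda$ — must match. In both exceptions $K_1$ itself is isomorphic to~A or to~B respectively, so the hypothesis of part~(i) applied to $S=K_1$ forces the invariants on $K_1$ and $K_2$ to agree. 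Hence $K_1$ and $K_2$ are $W$-conjugate in every case, and after composing with a suitable Weyl element we may assume $K_1=K_2=K$; the statement becomes that $f$ lies in the image of the restriction map $\rho\colon\mathrm{Stab}_W(K)\to\mathrm{Aut}(K)$.

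I would then proceed by a case analysis over the $W$-conjugacy classes of cliques of types I, II, III, and IV, as classified by Theorem~\ref{main}. For each class I would compute $\mathrm{Aut}(K)$ and the image of $\rho$. For the majority of the classes — in particular cliques of type~III and those cliques whose stabilizer in $W$ is a ``large'' reflection subgroup — I expect $\rho$ to be surjective, and there is nothing to check. The remaining ``flexible'' classes are exactly those where the five graphs A, B, $C_\alpha$, D, F embed in $K$ in a non-trivial way; for each such class I would exhibit enough ordered test subtuples $S\subset K$ to detect every non-trivial coset of $\mathrm{Im}(\rho)$ in $\mathrm{Aut}(K)$ by comparing the $W$-orbit of $S$ with that of $f(S)$.

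For parts (ii), (iii), and (iv) the plan is to compute, directly, the $W$-orbits of ordered tuples of graph type A, B, $C_\alpha$, D, and F using Theorem~\ref{main} and an explicit coordinate model of the ambient maximal clique. Part~(ii) will follow almost immediately from parts~(i) and~(ii) of Theorem~\ref{main}: both the unordered sets of four mutually orthogonal roots and those of seven pairwise-adjacent color-$1$ roots split into exactly two $W$-orbits, distinguished by the vertex-sum modulo $2\Lambda$, and the symmetric group on the vertex set is realized inside $\mathrm{Stab}_W$ (as the Weyl group of a suitable subsystem acting multiply transitively), so ordered and unordered $W$-orbits coincide. For parts~(iii) and~(iv) I would describe the roots $e_i$ in explicit coordinates, compute the linear combinations $e_1+e_2+e_3-e_4-e_5$ (for $C_{-1}$), $\sum_{i=1}^{5}e_i$ (for $C_1$ and $D$), and $\sum_{i=1}^{6}e_i$ (for $F$), and identify the $W$-orbit of each such combination inside $\Lambda$; the sets $\{2f_1+f_2\mid f_1,f_2\in E\}$, $\{2f_1+2f_2\mid f_1,f_2\in E\}$, and $2\Lambda$ should emerge as the unique $W$-stable subsets distinguishing the orbits on ordered tuples of the corresponding graph type.

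The main obstacle I anticipate is the completeness statement underlying part~(i): proving that the five configurations A, B, $C_\alpha$, D, F really do suffice, i.e.\ that no additional ``hidden'' obstruction configuration is needed for any clique of types I–IV. This will require an exhaustive walk through the $W$-orbits produced by Theorem~\ref{main}, and for each flexible clique an explicit identification of $\mathrm{Stab}_W(K)$ with a subgroup of a smaller Weyl group, so as to verify that the transports of A, B, $C_\alpha$, D, F already generate every non-trivial coset of $\mathrm{Im}(\rho)$ in $\mathrm{Aut}(K)$.
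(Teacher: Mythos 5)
Your proposal is correct and follows essentially the same route as the paper: reduce via the classification in Theorem \ref{main} (handling the two exceptional classes by applying the hypothesis to $S=K_1$ itself, which is isomorphic to $A$ or $B$) to the case $K_1=K_2=K$ a fixed orbit representative, study the restriction map $W_K\to\mathrm{Aut}(K)$, and detect the non-trivial cosets of its image by test configurations of types $A$, $B$, $C_{\alpha}$, $D$, $F$, with parts (ii)--(iv) coming from matching counts of ordered versus unordered orbits and from the symmetric invariants $\sum e_i$ and $e_1+e_2+e_3-e_4-e_5$. The ``exhaustive walk'' you correctly identify as the main burden is exactly what the paper carries out, clique by clique over the tables of type-IV representatives, with computer assistance.
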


\begin{remark}
Note that to apply Theorem \ref{main2} (i) to an isomorphism $f$, we have to know whether certain ordered sequences of roots are conjugate. Theorem \ref{main2}~(ii), in combination with Theorem \ref{main} (i) and (ii), tells us how to verify this when the colored graph on the roots in an ordered sequence is isomorphic to $A$ or $B$. Theorem~\ref{main2}~(iii) and (iv) tells us how to verify this when the colored graph on the roots in an ordered sequence is isomorphic to $C_{\alpha}$, $D$, or $F$.
\end{remark}

\begin{remark}
In the proof of Theorem \ref{main2}, we will specify for each type of $K_1$ and $K_2$ which of the graphs $A$, $B$, $C_{\alpha}$, $D$, and $F$ are needed to check whether an isomorphism $f$ extends. Of course one can see this partially from the size and the colors, but it turns out that we can make stronger statements. For example, surprisingly, an isomorphism between two maximal graphs in $\Graph{0,1}$ always extends, and even uniquely (Lemma~\ref{thm2max01}). In the table in Remark \ref{welkedingenwelkeklieken} we show the requirements for each type of $K_1$ and $K_2$. 
\end{remark}

As we mentioned before, because of the size of $\Gamma$ it is practically impossible to naively let a computer find and classify all cliques of the above types according to their $W$-orbit. This holds mainly for the results in Section \ref{maximalcliques}, where we study cliques of type~IV. This is the only section where we use a computer program, but without using results from the previous sections to minimize the computations it would have been practically undoable. Checking that two cliques are isomorphic is easily done by hand for types I, II, and III, since with one exception of size fourteen, they are all of size at most eight (see Sections~\ref{facetsthreecliques} and~\ref{monocliques}). For type IV we give necessary and sufficient invariants to check if two large cliques are isomorphic in Section \ref{maximalcliques}. 

\begin{remark}\label{previouswork}
Apart from the work in \cite{DynMin10} on bases of subsystems of $E_8$, some partial results of Theorems \ref{main} and \ref{main2} were known before. We list them here and compare them to our results. \\
The orbits of the faces of the $E_8$ root polytope under the action of $W$ are described in \cite[Section 7.5]{Cox30}. These include all monochromatic cliques of color~1 (see Proposition~\ref{facesform}). For one of the types of facets, we give a different, more group-theoretical proof of the fact that they form one orbit under the action of $W$, see Corollary~\ref{transsevenface}. \\
The orbits of \textsl{ordered sequences} of the vertices in the faces (except for one type of facets) have been described in \cite[Corollary 26.8]{Man74}. We summarize this result in Proposition \ref{transitief}. \\
Monochromatic cliques of color 0 are orthogonal sets, and their orbits under the action of $W$ are described in \cite[Corollary 3.3]{DynMin10}. We describe the action of $W$ on the \textsl{ordered sequences} of orthogonal roots in Proposition \ref{orbitstuplesgamma0}. \\
Finally, in \cite{CRS04} the authors give a classification of isomorphism types of all maximal \textsl{exceptional} graphs (i.e. connected graphs with least eigenvalue greater or equal to $-2$ that are not generalized line graphs \cite[Section 1.1]{CRS04}). From \cite[Corollary 3.6.4]{CRS04} it follows that these graphs correspond exactly to the maximal cliques in $\Graph{0,1}$. Therefore our classification of isomorphism types of cliques of Type IV for $c=\{0,1\}$ (see Appendices \ref{list} and \ref{29in01}) coincides with the classification of isomorphism types of maximal exceptional graphs in \cite[table p.140 and Table A6]{CRS04}; see Remark \ref{vergelijken} for a comparison between our method and the one in \cite{CRS04}. However, the classification of the isomorphism types is only part of our results on the maximal cliques in $\Graph{0,1}$. We also give invariants for such a clique that determine its isomorphism type, and we show that each isomorphism class is a full orbit under the action of the Weyl group (Propositions \ref{hard} and \ref{29cliques}). Moreover, in Corollary \ref{thm2max01} we show that every isomorphism between two representations of exceptional graphs in $E_8$ extends to an automorphism of $E_8$.
\end{remark}

Our inspiration to study the $E_8$ root system and the cliques in $\Gamma$ is the connection to del Pezzo surfaces of degree one. Such surfaces have exactly 240 lines, and there is a bijection between these lines and a root system that is isomorphic to $E_8$. We have found the maximal number of lines on these surfaces that go through one point using the results in this paper, see \cite{vLW}. This led us to studying cliques in the colored intersection graph on these lines (which is isomorphic to $\Gamma$). A good reference for these surfaces and their lines is \cite[Chapter IV]{Man74}. In Remarks \ref{dp11}, \ref{dp13}, \ref{dp14}, \ref{dp15}, and \ref{dp12}, we explain how some of our results translate to this geometric view.

\vspace{11pt}

We split the article into chapters that deal with one or more of the types I, II, III, or~IV. Note that these four types do not exclude each other, and some results in one section may be part of a result in another section. We ordered the sections such that each section builds as much on the previous ones as possible. \\
Section \ref{Background} states all the needed definitions as well as many known results about $E_8$ and the action of the Weyl group, and the relation with del Pezzo surfaces. We also set up the notation for the rest of this article. The reader who is familiar with root systems, and with $E_8$ in particular, can skip this section. Section \ref{facetsthreecliques} contains all results on the facets of the $E_8$ root polytope, and cliques of type~III. Section~\ref{monocliques} deals with cliques of type I. Section~\ref{maximalcliques} classifies all cliques of type IV. This is the biggest section, and the only section where we use a computer for some of the results (from Section \ref{-10} onwards). The results from this section are summarized in the tables in the appendices. Finally, we prove Theorems \ref{main} and \ref{main2} in Section \ref{proofthm1}.

\vspace{11pt}

All computations were done in \texttt{magma} (\cite{MR1484478}). The code that we used can be found in~\cite{magma}. 

 \section{Background: the Weyl group and the \texorpdfstring{$E_8$}{E8} root polytope}\label{Background}

Let $\Lambda$, $E$, $\Gamma$, and $W$ be as defined in the introduction. In this section we recall some well-known results about these objects, the Weyl group, and the $E_8$ root polytope. We also make a first step in proving Theorems~\ref{main} and \ref{main2}, by showing that for two cliques of type I, II, III, or IV in $\Gamma$ that are isomorphic as colored graphs, there is a type that they both belong to (Lemma \ref{step0}).   

\vspace{11pt}

Useful references for root systems and the Weyl group are \cite[Chapter 6]{Bo81}, and \cite[Chapter III]{Hum72}. 

\vspace{11pt}

The subgroup of the isometry group of $\mathbb{R}^8$ that is generated by the reflections in the hyperplanes orthogonal to the roots in $E$ is called the Weyl group, and denoted by $W_8$. This group permutes the elements in $E$, and since these roots span $\mathbb{R}^8$, the action of $W_8$ on $E$ is faithful. The Weyl group is therefore finite: it has order $696729600=2^{14}\cdot3^5\cdot5^2\cdot7$. It is equal to the automorphism group of the $E_8$ root system \cite[Section~12.2]{Hum72}, hence also to the automorphism group of the root lattice~$\Lambda$, and to the group~$W$.

\begin{lemma}\label{weyltranse8} The Weyl group acts transitively on the $E_8$ root system.\end{lemma}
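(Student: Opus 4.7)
The plan is to exhibit, for any two roots $e, f \in E$, an element of $W$ carrying $e$ to $f$, by reducing to a local transitivity statement and then a connectivity statement.

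First, I would establish a local move: whenever $e, f \in E$ satisfy $e \cdot f \neq 0$, the roots $e$ and $f$ are $W$-conjugate. Since $W$ contains every reflection $s_r(x) = x - (x \cdot r)\,r$ for $r \in E$, the three possible nonzero values of $e \cdot f$ reduce to explicit computations. If $e \cdot f = -2$, then $f = -e$ and $s_e(e) = f$. If $e \cdot f = 1$, then $(e-f)\cdot(e-f) = 2$, so $e - f \in E$, and $s_{e-f}(e) = e - (e \cdot (e-f))(e-f) = f$. If $e \cdot f = -1$, then $e + f \in E$ for the same norm reason, and a direct check shows $s_{e+f}(e) = -f$, so $s_f \circ s_{e+f}$ sends $e$ to $f$.

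Next, I would propagate this local move by showing that the graph on $E$ with edges joining non-orthogonal pairs of roots is connected. Fix a set $\Delta = \{\alpha_1, \ldots, \alpha_8\}$ of simple roots, so that every root is a $\mathbb{Z}$-combination of elements of $\Delta$ with coefficients all of one sign. Adjacent simple roots satisfy $\alpha_i \cdot \alpha_j = -1$, and since the $E_8$ Dynkin diagram is connected, the simple roots all lie in a single connected component of this graph. For any positive root $\beta = \sum_i n_i \alpha_i$, the identity $2 = (\beta,\beta) = \sum_i n_i(\beta \cdot \alpha_i)$ forces $\beta \cdot \alpha_i > 0$ for some $i$, so $\beta$ is linked to a simple root. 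Finally, each $e \in E$ is linked to $-e$ by the inner product $-2$, absorbing the negative roots into the same component. Combining this connectivity with Step~1 yields the transitivity of $W$ on $E$.

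I do not anticipate a real obstacle: Step~1 is a direct computation using the explicit reflection formula, and Step~2 rests only on the connectedness of the $E_8$ Dynkin diagram. Alternatively, the lemma is a special case of the general statement that the Weyl group of an irreducible root system acts transitively on roots of each length (see \cite{Hum72}, \S10.4); but the self-contained argument above is well suited to the explicit $E_8$ setting used throughout the rest of the paper.
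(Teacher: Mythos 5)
Your proof is correct, but it takes a genuinely different route from the paper, whose entire proof of this lemma is a citation to \cite{Hum72}, Section 10.4, Lemma C. Your two steps both check out: the reflection computations in Step~1 are right (for $e\cdot f=1$ one gets $s_{e-f}(e)=e-(2-1)(e-f)=f$, and for $e\cdot f=-1$ one gets $s_{e+f}(e)=-f$ followed by $s_f(-f)=f$), and Step~2 correctly deduces connectivity of the non-orthogonality graph from the connectedness of the $E_8$ Dynkin diagram, the identity $2=\sum_i n_i(\beta\cdot\alpha_i)$ for a positive root $\beta=\sum_i n_i\alpha_i$, and the edge of color $-2$ joining $e$ to $-e$. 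What your argument buys is a self-contained, fully explicit proof using only the reflection formula $s_r(x)=x-(x\cdot r)r$; the price is that you must import the existence of a base $\Delta$ with the all-coefficients-of-one-sign property, a piece of general root-system theory the paper never otherwise invokes (it works throughout with the explicit coordinate description of $E$). The paper's choice to defer to the standard reference is consistent with how its Background section treats other classical facts, and indeed your closing remark acknowledges that the lemma is exactly the cited special case; either justification is acceptable here.
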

\begin{proof}\cite[Section 10.4, Lemma C]{Hum72}.\end{proof}

Note that the roots in $E$ are of two types. Either they are of the form $\left(\pm\tfrac{1}{2},\ldots,\pm\tfrac{1}{2}\right)$, where an even number of entries is negative (giving $2^7=128$ roots), or exactly two entries are non-zero, and they can independently be chosen to be $-1$ or $1$ (giving $4\cdot\binom{8}{2}=112$ roots).

\begin{proposition}\label{intersection}
The absolute value of the dot product of any two elements in~$E$ is at most 2. Let $e\in E$ be a root. Then $e$ has dot product 2 only with itself, and dot product $-2$ only with its inverse $-e$. There are exactly 56 roots $f\in E$ with $e\cdot f=1$, there are exactly 56 roots $g\in E$ with $e\cdot g=-1$, and there are exactly 126 roots in $E$ that are orthogonal to $e$. 
\end{proposition}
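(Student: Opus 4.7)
The plan is to combine Cauchy--Schwarz with the integrality of the $E_8$ inner form, and then to reduce the three counting claims to a single explicit verification via transitivity.

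First I would note that since $\Lambda$ is an integral lattice, $e \cdot f \in \mathbb{Z}$ for all $e, f \in E$. Cauchy--Schwarz gives $|e \cdot f| \leq \|e\| \|f\| = 2$, with equality only when $f$ is a scalar multiple of $e$; combined with $\|f\| = \|e\| = \sqrt{2}$, this forces $f = \pm e$ in the equality case. This proves the bound and the first two statements about the values $\pm 2$, and shows that in general $e \cdot f \in \{-2,-1,0,1,2\}$.

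For the three remaining counts, the key observation is that by Lemma \ref{weyltranse8}, $W$ acts transitively on $E$, and since every element of $W$ preserves the inner product, the number of roots $f \in E$ with $e \cdot f = k$ is independent of the choice of $e$. It therefore suffices to fix one convenient root, for instance $e = (1,1,0,0,0,0,0,0)$, and to count directly. I would split $E$ into the $112$ roots with exactly two nonzero entries in $\{-1,+1\}$ and the $128$ half-integer roots $\tfrac{1}{2}(\epsilon_1,\ldots,\epsilon_8)$ with an even number of $\epsilon_i = -1$, and compute the dot product with $e$ in each class.

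The only point requiring any care is the parity condition in the half-integer case: for such $f$ one has $e \cdot f = \tfrac{1}{2}(\epsilon_1 + \epsilon_2) \in \{-1,0,1\}$, so one must count how many sign patterns on $\epsilon_3,\ldots,\epsilon_8$ are compatible with the global even-parity constraint for each fixed pair $(\epsilon_1,\epsilon_2)$. This elementary binomial-coefficient count, together with the analogous (easier) case analysis for the integer-coordinate roots, yields the numbers $56$, $56$, and $126$; summing with the two extremal contributions reproduces $240=|E|$ as a sanity check. There is no serious obstacle here beyond this bookkeeping.
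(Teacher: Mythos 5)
Your proposal is correct and follows essentially the same route as the paper: Cauchy--Schwarz plus primitivity for the values $\pm 2$, then transitivity of $W$ (Lemma \ref{weyltranse8}) to reduce the counts to the single root $e=(1,1,0,\ldots,0)$, followed by the same case split between integer-coordinate and half-integer roots. The bookkeeping you describe matches the paper's tallies ($32+24=56$ for dot product $1$, and $60+2+64=126$ for orthogonality), with the paper additionally shortcutting the $e\cdot g=-1$ count via the bijection $f\mapsto -f$.
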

\begin{proof}
From Cauchy-Schwarz it follows that for $e,e'\in E$ we have $$|e\cdot e'|\leq \|e\|\cdot\|e'\|=2,$$ and equality holds if and only if $e,e'$ are scalar multiples of each other. Since all roots are primitive, it follows that $e\cdot e'=2$ if and only if $e=e'$, and $e\cdot e'=-2$ if and only if $e=-e'$.  
Since $W$ acts transitively on $E$ (Lemma \ref{weyltranse8}), to count the other cases it suffices to prove this for one element in $E$. Take $e=(1,1,0,0,0,0,0,0)\in E$. \\
The roots $f\in E$ with $e\cdot f=1$ are of the form $f=(a_1,\ldots,a_8)$ with $a_1+a_2=1$. So for these roots we either have $a_1=a_2=\tfrac{1}{2}$, which gives $32$ different roots, or $\{a_1,a_2\}=\{0,1\}$, which gives $24$ different roots. This gives a total of $56$ roots.\\
For $f\in E$, we have $e\cdot f=1$ if and only if $e\cdot-f=-1$, so this gives also 56 roots $g\in E$ with $e\cdot g=-1$.\\
The roots in $E$ orthogonal to $e$ are of the form $f=(a_1,\ldots,a_8)$ with $a_1+a_2=0$. So for these roots we have $a_1=a_2=0$, which gives 60 roots, or $\{a_1,a_2\}=\{-1,1\}$, which gives 2 roots, or $\{a_1,a_2\}=\left\{-\tfrac{1}{2},\tfrac{1}{2}\right\}$, which gives 64 roots. This gives a total of $126$ roots.
\end{proof}

We continue with results on the $E_8$ root polytope. Coxeter described all faces of the~$E_8$ root polytope, which he called the $4_{21}$ polytope, in \cite{Cox30}. The faces come in two types: $k$-simplices (for $k\leq7$), given by $k+1$ vertices with angle $\frac{\pi}{3}$ and distance~$\sqrt{2}$ between any two of them, and $k$-crosspolytopes (for $k=7$), given by~$2k$ vertices where every vertex is orthogonal to exactly one other vertex, and has angle $\frac{\pi}{3}$ and distance $\sqrt{2}$ with all the other vertices. We summarize his results in Propositions \ref{facesform} and \ref{facets}. 

\begin{lemma}\label{sqrt2}
Two vertices in the $E_8$ root polytope have distance $\sqrt{2}$ between them if and only if their dot product is one. 
\end{lemma}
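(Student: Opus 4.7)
The plan is to unwind the definition of distance using the fact that every root has squared norm $2$. Specifically, for any two roots $e,f\in E$, we have $\|e\|^2=\|f\|^2=2$ by the definition of $E$, and
\[
\|e-f\|^2 \;=\; \|e\|^2 - 2\,(e\cdot f) + \|f\|^2 \;=\; 4 - 2\,(e\cdot f).
\]
Hence $\|e-f\|=\sqrt{2}$ if and only if $\|e-f\|^2=2$, which is equivalent to $e\cdot f=1$.

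Since the vertices of the $E_8$ root polytope are exactly the roots in $E$, this finishes the argument. The only point to be careful about is the ``if'' direction: conceivably one might worry about $e=f$ giving distance~$0$ but dot product $2$, not $1$, so the statement really concerns distinct vertices; but the algebraic equivalence above handles this automatically (if $e\cdot f=1$ then automatically $e\neq f$, since $e\cdot e=2$). No obstacle beyond a single line of algebra is expected.
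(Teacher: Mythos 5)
Your proof is correct and is essentially identical to the paper's: both simply expand $\|e-f\|^2 = 4 - 2\,(e\cdot f)$ using $\|e\|^2=\|f\|^2=2$ and read off the equivalence. The extra remark about $e=f$ is a harmless clarification the paper omits.
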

\begin{proof}For $e,f\in E$ we have $\|e-f\|^2=e^2-2\cdot e\cdot f+f^2=4-2\cdot e\cdot f.
$\end{proof}

\begin{proposition}\label{facesform}
For $k\leq 7$, the set of $k$-simplices in the $E_8$ root polytope is given by $$\{\{e_1,\ldots,e_{k+1}\}\;|\:\forall i:e_i\in E;\;\forall j\neq i:e_i\cdot e_j=1\},$$ where a $k$-simplex is identified with the set of its vertices. For $k\leq 6$, the $k$-simplices in the $E_8$ root polytope are exactly its $k$-faces.
\end{proposition}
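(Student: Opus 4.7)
The first assertion is a set equality, so I would prove both inclusions. For the forward direction, the description of the faces recalled in the paragraph just before the statement (due to Coxeter) says that a $k$-simplex face consists of $k+1$ vertices at pairwise distance $\sqrt{2}$. By Lemma \ref{sqrt2}, this is exactly the condition $e_i\cdot e_j=1$ for all $i\neq j$. So every $k$-simplex lies in the set on the right-hand side.

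For the reverse inclusion, I would show that any subset $\{e_1,\ldots,e_{k+1}\}\subseteq E$ with pairwise inner product $1$ actually arises as a face, and then invoke Coxeter's classification to conclude that this face is a $k$-simplex. The natural construction is to put $v=e_1+\cdots+e_{k+1}$ and consider the hyperplane $H=\{x\in\mathbb{R}^8:x\cdot v=k+2\}$. A direct computation gives $v\cdot e_i=k+2$ for all $i$. For any other root $e\in E$, Proposition \ref{intersection} forces $e\cdot e_i\leq 1$ for every $i$ (with equality only if $e=e_i$), so $v\cdot e\leq k+1<k+2$; if instead $e=-e_j$ for some $j$, the inequality holds more sharply. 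Hence $H$ is a supporting hyperplane of the root polytope whose intersection with $E$ is exactly $\{e_1,\ldots,e_{k+1}\}$, so the convex hull of these vertices is a face. A short Gram-matrix computation shows that $e_2-e_1,\ldots,e_{k+1}-e_1$ have Gram matrix $I+J$ (where $J$ is the all-ones matrix), whose eigenvalues $k+1$ and $1$ are positive. So the $e_i$ are affinely independent, the face is genuinely $k$-dimensional, and by Coxeter's classification it is a $k$-simplex.

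The second assertion is then a direct consequence: Coxeter's enumeration tells us that for $k\leq 6$ no crosspolytope faces appear, so every $k$-face is a $k$-simplex, and combined with the first part this identifies the $k$-faces with the sets of $k+1$ roots having all pairwise inner products equal to $1$. The most delicate point in the plan is the supporting-hyperplane argument, and in particular confirming that the resulting face has the expected dimension $k$ rather than collapsing to something smaller; the Gram-matrix computation handles this uniformly for all $k\leq 7$.
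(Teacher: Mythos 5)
Your proposal is correct, and it is more self-contained than the paper's own argument. The paper's proof of this proposition is essentially two sentences: the forward inclusion is exactly your first step (Lemma \ref{sqrt2} translates ``pairwise distance $\sqrt{2}$'' into ``pairwise dot product $1$''), and everything else --- including the reverse inclusion and the identification of $k$-simplices with $k$-faces for $k\leq 6$ --- is delegated to Coxeter's classification in \cite{Cox30}. Where you differ is in the reverse inclusion: you give an explicit supporting-hyperplane argument, taking $v=e_1+\cdots+e_{k+1}$, computing $v\cdot e_i=k+2$ and $v\cdot e\leq k+1$ for all other roots via Proposition \ref{intersection}, and then checking affine independence through the Gram matrix $I+J$ of the differences $e_i-e_1$. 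All of these computations are right (the off-diagonal entries are $1-1-1+2=1$ and the diagonal entries are $2$, so the eigenvalues $k+1$ and $1$ are indeed positive). This is precisely the technique the paper itself uses later, in Remark \ref{facetsimplexhyperplane}, for the case $k=7$ with the normal vector $\tfrac13\sum e_i$; you have in effect extended that remark to all $k\leq 7$. What your route buys is independence from Coxeter for the first assertion: you only need his enumeration to rule out non-simplex faces in dimensions $k\leq 6$, whereas the paper's proof relies on the external reference for the full set equality as well. The only mild caveat is that the phrase ``$k$-simplex in the root polytope'' is defined in the paper via Coxeter's description of the faces, so your forward inclusion still quietly rests on that classification --- but that is equally true of the paper's proof.
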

\begin{proof}
The vertices in a $k$-simplex have dot product 1 by the previous lemma. The fact that the $k$-faces are exactly the $k$-simplices for $k\leq 6$ is in \cite[Section~7.5 or the table on page 414]{Cox30}. 
\end{proof}

\begin{proposition}\label{facets}
The facets of the $E_8$ root polytope are exactly the $7$-simplices and the $7$-crosspolytopes contained in it. 
The set of $7$-crosspolytopes is given by 
$$\left\{\left\{\{e_1,f_1\},\ldots,\{e_{7},f_{7}\}\right\}\;\left|\:\begin{array}{l}\forall i\in\{1,\ldots,7\}: e_i,f_i\in E;\;e_i\cdot f_i=0;\\
\forall j\neq i: e_i\cdot e_j=e_i\cdot f_j=f_i\cdot f_j=1.
\end{array}\right.\right\},$$ where a $7$-crosspolytope is identified by the set of its 7 pairs of orthogonal roots.
\end{proposition}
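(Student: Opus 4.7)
The plan is to combine Coxeter's classification of the facets of the $4_{21}$ polytope with a direct translation of the geometric defining properties of a $7$-crosspolytope into the stated dot-product conditions. The statement is really two assertions: (a) the facets are precisely the $7$-simplices and $7$-crosspolytopes contained in the polytope, and (b) those crosspolytopes are exactly the sets described by the displayed formula.

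For (a) I would quote Coxeter's description in \cite{Cox30}, Section~7.5 (together with the table on page~414). Coxeter shows that the $4_{21}$ polytope has exactly two Weyl orbits of facets, one of type $\alpha_7$ (a regular $7$-simplex with $8$ vertices) and one of type $\beta_7$ (a $7$-crosspolytope with $14$ vertices). Together with Proposition~\ref{facesform}, which already handles the $k$-faces for $k\leq 6$, this gives the first claim.

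For (b) I would unpack the general definition of a $k$-crosspolytope given just before Proposition~\ref{facesform}: it has $2k$ vertices, every vertex is orthogonal to exactly one other vertex, and every remaining pair of vertices has angle $\pi/3$ and mutual distance~$\sqrt{2}$. With $k=7$ this yields $14$ vertices which can be uniquely partitioned into seven orthogonal pairs $\{e_i,f_i\}$, so $e_i\cdot f_i=0$ for each $i$. For all other pairs Lemma~\ref{sqrt2} translates distance $\sqrt{2}$ into dot product $1$, giving $e_i\cdot e_j=e_i\cdot f_j=f_i\cdot f_j=1$ for $j\neq i$. Conversely, any set of roots satisfying the displayed conditions has pairwise distances and orthogonality matching the definition of a $7$-crosspolytope, so the two descriptions coincide.

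The main obstacle is that the classification in (a) is imported as a black box from \cite{Cox30}. If one wanted a self-contained argument, the hard step would be to exclude all other facet types. A natural route would be to use the transitivity of $W$ on $E$ (Lemma~\ref{weyltranse8}) to reduce to facets through a fixed root $e$, classify the possible supporting hyperplanes $\{x:\ell(x)=\ell(e)\}$ by analysing the stabilizer of $e$ in $W$, and then verify by direct computation that the roots on each such hyperplane form either a $7$-simplex or a configuration matching the right-hand side of the displayed formula. In the present paper, however, it is enough to cite Coxeter and carry out the dot-product translation described above.
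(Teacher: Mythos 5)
Your proposal matches the paper's own proof: the paper also cites \cite{Cox30}, Section 7.5 (and the table on p.\ 414) for the classification of the facets into $7$-simplices and $7$-crosspolytopes, and derives the dot-product conditions from Lemma \ref{sqrt2}. Your extra remarks about a hypothetical self-contained argument go beyond what the paper does, but the actual proof you give is essentially identical.
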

\begin{proof}
The facets are the $7$-simplices and the $7$-crosspolytopes by \cite[Section~7.5 or the table on page 414]{Cox30}.  The dot products follow from Lemma \ref{sqrt2}. 
\end{proof}

\begin{remark}
We also show that the $7$-simplices and the $7$-crosspolytopes in the~$E_8$ root polytope are facets in Remarks \ref{facetsimplexhyperplane} and \ref{facetcrosspolytopehyperplane}.
\end{remark}

\begin{corollary}\label{numberfacets}
The $E_8$ root polytope has 6720 1-faces, 60480 2-faces, 241920 3-faces, 483840 4-faces, 483840 5-faces, 207360 6-faces, 17280 $7$-simplices, and 2160 \\
$7$-crosspolytopes. 
\end{corollary}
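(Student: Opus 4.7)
The plan is to turn each face count into a count of clique-like objects in $\Gamma_1$ via Propositions~\ref{facesform} and~\ref{facets}, and then to count those objects by an induction in the small-dimensional cases and by an orbit--stabilizer argument for the two types of $7$-faces.

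For $0\le k\le 6$, Proposition~\ref{facesform} identifies the $k$-faces with the $(k+1)$-cliques in $\Gamma_1$. Write $s_m$ for the number of $m$-cliques in $\Gamma_1$; then $s_1=240$ and $s_2=240\cdot 56/2=6720$ follow directly from Proposition~\ref{intersection}. Invoking the transitivity of $W$ on ordered $m$-cliques from Proposition~\ref{transitief} (stated later in this section), every $m$-clique has the same number $\nu_m$ of common $\Gamma_1$-neighbours, and hence
$$
s_{m+1}=\frac{s_m\cdot\nu_m}{m+1}.
$$
I would compute each $\nu_m$ by fixing a convenient $m$-clique (for instance, the $m$ integer roots with a $1$ in positions $i$ and $8$ and zeros elsewhere, for $i=1,\ldots,m$) inside the realization of $E$ given in the introduction and listing all roots with the prescribed inner products with every vertex; this yields $\nu_1,\ldots,\nu_6 = 56, 27, 16, 10, 6, 3$, which drives the recursion to $s_3=60480$, $s_4=241920$, $s_5=s_6=483840$, and $s_7=207360$.

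The recursion breaks down at the next step, since the average number of common $\Gamma_1$-neighbours of a $7$-clique would have to be $8\cdot 17280/207360=2/3$, so I would count the two types of $7$-faces separately. For $7$-simplices, $W$ acts transitively with setwise stabilizer equal to the full symmetric group $S_8$ of vertex permutations (all realized in $W$, with trivial pointwise kernel), giving $|W|/8!=696729600/40320=17280$. For $7$-crosspolytopes, Corollary~\ref{transsevenface} provides transitivity, and the setwise stabilizer is the index-two subgroup of the hyperoctahedral group $S_2\wr S_7$ acting on the seven orthogonal pairs that lies in $W$, of order $2^{7-1}\cdot 7!=322560$; this yields $|W|/322560=2160$.

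The main obstacle I expect is the transitivity input: for the inductive part one needs transitivity of $W$ on $m$-cliques for every $m\le 6$ (supplied by Proposition~\ref{transitief}), and for the $7$-crosspolytope count one needs Corollary~\ref{transsevenface} together with a careful identification of its setwise stabilizer inside the hyperoctahedral group. Apart from these two inputs, everything is bookkeeping inside a fixed realization of $E_8$, and each of the eight numerical claims can additionally be cross-checked against Coxeter's tabulation on page~$414$ of~\cite{Cox30}, already invoked in the proofs of Propositions~\ref{facesform} and~\ref{facets}.
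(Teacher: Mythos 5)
Your proof is correct, but it takes a genuinely different route: the paper's entire proof of Corollary~\ref{numberfacets} is the citation ``See \cite{Cox30}, p.414,'' whereas you rederive all eight numbers internally. Your recursion $s_{m+1}=s_m\nu_m/(m+1)$ with $\nu_1,\ldots,\nu_6=56,27,16,10,6,3$ is exactly the computation the authors relegate to Remark~\ref{aantalexckrommen} (there phrased via the count of exceptional curves on del Pezzo surfaces of increasing degree), your $7$-simplex count $|W|/8!$ matches the simply-transitive action on $U$ from Proposition~\ref{transitief} and Lemma~\ref{voortbrengen1} (the paper instead gets $17280$ in Proposition~\ref{max178} by pairing each non-maximal $7$-clique with its unique $8$-clique), and your crosspolytope count parallels Lemma~\ref{index2}. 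What your approach buys is independence from Coxeter's tabulation and a uniform clique-counting framework; what it costs is that every ingredient must be supplied without circularity. Two points deserve care. First, you use Proposition~\ref{transitief} and (implicitly) Lemma~\ref{voortbrengen1}, both of which appear \emph{after} this corollary in the paper; neither depends on it, so the forward reference is harmless, but you should say so. Second, and more substantively, your crosspolytope stabilizer claim is the one place where the paper's own machinery would be circular if borrowed wholesale: Lemma~\ref{index2} \emph{derives} $|W_s|=322560$ from $|C|=2160$, so you cannot quote it. You would need to prove independently that the image of $W_s$ in $\mu_2^7\rtimes S_7$ has index exactly two --- the lower bound comes from the reflections $r_{ij}$ and $t_{ij}$, but the upper bound requires showing that a single-pair swap $c_1\leftrightarrow d_1$ is not induced by any element of $W$ (e.g.\ because it changes whether $\sum_i c_i$ lies in $2\Lambda$, as in the proof of Corollary~\ref{thm2crosspolytopes}). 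The cleaner non-circular alternative, which the paper records in Remark~\ref{numbercrosspolytopes}, is to use Lemma~\ref{sevenfaces}: each ordered orthogonal pair lies in a unique crosspolytope and each crosspolytope contains fourteen such pairs, giving $|C|=240\cdot126/14=2160$ with no stabilizer computation at all.
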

\begin{proof}
See \cite[p.414]{Cox30}.
\end{proof}

\begin{remarkdp1}\label{dp11}
Let us give a quick analogy with geometry, which was our motivation to study the $E_8$ root lattice. More on this can be found in \cite[Chapter IV]{Man74}, and in a lot more detail than sketched here. \\
Let $X$ be a del Pezzo surface of degree one over an algebraically closed field $k$. Then~$X$ is isomorphic to the blow up of $\mathbb{P}^2_k$ in eight points in general position (meaning no three on a line, no six on a conic, and no eight on a cubic that is singular at one of them). Let $K_X$ be the class in Pic~$X$ of the anticanonical divisor of $X$, and let $K_X^{\perp}$ be the orthogonal complement of $K_X$ in the lattice Pic~$X$. Let $\left(\mathbb{R}\otimes K_X^{\perp},\langle\cdot,\cdot\rangle\right)$ be the Euclidean vector space with inner product $\langle\cdot,\cdot\rangle$ defined by the \textsl{negative} of the intersection pairing in Pic~$X$. Classes in $K_X^{\perp}$ with self intersection~$-2$ (so inner product 2 in $\mathbb{R}\otimes K_X^{\perp}$) form a root system within this vector space, and this root system is isomorphic to $E_8$. \\
It is well known that Pic~$X$ contains 240 classes $c$ with $c^2=c\cdot K_X=-1$, called exceptional classes. Let $C$ be the set of exceptional classes in Pic~$X$. For $c\in C$ we have $c+K_X\in K_X^{\perp}$ and $\langle c+K_X,c+K_X\rangle=2$, and this gives a bijection between~$C$ and the root system in $\mathbb{R}\otimes K_X^{\perp}$, such that $\langle c_1+K_X,c_2+K_X\rangle=1-c_1\cdot c_2$. Therefore the group of permutations of $C$ that preserves the intersection multiplicity is isomorphic to the Weyl group $W_8$. Moreover, studying the colored intersection graph of $C$, where colors are given by the intersection multiplicities, is equivalent to studying the colored graph of the $E_8$ root system, where colors are given by the dot products. Throughout this article, we will remark on some of the analogies of the results for the set $C$. \\
For example, the vertices of a $k$-simplex in the $E_8$ root polytope correspond to a sequence of $k+1$ exceptional classes in $C$ that have pairwise intersection pairing~0. Moreover, for~$r$ pairwise disjoint exceptional curves $e_1,\ldots,e_r$ (for $1\leq r\leq 7$), the exceptional curves that are disjoint from $e_1,\ldots,e_r$ correspond to the exceptional curves of the del Pezzo surface of degree $r+1$ that is obtained by blowing down $e_1,\ldots,e_r$. We know the number of exceptional curves on del Pezzo surfaces \cite[Table~IV.9]{Man74}, and we can use this to compute the number of $k$-faces of the $E_8$ root polytope for~$k\leq5$. 
\end{remarkdp1}

\begin{remark}\label{aantalexckrommen}
For $k\leq5$, the statement in Corollary \ref{numberfacets} also follows from the last part of Remark~\ref{dp11} and Table (IV.9) in \cite{Man74}: we have $$\frac{240\cdot56}{2}=6720,\;\;\frac{240\cdot56\cdot27}{3!}=60480,\;\;\frac{240\cdot56\cdot27\cdot16}{4!}=241920,$$ and so on. For $k$ equal to 6 and for the 7-simplices, the statement is in Proposition~\ref{max178}. For the 7-crosspolytopes it follows from Lemma \ref{sevenfaces}, see Remark~\ref{numbercrosspolytopes}.
\end{remark}

The following propositions state results about the action of the Weyl group on the faces of the $E_8$ root polytope.

\begin{proposition}\label{orbits67faces}
For $k\leq5$, the group $W$ acts transitively on the set of $k$-faces. There are two orbits of facets. 
\end{proposition}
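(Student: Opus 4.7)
The proof splits into showing transitivity on $k$-faces for $k \leq 5$, and separately analyzing the facet orbits. For the first part, since Proposition \ref{facesform} identifies the $k$-faces (for $k \leq 6$) with $k$-simplices, the plan is to use induction on $k$, with the base case $k = 0$ supplied by Lemma \ref{weyltranse8}. For the inductive step, given two $k$-simplices $S_1 = \{e_1, \ldots, e_{k+1}\}$ and $S_2 = \{f_1, \ldots, f_{k+1}\}$, apply the inductive hypothesis to get $w \in W$ with $w(\{e_1, \ldots, e_k\}) = \{f_1, \ldots, f_k\}$, relabel so that $w(e_i) = f_i$ for $i \leq k$, and reduce to showing that the pointwise stabilizer $W_k$ of $\{f_1, \ldots, f_k\}$ acts transitively on the set $R_k = \{g \in E : g \cdot f_i = 1 \text{ for all } i \leq k\}$ of roots completing the simplex. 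Both $w(e_{k+1})$ and $f_{k+1}$ lie in $R_k$, so an appropriate stabilizer element will finish the induction.

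The identification to use is the del Pezzo dictionary of Remark \ref{dp11}: thinking of $E$ as the set of exceptional classes on a del Pezzo surface of degree $1$, a $k$-simplex corresponds to $k+1$ pairwise disjoint exceptional classes, and blowing down $f_1, \ldots, f_k$ produces a del Pezzo surface of degree $k+1$. Under this correspondence $W_k$ becomes the Weyl group $W(E_{8-k})$ of the smaller surface, while $R_k$ becomes the set of exceptional classes on that smaller surface, so the inductive step follows from the analog of Lemma \ref{weyltranse8} for the root system $E_{8-k}$. For the facet claim, the plan is to observe that by Proposition \ref{facets} the facets consist of $7$-simplices (with $8$ vertices and all edge colors equal to $1$) and $7$-crosspolytopes (with $14$ vertices and exactly $7$ edges of color $0$); since $W$ preserves colors, these two combinatorial types cannot lie in a common orbit, giving at least two orbits. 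Transitivity on the $7$-simplices follows by extending the induction above to $k=7$, and transitivity on the $7$-crosspolytopes is deferred to Corollary \ref{transsevenface} (or taken from Coxeter \cite{Cox30}), yielding exactly two facet orbits.

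The main obstacle is the stabilizer-transitivity step in the induction, namely the identification of $W_k$ with $W(E_{8-k})$ acting transitively on $R_k$. Invoking the del Pezzo correspondence makes this clean, but a self-contained argument would require verifying that the sub-root-system of $E_8$ orthogonal to the appropriate span (after a shift placing it in a sublattice) is really of type $E_{8-k}$ and that its Weyl group acts transitively on $R_k$; this could be handled either by exhibiting explicit generators or by a counting argument matching $|W(E_{8-k})|$ with the size of $R_k$ recorded in Remark \ref{aantalexckrommen}. The $7$-crosspolytope case has no analogous inductive structure and needs a genuinely separate analysis, which is why it is postponed to Corollary \ref{transsevenface}.
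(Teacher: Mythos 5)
The paper's own proof of this proposition is a one-line citation to Coxeter \cite{Cox30}, Section 7.5, so your proposal is a genuinely different, more self-contained route. For $k\leq 5$ your induction is sound in outline: it is essentially the argument behind Manin's Corollary 26.8 (which the paper invokes for Proposition \ref{transitief}), and the stabilizer step works precisely because for $k\leq 5$ the pointwise stabilizer of $k$ pairwise-meeting roots is the Weyl group of an $E_{8-k}$ system with $8-k\geq 3$, which does act transitively on the exceptional classes of the corresponding del Pezzo surface of degree $k+1$. You are right that the identification of the pointwise stabilizer with $W(E_{8-k})$ is the nontrivial input (it needs Steinberg-type generation of the stabilizer by the reflections it contains, or else the counting fallback you mention), and you flag this honestly.

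The genuine gap is the sentence claiming that transitivity on the $7$-simplices ``follows by extending the induction above to $k=7$.'' It does not: the inductive chain breaks one level earlier. At the $7$-vertex level (the $6$-faces) the stabilizer of six pairwise-meeting roots corresponds to $W(E_2)$, a single reflection, which has \emph{two} orbits on the three completing roots; accordingly there are two orbits of $6$-faces (as the paper records in Proposition \ref{max178} and Remark \ref{6-faces}), so the inductive hypothesis you would need at the step $k=6\to 7$ is false. To rescue transitivity on the $8$-vertex cliques you need an additional ingredient, e.g.\ that every $7$-vertex subclique of an $8$-clique is non-maximal, that the two orbits of $7$-cliques are exactly the maximal and the non-maximal ones, and that a non-maximal $7$-clique has a unique completion (the content of Proposition \ref{max178}); alternatively a counting argument via the free action of $W$ on ordered $8$-tuples (Lemma \ref{voortbrengen1}) together with $17280\cdot 8!=|W|$; or simply the citation to Manin or Coxeter that the paper itself uses. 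Your treatment of the rest of the facet claim is fine: the two combinatorial types cannot be conjugate, and deferring transitivity on the $7$-crosspolytopes to Corollary \ref{transsevenface} is not circular, since that corollary rests on Proposition \ref{pair} and Lemma \ref{sevenfaces} rather than on the present proposition.
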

\begin{proof} In \cite[Section 7.5]{Cox30} it is shown that all $k$-simplices are conjugate for $k\leq 5$, and that any two facets of the same type are conjugate as well. We know that there are two types of facets from Proposition \ref{facets}. 
\end{proof}

\begin{remark}There are two orbits of $6$-faces, which we describe in Proposition~\ref{max178}. See also Remark \ref{6-faces}.
\end{remark} 

We know something even stronger, namely, the action of $W$ on the \textsl{ordered sequences} of roots in faces of the $E_8$ root polytope.

\begin{proposition}\label{transitief}For all $r\leq8$ such that $r\neq 7$, the group $W$ acts transitively on the set $$R_r=\{(e_1,\ldots,e_r)\in E^s\;|\;\forall i\neq j:e_i\cdot e_j=1\}.$$
For $r=7$, there are two orbits under the action of $W$.
\end{proposition}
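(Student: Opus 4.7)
The plan is to reduce the ordered-tuple statement to the unordered one (Proposition \ref{orbits67faces} and the ensuing remark) by showing that the $W$-stabiliser of an $(r-1)$-simplex surjects onto the symmetric group permuting its vertices. The key observation is that if $e_i,e_j \in E$ satisfy $e_i \cdot e_j = 1$, then $e_i - e_j \in \Lambda$ and
\[
(e_i - e_j)\cdot(e_i - e_j) = 2 - 2 + 2 = 2,
\]
so $e_i - e_j$ is itself a root. I would therefore consider the reflection $s_{e_i - e_j} \in W$ and compute, for any further root $e_k$ with $e_k \cdot e_i = e_k \cdot e_j = 1$, that $\langle e_k, e_i - e_j\rangle = 0$, hence $s_{e_i - e_j}(e_k) = e_k$. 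Of course $s_{e_i - e_j}$ swaps $e_i$ and $e_j$. So this reflection realises the transposition $(i\ j)$ on the vertices of the simplex while fixing the other $e_k$ set-wise and even point-wise.

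Since the transpositions generate the symmetric group, the stabiliser inside $W$ of any unordered $(r-1)$-simplex $\{e_1,\dots,e_r\}$ contains a subgroup acting as the full group $S_r$ on the ordered tuples of its vertices. Combined with transitivity on the unordered simplices, this gives transitivity on ordered tuples at the same time. Concretely, for $r \le 6$ Proposition \ref{orbits67faces} supplies one orbit of $(r-1)$-faces, so $W$ acts transitively on $R_r$; for $r = 7$ the remark following Proposition \ref{orbits67faces} says there are exactly two orbits of $6$-faces, which then translates directly into exactly two orbits on $R_7$; and for $r = 8$, Proposition \ref{orbits67faces} together with Proposition \ref{facets} show that the $7$-simplices form a single orbit of facets (the $7$-crosspolytopes being the other orbit, and not relevant here), so $W$ acts transitively on $R_8$.

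The only possibly delicate point is verifying the two-orbit count for $r = 7$: the bound "at most two" follows from the stabiliser argument above applied to each of the two orbits of unordered $6$-simplices, while the bound "at least two" is immediate from the fact that two unordered $6$-simplices in different $W$-orbits cannot yield ordered tuples in the same $W$-orbit. I do not anticipate a real obstacle here; the main content of the proof is the little reflection computation with $s_{e_i - e_j}$, and everything else is bookkeeping on top of Proposition \ref{orbits67faces}.
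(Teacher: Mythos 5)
Your argument is correct in its core mechanics but takes a genuinely different route from the paper. The paper's proof is a one-liner: it transports $R_r$ through the bijection of Remark \ref{dp11} to ordered sequences of pairwise disjoint exceptional curves on a degree-one del Pezzo surface and cites \cite{Man74}, Corollary 26.8. Your proof is self-contained: the observation that $e_i-e_j$ is a root whenever $e_i\cdot e_j=1$, that $s_{e_i-e_j}$ swaps $e_i$ and $e_j$, and that it fixes every further vertex $e_k$ of the simplex (since $e_k\cdot(e_i-e_j)=1-1=0$) is correct, and it does show that the set-wise stabiliser of an $(r-1)$-simplex surjects onto $S_r$; combined with transitivity on unordered simplices this passes from unordered to ordered orbits with no loss. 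This is essentially the same reflection trick the paper uses elsewhere (compare the reflections $r_{ij}$ in the proof of Lemma \ref{index2}), and it buys a proof that avoids the del Pezzo detour entirely. For $r\le 6$ your input is Proposition \ref{orbits67faces}, and for $r=8$ it is Proposition \ref{orbits67faces} together with Proposition \ref{facets}; both are justified independently by citation to \cite{Cox30}, so those cases are complete.

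The one point you must repair is the $r=7$ case. You take ``there are exactly two orbits of unordered $6$-faces'' from the remark following Proposition \ref{orbits67faces}, but in this paper that remark is justified only by forward reference to Proposition \ref{max178}, whose proof in turn invokes Proposition \ref{transitief} (both for extending small cliques in $\Gamma_1$ to $6$-cliques and, crucially, for the upper bound ``at most two orbits of cliques of size $7$''). As written, your $r=7$ argument is therefore circular within the paper's own logical structure. The lower bound (at least two orbits of unordered $6$-simplices, hence of ordered $7$-tuples) can be obtained independently by exhibiting a maximal and a non-maximal $7$-clique in $\Gamma_1$, but the upper bound (at most two orbits of unordered $6$-simplices) needs an independent source; the clean fix is to cite \cite{Cox30} directly for the classification of $6$-faces into two conjugacy classes, exactly as Proposition \ref{orbits67faces} already does for $k\le5$ and for the facets. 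With that citation in place your proof goes through.
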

\begin{proof} In Remark~\ref{dp11} we describe a bijection between $E$ and the set $C$ of 240 exceptional curves on a del Pezzo surface of degree one, where two elements in $E$ have dot product $a$ if and only if the two corresponding elements in $C$ have intersection product $1-a$. This bijection respects the action of $W$, and under this bijection the set $R_r$ corresponds to the set of sequences of length $r$ of disjoint exceptional curves. The statement now follows from \cite[Corollary 26.8]{Man74}.
\end{proof}

The following lemma is the first step in proving Theorems \ref{main} and \ref{main2}.

\begin{lemma}\label{step0}
Let $K_1,K_2$ be two cliques in $\Gamma$ of type I, II, III, or IV that are isomorphic. Then there is a type I, II, III, or IV that they both belong to. 
\end{lemma}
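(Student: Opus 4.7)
The plan is to run a short case analysis on the type of $K_1$, using that each of types I, II, and III is characterised by an invariant that is preserved under colored-graph isomorphism, so that the only remaining possibility is type IV.

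First I would record three stability observations. For type I: an isomorphism $K_1 \to K_2$ restricts to a colour-preserving bijection on edges, so $K_1$ is monochromatic of colour $\alpha$ if and only if $K_2$ is. For type III: isomorphism preserves the number of vertices, so $|K_1| \leq 3$ if and only if $|K_2| \leq 3$. For type II: Proposition \ref{facesform} characterises the $k$-simplices of the $E_8$ root polytope (for $k\leq 7$) as cliques on $k+1$ vertices with every edge of colour $1$, and Proposition \ref{facets} characterises the $7$-crosspolytopes as size-$14$ cliques in which the colour-$0$ edges form a perfect matching and all other edges have colour $1$. Both descriptions are purely properties of the colored graph and are therefore preserved by any isomorphism.

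Second I would combine these. If $K_1$ is of type I, II, or III, then by the corresponding observation $K_2$ is of the same type, and we are done. Otherwise $K_1$ is of type IV and of no other type in the list. Since $K_2$ is assumed to be in $\{\mathrm{I},\mathrm{II},\mathrm{III},\mathrm{IV}\}$ and the observations are symmetric in $K_1$ and $K_2$, the hypothesis that $K_2$ were of type I, II, or III would force $K_1$ into that same type, a contradiction. Hence $K_2$ must be of type IV, and $K_1$ and $K_2$ share type IV.

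I do not expect any real obstacle: the genuine content, namely the colored-graph characterisation of faces of the root polytope, has already been packaged into Propositions \ref{facesform} and \ref{facets}; the rest is essentially a bookkeeping check against the definitions of the four types.
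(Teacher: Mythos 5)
Your proposal is correct and follows essentially the same route as the paper: types I and III are preserved because isomorphisms preserve edge colors and vertex counts, type II is preserved via the colored-graph characterisations of simplices and crosspolytopes in Propositions \ref{facesform} and \ref{facets}, and the remaining case forces both cliques into type IV. The only cosmetic difference is that you phrase the last step as a contradiction using symmetry, where the paper states directly that two cliques not of types I, II, or III are automatically both of type IV.
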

\begin{proof}
If a clique is of type I or III, then any clique that is isomorphic to it is of the same type. If $K_1$ is of type II, then its vertices form a $k$-simplex (for $k\leq7$) or a $k$-crosspolytope (for $k=7$) by Proposition \ref{facesform} and Proposition \ref{facets}. In both cases,~$K_2$ is of the same type, again by Proposition \ref{facesform} and Proposition \ref{facets}. Analogously, if~$K_2$ is of type II then so is $K_1$. Finally, if $K_1$ and $K_2$ are both not of types I, II, or III, then they are automatically both of type~IV.  
\end{proof}

We conclude this section by stating a lemma that will be used throughout this article.

\begin{lemma}\label{action}
Let $H$ be a group, let $A,B$ be $H$-sets, and $f\colon A\longrightarrow B$ a morphism of $H$-sets. Then the following hold.
\begin{itemize}
\item[](i) If $H$ acts transitively on $A$, then $H$ acts transitively on $f(A)$.
\item[](ii) If $H$ acts transitively on $B$, then all fibers of $f$ have the same cardinality.
\item[](iii) If $H$ acts transitively on $A$ and $A$ is finite, then all non-empty fibers of $f$ have the same cardinality, say $n$, and $|f(A)|=\frac{|A|}{n}$.
\item[](iv) If $H$ acts transitively on $f(A)$, and there is an element $b\in f(A)$ such that its stabilizer $H_b$ acts transitively on $f^{-1}(b)$, then $f$ acts transitively on $A$. 
\end{itemize}
\end{lemma}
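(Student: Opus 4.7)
The plan is to prove the four statements directly from the defining property of a morphism of $H$-sets, namely $f(h\cdot a)=h\cdot f(a)$, handling them in the stated order so that the later parts can quote the earlier ones. The whole lemma is a piece of general nonsense about $G$-sets, so I would aim for short arguments built around transporting elements by the $H$-action and tracking how $f$ respects it.

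For (i), I would pick $b_1,b_2\in f(A)$, lift them to preimages $a_1,a_2\in A$, use transitivity on $A$ to obtain $h\in H$ with $h\cdot a_1=a_2$, and then apply equivariance of $f$ to conclude $h\cdot b_1=b_2$. For (ii), the key observation is that whenever $h\in H$ sends $b_1$ to $b_2$, the map $a\mapsto h\cdot a$ restricts to a set map $f^{-1}(b_1)\to f^{-1}(b_2)$ (because $f(h\cdot a)=h\cdot f(a)=h\cdot b_1=b_2$), and the same recipe applied to $h^{-1}$ provides a two-sided inverse. So transitivity on $B$ gives the fibers equal cardinality.

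For (iii), I would corestrict $f$ to a surjective morphism of $H$-sets $A\to f(A)$. Part (i) shows that $H$ acts transitively on the codomain $f(A)$, so (ii) applies to the corestriction and forces all of its fibers, which are precisely the non-empty fibers of $f$, to share a common cardinality $n$. Finiteness of $A$ then gives $|A|=n\cdot|f(A)|$ by counting. For (iv), given $a_1,a_2\in A$, set $b_i=f(a_i)$; by transitivity on $f(A)$ choose $h_1,h_2\in H$ with $h_i\cdot b_i=b$, so that $h_1\cdot a_1$ and $h_2\cdot a_2$ both lie in $f^{-1}(b)$; then transitivity of $H_b$ on $f^{-1}(b)$ supplies a $k\in H_b$ with $k\cdot(h_1\cdot a_1)=h_2\cdot a_2$, and $h_2^{-1}kh_1\in H$ is the required element sending $a_1$ to $a_2$.

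There is no real obstacle here; the whole proof is an exercise in equivariance bookkeeping. The one place that deserves slight care is (iii), where one must first pass to the corestriction in order to apply (ii) honestly (since (ii) by itself speaks of all fibers over $B$, which can include empty ones); and in (iv), where three successive uses of transitivity must be composed in the right order to produce a single $h$.
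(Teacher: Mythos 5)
Your proposal is correct and follows essentially the same route as the paper's own proof: equivariance for (i), transport of fibers by the action for (ii), combining (i) and (ii) on the image for (iii), and the composition $h'^{-1}gh$ for (iv). The only cosmetic difference is that you make the corestriction to $f(A)$ in (iii) explicit, which the paper leaves implicit.
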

\begin{proof}
\begin{itemize}
\item[] 
\item[](i) Take $f(a),f(a')\in f(A)$ with $a,a'\in A$. Assume that $H$ acts transitively on $A$, then there is an $h\in H$ such that $ha=a'$. Since $f$ is a morphism of $H$-sets, we have $hf(a)=f(ha)=f(a')$, so $H$ acts transitively on $f(A)$.  
\item[](ii) Take $b,b'\in B$. Since $H$ acts transitively on $B$, there is an $h\in H$ such that $hb=b'$, so $|f^{-1}(b')|=|f^{-1}(hb)|=|hf^{-1}(b)|=|f^{-1}(b)|$.
\item[](iii) Take $b,b'\in B$ such that $f^{-1}(b)$ and $f^{-1}(b')$ are non-empty. Then we have $b,b'\in f(A)$. Since $H$ acts transitively on $f(A)$ by $(i)$, it follows from $(ii)$ that $f^{-1}(b)$ and $f^{-1}(b')$ have the same cardinality, say $n$. It is now immediate that $|A|=|f^{-1}(B)|=\sum_{b\in f(A)}n=n|f(A)|$, so $|f(A)|=\frac{|A|}{n}$.
\item[] (iv) Take $b\in f(A)$ such that $H_b$ acts transitively on $f^{-1}(b)$. Take $a,a'\in A$. Since $H$ acts transitively on $f(A)$, there are $h,h'\in H$ such that $hf(a)=b$ and $h'f(a')=b$. Then $ha$ and $h'a'$ are contained in $f^{-1}(b)$. Since $H_b$ acts transitively on $f^{-1}(b)$, there is an element $g\in H_b$ with $gha=h'a'$. So we have $h'^{-1}gha=a'$ and $H$ acts transitively on $A$.\qedhere
\end{itemize}
\end{proof}

\section{Facets of the \texorpdfstring{$E_8$}{E8} root polytope and cliques of size at most three}\label{facetsthreecliques}

In this section we study the cliques in $\Gamma$ of type III, and the facets of the $E_8$ root polytope. We give an alternative proof for the fact that $W$ acts transitively on the set of facets that are $7$-crosspolytopes (Corollary~\ref{transsevenface}), and we prove the following propositions.

\begin{proposition}\label{A} For $a\in\{\pm1,-2,0\}$, the group $W$ acts transitively on the set $$\{(e_1,e_2)\in E^2\;|\;e_1\cdot e_2=a\}.$$
\end{proposition}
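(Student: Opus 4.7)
The plan is to treat the four values $a \in \{1,-1,-2,0\}$ separately, reducing three of them directly to results already stated and handling the orthogonal case $a=0$ via the sub-root system structure combined with Lemma~\ref{action}(iv) applied to first-coordinate projection.

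First I would observe that $a=1$ is immediate, since the set in question is precisely $R_2$ from Proposition~\ref{transitief}. For $a=-1$, I would reduce to $a=1$ by negation: given two pairs $(e_1,e_2)$ and $(e_1',e_2')$ both with inner product $-1$, the pairs $(e_1,-e_2)$ and $(e_1',-e_2')$ both lie in $R_2$, so the $w\in W$ produced by the $a=1$ case sends $e_1\mapsto e_1'$ and, by linearity of $w$, also $e_2\mapsto e_2'$. For $a=-2$, Proposition~\ref{intersection} shows that such a pair is necessarily of the form $(e,-e)$, so transitivity follows directly from Lemma~\ref{weyltranse8}. Finally, for $a=0$, I would apply Lemma~\ref{action}(iv) to the $W$-equivariant first-coordinate projection $f\colon\{(e_1,e_2)\in E^2\mid e_1\cdot e_2=0\}\longrightarrow E$: since $W$ acts transitively on $E$ by Lemma~\ref{weyltranse8}, it suffices to fix a single root $b\in E$ and show that the stabilizer $W_b$ acts transitively on the $126$ roots orthogonal to $b$. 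These orthogonal roots form a sub-root system of type $E_7$ inside $b^{\perp}$, the stabilizer $W_b$ is the full Weyl group $W(E_7)$ of this sub-system (a standard fact; see \cite{Bo81} or \cite{Hum72}), and $W(E_7)$ acts transitively on its own root system by the $E_7$-analogue of Lemma~\ref{weyltranse8}.

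The hard part will be the $a=0$ case, specifically the structural identification of $W_b$ with $W(E_7)$ acting transitively on the orthogonal copy of $E_7$. Should a self-contained argument be preferred, after fixing $b=(1,1,0,\ldots,0)$ one could split the $126$ orthogonal roots, as in the proof of Proposition~\ref{intersection}, into the two integer roots supported on the first two coordinates, the $60$ integer roots supported on the last six coordinates, and the $64$ half-integer roots, and then link representatives of the three families by explicit reflections $s_f$ with $f\in b^{\perp}\cap E$: such reflections automatically fix $b$ and therefore lie in $W_b$.
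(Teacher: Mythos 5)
Your proposal is correct, and for $a\in\{1,-1,-2\}$ it coincides with the paper's argument (Lemma~\ref{twosets}): $a=1$ is Proposition~\ref{transitief}, $a=-1$ follows by negating the second coordinate, and $a=-2$ reduces to transitivity on $E$ itself. For $a=0$, however, you take a genuinely different route. The paper (Proposition~\ref{pair}) first proves transitivity on the set of triples $B=\{(e_1,e_2,e_3): e_1\cdot e_2=e_2\cdot e_3=0,\ e_1\cdot e_3=1\}$ by an explicit stabilizer bound coming from the facet structure (Lemma~\ref{lemma}), and then shows that the projection $B'\to A_0$ is surjective by counting fiber sizes via Lemma~\ref{action}(iii); the auxiliary transitivity on $B$ is machinery the paper reuses elsewhere (e.g., in Proposition~\ref{B} for the case $(0,0,1)$). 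You instead apply Lemma~\ref{action}(iv) to the first-coordinate projection and reduce to the statement that the stabilizer $W_b$ of a root acts transitively on the $126$ roots orthogonal to it; identifying $b^{\perp}\cap E$ with an $E_7$ root system and $W_b$ with $W(E_7)$ (the orders match: $|W|/240=2903040=|W(E_7)|$) then finishes the argument, since $W(E_7)$ is transitive on its roots for the same reason as Lemma~\ref{weyltranse8}. Your route is shorter and more conceptual, at the cost of importing a structural fact about root stabilizers that the paper deliberately avoids in favor of coordinate computations. One small caveat: in your self-contained fallback, merely linking one representative of each of the three coordinate families by reflections does not by itself give transitivity on all $126$ roots; you would also need to exhibit enough reflections in $W_b$ (coordinate permutations and sign changes on the last six coordinates, say) to act transitively within each family. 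The primary argument via $W(E_7)$ does not suffer from this gap.
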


\begin{proposition}\label{B}
For $a,b,c\in\{-2,-1,0,1\}$, the group $W$ acts transitively on the set $$\{(e_1,e_2,e_3)\in E^3\;|\;e_1\cdot e_2=a,\;e_2\cdot e_3=b,\;e_1\cdot e_3=c\},$$ in all cases where it is not empty.
\end{proposition}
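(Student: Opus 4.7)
The plan is to reduce the statement to Proposition~\ref{A} by analysing the forgetful map and then studying a stabilizer. Let $T_{a,b,c}$ denote the set in the statement, and let $P_a=\{(e_1,e_2)\in E^2 : e_1\cdot e_2=a\}$. Consider the $W$-equivariant map
\[
\pi\colon T_{a,b,c}\longrightarrow P_a,\qquad (e_1,e_2,e_3)\longmapsto (e_1,e_2).
\]
Proposition~\ref{A} tells us that $W$ acts transitively on $P_a$, so Lemma~\ref{action}(iv) reduces the problem to showing, for any single $(e_1,e_2)\in\pi(T_{a,b,c})$, that $W_{(e_1,e_2)}$ acts transitively on the fiber
\[
F=\{e_3\in E : e_1\cdot e_3=c,\;e_2\cdot e_3=b\}.
\]

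For $a=\pm 2$, we have $e_2=\pm e_1$ and the two conditions on $e_3$ collapse into one, so transitivity of $W_{e_1}=W_{(e_1,\pm e_1)}$ on $F$ follows from Proposition~\ref{A} applied to the pair $(e_1,e_3)$. For $a\in\{-1,0,1\}$, I would choose a convenient representative $(e_1,e_2)$ in the explicit coordinate realization of $E_8$ from Section~\ref{Background} and exploit the Weyl group $W_R$ of the root subsystem $R=\{r\in E : r\cdot e_1=r\cdot e_2=0\}$, which lies inside $W_{(e_1,e_2)}$ and fixes $e_1$ and $e_2$ pointwise. A direct computation identifies $R$ as $D_6$ when $a=0$ and as $E_6$ when $a=\pm 1$; both are irreducible. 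Since $4-a^2\neq 0$, every $e_3\in F$ has a unique decomposition $e_3=\alpha e_1+\beta e_2+v$ with $v\in\mathrm{span}(e_1,e_2)^{\perp}$, and the pair $(\alpha,\beta)$ together with $\|v\|^2=2-2\alpha^2-2\beta^2-2a\alpha\beta$ is entirely determined by $(a,b,c)$. Transitivity of $W_{(e_1,e_2)}$ on $F$ therefore reduces to transitivity of $W_R$ on the set of vectors $v$ arising from some $e_3\in F$.

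The hard part is this last transitivity statement, which requires a finite case analysis over the admissible triples $(a,b,c)$. When $v=0$ the fiber has a single element; when $\|v\|^2=2$, the vector $v$ is a root of $R$ and transitivity follows from Lemma~\ref{weyltranse8} applied to $D_6$ or $E_6$. The remaining values of $\|v\|^2$ (for instance $\|v\|^2=1$ in the $a=0,\;|b|=|c|=1$ case, or $\|v\|^2=4/3$ in the $a=1,\;b=c=1$ case) produce finitely many short vectors in the orthogonal six-dimensional representation of $R$. In the $D_6$ cases one checks transitivity directly, since $W(D_6)$ acts on the ambient $\mathbb{R}^6$ by signed permutations with an even number of sign changes, which is visibly transitive on each of the relevant short-vector sets. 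In the $E_6$ cases, the vectors $v$ form a single minuscule orbit under $W(E_6)$, and transitivity can be read off from the standard description of the $27$-dimensional minuscule representation. Doing this bookkeeping efficiently (ideally with a single representative $(e_1,e_2)$ worked out once per value of $a$, so that all relevant $(b,c)$ can be handled simultaneously) is where the real work of the proof lies.
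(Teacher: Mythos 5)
Your strategy is sound and genuinely different from the paper's. The paper first uses sign-flip bijections such as $(e_1,e_2,e_3)\mapsto(e_1,-e_2,e_3)$ to cut the twenty cases $a\leq b\leq c$ down to five core ones, and then handles those by disparate ad hoc arguments: $V_{-1,-1,-1}$ via the linear relation $e_1+e_2+e_3=0$, $V_{0,0,1}$ by embedding a stabilizer into $S_6$ and counting, $V_{0,0,0}$ via the machinery of the maps $Z\to V_4\to V_3$, $V_{0,1,1}$ via the $\mu_2^7\rtimes S_7$ structure of a $7$-crosspolytope stabilizer, and $V_{1,1,1}$ by citing Manin. Your reduction to fiber transitivity under $W_{(e_1,e_2)}$ via Lemma~\ref{action}(iv), followed by the identification of the orthogonal subsystem $R$ as $D_6$ (for $a=0$) or $E_6$ (for $a=\pm1$), is more uniform and more conceptual; your norm computations for $v$ and the resulting fiber descriptions (single points when $v=0$, the $60$ or $72$ roots of $R$ when $b=c=0$, the $12$ vector weights and $32$ half-spin weights of $D_6$, the $27$ minuscule weights of $E_6$) are all correct. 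The trade-off is that your route imports standard facts about Weyl orbits of small weights, whereas the paper stays self-contained. (Minor slip: only $a=-2$ occurs, since $2\notin\{-2,-1,0,1\}$.)

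One caveat on where the remaining work sits. For each relevant $(a,b,c)$ you must show the set of vectors $v$ is a \emph{single} $W_R$-orbit, not merely that it is $W_R$-stable with the right cardinality and norm; for instance, in the $a=0$, $\{|b|,|c|\}=\{0,1\}$ case the $32$ vectors of squared norm $3/2$ could a priori split between the two half-spin orbits of $D_6$, and ruling this out requires an explicit identification. Moreover, the copy of $D_6$ orthogonal to two orthogonal roots does not sit in the standard signed-permutation coordinates of $\mathbb{R}^6$ (its $60$ roots include both integer and half-integer vectors of $E_8$), so the transitivity is not ``visible'' until you construct an orthonormal basis of $\operatorname{span}(e_1,e_2)^{\perp}$ adapted to $R$ and re-express the vectors $v$ in it. These verifications will all succeed, but they are genuine computations of roughly the same weight as the paper's case analysis, and your write-up correctly flags them as the outstanding work rather than carrying them out.
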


Note that these two propositions describe the orbits under the action of $W$ of \textsl{sequences} of the vertices of cliques in $\Gamma$, hence they also prove Theorem \ref{main2} for cliques of type III; see Corollary \ref{corB}. The proof of Proposition \ref{A} can be found below Proposition~\ref{pair}, and the proof of Proposition~\ref{B} below Lemma \ref{driehoek}. Throughout this section we do not use any computer programs. More background on the $E_8$ root polytope can be found in \cite{Cox30} and \cite{Cox48}. 

\vspace{11pt}

We start with some results on the facets of the $E_8$ root polytope that are $7$-simplices. The results on the facets that are $7$-crosspolytopes are in Lemmas \ref{transsevenface} and~\ref{sevengenerate}. Consider the set $$U=\{(e_1,e_2,e_3,e_4,e_5,e_6,e_7,e_8)\in E^8\;|\;\forall i\neq j:e_i\cdot e_j=1\}.\label{facet1}$$ 
Note that an element in $U$ is a sequence of eight roots that form a $7$-simplex. Define the following roots, and note that $(u_1,\ldots,u_8)$ is an element in $U$.
\begin{align*}
&u_1=(1,1,0,0,0,0,0,0);\; & &u_5=(1,0,0,0,0,1,0,0);\\
&u_2=(1,0,1,0,0,0,0,0);\; & &u_6=(1,0,0,0,0,0,1,0);\\
&u_3=(1,0,0,1,0,0,0,0);\; & &u_7=(1,0,0,0,0,0,0,1);\\
&u_4=(1,0,0,0,1,0,0,0);\; & &u_8=\left(\tfrac{1}{2},\tfrac{1}{2},\tfrac{1}{2},\tfrac{1}{2},\tfrac{1}{2},\tfrac{1}{2},\tfrac{1}{2},\tfrac{1}{2}\right).\label{utjes}
\end{align*}

\begin{lemma}\label{voortbrengen1}
Every element in $U$ generates a sublattice of index 3 of the root lattice~$\Lambda$, and the group $W$ acts freely on $U$.
\end{lemma}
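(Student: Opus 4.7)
The plan is to use Proposition \ref{transitief} with $r=8$ (note $r\neq 7$, so the proposition applies) to reduce everything to a single specific 8-tuple. Indeed, that proposition gives transitivity of $W$ on $U$, so the index of the sublattice generated by an element of $U$ is the same for all elements of $U$, and the stabilizer in $W$ of an element of $U$ is (up to conjugation) independent of the element. Thus it suffices to compute the index for $(u_1,\ldots,u_8)$ and to show that its stabilizer in $W$ is trivial.

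For the index, let $L\subseteq\Lambda$ be the $\mathbb Z$-span of $u_1,\ldots,u_8$. The Gram matrix $M=(u_i\cdot u_j)_{i,j}$ is $I+J$, where $I$ is the $8\times 8$ identity and $J$ is the all-ones matrix. Since the eigenvalues of $J$ are $8$ (with multiplicity $1$) and $0$ (with multiplicity $7$), the eigenvalues of $M$ are $9$ and $1$, so $\det M=9$. In particular $u_1,\ldots,u_8$ are linearly independent, $L$ has rank $8$, and because $\Lambda$ is unimodular we have $[\Lambda:L]^2=\det M=9$, hence $[\Lambda:L]=3$.

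For freeness, I would give the short counting argument: by Corollary \ref{numberfacets} there are $17280$ $7$-simplices in the $E_8$ root polytope, and each gives $8!$ ordered tuples in $U$, so
\[
|U|=17280\cdot 8!=696729600=|W|.
\]
Combined with transitivity and the orbit–stabilizer theorem, this forces every point stabilizer to be trivial, i.e.\ $W$ acts freely on $U$. (Alternatively, since $L$ has finite index in $\Lambda$ the roots $u_1,\ldots,u_8$ span $\mathbb R^8$, so any $w\in W$ fixing each $u_i$ acts trivially on $\mathbb R^8$ and is the identity; transitivity then transfers this to every element of $U$.)

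There is no real obstacle here: the only nontrivial ingredient is the relation between the Gram determinant of $L$ and the index $[\Lambda:L]$, which uses unimodularity of $\Lambda$. Everything else is a direct application of results already established, namely Proposition \ref{transitief} and Corollary \ref{numberfacets}.
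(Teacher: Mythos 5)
Your proof is correct, but it diverges from the paper's in both halves, so a comparison is worth making. For the index, the paper simply computes the determinant of the $8\times 8$ matrix whose rows are the coordinates of $u_1,\ldots,u_8$ (it equals $3$) and concludes directly; your Gram-matrix computation $\det(I+J)=9=[\Lambda:L]^2$ is the same idea in a coordinate-free dress, with the small bonus that the Gram matrix is literally identical for every element of $U$, so for this half you do not actually need the transitivity from Proposition~\ref{transitief} at all. For freeness, the routes genuinely differ: the paper argues intrinsically that any $w\in W$ fixing a tuple $u\in U$ fixes the index-$3$ sublattice it generates pointwise, hence satisfies $3w(x)=w(3x)=3x$ for all $x\in\Lambda$ and is the identity by torsion-freeness --- this is essentially your parenthetical alternative. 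Your primary argument instead counts $|U|=17280\cdot 8!=|W|$ and invokes orbit--stabilizer. This is valid, but note that it imports the facet count $17280$ from Corollary~\ref{numberfacets}, which the paper takes from Coxeter, and it reverses the paper's logical flow: in Remark~\ref{remark} the paper \emph{deduces} $|U|=|W|$ from the freeness you are trying to prove. There is no circularity since Corollary~\ref{numberfacets} is cited externally and proved independently, but the paper's self-contained lattice argument (or your parenthetical version of it, which is if anything cleaner: an isometry fixing eight linearly independent vectors is the identity) is preferable if one wants the lemma not to depend on Coxeter's enumeration.
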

\begin{proof}
By Proposition \ref{transitief}, it is enough to check the first statement for one element in~$U$. The matrix whose $i$-th row is $u_i$ for $i\in\{1,\ldots,8\}$ has determinant 3, so $u_1,\ldots,u_8$ are linearly independent and generate a sublattice of rank 8 and index 3 in $\Lambda$. Take $w\in W$ such that there is an element $u\in U$ with $w(u)=u$. Then $w$ fixes the sublattice generated by $u$, so for all $x\in\Lambda$ we have $3w(x)=w(3x)=3x$. Since~$\Lambda$ is torsion free, this implies that $w$ fixes all of $\Lambda$. It follows that $w$ is the identity. We conclude that the action of $W$ on $U$ is free.
\end{proof}

\begin{corollary}\label{normal} Let $u=(e_1,\ldots,e_8)$ be an element in $U$. Then $\frac{1}{3}\sum_{i=1}^8e_i$ is contained in~$\Lambda$. 
%Moreover, there is a bijection $$\{\{e_1,\ldots,e_8\}\;|\;\forall i:e_i\in E;\;\forall i\neq j:e_i\cdot e_j=1\}\leftrightarrow\{\in\Lambda\;|\;\|v\|=8,\;v\mbox{ is primitive}\},$$ $$\{e_1,\ldots,e_8\}\longmapsto \frac{1}{3}\sum_{i=1}^8e_i,$$ $$v\longmapsto \{e\in E\;|\;e\cdot v=3\}.$$ 
\end{corollary}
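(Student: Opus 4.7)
The plan is to reduce the claim to a single explicit computation by invoking transitivity. By Proposition~\ref{transitief} applied with $r=8$ (which falls in the transitive range $r\neq 7$), the Weyl group $W$ acts transitively on $U$. Since $W$ acts on $U$ coordinatewise through its action on the lattice $\Lambda$, for any $w\in W$ and $u=(e_1,\ldots,e_8)\in U$ we have $\tfrac{1}{3}\sum_i w(e_i)=w\bigl(\tfrac{1}{3}\sum_i e_i\bigr)$, and $W$ preserves $\Lambda$. Hence the property ``$\tfrac{1}{3}\sum_{i=1}^8 e_i\in\Lambda$'' is constant on the unique $W$-orbit of $U$, and it suffices to verify the statement for the specific element $u=(u_1,\ldots,u_8)$ listed just above Lemma~\ref{voortbrengen1}.

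For that element I would compute the sum coordinate by coordinate: the first coordinate of $\sum_{i=1}^8 u_i$ is $7+\tfrac{1}{2}$ while each of the other seven coordinates is $1+\tfrac{1}{2}$. Dividing by $3$ gives a vector whose first coordinate is an integer plus $\tfrac{1}{2}$ and whose remaining coordinates are all $\tfrac{1}{2}$; it therefore lies in $\mathbb{Z}^8+\langle(\tfrac{1}{2},\ldots,\tfrac{1}{2})\rangle$. A short check shows its coordinate sum equals the even integer $6$, so by the definition of $\Lambda$ in the introduction the vector lies in $\Lambda$.

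There is no real obstacle: the argument is a one-line consequence of transitivity and a routine coordinate calculation. The only small point to keep in mind is that Proposition~\ref{transitief} excludes $r=7$, but this exclusion is irrelevant here since we use it with $r=8$. One can also note that the conclusion is equivalent to the statement $\sum_{i=1}^8 e_i\in 3\Lambda$, which is compatible with (but strictly stronger than) the index-$3$ statement of Lemma~\ref{voortbrengen1}.
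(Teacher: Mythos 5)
Your proof is correct, but it takes a different route from the paper's. The paper does not recompute anything: it takes the index-$3$ statement of Lemma \ref{voortbrengen1} as input, observes that $v=\tfrac13\sum_i e_i$ satisfies $v\cdot e_i=3$ for all $i$, deduces $\tfrac13 v\in M^{\vee}$ for the sublattice $M$ generated by the $e_i$, and then uses $[M^{\vee}:\Lambda^{\vee}]=[\Lambda:M]=3$ together with the self-duality of the unimodular lattice $\Lambda$ to conclude $v\in\Lambda^{\vee}=\Lambda$. Your argument instead reduces to a single representative via the transitivity of $W$ on $U$ (Proposition \ref{transitief} with $r=8$, which is legitimate since only $r=7$ is excluded) and verifies the claim by the coordinate computation $\tfrac13\sum_i u_i=(\tfrac52,\tfrac12,\ldots,\tfrac12)$, whose coordinate sum is $6$. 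Both steps check out, and the reduction is valid because $W$ acts linearly and preserves $\Lambda$, so the property is constant on the orbit. What each approach buys: yours is more elementary (no dual lattices) and is in the same spirit as the paper's own proof of Lemma \ref{voortbrengen1}, which also reduces to the representative $(u_1,\ldots,u_8)$; the paper's is structural and coordinate-free, showing that the conclusion follows formally from the index-$3$ fact, the dot products $v\cdot e_i=3$, and unimodularity, without singling out a representative. Your closing remark that the conclusion amounts to $\sum_i e_i\in 3\Lambda$ and is not a formal consequence of the index-$3$ statement alone is fair -- the paper's proof needs the extra unimodularity input precisely for this reason.
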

\begin{proof}By Lemma \ref{voortbrengen1}, we know that the roots $e_1,\ldots,e_8$ generate a lattice $M$ of index 3 in $\Lambda$. Set $v=\frac13\sum_{i=1}^8e_i$. Since $v\cdot e_i=3$ for $i\in\{1,\ldots,8\}$, we have $\frac{1}{3}v\in M^{\vee}$, where $M^{\vee}$ is the dual lattice of $M$. But the dual lattice $\Lambda^{\vee}$ has index 3 in $M^{\vee}$, so it follows that $3\cdot\frac{1}{3}v=v$ is contained in $\Lambda^{\vee}$. Since $\Lambda$ is unimodular, it is self dual, so $v$ is contained in $\Lambda$. 
\end{proof}

\begin{remarkdp1}\label{dp13}Let $X$ be a del Pezzo surface of degree 1 and $K_X$ its canonical divisor, see Remark \ref{dp11}. Lemma \ref{voortbrengen1} can be stated in terms of $X$ as follows. For every set of eight exceptional classes $c_1,\ldots,c_8$ that have pairwise intersection pairing 0, there exists a unique class $l$ such that we have $K_X=-3l+\sum_{i=1}^8c_i$ and $(l,c_1,\ldots,c_8)$ is a basis for Pic~$X$; one can blow down the exceptional curves corresponding to $c_1,\ldots,c_8$ to eight points in $\mathbb{P}^2$, such that $l$ is the class of the pullback of a line in $\mathbb{P}^2$ that does not contain any of these eight points.\end{remarkdp1}

\begin{remark} \label{facetsimplexhyperplane}Let $u=(e_1,\ldots,e_8)$ be an element in $U$. We know that $e_1,\ldots,e_8$ define a facet of the $E_8$ root polytope. This also follows from from Corollary \ref{normal}. Indeed, for $v=\frac{1}{3}\sum_{i=1}^8e_i$ we have $v\cdot e_i=3$ for $i\in\{1,\ldots,8\}$, and we have $$v\cdot e=\frac13\sum_{i=1}^8e_i\cdot e\leq\frac13\sum_{i=1}^81=\frac83<3$$ for $e\in E\setminus\{e_1,\ldots,e_8\}$.  This implies that the whole $E_8$ root polytope lies on one side of the hyperplane given by $v\cdot x=3$, and the intersection of the polytope with this hyperplane, which is exactly given by the convex combinations of $e_1,\ldots,e_8$, lies in the boundary of the polytope. Hence $e_1,\ldots,e_8$ generate a facet of the $E_8$ root polytope, and $v$ is the normal vector to this facet. \end{remark}

We will now prove part of Proposition \ref{A}.

\begin{lemma}\label{twosets}
For any $a\in \{-2,\pm1\}$, the group $W$ acts transitively on the set $$A_a=\{(e_1,e_2)\in E^2\;|\;e_1\cdot e_2=a\}.$$
\end{lemma}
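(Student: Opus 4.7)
The plan is to dispatch the three values of $a$ separately, reducing each case either to a previously established transitivity result or to a trivial $W$-equivariant bijection. The only substantive transitivity statement used is the $r=2$ case of Proposition \ref{transitief}, which already gives us the $a=1$ case immediately; the other two cases will follow formally from this one together with Proposition \ref{intersection} and Lemma \ref{weyltranse8}.

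For $a=1$, there is nothing to do: Proposition \ref{transitief} applied with $r=2$ states precisely that $W$ acts transitively on $\{(e_1,e_2)\in E^2\mid e_1\cdot e_2=1\}$, which is $A_1$. For $a=-2$, Proposition \ref{intersection} says that $e_1\cdot e_2=-2$ forces $e_2=-e_1$. Hence the first projection $\pi\colon A_{-2}\to E$, $(e_1,e_2)\mapsto e_1$, is a $W$-equivariant bijection (its inverse being $e_1\mapsto (e_1,-e_1)$), and Lemma~\ref{weyltranse8} gives transitivity of $W$ on $E$, which transfers to $A_{-2}$.

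For $a=-1$, I would exhibit a $W$-equivariant bijection $\varphi\colon A_{-1}\to A_1$ defined by $\varphi(e_1,e_2)=(e_1,-e_2)$. This is a bijection because $-E=E$ and the sign of the second inner product flips, and it is $W$-equivariant because $W$ acts by linear isometries, so $w\cdot \varphi(e_1,e_2)=(we_1,-we_2)=\varphi(we_1,we_2)=\varphi(w\cdot(e_1,e_2))$. Transitivity on $A_1$ established above therefore yields transitivity on $A_{-1}$.

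There is no serious obstacle here; the genuine work is all contained in Proposition \ref{transitief}, whose proof is deferred to the cited bijection with exceptional curves on a del Pezzo surface of degree one and \cite{Man74}, Corollary 26.8. The point of the present lemma is merely to record that the $a\in\{-2,-1\}$ cases are formal consequences of the $a=1$ case, which is why the $a=0$ case is postponed: it is not formally reducible to a pair with positive dot product and will be treated separately (in Proposition~\ref{A}).
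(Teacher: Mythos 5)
Your proof is correct and follows essentially the same route as the paper: the $a=1$ case is Proposition \ref{transitief} with $r=2$, the $a=-1$ case uses the sign-flip bijection with $A_1$, and the $a=-2$ case uses the bijection with $E$ coming from Proposition \ref{intersection}. The only (immaterial) differences are the direction in which you write the bijections and that you cite Lemma \ref{weyltranse8} rather than Proposition \ref{transitief} for transitivity on $E$.
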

\begin{proof}
The group $W$ acts transitively on $A_1$ by Proposition \ref{transitief}. There is a bijection between the $W$-sets $A_1$ and $A_{-1}$ given by $$f\colon A_1\longrightarrow A_{-1},\;(e_1,e_2)\longmapsto (e_1,-e_2).$$ It follows from Lemma \ref{action} that $W$ acts transitively on $A_{-1}$, too. Finally, we have a bijection $$E\longrightarrow A_{-2},\;e\longmapsto (e,-e),$$ so $W$ acts transitively on $A_{-2}$ by Proposition \ref{transitief} and by Lemma \ref{action}.
\end{proof}

\vspace{11pt}

Before we prove the rest of Proposition \ref{A}, we prove Proposition \ref{B} for the cases $(a,b,c)=(-1,-1,-1)$ (Corollary \ref{trans-1-1-1}) and $(a,b,c)=(0,0,1)$ (Lemma \ref{lemma}), which we will use later. 

\begin{lemma}\label{unique-1-1-1}
For $e_1,e_2\in E$ with $e_1\cdot e_2=-1$ there is a unique element $e\in E$ with $e\cdot e_1=e\cdot e_2~=~-1$, which is given by $e=-e_1-e_2$.
\end{lemma}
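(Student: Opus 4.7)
The plan is to first verify that the proposed element $e = -e_1 - e_2$ actually lies in $E$ and has the required inner products, and then to use a short positivity argument to get uniqueness.

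For existence, I would set $e = -e_1 - e_2$ and observe that $e \in \Lambda$ since $\Lambda$ is closed under addition. A direct computation gives
\[
e \cdot e \;=\; (e_1 + e_2)\cdot(e_1 + e_2) \;=\; e_1\cdot e_1 + 2\, e_1\cdot e_2 + e_2\cdot e_2 \;=\; 2 - 2 + 2 \;=\; 2,
\]
so $\|e\| = \sqrt{2}$ and therefore $e \in E$. Expanding similarly shows $e \cdot e_1 = -e_1\cdot e_1 - e_1\cdot e_2 = -2 + 1 = -1$, and by symmetry $e \cdot e_2 = -1$.

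For uniqueness, suppose $e \in E$ satisfies $e \cdot e_1 = e \cdot e_2 = -1$. Then I would consider $v = e + e_1 + e_2 \in \Lambda$ and compute
\[
v \cdot v \;=\; e\cdot e + e_1\cdot e_1 + e_2\cdot e_2 + 2(e\cdot e_1 + e\cdot e_2 + e_1\cdot e_2) \;=\; 2+2+2 + 2(-1-1-1) \;=\; 0.
\]
Since $\Lambda \subseteq \mathbb{R}^8$ with the standard positive-definite inner product, $v\cdot v = 0$ forces $v = 0$, hence $e = -e_1 - e_2$.

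There is no real obstacle here: once one writes down the natural candidate $-e_1 - e_2$, both parts reduce to one-line inner-product expansions, with positive-definiteness doing the work for uniqueness. (One could alternatively invoke Lemma \ref{twosets} to reduce to a single explicit pair and count by hand, but the coordinate-free computation above is shorter and makes the formula $e = -e_1 - e_2$ transparent.)
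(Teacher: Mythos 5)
Your proof is correct and follows essentially the same route as the paper: the paper also sets $f = e_1 + e_2 + e$, computes $\|f\| = 0$ to force $f = 0$ for uniqueness, and checks $\|-e_1-e_2\| = \sqrt{2}$ for existence. You merely spell out the inner-product expansions in more detail, which is fine.
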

\begin{proof}
Take $e_1,e_2,e\in E$ with $e_1\cdot e_2=-1$ and $e\cdot e_1=e\cdot e_2~=~-1$. Set $f=e_1+e_2+e$. Then we have $\|f\|=0$, hence $f=0$, so $e=-e_1-e_2$. Therefore $e$ is unique if it exists. Moreover, we have $\|-e_1-e_2\|=\sqrt{2}$, so $-e_1-e_2$ is an element in $E$ that satisfies the conditions.
\end{proof}

\begin{corollary}\label{trans-1-1-1}
The group $W$ acts transitively on the $W$-set $$\{(e_1,e_2,e_3)\in E^3\;|\;e_1\cdot e_2=e_2\cdot e_3=e_1\cdot e_3=-1\}.$$
\end{corollary}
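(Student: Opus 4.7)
The plan is to reduce the corollary to Lemma \ref{twosets} via Lemma \ref{unique-1-1-1}. Write
\[
T=\{(e_1,e_2,e_3)\in E^3\;|\;e_1\cdot e_2=e_2\cdot e_3=e_1\cdot e_3=-1\}
\]
and recall the $W$-set $A_{-1}=\{(e_1,e_2)\in E^2\;|\;e_1\cdot e_2=-1\}$ from Lemma \ref{twosets}.

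First I would consider the forgetful map $\pi\colon T\longrightarrow A_{-1}$ sending $(e_1,e_2,e_3)$ to $(e_1,e_2)$. This map is clearly a morphism of $W$-sets. By Lemma \ref{unique-1-1-1}, for any $(e_1,e_2)\in A_{-1}$ the only root $e_3\in E$ with $e_3\cdot e_1=e_3\cdot e_2=-1$ is $-e_1-e_2$; in particular such an $e_3$ exists, so $\pi$ is surjective, and it is unique, so $\pi$ is also injective. Hence $\pi$ is a $W$-equivariant bijection.

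Since $W$ acts transitively on $A_{-1}$ by Lemma \ref{twosets}, it follows from Lemma \ref{action}(iv) (with the fiber over any point being a single element, on which the stabilizer automatically acts transitively) that $W$ acts transitively on $T$, as desired. There is no real obstacle here; the corollary is essentially a direct consequence of Lemma \ref{unique-1-1-1} combined with the transitivity already established for pairs with dot product $-1$.
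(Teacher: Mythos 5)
Your proof is correct and follows essentially the same route as the paper: the paper uses Lemma \ref{unique-1-1-1} to exhibit the $W$-equivariant bijection $(e_1,e_2)\mapsto(e_1,e_2,-e_1-e_2)$ (the inverse of your forgetful map $\pi$) and then concludes by Lemma \ref{twosets} and Lemma \ref{action}. The only cosmetic difference is that you invoke Lemma \ref{action}(iv) with singleton fibers where part (i) applied to the bijection would suffice.
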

\begin{proof}
By Lemma \ref{unique-1-1-1} there is a bijection between the sets $$\{(e_1,e_2)\in E^2\;|\;e_1\cdot e_2=-1\}$$ and $$\{(e_1,e_2,e_3)\in E^3\;|\;e_1\cdot e_2=e_2\cdot e_3=e_1\cdot e_3=-1\},$$ given by $(e_1,e_2)\longmapsto(e_1,e_2,-e_1-e_2)$. The statement now follows from Lemma \ref{twosets} and Lemma \ref{action}.
\end{proof}

\begin{lemma}\label{intersection2}
Take $e_1,\;e_2\in E$ such that $e_1\cdot e_2=1$. Then there are exactly 72 elements of $E$ orthogonal to $e_1$ and $e_2$.
\end{lemma}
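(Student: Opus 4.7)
The plan is to reduce to a single concrete pair by transitivity and then count directly. By Lemma~\ref{twosets} (the case $a=1$, which is also the content of Proposition~\ref{transitief} for $r=2$), the Weyl group~$W$ acts transitively on the set of ordered pairs $(e_1,e_2)\in E^2$ with $e_1\cdot e_2=1$. Since $W$ preserves the dot product, the cardinality of
\[
\{f\in E\;|\;f\cdot e_1=f\cdot e_2=0\}
\]
depends only on the $W$-orbit of $(e_1,e_2)$, so it suffices to establish the count for one convenient pair.

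Next, I would take the explicit pair $e_1=(1,1,0,0,0,0,0,0)$ and $e_2=(1,0,1,0,0,0,0,0)$, which lies in the set above. For a root $f=(a_1,\ldots,a_8)\in E$, orthogonality with $e_1$ and $e_2$ amounts to the two linear conditions $a_1+a_2=0$ and $a_1+a_3=0$, i.e. $a_2=a_3=-a_1$. I then split into the two types of roots listed just before Proposition~\ref{intersection}.

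For the integer roots (exactly two non-zero entries, each $\pm 1$), the relation $a_2=a_3=-a_1$ forces $a_1=0$ and hence $a_2=a_3=0$; the two non-zero coordinates must therefore lie among positions $4,\ldots,8$, giving $\binom{5}{2}\cdot 4=40$ roots. For the half-integer roots (all entries $\pm\tfrac12$ with an even number of minus signs), the first three coordinates are forced to be either $(\tfrac12,-\tfrac12,-\tfrac12)$ or $(-\tfrac12,\tfrac12,\tfrac12)$; in the first case two minus signs are already used so we need an even number of minus signs among positions $4,\ldots,8$, and in the second case we need an odd number. Each choice gives $2^{5-1}=16$ sign patterns, for a total of $32$ half-integer roots. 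Summing, $40+32=72$, which proves the lemma.

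There is no real obstacle here: once transitivity cuts the problem down to one pair, everything reduces to two routine sign counts, and the parity condition on the half-integer entries is the only point where one must be careful.
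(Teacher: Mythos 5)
Your proof is correct and follows essentially the same route as the paper: reduce to the pair $e_1=(1,1,0,0,0,0,0,0)$, $e_2=(1,0,1,0,0,0,0,0)$ via transitivity (Lemma~\ref{twosets}), then count $32$ half-integer and $40$ integer roots satisfying $a_2=a_3=-a_1$. You merely spell out the parity bookkeeping for the half-integer roots in more detail than the paper does; the argument and the counts agree.
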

\begin{proof} By Lemma \ref{twosets} it is enough to check this for fixed $e_1,e_2\in E$ with $e_1\cdot e_2=1$. Set $e_1=\left(1,1,0,0,0,0,0,0\right)$, $e_2=\left(1,0,1,0,0,0,0,0\right)$. Then $e_1\cdot e_2=1$. An element $f\in E$ with $f\cdot e_1=f\cdot e_2=0$ is of the form $f=\left(a_1,\ldots,a_8\right)$ with $a_1+a_2=0$ and $a_1+a_3=0$, hence $a_1=-a_2$ and $a_2=a_3$. If $f$ is of the form $\left(\pm\tfrac{1}{2},\ldots,\pm\tfrac{1}{2}\right)$, then there are $32$ such possibilities. If $f$ has two non-zero entries, given by~$\pm1$, then $a_1,a_2,a_3$ should all be zero, which gives $40$ possibilities. We find a total of $72$ possibilities for $f$.  
\end{proof}

\begin{lemma}\label{lemma}Consider the set 
$$B=\{(e_1,e_2,e_3)\in E^3\;|\;e_1\cdot e_2=e_2\cdot e_3=0;\;e_1\cdot e_3=1\}.$$ We have $|B|=967680$, and the following hold.
\begin{itemize}
\item[](i) The group $W$ acts transitively on $B$.
\item[](ii) For every element $b=(e_1,e_2,e_3)\in B$, there are exactly 6 roots that have dot product 1 with $e_1,e_2$ and $e_3$. These 6 roots, together with $e_1$ and $e_3$, form a facet in the set $U$.
\end{itemize}
\end{lemma}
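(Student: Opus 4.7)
The plan is to set up a $W$-equivariant bijection between $B$ and the set $T$ of pairs $(F,(a,b))$, where $F$ is a $7$-simplex facet of the $E_8$ root polytope and $(a,b)$ is an ordered pair of distinct vertices of $F$; combined with the transitivity of $W$ on $U$ (Lemma~\ref{voortbrengen1}), this will yield part (i), while part (ii) will fall out of the construction. The cardinality $|B|=967680$ is immediate from Proposition~\ref{intersection} and Lemma~\ref{intersection2}: I would pick $e_1\in E$ ($240$ options), then $e_3$ with $e_1\cdot e_3=1$ ($56$ options), then $e_2$ orthogonal to both ($72$ options), giving $240\cdot 56\cdot 72$.

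The heart of the argument is the construction of the map $\Phi$. Given $(e_1,e_2,e_3)\in B$, I would set $v=e_1+e_2+e_3$. A direct expansion yields $\|v\|^2=8$; moreover, splitting into the cases $f\in\{\pm e_1,\pm e_2,\pm e_3\}$ and $f$ otherwise (so each $e_i\cdot f\in\{-1,0,1\}$), one checks that $v\cdot f\le 3$ for every $f\in E$. Hence $v\cdot x=3$ is a supporting hyperplane of the $E_8$ root polytope, and $F_v:=\{f\in E\mid v\cdot f=3\}$ is the vertex set of the resulting face, clearly containing $e_1$ and $e_3$. Applying Cauchy--Schwarz to $v\cdot(f+f')=6$ for distinct $f,f'\in F_v$ gives $\|f+f'\|^2\ge 9/2$, hence $f\cdot f'\ge 1/4$, forcing the integer $f\cdot f'$ to equal $1$; so $F_v$ is a simplex by Proposition~\ref{facesform}.

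To show $|F_v|=8$, I would use Proposition~\ref{transitief} on pairs to reduce to $(e_1,e_3)=(u_1,u_2)$. Corollary~\ref{numberfacets} and Lemma~\ref{action}(iii) applied to the incidence map from (facet, ordered edge) to $A_1$ show that every ordered edge in $A_1$ lies in exactly $17280\cdot 56/13440=72$ of the $7$-simplex facets. For each such facet $F$ its normal $v_F$ lies in $\Lambda$ (Corollary~\ref{normal}) and has $\|v_F\|^2=8$ by $W$-transitivity on $U$ together with a direct check for $(u_1,\dots,u_8)$; a short norm computation then gives $\|v_F-u_1-u_2\|^2=2$ and $(v_F-u_1-u_2)\cdot u_i=0$, so $F\mapsto v_F-u_1-u_2$ maps the $72$ facets containing $\{u_1,u_2\}$ into the $72$ roots orthogonal to both (Lemma~\ref{intersection2}). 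Injectivity is clear because $v_F$ determines $F$, hence the map is bijective. Consequently every $e_2$ orthogonal to $(u_1,u_2)$ equals $v_F-u_1-u_2$ for a unique facet $F$, giving $v=v_F$ and $F_v=F$ with exactly $8$ vertices.

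Part (ii) is then immediate: the six vertices $f_1,\dots,f_6$ of $F$ other than $u_1,u_2$ lie in $F_v$, the $7$-simplex structure forces $u_1\cdot f_i=u_2\cdot f_i=1$, and then $e_2\cdot f_i=v\cdot f_i-2=1$; conversely any root with all three dot products equal to $1$ lies in $F_v$ and differs from $u_1,u_2$. For part (i), the map $\Phi\colon B\to T$ by $(e_1,e_2,e_3)\mapsto(F_v,(e_1,e_3))$, with inverse $(F,(a,b))\mapsto(a,v_F-a-b,b)$, is a $W$-equivariant bijection; combined with Lemma~\ref{action}(i) applied to the $W$-equivariant surjection $U\to T$, $(u_1,\dots,u_8)\mapsto(\{u_1,\dots,u_8\},(u_1,u_2))$, this gives transitivity of $W$ on $B$. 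The main obstacle is the identification $|F_v|=8$: the Cauchy--Schwarz and supporting-hyperplane arguments alone only say $F_v$ is \emph{some} simplicial face, and it is the numerical coincidence between the roots orthogonal to an edge and the facets through that edge that pins down the dimension.
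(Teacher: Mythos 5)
Your proof is correct, but it takes a genuinely different route from the paper's. You build a $W$-equivariant bijection $B\cong T$, where $T$ is the set of $7$-simplex facets with a marked ordered edge, and deduce transitivity on $B$ from transitivity on $U$; the technical heart is pinning down $|F_v|=8$ via the coincidence that an ordered edge of $A_1$ lies on exactly $17280\cdot 56/13440=72$ simplicial facets while exactly $72$ roots are orthogonal to both its endpoints (Lemma~\ref{intersection2}), so that the injection $F\mapsto v_F-u_1-u_2$ is forced to be a bijection. The paper instead fixes one explicit $b=(e_1,e_2,e_3)\in B$, lists the six roots of $U_b=\{e\mid e\cdot e_i=1\text{ for all }i\}$ by hand, observes that $\{e_1,e_3\}\cup U_b$ is an element of $U$ so that $W_b$ acts faithfully on $U_b$ by Lemma~\ref{voortbrengen1}, and then squeezes $|Wb|$ between $|W|/720=967680$ and $|B|=967680$ to get transitivity; part (ii) then propagates from the explicit example by (i). Your argument is coordinate-free and makes part (ii) fall out structurally (the six roots are exactly the remaining vertices of the facet $F_v$), at the cost of the extra supporting-hyperplane and counting machinery; the paper's argument is shorter and more computational, and the explicit list of $U_b$ is reused later (e.g.\ in the proof of Proposition~\ref{pair}). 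Both establish the same equivalence between elements of $B$ and facets through an edge, just in opposite directions: you construct the facet from the triple, the paper extracts the triple's stabilizer bound from the facet.
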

\begin{proof}From Proposition \ref{intersection} and Lemma \ref{intersection2} we have $$|B|=240\cdot56\cdot72=967680.$$ Set $e_1=(1,1,0,0,0,0,0,0),\;e_2=(0,0,1,1,0,0,0,0),$ and $e_3=(1,0,0,0,1,0,0,0)$. Then $b=(e_1,e_2,e_3)$ is an element in $B$. Let $W_b$ be its stabilizer in $W$ and $Wb$ its orbit in $B$. Let $U_{b}$ be the set $$U_{b}=\{e\in E\;|\;e\cdot e_1=e\cdot e_2=e\cdot e_3=1\}.$$ For an element $e=(a_1,\ldots,a_8)\in U_b$, we have $a_1+a_2=a_3+a_4=a_1+a_5=1$. From this we find 
$$U_b=\left\{\begin{array}{c}(1,0,0,1,0,0,0,0)\\(1,0,1,0,0,0,0,0)\\\left(\tfrac{1}{2},\tfrac{1}{2},\tfrac{1}{2},\tfrac{1}{2},\tfrac{1}{2},\tfrac{1}{2},\tfrac{1}{2}\tfrac{1}{2}\right)\\\left(\tfrac{1}{2},\tfrac{1}{2},\tfrac{1}{2},\tfrac{1}{2},\tfrac{1}{2},-\tfrac{1}{2},-\tfrac{1}{2},\tfrac{1}{2}\right)\\\left(\tfrac{1}{2},\tfrac{1}{2},\tfrac{1}{2},\tfrac{1}{2},\tfrac{1}{2},-\tfrac{1}{2}\tfrac{1}{2},-\tfrac{1}{2}\right)\\\left(\tfrac{1}{2},\tfrac{1}{2},\tfrac{1}{2},\tfrac{1}{2},\tfrac{1}{2},\tfrac{1}{2},-\tfrac{1}{2},-\tfrac{1}{2}\right)\\\end{array}\right\}.$$ We conclude that there are 6 roots that have dot product 1 with $e_1,e_2$, and $e_3$. It is obvious that these 6 elements, together with $e_1$ and $e_3$, form an element of the set~$U$ that is defined above Lemma \ref{voortbrengen1}.\\
We have $\frac{|W|}{|W_{b}|}=|Wb|\leq|B|$. We want to show that the latter is an equality. 
The group $W_b$ acts on $U_b$. Let $w$ be an element of $W_b$ that fixes all the roots in $U_b$. Since the roots in $\{e_1,e_3\}\cup U_b$ form an element in $U$, by Lemma \ref{voortbrengen1} this implies that $w$ is the identity. Therefore the action of $W_b$ on $U_b$ is faithful. This implies that $W_b$ injects into $S_6$, so $|W_b|\leq720$. We now have $$967680=\frac{|W|}{720}\leq\frac{|W|}{|W_{b}|}=|Wb|\leq|B|=967680,$$ so we have equality everywhere and therefore we have $Wb=B$. We conclude that~$W$ acts transitively on $B$, proving (i). Part (ii) clearly holds for the element $b$, and from part (i) it follows that it holds for all elements in $B$.
\end{proof}

We proceed to prove the rest of Proposition~\ref{A}. 

\begin{lemma}\label{32}
For $e_1=\left(1,1,0,0,0,0,0,0\right),e_2=\left(0,0,1,1,0,0,0,0\right)\in E$, there are 32 elements $e$ in $E$ such that $e\cdot e_1=0$ and $e\cdot e_2=1$. 
\end{lemma}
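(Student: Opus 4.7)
The plan is a direct case analysis on the two shapes that roots in $E$ can take, as recalled in the text just below Lemma~\ref{weyltranse8}: integer roots with exactly two nonzero coordinates $\pm 1$, and half-integer roots of the form $(\pm\tfrac12,\ldots,\pm\tfrac12)$ with an even number of minus signs. Writing $e=(a_1,\ldots,a_8)$, the two conditions $e\cdot e_1=0$ and $e\cdot e_2=1$ translate to the linear equations
\[
a_1+a_2=0,\qquad a_3+a_4=1.
\]
So the main step is just to count, in each of the two shapes, the number of $(a_1,\ldots,a_8)$ satisfying these equations.

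For the integer shape, since $e$ has exactly two nonzero coordinates each equal to $\pm 1$, the equation $a_3+a_4=1$ forces $\{a_3,a_4\}=\{0,1\}$, consuming exactly one of the two nonzero slots. In particular $a_1=a_2=0$ (the other option $\{a_1,a_2\}=\{-1,1\}$ would leave $a_3,a_4$ both zero and violate $a_3+a_4=1$). The remaining nonzero coordinate lies in positions $5,6,7,8$ and is $\pm 1$. This gives $2\cdot 4\cdot 2=16$ roots. For the half-integer shape, $a_1+a_2=0$ forces $\{a_1,a_2\}=\{-\tfrac12,\tfrac12\}$ (2 choices) and $a_3+a_4=1$ forces $a_3=a_4=\tfrac12$ (1 choice). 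The coordinates $a_5,\ldots,a_8$ are then free in $\{\pm\tfrac12\}^4$, subject to the parity condition that the total number of minus signs is even; since exactly one minus sign has already occurred, this amounts to an odd number of minus signs among $a_5,\ldots,a_8$, contributing $\binom{4}{1}+\binom{4}{3}=8$ possibilities. This yields $2\cdot 1\cdot 8=16$ further roots.

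Summing the two cases gives $16+16=32$, which is the desired count. There is no real obstacle here: the argument is a one-paragraph bookkeeping exercise once one has the classification of roots from the paragraph between Lemma~\ref{weyltranse8} and Proposition~\ref{intersection}, and it follows the same template as the proofs of Proposition~\ref{intersection} and Lemma~\ref{intersection2} elsewhere in this section.
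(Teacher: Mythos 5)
Your proof is correct and follows essentially the same route as the paper's: translate the two dot-product conditions into $a_1+a_2=0$ and $a_3+a_4=1$, then count separately over the integer and half-integer root shapes, getting $16+16=32$. The only difference is that you spell out the even-minus-sign parity bookkeeping in the half-integer case, which the paper leaves implicit.
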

\begin{proof}Take $e\in E$ with $e\cdot e_1=0$ and $e\cdot e_2=1$. Then we have $e=\left(a_1,a_2,a_3,a_4,\ldots,a_8\right)$ with $a_1+a_2=0$ and $a_3+a_4=1$. If $e$ is of the form $\left(\pm\tfrac{1}{2},\ldots,\pm\tfrac{1}{2}\right)$, then $a_1=-a_2$ and $a_3=a_4=\tfrac{1}{2}$. There are $16$ such possibilities. If $e$ has two non-zero entries given by $\pm1$, then either $a_3=1,\;a_1=a_2=a_4=0$, or $a_4=1,\;a_1=a_2=a_3=0$. This gives $16$ possibilities. We find a total of $32$ possibilities for $e$. 
\end{proof}

\begin{proposition}\label{pair}
The group $W$ acts transitively on the set $$A_0=\{(e_1,e_2)\in E^2\;|\;e_1\cdot e_2=0\}.$$
\end{proposition}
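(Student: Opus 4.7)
The plan is to deduce transitivity on $A_0$ from transitivity on the triple set $B$ of Lemma~\ref{lemma} via the forgetful map
\[
\pi \colon B \longrightarrow A_0, \qquad (e_1,e_2,e_3) \longmapsto (e_1,e_2),
\]
which is a morphism of $W$-sets. Since $W$ acts transitively on $B$ by Lemma~\ref{lemma}(i), part~(i) of Lemma~\ref{action} immediately gives transitivity of $W$ on the image $\pi(B) \subseteq A_0$. So everything reduces to showing that $\pi$ is surjective.

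To show surjectivity, I would use Lemma~\ref{action}(iii): since $W$ acts transitively on $B$ and $B$ is finite, all non-empty fibres of $\pi$ have a common cardinality $n$, and $|\pi(B)| = |B|/n$. Lemma~\ref{lemma} already gives $|B| = 967680$, and by Proposition~\ref{intersection} one has $|A_0| = 240 \cdot 126 = 30240$, so it suffices to exhibit a single orthogonal pair whose fibre under $\pi$ has size exactly $32 = 967680/30240$.

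For this, take $f_1 = (0,0,1,1,0,0,0,0)$ and $f_2 = (1,1,0,0,0,0,0,0)$, so that $(f_1,f_2) \in A_0$. The fibre $\pi^{-1}(f_1,f_2)$ consists of roots $e$ with $f_1 \cdot e = 1$ and $f_2 \cdot e = 0$; this is exactly the set counted in Lemma~\ref{32} (with the roles of its two roots interchanged), and that lemma gives $|\pi^{-1}(f_1,f_2)| = 32$. In particular this fibre is non-empty, so $(f_1,f_2) \in \pi(B)$, and by Lemma~\ref{action}(iii) the common non-empty fibre size is $n = 32$. Therefore
\[
|\pi(B)| \;=\; \frac{|B|}{n} \;=\; \frac{967680}{32} \;=\; 30240 \;=\; |A_0|,
\]
so $\pi(B) = A_0$ and transitivity of $W$ on $A_0$ follows.

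I do not anticipate a real obstacle: the only genuine piece of content needed is the fibre computation, and that has already been done (up to relabelling the two orthogonal roots) in Lemma~\ref{32}. The mild subtlety is bookkeeping with the two variants of the condition $(e\cdot f_1, e\cdot f_2) = (0,1)$ versus $(1,0)$, which is handled either by swapping the labels or by noting that the transposition of coordinate blocks $\{1,2\}$ and $\{3,4\}$ lies in $W$ and swaps $f_1$ and $f_2$, so Lemma~\ref{action}(ii) identifies the two fibre sizes.
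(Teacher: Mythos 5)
Your argument is correct and is essentially the paper's own proof: the paper also deduces transitivity on $A_0$ from transitivity on the triple set of Lemma~\ref{lemma} (relabelled as $B'$ so the projection lands on the first two coordinates), computes a single fibre of size $32$ via Lemma~\ref{32}, and concludes surjectivity from the count $|B|/32 = 30240 = |A_0|$ together with Lemma~\ref{action}. The only difference is your cosmetic choice to work with $B$ directly and swap the roles of the two orthogonal roots rather than pre-permuting the coordinates.
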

\begin{proof}
Consider the set $B'=\{(e_1,e_2,e_3)\in E^3\;|\;e_1\cdot e_2=e_1\cdot e_3=0;\;e_2\cdot e_3=1\}.$ Note that there is a bijection between the $W$-set $B'$ and the $W$-set $B$ in Lemma~\ref{lemma}, given by $(e,f,g)\longmapsto (f,e,g)$. Therefore, the group $W$ acts transitively on $B'$ and we have $|B'|=967680$ by Lemma \ref{lemma}. We have a projection $\lambda\colon B'\longrightarrow A_0$ to the first two coordinates. We show that $\lambda$ is surjective. Fix the roots $e_1=\left(1,1,0,0,0,0,0,0\right)$ and $e_2=\left(0,0,1,1,0,0,0,0\right)$ in $E$. Then $(e_1,e_2)$ is an element of $A_0$. Take $e\in E$, then $(e_1,e_2,e)$ is in $B'$ if and only if $e\cdot e_1=0$ and $e\cdot e_2=1$. By Lemma \ref{32} this gives 32 possibilities for $e$,  so $|\lambda^{-1}((e_1,e_2))|=32$. Since $W$ acts transitively on~$B'$, it follows from Lemma \ref{action} that all non-empty fibers of $\lambda$ have cardinality 32, and $|\lambda(B')|=\frac{|B'|}{32}=30240$. By Proposition \ref{intersection} we have $|A_0|=240\cdot 126=30240$. We conclude that $\lambda(B')=A_0$. Hence $\lambda$ is surjective. Therefore, the group $W$ acts transitively on $A_0$ by Lemma~\ref{action}. 
\end{proof}

\begin{proofofpropositionA}\label{prp1}
This follows from Proposition \ref{pair} together with Lemma~\ref{twosets}. 
\end{proofofpropositionA}

Before we continue proving Proposition \ref{B}, we complete our study of the facets of the $E_8$ root polytope. Define the set
$$C=\left\{\left\{\{e_1,f_1\},\ldots,\{e_7,f_7\}\right\}\;\left|\:\begin{array}{l}\forall i\in\{1,\ldots,7\}: e_i,f_i\in E;\;e_i\cdot f_i=0;\\
\forall j\neq i: e_i\cdot e_j=e_i\cdot f_j=f_i\cdot f_j=1.
\end{array}\right.\right\}.$$\label{defC}

Elements in $C$ are facets that are $7$-crosspolytopes by Proposition \ref{facesform}. 
We define the following elements $c_{1},\ldots,c_7,d_1,\ldots,d_{7}$. Note that $\{\{c_1,d_1\},\ldots,\{c_7,d_7\}\}$ is an element in $C$. 

\begin{align*}
&c_1=\left(1,1,0,0,0,0,0,0\right), & &d_1=(0,0,1,1,0,0,0,0),\\ 
&c_2=\left(1,0,1,0,0,0,0,0\right), & &d_2=(0,1,0,1,0,0,0,0),\\
&c_3=(1,0,0,1,0,0,0,0), & &d_3=(0,1,1,0,0,0,0,0),\\
&c_4=\left(\tfrac{1}{2},\tfrac{1}{2},\tfrac{1}{2},\tfrac{1}{2},\tfrac{1}{2},\tfrac{1}{2},\tfrac{1}{2},\tfrac{1}{2}\right), & & d_4=\left(\tfrac{1}{2},\tfrac{1}{2},\tfrac{1}{2},\tfrac{1}{2},-\tfrac{1}{2},-\tfrac{1}{2},-\tfrac{1}{2},-\tfrac{1}{2}\right),\\
&c_5=\left(\tfrac{1}{2},\tfrac{1}{2},\tfrac{1}{2},\tfrac{1}{2},-\tfrac{1}{2},-\tfrac{1}{2},\tfrac{1}{2},\tfrac{1}{2}\right), & &d_{5}=\left(\tfrac{1}{2},\tfrac{1}{2},\tfrac{1}{2},\tfrac{1}{2},\tfrac{1}{2},\tfrac{1}{2},-\tfrac{1}{2},-\tfrac{1}{2}\right),\\
&c_{6}=\left(\tfrac{1}{2},\tfrac{1}{2},\tfrac{1}{2},\tfrac{1}{2},-\tfrac{1}{2},\tfrac{1}{2},-\tfrac{1}{2},\tfrac{1}{2}\right), & &d_{6}=\left(\tfrac{1}{2},\tfrac{1}{2},\tfrac{1}{2},\tfrac{1}{2},\tfrac{1}{2},-\tfrac{1}{2},\tfrac{1}{2},-\tfrac{1}{2}\right),\\
&c_{7}=\left(\tfrac{1}{2},\tfrac{1}{2},\tfrac{1}{2},\tfrac{1}{2},-\tfrac{1}{2},\tfrac{1}{2},\tfrac{1}{2},-\tfrac{1}{2}\right), & &d_{7}=\left(\tfrac{1}{2},\tfrac{1}{2},\tfrac{1}{2},\tfrac{1}{2},\tfrac{1}{2},-\tfrac{1}{2},-\tfrac{1}{2},\tfrac{1}{2}\right).\\ 
\end{align*}

\begin{lemma}\label{sevenfaces}
For $e_1,e_2\in E$ with $e_1\cdot e_2=0$, there are exactly 12 elements $e\in E$ with $e\cdot e_1=e\cdot e_2=1$. These 12 elements, together with $e_1$ and $e_2$, form an element in $C$, and this is the unique element in $C$ containing ${e_1,e_2}$.
\end{lemma}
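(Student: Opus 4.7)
The plan is to exploit the transitivity of $W$ on $A_0$ from Proposition \ref{pair}, reduce to a single concrete pair, and then verify all three claims (the count, the crosspolytope structure, and uniqueness) by direct computation.

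For the count, I would take $e_1 = (1,1,0,0,0,0,0,0)$ and $e_2 = (0,0,1,1,0,0,0,0)$. A root $e = (a_1, \ldots, a_8) \in E$ with $e \cdot e_1 = e \cdot e_2 = 1$ satisfies $a_1 + a_2 = 1$ and $a_3 + a_4 = 1$. If $e$ is a half-integer root, these equations force $a_1 = a_2 = a_3 = a_4 = \tfrac{1}{2}$, and the last four entries are free $\pm\tfrac{1}{2}$ subject to an even total number of minus signs; since the first four contribute none, we count subsets of even size in a $4$-element set, giving $1 + 6 + 1 = 8$ roots. If $e$ is an integer root with exactly two nonzero coordinates, then exactly one of $\{a_1, a_2\}$ and one of $\{a_3, a_4\}$ must equal $1$, yielding $2 \cdot 2 = 4$ roots, for a total of $12$.

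To see that these $12$ roots together with $e_1, e_2$ form a $7$-crosspolytope, I would exhibit the orthogonal pairings explicitly: the eight half-integer roots split into four pairs by simultaneously flipping the signs of the last four coordinates, and the four integer roots split into the pairs $\{(1,0,1,0,\mathbf{0}), (0,1,0,1,\mathbf{0})\}$ and $\{(1,0,0,1,\mathbf{0}), (0,1,1,0,\mathbf{0})\}$. A direct dot-product check then verifies that each of the $12$ roots is orthogonal to exactly one other among the $12$, and that every remaining dot product between the $14$ roots equals $1$; by Proposition \ref{facets} this is an element of $C$. For uniqueness, any element of $C$ containing $\{e_1,e_2\}$ must have its other $12$ vertices each satisfying dot product $1$ with both $e_1$ and $e_2$, and these are exactly the $12$ roots we counted; the partition into $6$ orthogonal pairs is then forced because each root is orthogonal to only one other member of the list. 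I do not expect a conceptual obstacle; the main bookkeeping is the dot-product table among the $12$ roots, which is tedious but completely mechanical.
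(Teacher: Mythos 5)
Your proposal is correct and follows essentially the same route as the paper: reduce via the transitivity of $W$ on $A_0$ (Proposition \ref{pair}) to the concrete pair $(1,1,0,\ldots,0)$, $(0,0,1,1,0,\ldots,0)$, count $8$ half-integer plus $4$ integer solutions of $a_1+a_2=a_3+a_4=1$, and identify the resulting $14$ roots as a $7$-crosspolytope. The paper streamlines the last step by recognizing the $12$ roots as the previously defined $c_2,\ldots,c_7,d_2,\ldots,d_7$, while you verify the orthogonal pairing directly, but the argument is the same.
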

\begin{proof}
By Proposition \ref{pair}, it is enough to check this for fixed $e_1,e_2\in E$ with $e_1\cdot e_2=0$. Take $e_1=c_1,\;e_2=d_1$ in $E$. For a root $e=(a_1,\ldots,a_8)$ in $E$ with $e\cdot c_1~=~e\cdot d_1=1$, we have either $a_1=a_2=a_3=a_4=\tfrac{1}{2}$, which implies $e\in\{c_4,\ldots,c_7,d_4,\ldots,d_7\}$, or $\{a_1,a_2\}=\{a_3,a_4\}=\{0,1\}$, which implies $e\in\{c_2,c_3,d_2,d_3\}$. Therefore there are exactly 12 possibilities $\{c_2,\ldots,c_7,d_2,\ldots,d_7\}$ for $e$, and we conclude that the set $\{\{c_1,d_1\},\ldots,\{c_7,d_7\}\}$ is the unique element in $C$ containing ${c_1,d_1}$.  
\end{proof}

\begin{remark}\label{numbercrosspolytopes} Since elements in $C$ correspond to $7$-crosspolytopes, we know that we have $|C|=2160$ from Corollary \ref{numberfacets}. This also follows from the previous lemma. Recall the set $A_0=\{(e_1,e_2)\in E^2\;|\;e_1\cdot e_2=0\}$. By Lemma \ref{sevenfaces}, for every element $(e_1,e_2)$ in $A_0$ there is a unique element in $C$ containing $e_1,e_2$. But every element in $C$ contains seven pairs $f_1,f_2$ such that $(f_1,f_2)$ and $(f_2,f_1)$ are in $A_0$, so the map $A_0\longrightarrow C$ is fourteen to one. Hence we have $|C|=\frac{|A_0|}{14}=\frac{240\cdot126}{14}=2160$.\end{remark}

\begin{corollary}\label{transsevenface}
The group $W$ acts transitively on $C$.
\end{corollary}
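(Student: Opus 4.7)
The plan is to apply Lemma \ref{action}(i) to a natural $W$-equivariant surjection from the set $A_0$ of ordered orthogonal pairs onto $C$. Concretely, Lemma \ref{sevenfaces} tells us that for every $(e_1,e_2)\in A_0$ there exists a unique element of $C$ containing the unordered pair $\{e_1,e_2\}$; call this element $\phi(e_1,e_2)$. This defines a map $\phi\colon A_0\longrightarrow C$, and since the condition ``$X$ is the unique $7$-crosspolytope containing $\{e_1,e_2\}$'' is preserved by the $W$-action on $E$, the map $\phi$ is $W$-equivariant.

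Next I would verify surjectivity. By definition, every $X\in C$ contains seven pairs $\{e_i,f_i\}$ of orthogonal roots, and for any such pair the pair $(e_i,f_i)$ lies in $A_0$ with $\phi(e_i,f_i)=X$ by the uniqueness clause of Lemma \ref{sevenfaces}. Hence $\phi$ is surjective (in fact, each fiber has cardinality $14$, recovering the count $|C|=|A_0|/14=2160$ from Remark \ref{numbercrosspolytopes}).

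Finally, Proposition \ref{pair} gives transitivity of $W$ on $A_0$, so Lemma \ref{action}(i) yields transitivity of $W$ on $\phi(A_0)=C$, which is the claim.

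There is no real obstacle here: all the work has already been done in Lemma \ref{sevenfaces} (existence/uniqueness of the crosspolytope through an orthogonal pair) and Proposition \ref{pair} (transitivity on orthogonal pairs); the corollary is a one-step consequence via the transport-of-transitivity principle of Lemma \ref{action}(i).
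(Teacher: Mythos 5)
Your proof is correct and follows exactly the paper's own argument: both construct the $W$-equivariant surjection $A_0\to C$ from Lemma \ref{sevenfaces}, use Proposition \ref{pair} for transitivity on $A_0$, and conclude via Lemma \ref{action}(i). The fiber-count aside merely reproduces Remark \ref{numbercrosspolytopes} and is not needed for the conclusion.
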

\begin{proof}
Consider the set $A_0=\{(e_1,e_2)\in E^2\;|\;e_1\cdot e_2=0\}$. By Proposition \ref{pair}, the group $W$ acts transitively on $A_0$. By Lemma \ref{sevenfaces} there is a map $A_0\longrightarrow C$, sending $(e_1,e_2)$ to the unique element in $C$ that contains $e_1$ and $e_2$. This map is clearly surjective. It follows from Lemma \ref{action} that $W$ acts transitively on $C$. 
\end{proof}

\begin{lemma}\label{sevengenerate}
Every element in $C$ generates a sublattice of finite index in~$\Lambda$.
\end{lemma}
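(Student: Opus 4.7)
The plan is to reduce to a single explicit configuration via transitivity, and then to verify the rank by a direct linear-algebra inspection on the listed coordinates.

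First I would use that $W$ acts on $\Lambda$ as a group of lattice automorphisms, so the index in $\Lambda$ of the sublattice generated by the roots of a $7$-crosspolytope depends only on the $W$-orbit of that crosspolytope. By Corollary \ref{transsevenface}, $W$ acts transitively on $C$. It therefore suffices to prove the statement for the single element $\{\{c_1,d_1\},\ldots,\{c_7,d_7\}\}\in C$ exhibited just above the statement.

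Second, since $\Lambda\subset\mathbb{R}^8$ has rank $8$, a sublattice has finite index if and only if its generators span $\mathbb{R}^8$. I would exhibit $8$ vectors in the $\mathbb{Z}$-span of $\{c_1,\ldots,c_7,d_1,\ldots,d_7\}$ that are $\mathbb{R}$-linearly independent by splitting $\mathbb{R}^8$ into the subspace of the first four coordinates and the subspace of the last four. The roots $c_1,c_2,c_3,d_1$ are supported in the first four coordinates, and the corresponding $4\times 4$ coordinate matrix has nonzero determinant, so they span that subspace. The four differences $c_4-d_4,\ c_4-c_5,\ c_4-c_6,\ c_4-c_7$ are supported in the last four coordinates (since $c_4,d_4,c_5,c_6,c_7$ all coincide on the first four entries) and their $4\times 4$ coordinate matrix there also has nonzero determinant, so they span the complementary subspace. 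Concatenating these two sets of four vectors yields $8$ $\mathbb{R}$-linearly independent vectors in the $\mathbb{Z}$-span, which therefore has rank $8$.

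I do not expect any real obstacle: everything comes down to one small determinant in each coordinate block, and the statement asks only for finite index, not for a specific value of the index, so we do not need to track what that index actually is.
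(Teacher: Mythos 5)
Your proposal is correct and follows essentially the same route as the paper: reduce to the explicit crosspolytope $\{\{c_1,d_1\},\ldots,\{c_7,d_7\}\}$ via the transitivity of $W$ on $C$ (Corollary \ref{transsevenface}), then verify that the fourteen roots span $\mathbb{R}^8$. The only difference is that the paper simply asserts the $14\times 8$ matrix has rank $8$, whereas you verify this by an explicit block decomposition into the first and last four coordinates, which checks out.
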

\begin{proof}
By Corollary \ref{transsevenface}, it is enough to check this for one element in $C$. Take the element $\{\{c_1,d_1\},\ldots,\{c_7,d_7\}\}$ in $C$, where the $c_i,d_i$ are defined above Lemma \ref{sevenfaces}. The matrix whose rows are the vectors $c_1,\ldots,c_7,d_1,\ldots,d_7$ has rank 8, so these 14 elements generate a sublattice $L$ of finite index in $\Lambda$.
\end{proof}

\begin{remark}\label{facetcrosspolytopehyperplane}
Let $\{\{e_1,f_1\},\ldots,\{e_7,f_7\}\}$ be an element in $C$, and let $c$ be given by $c=\{e_1,\ldots,e_7,f_1,\ldots,f_7\}$. We know that the elements in $c$ are the vertices of a facet of the $E_8$ root polytope. We show how this also follows from the previous lemma. Take $i\in\{1,\ldots,7\}$, then we have $(e_i+f_i)\cdot e=2$ for all $e\in c$. Since the elements in $c$ generate a full rank sublattice, this implies that $e_i+f_i=e_j+f_j$ for all $i,j\in\{1,\ldots,7\}$. So the vector $n=\frac17\sum_{i=1}^7(e_i+f_i)=e_1+f_1$ is an element in $\Lambda$ with $n\cdot e=2$ for $e\in s$. Take $e\in E\setminus s$, and note that $e$ cannot have dot product 1 with both $e_1$ and $f_1$ by Lemma \ref{sevenfaces}. It follows that we have $n\cdot e<2$, so the entire $E_8$ root polytope lies on one side of the affine hyperplane given by $n\cdot x=2$. Moreover, this hyperplane intersects the $E_8$ root polytope in its boundary, and exactly in the convex combinations of the roots $e_1,\ldots,e_7,f_1,\ldots,f_7$. Therefore these roots are the vertices of a facet of the $E_8$ root polytope with normal vector $n$. \end{remark}

We continue with Proposition \ref{B}, and prove it for $(a,b,c)=(0,0,0)$. Consider the sets $$V_3=\{\left(e_1,e_2,e_3\right)\in E^3\;|\;\forall i\neq j:e_i\cdot e_j=0\}$$ and $$V_4=\{\left(e_1,e_2,e_3,e_4\right)\in E^4\;|\;\forall i\neq j:e_i\cdot e_j=0\}.$$ 

\vspace{11pt}

We begin by studying $V_4$. To this end, recall the set $U$ defined above Lemma \ref{voortbrengen1}, and define the set $$Z=\{\left(\{e_1,e_2\},\{e_3,e_4\},\{e_5,e_6\},\{e_7,e_8\}\right)\;|\;\forall i: e_i\in E;\;\forall j\neq i:e_i\cdot e_j=1\}.$$  

\begin{remark}\label{remark}
We have a surjective map $U\longrightarrow Z$ by simply forgetting the order of~$e_i$ and $e_{i+1}$ for $i\in\{1,3,5,7\}$. Since $W$ acts transitively on $U$ (Proposition \ref{transitief}), it follows from Lemma \ref{action} that $W$ acts transitively on $Z$. 
By Lemma \ref{voortbrengen1}, the action of $W$ on $U$ is free, so we have $|U|=|W|$, and $|Z|=\frac{|U|}{2^4}=\frac{|W|}{2^4}=2^{10}\cdot3^5\cdot5^2\cdot7$.
\end{remark}

We want to define a map $\alpha\colon Z\longrightarrow V_4$. To do this we use the following lemma. 

\begin{lemma}\label{mapalpha}
For an element $z=(\{e_1,e_2\},\{e_3,e_4\},\{e_5,e_6\},\{e_7,e_8\})$ in $Z$, there are unique roots $f_1,f_2,f_3,f_4\in E$ with 
\begin{align*}
f_1\cdot e_i=0,\;\; &f_1\cdot e_j=1\;\;\mbox{for }i\in\{1,2\},\;j\notin\{1,2\};\\
f_2\cdot e_i=0,\;\; &f_2\cdot e_j=1\;\;\mbox{for }i\in\{3,4\},\;j\notin\{3,4\};\\
f_3\cdot e_i=0,\;\; &f_3\cdot e_j=1\;\;\mbox{for }i\in\{5,6\},\;j\notin\{5,6\};\\
f_4\cdot e_i=0,\;\; &f_4\cdot e_j=1\;\;\mbox{for }i\in\{7,8\},\;j\notin\{7,8\}.
\end{align*}
For these $f_1,f_2,f_3,f_4$ we have $f_i\cdot f_j=0$ for $i\neq j$, and $3\sum_{i=1}^4 f_i=\sum_{i=1}^8e_i$. 
\end{lemma}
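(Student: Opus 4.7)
The plan is to use the transitivity of $W$ on $Z$ established in Remark \ref{remark} to reduce the geometric content to a single representative. Uniqueness of each $f_k$ is immediate from linear algebra: the eight conditions $f_k\cdot e_i\in\{0,1\}$ determine $f_k$ uniquely in $\mathbb{R}^8$, because $e_1,\ldots,e_8$ form an $\mathbb{R}$-basis of $\mathbb{R}^8$ by Lemma~\ref{voortbrengen1}. For existence in $E$, I would pick the element of $Z$ whose pairs are $\{u_1,u_2\},\{u_3,u_4\},\{u_5,u_6\},\{u_7,u_8\}$, with the $u_i$ as defined above Lemma~\ref{voortbrengen1}, solve the four linear systems coordinate-wise, and verify by direct inspection that each of the four solutions lies in $E$. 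A short calculation on the same representative also shows $f_i\cdot f_j=0$ for $i\neq j$.

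Next I would propagate these properties to all of $Z$ via $W$. For an arbitrary $z\in Z$, transitivity yields $w\in W$ sending the representative to $z$; since $w$ is an isometry, the images under $w$ of the $f_k$ attached to the representative satisfy the defining inner-product conditions for $z$, and so must coincide with the corresponding $f_k$ by the uniqueness already established. Consequently existence in $E$ and the orthogonality $f_i\cdot f_j=0$ transfer from the representative to every element of $Z$.

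The identity $3\sum_{i=1}^4 f_i=\sum_{i=1}^8 e_i$ I would prove without further computation by pairing both sides with each $e_j$: the root $e_j$ belongs to exactly one pair, so among $f_1\cdot e_j,\ldots,f_4\cdot e_j$ one is zero and three equal one, giving $\sum_{i=1}^4 f_i\cdot e_j = 3$, while $\sum_{k=1}^8 e_k\cdot e_j = 2 + 7\cdot 1 = 9 = 3\cdot 3$. Since $\{e_1,\ldots,e_8\}$ spans $\mathbb{R}^8$, the vector $3\sum_{i}f_i-\sum_{k}e_k$ is forced to vanish.

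The only genuinely nontrivial step is the coordinate verification that the four solutions of the linear systems on the chosen representative actually lie in $E$; everything else is either linear algebra, a pairing-with-a-basis argument, or a transport-of-structure argument along the isometric $W$-action.
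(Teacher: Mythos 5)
Your proof is correct, but it takes a genuinely different route from the paper's. The paper avoids both the representative computation and the appeal to transitivity on $Z$: it sets $v=\frac{1}{3}\sum_{i=1}^8 e_i$, which lies in $\Lambda$ by Corollary~\ref{normal}, and defines $f_i=v-e_{2i-1}-e_{2i}$ explicitly for an \emph{arbitrary} $z\in Z$. The identities $\|v\|^2=8$ and $v\cdot e_j=3$ then give $\|f_i\|=\sqrt{2}$ (so $f_i\in E$), the prescribed dot products, $f_i\cdot f_j=0$, and $\sum_i f_i=4v-3v=v$, hence $3\sum_i f_i=\sum_j e_j$, each in a line or two. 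Your version replaces this with a coordinate check on the representative $(\{u_1,u_2\},\ldots,\{u_7,u_8\})$ followed by transport along the $W$-action from Remark~\ref{remark}; that transport is sound, since the defining conditions on each $f_k$ depend only on the unordered pairs, so the images of the representative's roots satisfy the conditions for $z$ and must equal the $f_k$ by the uniqueness you established first. The one step you leave as ``direct inspection'' does work out (one finds $v=\left(\tfrac52,\tfrac12,\ldots,\tfrac12\right)$ and, e.g., $f_1=\left(\tfrac12,-\tfrac12,-\tfrac12,\tfrac12,\tfrac12,\tfrac12,\tfrac12,\tfrac12\right)$), but it is exactly the computation the paper's uniform formula renders unnecessary. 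Your basis-pairing argument for $3\sum_i f_i=\sum_j e_j$ is an attractive alternative to the paper's algebraic identity and requires no coordinates at all. In short, the paper's construction buys uniformity and shorter verification; yours buys independence from guessing the explicit formula, at the cost of one finite computation and an extra transport step.
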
 
\begin{proof}
By Lemma \ref{voortbrengen1}, the elements $e_1,\ldots,e_8$ generate a full rank sublattice of $\Lambda$, so an element $f\in E$ is uniquely determined by the intersection numbers $f\cdot e_i$ for $i$ in $\{1,\ldots,8\}$. We will show existence. Set $v=\frac{1}{3}\sum_{i=1}^8e_i$. 
By Corollary \ref{normal}, the vector $v$ is an element in $\Lambda$. We have $\|v\|=\sqrt{8}$, and $v\cdot e_i=3$ for $i\in\{1,\ldots,8\}$. For $i\in \{1,2,3,4\}$, set $f_i=v-e_{2i-1}-e_{2i}$. Then $\|f_i\|=\sqrt{2}$, so $f_i\in E$. Moreover, $f_1,f_2,f_3,f_4$ satisfy the conditions in the lemma. \end{proof}

\vspace{5pt}

We now define a map $\alpha\colon Z\longrightarrow V_4,\;(\{e_1,e_2\},\ldots,\{e_7,e_8\})\longmapsto (f_1,f_2,f_3,f_4)$, where $f_1,f_2,f_3,f_4$ are the unique elements found in Lemma \ref{mapalpha}.

\begin{corollary}\label{primitiveY}
If $(f_1,f_2,f_3,f_4)$ is an element in the image of $\alpha$, then $x=\sum_{i=1}^4f_i$ is a primitive element of $\Lambda$ with norm $\sqrt{8}$.
\end{corollary}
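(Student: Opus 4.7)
The plan is to use the explicit formula for $x$ that is already hidden in the proof of Lemma \ref{mapalpha}, and then rule out divisibility by invoking the evenness of $\Lambda$ together with the integrality of dot products of roots.

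First, I would observe that the norm claim is essentially already done. By Lemma \ref{mapalpha}, we have $3x=3\sum_{i=1}^{4}f_i=\sum_{i=1}^{8}e_i$, and the proof of that lemma introduces precisely the vector $v=\frac{1}{3}\sum_{i=1}^{8}e_i$, shows $v\in\Lambda$ (via Corollary \ref{normal}), and computes $\|v\|=\sqrt{8}$. Hence $x=v\in\Lambda$ has norm $\sqrt{8}$, and moreover $x\cdot e_i=v\cdot e_i=3$ for every $i\in\{1,\ldots,8\}$; this last identity will be the key.

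Next, I would prove primitivity by contradiction. Suppose $x=kw$ for some $w\in\Lambda$ and some integer $k\geq 2$. Since $\Lambda$ is even, $\|w\|^2$ is a non-negative even integer. If $w=0$ then $x=0$, contradicting $\|x\|^2=8$, so $\|w\|^2\geq 2$, and then
\[
8=\|x\|^2=k^2\|w\|^2\geq 2k^2,
\]
forcing $k=2$ and $\|w\|^2=2$. In particular, $w$ is a root, so $w\in E$. Taking the dot product with any of the $e_i$ gives
\[
3=x\cdot e_i=2\,(w\cdot e_i),
\]
which has no integer solution, since by Proposition \ref{intersection} the dot product of two roots is an integer (in $\{-2,-1,0,1,2\}$). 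This contradiction shows that $x$ is primitive.

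The only potential obstacle I anticipate is making sure the identification $x=v$ is clean; once that is done, the argument is just parity/integrality bookkeeping, so there is no real technical hurdle beyond checking that the formulas set up in Lemma \ref{mapalpha} give exactly what is needed.
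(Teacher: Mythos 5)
Your proof is correct and follows essentially the same route as the paper: both arguments rest on the two identities $\|x\|^2=8$ and $x\cdot e_i=3$ coming from $3x=\sum_{i=1}^8 e_i$, and then rule out a non-trivial divisor $m$ of $x$ (the paper deduces $m\mid 2$ from the norm and $m\mid 3$ from the dot product, while you narrow to $m=2$ via evenness of $\Lambda$ and then hit the same parity obstruction $3=2(w\cdot e_i)$). The bookkeeping in your version is sound, so no changes are needed.
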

\begin{proof}
Take $(f_1,f_2,f_3,f_4)$ in the image of $\alpha$, and let $(\{e_1,e_2\},\ldots,\{e_7,e_8\})\in Z$ be such that $(f_1,f_2,f_3,f_4)=\alpha((\{e_1,e_2\},\ldots,\{e_7,e_8\}))$. Set $x=\sum_{i=1}^4f_i$. Then we have $3x=\sum_{i=1}^8e_i$ by Lemma \ref{mapalpha}. It follows that $\|3x\|^2=72$, hence $\|x\|^2=8$. Moreover, for any $i\in\{1,\ldots,8\}$ we have $3x\cdot e_i=9$, hence $x\cdot e_i=3$. This implies that if we have $x=m\cdot x'$ for some $m\in\mathbb{Z},\;x'\in\Lambda$, then $m|2$ and $m|3$, so $m=1$ and~$x$ is primitive.
\end{proof}

\begin{remarkdp1}\label{dp14}
Let $X$ be a del Pezzo surface of degree one over an algebraically closed field, and $C$ the set of exceptional classes in Pic~$X$. The map $\alpha$ has a nice description in the geometric setting, through the bijection $$C\longrightarrow E,\;c\longmapsto c+K_X.$$ Take $z=(\{e_1,e_2\},\{e_3,e_4\},\{e_5,e_6\},\{e_7,e_8\})$ an element in $Z$. The roots $e_1,\ldots,e_8$ correspond to classes $c_1,\ldots,c_8$ in $C$ with $c_i\cdot c_j=0$ for all $i\neq j\in\{1,\ldots,8\}$. These classes correspond to pairwise disjoint curves on $X$ that can be blown down to points $P_1,\ldots,P_8$ in $\mathbb{P}^2$ such that $c_i$ is the class of the exceptional curve above $P_i$ for $i$ in $\{1,\ldots,8\}$ (see \cite{Man74}). The conditions for $f_i$ in Lemma \ref{mapalpha} are equivalent to $f_i$ being the strict transform on $X$ of the line in $\mathbb{P}^2$ through $P_{2i-1}$ and $P_{2i}$ for $i\in\{1,2,3,4\}$. This geometrical argument immediately proves the uniqueness of $f_i$.
\end{remarkdp1}

Let $\pi\colon V_4\longrightarrow V_3$ be the projection to the first three coordinates. From the maps $\pi$ and $\alpha$, transitivity on $V_3$ will follow (Proposition~\ref{three}). Let $Y$ be the image of $\alpha$. We will show that $V_4$ has two orbits under the action of $W$, given by $Y$ and $V_4\setminus Y$ (Proposition~\ref{orbits}). The following commutative diagram shows the maps and sets that are defined. 

\begin{center}
\begin{tikzcd} \label{diagram}
U \arrow[d, two heads] && \\
Z \arrow[r, "\alpha"] \arrow[dr, two heads] &  V_4 \arrow[r,"\pi"] &V_3\\
& Y\arrow[u, hook] \arrow[ur]&
\end{tikzcd}
\end{center}

\begin{lemma}\label{cardfiber}
The map $\alpha$ is injective. 
\end{lemma}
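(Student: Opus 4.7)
The plan is to recover the preimage from the image by extracting, first, a single auxiliary vector, and then each unordered pair of roots as the unique pair summing to a prescribed vector.

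Suppose $z = (\{e_1,e_2\},\{e_3,e_4\},\{e_5,e_6\},\{e_7,e_8\}) \in Z$ and $\alpha(z) = (f_1,f_2,f_3,f_4)$. By Lemma \ref{mapalpha}, we have $3\sum_{i=1}^{4} f_i = \sum_{i=1}^{8} e_i$, and the proof of that lemma shows $f_i = v - e_{2i-1} - e_{2i}$ for $v := \frac{1}{3}\sum_{i=1}^{8} e_i$. In particular, $v = \sum_{i=1}^{4} f_i$ depends only on the image $(f_1,\ldots,f_4)$, and so does each pair sum $s_i := e_{2i-1} + e_{2i} = v - f_i$. So to prove injectivity it suffices to show that each unordered pair $\{e_{2i-1},e_{2i}\}$ is determined by its sum $s_i$, among all pairs of roots with inner product $1$.

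I therefore plan to prove the following sublemma: if $a,b,c,d \in E$ with $a \cdot b = c \cdot d = 1$ and $a+b = c+d$, then $\{a,b\} = \{c,d\}$. The verification is a one-line computation: $3 = a \cdot (a+b) = a \cdot (c+d) = a \cdot c + a \cdot d$. By Proposition \ref{intersection}, each of $a \cdot c$ and $a \cdot d$ lies in $\{-2,-1,0,1,2\}$, and attains the value $2$ exactly when the two roots coincide. The only way two such values sum to $3$ is $\{1,2\}$, so either $a = c$ (hence $b = d$) or $a = d$ (hence $b = c$), and in both cases $\{a,b\} = \{c,d\}$.

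Combining these, given $(f_1,\ldots,f_4) \in Y$, the vector $v$ is determined, hence each $s_i$, hence each unordered pair $\{e_{2i-1},e_{2i}\}$ by the sublemma; the indexing $i \in \{1,2,3,4\}$ of the four pairs is fixed because $f_i$ is matched to the pair with sum $v - f_i$. This gives $z$ back uniquely from $\alpha(z)$, proving injectivity. I do not anticipate any real obstacle here; the only point requiring care is ruling out distinct pairs of roots with the same sum, which the sublemma handles cleanly via Proposition \ref{intersection}.
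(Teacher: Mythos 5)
Your proof is correct, but it follows a genuinely different route from the paper's. The paper fixes one explicit element $v=(f_1,f_2,f_3,f_4)\in V_4$ in coordinates, computes by hand that the fiber of $\alpha$ above it is a single point, and then invokes transitivity of $W$ on $Z$ together with Lemma \ref{action} to conclude that every non-empty fiber is a singleton. You instead reconstruct the preimage directly from the image: since $\sum_i f_i = v = \tfrac13\sum_j e_j$ by Lemma \ref{mapalpha}, each pair sum $e_{2i-1}+e_{2i}=v-f_i$ is determined by $(f_1,\ldots,f_4)$, and your sublemma (two roots with inner product $1$ are determined by their sum, via $a\cdot c + a\cdot d = a\cdot(a+b)=3$ and the bound $|e\cdot f|\le 2$ with equality forcing $e=f$ from Proposition \ref{intersection}) pins down each unordered pair. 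This is coordinate-free, needs no group action, and arguably explains \emph{why} $\alpha$ is injective rather than just verifying it on one fiber; the paper's method has the advantage of being the same template (compute one fiber, transport by transitivity) that it reuses for many other counts in the article. The one point worth flagging is that you quote $f_i = v - e_{2i-1}-e_{2i}$ from the \emph{proof} of Lemma \ref{mapalpha} rather than its statement; this is legitimate since the statement's uniqueness clause forces the constructed roots to be the $f_i$, but it would be cleaner to say so explicitly.
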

\begin{proof}
Consider the roots in $E$ given by
\begin{align*}&f_1=\left(1,1,0,0,0,0,0,0\right),  & &f_3=\left(0,0,0,0,1,1,0,0\right),\\ &f_2=\left(0,0,1,1,0,0,0,0\right),  & &f_4=\left(1,-1,0,0,0,0,0,0\right).\end{align*} Then $v=(f_1,f_2,f_3,f_4)$ is an element in $V_4$. Let $\left(\{e_1,e_2\},\{e_3,e_4\},\{e_5,e_6\},\{e_7,e_8\}\right)$ be an element in the fiber of $\alpha$ above $v$. Then we have \begin{align}
e_1\cdot f_1=e_2\cdot f_1=0 &\mbox{ and }e_1\cdot f_i=e_2\cdot f_i=1\mbox{ for all }i\neq 1; \\
e_3\cdot f_2=e_4\cdot f_2=0 &\mbox{ and }e_3\cdot f_i=e_4\cdot f_i=1\mbox{ for all }i\neq 2; \nonumber \\
e_5\cdot f_3=e_6\cdot f_3=0 &\mbox{ and }e_5\cdot f_i=e_6\cdot f_i=1\mbox{ for all }i\neq 3; \nonumber\\
e_7\cdot f_4=e_8\cdot f_4=0 &\mbox{ and }e_7\cdot f_i=e_8\cdot f_i=1\mbox{ for all }i\neq 4. \nonumber
 \end{align}
Write $e_1=(a_1,\ldots,a_8)$. Then (1) implies that we have $a_1+a_2=0$ and $a_1-a_2=1$, and $a_3+a_4=a_5+a_6=1$. It follows that $e_1$ is equal to $\left(\tfrac{1}{2},-\tfrac{1}{2},\tfrac{1}{2},\tfrac{1}{2},\tfrac{1}{2},\tfrac{1}{2},-\tfrac{1}{2},\tfrac{1}{2}\right)$ or $\left(\tfrac{1}{2},-\tfrac{1}{2},\tfrac{1}{2},\tfrac{1}{2},\tfrac{1}{2},\tfrac{1}{2},\tfrac{1}{2},-\tfrac{1}{2}\right)$, and $e_2$ equals the other. Analogously we find:
\begin{align*}
&\left\{e_3,e_4\right\} =\left\{\left( 1, 0, 0, 0, 0, 1, 0, 0 \right),\left(1, 0, 0, 0, 1, 0, 0, 0 \right)\right\},\\
&\left\{e_5,e_6\right\}=\left\{\left(1, 0, 0, 1, 0, 0, 0, 0 \right),\left( 1, 0, 1, 0, 0, 0, 0, 0 \right)\right\},\\
&\left\{e_7,e_8\right\}=\left\{\left(\tfrac{1}{2},\tfrac{1}{2},\tfrac{1}{2},\tfrac{1}{2},\tfrac{1}{2},\tfrac{1}{2},-\tfrac{1}{2},-\tfrac{1}{2}\right),\left(\tfrac{1}{2},\tfrac{1}{2},\tfrac{1}{2},\tfrac{1}{2},\tfrac{1}{2},\tfrac{1}{2},\tfrac{1}{2},\tfrac{1}{2}\right)\right\}.\end{align*}
Hence the fiber above $v$ has cardinality one. Since $W$ acts transitively on $Z$, we conclude from Lemma \ref{action} that all non-empty fibers of $\alpha$ have cardinality one, so~$\alpha$ is injective. 
\end{proof}

\begin{remark}\label{image}By the previous lemma, there is a bijection between the set $Z$ and the set $\alpha(Z)=Y$. Since $\alpha$ is a $W$-map, it follows that $Y$ is a $W$-set, and that $W$ acts transitively on $Y$ by Lemma \ref{action}. 
\end{remark}

We state two more lemmas before we prove that $W$ acts transitively on $V_3$.

\begin{lemma}\label{emptyfibers} 
Consider the elements in $E$ given by 
\begin{align*}
&e_1=(1,1,0,0,0,0,0,0); & &f_1=(0,0,0,0,0,0,1,1)\\
&e_2=(0,0,1,1,0,0,0,0); & &f_2=(0,0,0,0,0,0,-1,-1).\\
&e_3=(0,0,0,0,1,1,0,0); & &
\end{align*}
Then $v=(e_1,e_2,e_3,f_1)$ and $v'=(e_1,e_2,e_3,f_2)$ are elements in $V_4$ that are not in $Y$. 
\end{lemma}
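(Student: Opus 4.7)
The plan is to verify first that both tuples lie in $V_4$ (a matter of checking norms and orthogonality), and then to show by two slightly different arguments that neither lies in $Y = \alpha(Z)$. Checking membership in $V_4$ is routine: each of the five listed vectors has norm $\sqrt{2}$ and integer coordinates with even coordinate sum, so lies in $E$, and in each of $v$ and $v'$ any two of the four roots appearing have their non-zero coordinates on disjoint index sets, hence their dot products vanish.

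For $v$, I would apply Corollary \ref{primitiveY} directly. The sum of the four roots of $v$ is $(1,1,1,1,1,1,1,1)$, which equals $2\cdot\bigl(\tfrac12,\tfrac12,\tfrac12,\tfrac12,\tfrac12,\tfrac12,\tfrac12,\tfrac12\bigr)$. Since $\bigl(\tfrac12,\ldots,\tfrac12\bigr)\in\Lambda$, this sum is not primitive, so the contrapositive of Corollary \ref{primitiveY} gives $v\notin Y$.

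For $v'$ the argument must be more delicate, because the sum $w := e_1+e_2+e_3+f_2 = (1,1,1,1,1,1,-1,-1)$ is primitive in $\Lambda$ (its entries have gcd one), so Corollary \ref{primitiveY} alone does not suffice. Instead, I would assume for contradiction that $v' = \alpha(z)$ for some $z = (\{g_1,g_2\},\ldots,\{g_7,g_8\})\in Z$. Lemma \ref{mapalpha} then gives $\sum_{i=1}^8 g_i = 3w$, so the eight $g_i$ form a $7$-simplex with facet-normal $w$, and by Remark \ref{facetsimplexhyperplane} each $g_i$ satisfies $g_i\cdot w = 3$. It is therefore enough to show that no root $g\in E$ has $g\cdot w = 3$.

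The main obstacle is this last step, which I would settle by a short case analysis on the two types of roots. If $g$ has exactly two non-zero integer entries $\pm 1$, then $g\cdot w$ is a sum of two terms, each in $\{-1,1\}$, so $g\cdot w \in \{-2,-1,0,1,2\}$ and the value $3$ is impossible. If $g = \tfrac12(s_1,\ldots,s_8)$ with $s_i\in\{\pm 1\}$ and an even number of $s_i$ equal to $-1$, then $g\cdot w = 3$ forces $\sum_i s_iw_i = 6$, which in turn forces exactly one index $j$ with $s_jw_j = -1$ and $s_iw_i = +1$ for all $i\neq j$. A parity count on the resulting pattern then gives a contradiction: if $j\in\{1,\ldots,6\}$ the number of negative $s_i$ comes out to $3$, while if $j\in\{7,8\}$ it comes out to $1$, both odd and so forbidden. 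Hence no such $g$ exists, and $v'\notin Y$.
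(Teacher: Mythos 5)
Your verification that $v,v'\in V_4$ and your treatment of $v$ match the paper. For $v'$, however, your stated reason for abandoning Corollary \ref{primitiveY} is false: primitivity in $\Lambda$ is not detected by the gcd of the integer coordinates, because $\Lambda$ contains half-integer vectors. Indeed $w=(1,1,1,1,1,1,-1,-1)=2\cdot\left(\tfrac12,\tfrac12,\tfrac12,\tfrac12,\tfrac12,\tfrac12,-\tfrac12,-\tfrac12\right)$, and the latter vector lies in $\Lambda$ (its coordinate sum is $2$), so $w$ is \emph{not} primitive and Corollary \ref{primitiveY} disposes of $v'$ exactly as it does of $v$ --- this is precisely the paper's proof, which handles both tuples in one line. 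Your replacement argument for $v'$ is nevertheless valid: from $v'=\alpha(z)$ and Lemma \ref{mapalpha} you correctly deduce $\sum_{i=1}^8 g_i=3w$, hence $g_i\cdot w=3$ for each $i$, and your parity analysis showing that no root of either type has dot product $3$ with $w$ is sound. So the conclusion stands, but you paid for it with a page of case analysis that a correct reading of ``primitive in $\Lambda$'' would have made unnecessary; the only thing your longer route buys is independence from Corollary \ref{primitiveY} in the second case, at the cost of revealing a misconception about the ambient lattice that you should correct.
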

\begin{proof}It is easy to check that $v$ and $v'$ are in $V_4$. We have $$e_1+e_2+e_3+f_1=2\cdot\left(\tfrac{1}{2},\tfrac{1}{2},\tfrac{1}{2},\tfrac{1}{2},\tfrac{1}{2},\tfrac{1}{2},\tfrac{1}{2},\tfrac{1}{2}\right)$$ and $$e_1+e_2+e_3+f_2=2\cdot\left(\tfrac{1}{2},\tfrac{1}{2},\tfrac{1}{2},\tfrac{1}{2},\tfrac{1}{2},\tfrac{1}{2},-\tfrac{1}{2},-\tfrac{1}{2}\right),$$ hence both $e_1+e_2+e_3+f_1$ and $e_1+e_2+e_3+f_2$ are not primitive elements in $\Lambda$ and therefore not contained in $Y$ by Corollary \ref{primitiveY}.
\end{proof}

\begin{lemma}\label{60}
For two elements $e_1,e_2\in E^2$ with $e_1\cdot e_2=0$, there are exactly 60 roots $e\in E$ such that $e_1\cdot e=e_2\cdot e =0$.
\end{lemma}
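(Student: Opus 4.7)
The plan is to reduce to a single concrete pair of orthogonal roots by transitivity, and then enumerate. By Proposition \ref{pair} (the special case of Proposition \ref{A} with $a=0$), the group $W$ acts transitively on the set of ordered pairs $(e_1,e_2)\in E^2$ with $e_1\cdot e_2=0$. Since counting the common orthogonal roots is preserved by $W$-action, it suffices to verify the claim for one convenient pair.

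Fix $e_1=(1,1,0,0,0,0,0,0)$ and $e_2=(0,0,1,1,0,0,0,0)$. A root $e=(a_1,\ldots,a_8)\in E$ is orthogonal to both $e_1$ and $e_2$ precisely when $a_1+a_2=0$ and $a_3+a_4=0$. I would then split the count along the two types of roots recalled just after Lemma \ref{weyltranse8}.

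For roots with exactly two nonzero coordinates equal to $\pm 1$: either both nonzero entries lie in positions $\{5,6,7,8\}$, giving $4\cdot\binom{4}{2}=24$ roots; or the two nonzero entries form $\{-1,1\}$ in positions $\{1,2\}$, giving $2$ roots; or they form $\{-1,1\}$ in positions $\{3,4\}$, giving $2$ roots. This contributes $24+2+2=28$ roots. For roots of the form $\left(\pm\tfrac12,\ldots,\pm\tfrac12\right)$ with an even number of minus signs: the constraints $a_1=-a_2$ and $a_3=-a_4$ force exactly one minus sign in each of $\{1,2\}$ and $\{3,4\}$, so the four signs in positions $1$--$4$ already contribute an even number of minuses; the parity condition then requires the signs in positions $5,6,7,8$ to contain an even number of minuses, giving $2^3=8$ choices, while the choices of $a_1$ and $a_3$ contribute a factor $2\cdot 2=4$. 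This yields $32$ roots.

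Adding the two cases gives $28+32=60$, as claimed. The only step requiring any care is keeping the parity condition straight in the half-integer case; the rest is a direct enumeration.
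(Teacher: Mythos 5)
Your proposal is correct and follows essentially the same route as the paper: reduce to the fixed pair $e_1=(1,1,0,\ldots,0)$, $e_2=(0,0,1,1,0,\ldots,0)$ via the transitivity in Proposition \ref{pair}, then count $32$ half-integer roots and $28$ integer roots satisfying $a_1=-a_2$ and $a_3=-a_4$. The only difference is that you spell out the enumeration (including the parity bookkeeping) in more detail than the paper does.
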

\begin{proof}
By Proposition \ref{pair}, it is enough to check this for two orthogonal roots $e_1,e_2$ in $E$. Set $e_1=\left(1,1,0,0,0,0,0,0\right)$, $e_2=\left(0,0,1,1,0,0,0,0\right)$. An element $f\in E$ with $f\cdot e_1=f\cdot e_2=0$ is of the form $f=\left(a_1,a_2,a_3,\ldots,a_8\right)$ with $a_1=-a_2$ and $a_3=-a_4$. If $f$ is of the form $\left(\pm\tfrac{1}{2},\ldots,\pm\tfrac{1}{2}\right)$, then there are $32$ such possibilities. If $f$ has two non-zero entries, given by $\pm1$, then there are $28$ possibilities. We find a total of $60$ possibilities for $f$.  
\end{proof}

The following graph summarizes the results in Proposition \ref{intersection} and Lemmas \ref{unique-1-1-1}, \ref{32} and~\ref{60}. Vertices are roots, and the number in a subset is its cardinality. The number on an edge between two subsets is the dot product of two roots, one from each subset. 

\vspace{22pt}

\begin{center}
\begin{tikzpicture}\label{graph}
\node[label=above right:$e_1$,draw,circle,fill,inner sep=0pt,minimum size=4pt](E1) at (5,5){};
\node[label=above:$-e_1$,draw,circle,fill,inner sep=0pt,minimum size=4pt](minE1) at (7,5.5) {};
\node[draw,circle,inner sep=1pt] (A) at (3.8,1.4) {
    \begin{tikzpicture} 
	  \node (a1) at (0,0) {126};
      \node [label=above:$e_2$,draw,circle,fill,inner sep=0pt,minimum size=4pt](E2) at (0,-1){};
      \node [label=below:$-e_2$,draw,circle,fill,inner sep=0pt,minimum size=4pt](minE2) at (0,-2.5){};
      \node [draw,circle] (a2) at (1,-1.75) {60};
      \node [draw,circle] (a3) at (-1.2,-0.7) {32};
      \node [draw,circle] (a4) at (-1.2,-2.8) {32};
      \path[every node/.style={font=\sffamily\small}] 
       (E2) edge node [midway,above] {$0$} (a2)
       (E2) edge node [midway,right] {$-2$} (minE2)
       (E2) edge node [midway,above] {$1$} (a3)
       (minE2) edge node [midway,below] {$1$} (a4)
	   (E2) edge node [midway,below] {$-1$} (a4)
       (minE2) edge node [midway,above] {$-1$} (a3)       
       (minE2) edge node [midway,below] {$0$} (a2);
     \end{tikzpicture}};
\node[draw,circle,inner sep=0pt] (B) at (8,3.5) {
  \begin{tikzpicture} 
     \node (b1) at (0,0) {56};
     \node [label=below:$-e_3$,draw,circle,fill,inner sep=0pt,minimum size=4pt](minE3) at (-0.8,-0.8) {};
     \node [label={[xshift=0cm, yshift=-1.3 cm]$-e_1-e_3$},draw,circle,fill,inner sep=0pt,minimum size=4pt](minE1E3) at (0.8,-0.8) {};
     \path[every node/.style={font=\sffamily\small}] 
       (minE3) edge node [midway,above] {$-1$} (minE1E3);
   \end{tikzpicture}};
 \node[draw,circle,inner sep=2pt] (C) at (2,5) {
   \begin{tikzpicture} 
     \node (c1) at (0,0) {56};
     \node [label=below:$e_3$,draw,circle,fill,inner sep=0pt,minimum size=4pt](E3) at (0,-0.5) {};
   \end{tikzpicture}
    };
\path[every loop/.style={}]   
    (E1) edge [in=150,out=90,loop, above, distance= 12mm] node {2} (E1);
   \path[every node/.style={font=\sffamily\small}]
     (E1) edge node [midway,above]{$-2$} (minE1)
     (E1) edge node [midway,above]{$-1$} (B)
     (E1) edge node [midway,left]{$0$} (A)
     (E1) edge node [midway,above]{$1$} (C);
\end{tikzpicture}
\end{center}

\vspace{22pt}

\begin{proposition}\label{three}
Let $v=(f_1,f_2,f_3)$ be an element of $V_3$. The following hold.
\begin{itemize}
\item[](i) We have $|V_3|=1814400$, and the group $W$ acts transitively on $V_3$.
\item[](ii) We have $|\pi^{-1}(v)|=26$, and $|\pi^{-1}(v)\cap Y|=24$. 
\item[](iii) For $\{(f_1,f_2,f_3,u),(f_1,f_2,f_3,u')\}=\pi^{-1}(v)\setminus Y$, we have $u=-u'$, and for $(f_1,f_2,f_3,e)\in \pi^{-1}(v)\cap Y$, we have $e\cdot u=e\cdot u'=0$. 
\end{itemize}
\end{proposition}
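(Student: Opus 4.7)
The plan is to reduce claims (i)--(iii) to an analysis at one specific triple $v_0 \in V_3$ via $W$-equivariance, and to force transitivity on $V_3$ indirectly through the known $W$-orbit $Y$ together with orbit-counting. First I would compute $|V_3|$ directly: by Proposition \ref{intersection} there are $240$ roots and $126$ orthogonal to any chosen one; by Lemma \ref{60} there are $60$ further roots orthogonal to any pair of orthogonal roots, so $|V_3| = 240 \cdot 126 \cdot 60 = 1814400$.

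Next I would fix $v_0 = (f_1,f_2,f_3)$ as in Lemma \ref{emptyfibers} and enumerate $\pi^{-1}(v_0)$ by the two shapes of an $E_8$-root. A root $e$ orthogonal to $f_1, f_2, f_3$ either has shape $(\pm\tfrac12)^8$ --- the three orthogonality conditions force opposite signs within each coordinate pair $\{1,2\}$, $\{3,4\}$, $\{5,6\}$, and the even-parity condition then forces exactly one sign change among the last two entries, yielding $2^4 = 16$ roots --- or has two nonzero entries $\pm 1$, which either lie within one of the pairs $\{1,2\}$, $\{3,4\}$, $\{5,6\}$ ($6$ roots) or within $\{7,8\}$ ($4$ roots). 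Hence $|\pi^{-1}(v_0)| = 26$. Setting $g := f_1+f_2+f_3 = (1,1,1,1,1,1,0,0)$, I would then test $g+e$ for primitivity in $\Lambda$: exactly the two roots $e = \pm u$ with $u := (0,\ldots,0,1,1)$ give a non-primitive sum, namely $g + u = 2 \cdot (\tfrac12,\tfrac12,\tfrac12,\tfrac12,\tfrac12,\tfrac12,\tfrac12,\tfrac12)$ and $g - u = 2 \cdot (\tfrac12, \tfrac12, \tfrac12, \tfrac12, \tfrac12, \tfrac12, -\tfrac12, -\tfrac12)$, both in $2\Lambda$; for the other $24$ a coordinate-wise inspection (using the shape conditions defining $\Lambda$) shows $g+e$ cannot be written as $nh$ with $h \in \Lambda$ and $n \ge 2$. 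By Corollary \ref{primitiveY} this yields $|\pi^{-1}(v_0) \cap Y| \le 24$ and recovers the pair from Lemma \ref{emptyfibers}.

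I would now apply orbit-counting to the $W$-equivariant map $\pi|_Y : Y \to V_3$ to simultaneously upgrade the bound above to an equality and establish transitivity on $V_3$. By Remark \ref{image}, $W$ acts transitively on $Y$ with $|Y| = |Z| = |W|/2^4 = 43545600$, so Lemma \ref{action}(iii) gives $|\pi(Y)| = |Y|/k$ for the common fiber size $k$, with $k \le 24$ by the previous step. Since $\pi(Y) \subseteq V_3$, this forces $|\pi(Y)| \ge 43545600/24 = 1814400 = |V_3|$, hence $k = 24$ and $\pi(Y) = V_3$. By Lemma \ref{action}(i), transitivity of $W$ on $Y$ transfers to $V_3$, which proves (i); claim (ii) then follows at once, because $|\pi^{-1}(v) \cap Y| = 24$ for every $v$ by the constant fiber size, while $|\pi^{-1}(v)| = |\pi^{-1}(v_0)| = 26$ by Lemma \ref{action}(ii) applied to the full map $\pi$.

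For (iii), I would first verify the statement at $v_0$: the two elements of $\pi^{-1}(v_0) \setminus Y$ have fourth coordinates $\pm u$ by the analysis above, so they are negatives, and for each of the $24$ roots $e$ on the list $e \cdot u = a_7 + a_8 = 0$ --- the two-$\pm 1$ cases confined to positions $\{1,\ldots,6\}$ have $a_7 = a_8 = 0$, the two remaining two-$\pm 1$ cases have $(a_7, a_8) = (\pm 1, \mp 1)$, and in the $(\pm\tfrac12)^8$ case the parity constraint forces $(a_7, a_8) \in \{(\tfrac12,-\tfrac12),(-\tfrac12,\tfrac12)\}$. For general $v = (g_1,g_2,g_3) \in V_3$, by (i) pick $w \in W$ with $wv_0 = v$; by $W$-stability of $Y$ the two elements of $\pi^{-1}(v) \setminus Y$ are $(g_1,g_2,g_3, \pm w(u))$, and the orthogonality claim transfers via $W$-invariance of the inner product. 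The main obstacle is the jump from the necessary-condition bound $|\pi^{-1}(v_0) \cap Y| \le 24$ (coming from Corollary \ref{primitiveY}) to the exact equality --- constructing a preimage in $Z$ for each of the $24$ primitive candidates by hand would be painful, but the orbit-counting step forces the equality automatically by matching $|Y|$ and $|V_3|$.
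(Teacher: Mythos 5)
Your overall strategy is the same as the paper's: compute $\lvert V_3\rvert$, count the fiber $\pi^{-1}(v_0)$ at an explicit base point, rule out two elements of that fiber from $Y$ via non-primitivity of the sum, and then squeeze with the orbit-counting identity $\lvert \pi(Y)\rvert = \lvert Y\rvert/k$ against $\lvert V_3\rvert = \lvert Y\rvert/24$ to get transitivity and the exact fiber sizes simultaneously. However, there is one genuine gap, and it sits exactly where you claim the orbit count lets you avoid work. Lemma \ref{action}(iii) gives $\lvert \pi(Y)\rvert=\lvert Y\rvert/k$ where $k$ is the common cardinality of the \emph{non-empty} fibers of $\pi|_Y$. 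Your bound $\lvert\pi^{-1}(v_0)\cap Y\rvert\le 24$ only transfers to $k$ if the fiber of $\pi|_Y$ over $v_0$ is non-empty, i.e.\ if at least one of the $24$ remaining quadruples actually lies in $Y$ --- and you never establish this. Corollary \ref{primitiveY} is only a necessary condition for membership in $Y$, so the primitivity of the other $24$ sums (which you do check) puts none of them in $Y$. If $v_0\notin\pi(Y)$, then $k$ could a priori be any divisor of $\lvert Y\rvert$ between $25$ and the (not yet known) size of a fiber of $\pi$ over a point of $\pi(Y)$, and the inequality chain $\lvert V_3\rvert \ge \lvert Y\rvert/k \ge \lvert Y\rvert/24$ collapses. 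Nothing numerical rules this out.

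The fix is exactly the step the paper does not skip: exhibit one element of $Z$ (equivalently of $Y$) lying over $v_0$. The paper writes down eight explicit roots $e_1,\dots,e_8$ forming an element of $Z$ whose image under $\pi\circ\alpha$ is $v=(f_1,f_2,f_3)$; this single witness makes the fiber over $v_0$ non-empty, after which your squeeze argument goes through verbatim and indeed forces all $24$ candidates into $Y$ without checking them individually. So you were right that you do not need to construct preimages for all $24$ candidates --- but you do need exactly one, and your proposal as written provides none. The rest of your argument (the $26=16+10$ fiber count, the identification of $\pm u$ as the non-$Y$ pair, and the verification and $W$-transfer of part (iii)) is correct and matches the paper.
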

\begin{proof}
From Proposition \ref{intersection} and Lemma \ref{60} it follows that $$|V_3|=240\cdot126\cdot60=1814400.$$ Consider the map $\lambda=\pi\circ \alpha\colon Z\rightarrow V_3$. Note that $\lambda$ is a $W$-map, since both $\pi$ and $\alpha$ are. We want to show that $\lambda$ is surjective. Set $$f_1=(1,1,0,0,0,0,0,0),\;f_2=(0,0,1,1,0,0,0,0),\;f_3=(0,0,0,0,1,1,0,0).$$ Then we have $v=(f_1,f_2,f_3)\in V_3$. Define the roots
\begin{align*}
&e_1=\left(\tfrac{1}{2},-\tfrac{1}{2},\tfrac{1}{2},\tfrac{1}{2},\tfrac{1}{2},\tfrac{1}{2},-\tfrac{1}{2},\tfrac{1}{2}\right), & &e_5=\left(1, 0, 0, 1, 0, 0, 0, 0 \right),\\
&e_2=\left(\tfrac{1}{2},-\tfrac{1}{2},\tfrac{1}{2},\tfrac{1}{2},\tfrac{1}{2},\tfrac{1}{2},\tfrac{1}{2},-\tfrac{1}{2}\right), & &e_6=\left( 1, 0, 1, 0, 0, 0, 0, 0 \right),\\
&e_3=\left( 1, 0, 0, 0, 0, 1, 0, 0 \right), & & e_7=\left(\tfrac{1}{2},\tfrac{1}{2},\tfrac{1}{2},\tfrac{1}{2},\tfrac{1}{2},\tfrac{1}{2},-\tfrac{1}{2},-\tfrac{1}{2}\right)\\
&e_4=\left(1, 0, 0, 0, 1, 0, 0, 0 \right), & &e_8=\left(\tfrac{1}{2},\tfrac{1}{2},\tfrac{1}{2},\tfrac{1}{2},\tfrac{1}{2},\tfrac{1}{2},\tfrac{1}{2},\tfrac{1}{2}\right).\end{align*} 
Note that for $i\neq j$ we have $e_i\cdot e_j=1$, so $\left(\{e_1,e_2\},\{e_3,e_4\},\{e_5,e_6\},\{e_7,e_8\}\right)$ is an element in $Z$.
We have 
\begin{align*}
f_1\cdot e_1=f_1\cdot e_2=0 &\mbox{ and }f_1\cdot e_i=1\mbox{ for all }i\not\in\{1,2\}; \\
f_2\cdot e_3=f_2\cdot e_4=0 &\mbox{ and }f_2\cdot e_i=1\mbox{ for all }i\not\in\{3,4\}; \\
f_3\cdot e_5=f_3\cdot e_6=0 &\mbox{ and }f_3\cdot e_i=1\mbox{ for all }i\not\in\{5,6\},
\end{align*}
so $\lambda\left(\left(\{e_1,e_2\},\{e_3,e_4\},\{e_5,e_6\},\{e_7,e_8\}\right)\right)=v$. Hence the fiber of $\lambda$ above $v$ is not empty, and we want to compute its cardinality. We first compute the cardinality of the fiber of $\pi$ above $v$. For an element $f=\left(a_1,\ldots,a_8\right)\in E$, we have $(f_1,f_2,f_3,f)\in$~$V_4$ if and only if $a_1+a_2=a_3+a_4=a_5+a_6~=~0$. This gives $16$ possibilities for~$f$ with $a_i\in\left\{\pm\tfrac{1}{2}\right\}$ for $i\in\{1,\ldots,8\}$, and $10$ possibilities for~$f$ where the two non-zero entries are $\pm1$. We conclude that $|\pi^{-1}(v)|=26$. Set $g_1~=~(0,0,0,0,0,0,1,1)$ and $g_2=(0,0,0,0,0,0,-1,-1)$,
then $u=(f_1,f_2,f_3,g_1)$ and $u'=(f_1,f_2,f_3,g_2)$ are both elements in $\pi^{-1}(v)$. By Lemma \ref{emptyfibers}, we know that the fibers of $\alpha$ above $u$ and $u'$ are empty. Since $\alpha$ is injective, this implies $|\lambda^{-1}(v)|\leq 24.$ Since $\lambda^{-1}(v)$ is not empty, by Lemma~\ref
{action}, we have $|\lambda(Z)|=\frac{|Z|}{|\lambda^{-1}(v)|}$. Combining this, we find $$\frac{|Z|}{24}\leq\frac{|Z|}{|\lambda^{-1}(v)|}=|\lambda(Z)|\leq |V_3|=1814400=\frac{|Z|}{24}.$$So we have equality everywhere, hence $|\lambda^{-1}(v)|=24$, and $|\lambda(Z)|=|V_3|,$ so $\lambda$ is surjective. Since $W$ acts 
transitively on $Z$, we conclude from Lemma~\ref{action} that $W$ acts transitively on $V_3$, too. This proves (i). To prove (ii), note that we showed that $|\pi^{-1}(v)|=26$ and $|\lambda^{-1}(v)|=24$, and since $\alpha$ is injective, we have the equality $|\pi^{-1}(v)\cap Y|=|\lambda^{-1}(v)|=24$. Since $\pi$ is a $W$-map, and $W$ acts transitively on $V_3$, the result holds for all elements in $V_3$. Finally, (iii) is an easy check for the element~$v$, after writing down the 26 elements in $\pi^{-1}(v)$. Since $W$ acts transitively on $V_3$, this holds for all elements in $V_3$.
\end{proof}

\begin{proposition}\label{orbits}
The set $V_4$ has two orbits under the action of $W$, which are $Y$ and $V_4\setminus Y$. We have $|Y|=43545600$ and $|V_4\setminus Y|=3628800$. An element $(e_1,\ldots,e_4)$ is in $V_4\setminus Y$ if and only if $\sum_{i=1}^4e_i\in2\Lambda$. 
\end{proposition}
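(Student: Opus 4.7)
The plan is to combine the previous results on $Z$, $Y$, $V_3$ and $V_4$ as follows. Since $\alpha$ is injective (Lemma \ref{cardfiber}), we have $|Y|=|Z|=43545600$ by Remark \ref{remark}, and since $\pi\colon V_4\to V_3$ has fibres of cardinality $26$ over a set of size $1814400$ (Proposition \ref{three}), $|V_4|=47174400$ and hence $|V_4\setminus Y|=3628800$. By Remark \ref{image}, $Y$ is a single $W$-orbit, so it remains to show that $V_4\setminus Y$ is a single $W$-orbit and to identify it as the set of tuples $(e_1,\ldots,e_4)$ with $\sum_{i=1}^4 e_i\in 2\Lambda$.

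For transitivity on $V_4\setminus Y$, I would apply Lemma \ref{action}(iv) to the restriction $\pi\colon V_4\setminus Y\to V_3$. By Proposition \ref{three}(iii) each fibre consists of exactly two tuples $(f_1,f_2,f_3,u)$ and $(f_1,f_2,f_3,-u)$. The key observation is that the Weyl group reflection $s_u\in W$ in the hyperplane orthogonal to $u$ fixes $f_1,f_2,f_3$ (all orthogonal to $u$ by definition of $V_4$) and swaps $u$ and $-u$; hence $s_u$ lies in the stabiliser of $v=(f_1,f_2,f_3)\in V_3$ and acts transitively on $\pi^{-1}(v)\setminus Y$. Combined with the transitivity of $W$ on $V_3$ from Proposition \ref{three}(i), Lemma \ref{action}(iv) then yields the desired transitivity on $V_4\setminus Y$.

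For the characterisation, the condition $\sum_{i=1}^4 e_i\in 2\Lambda$ is $W$-invariant because $W$ acts on $\Lambda$ by lattice automorphisms preserving $2\Lambda$. Corollary \ref{primitiveY} shows that for any element of $Y$ the sum is primitive in $\Lambda$, and hence cannot lie in $2\Lambda$. For the two specific elements of $V_4\setminus Y$ produced in Lemma \ref{emptyfibers}, the proof of that lemma already exhibits the sum as an element of $2\Lambda$. Transitivity of $W$ on each of the two orbits then propagates these properties to all of $Y$ and all of $V_4\setminus Y$, giving the stated equivalence.

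The main subtlety is the fibrewise transitivity step: one must produce a Weyl group element that fixes three specified orthogonal roots while negating a fourth. Once one notices that the root reflection $s_u$ does exactly this, everything else follows directly from the counting and $W$-invariance machinery built in the preceding lemmas, so I do not expect any further obstacle.
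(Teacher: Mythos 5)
Your proposal is correct and follows essentially the same route as the paper: both identify the two-element fibres of $\pi$ over $V_3$ lying outside $Y$, use the reflection in the hyperplane orthogonal to the fourth root to swap the two fibre elements while fixing the first three, invoke Lemma \ref{action}(iv) together with transitivity on $V_3$, and then separate the orbits via Corollary \ref{primitiveY} and the explicit sums from Lemma \ref{emptyfibers}. The only cosmetic difference is that you obtain $|V_4\setminus Y|$ by computing $|V_4|=26\cdot|V_3|$ and subtracting $|Y|=|Z|$, whereas the paper multiplies the fibre sizes $24$ and $2$ by $|V_3|$ directly.
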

\begin{proof}
From Remark \ref{image} it follows that $Y$ is an orbit under the action of $W$ on~$V_4$. Therefore $O=V_4\setminus Y$ is also a $W$-set. Let $e_1,e_2,e_3,f_1,f_2$ be as in Lemma \ref{emptyfibers} and set $v=(e_1,e_2,e_3)$, $u=(e_1,e_2,e_3,f_1)$, and $u'=(e_1,e_2,e_3,f_2)$. Consider the restriction $\pi|_O$ of $\pi$ to $O$. We have $v\in V_3$, and $u,u'\in \pi|_O^{-1}(v)$ by Lemma~\ref{emptyfibers}. From Proposition~\ref{three} we know that $|\pi^{-1}(v)\cap Y|=24$, so $\left|\pi|_O^{-1}(v)\right|=2$. This implies $\pi|_O^{-1}(v)=\{u,u'\}$. Consider the element $r$ in $W$ given by the reflection in the hyperplane that is orthogonal to $f_1$. Since $e_1,e_1,e_3$ are contained in this hyperplane, the reflection $r$ is contained in the stabilizer $W_v$ in $W$ of $v$. Moreover, since $f_2=-f_1$, the reflection $r$ interchanges $f_1$ and $f_2$, hence $W_v$ acts transitively on $\pi|_O^{-1}(v)$. Since $W$ acts transitively on $V_3$ by Proposition \ref{three}, we conclude that~$W$ acts transitively on $O$ from Lemma \ref{action}. From Proposition \ref{three} it follows that $|Y|=|V_3|\cdot24=43545600$, and $|O|=|V_3|\cdot2=3628800$. By Corollary \ref{primitiveY}, for every element $(g_1,g_2,g_3,g_4)$ in $Y$ the sum $\sum_{i=1}^4g_i$ is primitive. On the other hand, $u$ is an element in $O$, and $e_1+e_2+e_3+f_1=2\cdot\left(\tfrac{1}{2},\tfrac{1}{2},\tfrac{1}{2},\tfrac{1}{2},\tfrac{1}{2},\tfrac{1}{2},\tfrac{1}{2},\tfrac{1}{2}\right)$. So the sum of the coordinates of $u$ is an element in $2\Lambda$, and since $W$ acts transitively on $O$, this holds for every element in $O$. \end{proof}

Now that we proved that $W$ acts transitively on $V_3$, there is one last case of Proposition~\ref{B} that we prove separately (Lemma \ref{driehoek}). We state two auxiliary lemmas first.

\begin{lemma}\label{semidirect product}Let $r$ be a positive integer, and let $G$ be a graph with vertex set given by $\{v_1,\ldots,v_r,w_1\ldots,w_r\}$ and edge set given by $\{\{v_i,w_i\}\;|\;i\in\{1,\ldots,r\}\}$. Let $A$ be the automorphism group of $G$. For an element $a\in A$ and for $i\in\{1,\ldots,r\}$, define an integer $a_i$ by $a_i=1$ if $a(v_i)\in\{v_1,\ldots,v_r\}$, and $a_i=-1$ otherwise. There exists an isomorphism $\varphi\colon A\xrightarrow{\sim} \mu_2^r \rtimes S_r$, where $\mu_2$ is the multiplicative group with two elements and $S_r$ the symmetric group on $r$ elements, acting on $\mu_2^r$ by permuting the coordinates, given by $$\varphi(a)=(\left(a_1,\ldots,a_r\right),\left(i\mapsto j \mbox{ for }a(v_i)\in\{v_j,w_j\}\right)).$$ 
\end{lemma}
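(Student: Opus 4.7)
The plan is to verify directly that the explicitly given map $\varphi$ is a well-defined bijection that respects the group structure; the statement is essentially the identification of $\mathrm{Aut}(G)$ with the wreath product $\mu_2\wr S_r$ for a disjoint union of $r$ edges, so the proof is pure bookkeeping.

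First, I would check that $\varphi$ is well-defined. Any $a\in A$ sends edges to edges, and the edges $\{v_i,w_i\}$ of $G$ are pairwise disjoint (they partition the vertex set), so for each $i$ there is a unique $j\in\{1,\ldots,r\}$ with $\{a(v_i),a(w_i)\}=\{v_j,w_j\}$. This gives a well-defined map $\sigma_a\colon i\mapsto j$, which is a permutation of $\{1,\ldots,r\}$ because $a$ is bijective on vertices. The sign $a_i\in\{\pm1\}$ then records whether $a(v_i)=v_{\sigma_a(i)}$ or $a(v_i)=w_{\sigma_a(i)}$, so $\varphi(a)\in\mu_2^r\rtimes S_r$ is well-defined.

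Next, injectivity and surjectivity. For injectivity, if $\varphi(a)=\varphi(b)$ then $\sigma_a=\sigma_b$ and $a_i=b_i$ for all $i$, so $a(v_i)=b(v_i)$; because $a$ and $b$ each swap or fix each edge the same way, $a(w_i)=b(w_i)$ is forced to be the other element of $\{v_{\sigma_a(i)},w_{\sigma_a(i)}\}$. Hence $a=b$. For surjectivity, given $(\vec\epsilon,\sigma)\in\mu_2^r\rtimes S_r$, define a vertex map by $a(v_i)=v_{\sigma(i)}$ if $\epsilon_i=1$ and $a(v_i)=w_{\sigma(i)}$ if $\epsilon_i=-1$, and let $a(w_i)$ be the remaining element of $\{v_{\sigma(i)},w_{\sigma(i)}\}$. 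This $a$ is a bijection of vertices mapping each edge $\{v_i,w_i\}$ to $\{v_{\sigma(i)},w_{\sigma(i)}\}$, hence lies in $A$, and clearly $\varphi(a)=(\vec\epsilon,\sigma)$.

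Finally, the homomorphism property. For $a,b\in A$ with $\varphi(a)=(\vec\alpha,\sigma)$ and $\varphi(b)=(\vec\beta,\tau)$, I would trace $(ab)(v_i)=a(b(v_i))$: writing $b(v_i)=v_{\tau(i)}$ or $w_{\tau(i)}$ according to the sign of $\beta_i$, and then applying $a$ (which sends each of $v_{\tau(i)},w_{\tau(i)}$ into $\{v_{\sigma(\tau(i))},w_{\sigma(\tau(i))}\}$ with the $v$/$w$-distinction governed by $\alpha_{\tau(i)}$), one reads off $\sigma_{ab}=\sigma\tau$ and $(ab)_i=\alpha_{\tau(i)}\cdot\beta_i$. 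A quick comparison with the multiplication in $\mu_2^r\rtimes S_r$ (where $S_r$ acts on $\mu_2^r$ by permuting coordinates) confirms that $\varphi(ab)=\varphi(a)\varphi(b)$.

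The only mild obstacle is the fourth step: keeping the conventions for the semidirect product straight, in particular whether $\sigma$ or $\sigma^{-1}$ appears in the index shift of the $\mu_2^r$-factor. Once the convention for the $S_r$-action on $\mu_2^r$ is fixed consistently with the formula above, all steps are routine.
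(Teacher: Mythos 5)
Your proof is correct in substance but takes a genuinely different route from the paper. The paper does not verify the explicit formula for $\varphi$ at all: it builds the group homomorphism $\gamma\colon A\to S_r$ (your $a\mapsto\sigma_a$), identifies $\ker\gamma\cong\mu_2^r$ as the automorphisms preserving each edge, observes that $\gamma$ is surjective, and exhibits the section $g\mapsto(v_i\mapsto v_{g(i)},\,w_i\mapsto w_{g(i)})$, so that $A\cong\mu_2^r\rtimes S_r$ follows from the split short exact sequence. That argument is shorter and is all that is needed later (only the structure of $A$ and the index-$2$ subgroup $\{\prod_i m_i=1\}$ are ever used), but it leaves the displayed formula for $\varphi$ unchecked. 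Your direct verification of well-definedness, bijectivity, and the cocycle computation $(ab)_i=\alpha_{\tau(i)}\beta_i$, $\sigma_{ab}=\sigma\tau$ is the more literal reading of the statement. One caution on the step you flag yourself: with the most common convention $(\vec\alpha,\sigma)(\vec\beta,\tau)=\bigl((\alpha_i\beta_{\sigma^{-1}(i)})_i,\sigma\tau\bigr)$, your (correct) computation shows that the stated $\varphi$ is \emph{not} a homomorphism — e.g.\ for $r=2$, $a$ swapping the two edges and $b$ flipping the first edge, $\varphi(ab)=((-1,1),(12))$ while $\varphi(a)\varphi(b)=((1,-1),(12))$. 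The formula becomes a homomorphism either with the opposite indexing convention on the $\mu_2^r$-factor or after replacing $a_i$ by $a_{\sigma^{-1}(i)}$; the isomorphism produced by the paper's split-sequence argument is exactly the latter. This is a convention defect in the statement rather than a gap in your argument, and it is harmless for everything the lemma is used for, but your final sentence should commit to the convention under which the displayed $\varphi$ is multiplicative rather than asserting agreement with the unspecified standard one.
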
 
\begin{proof}Let $a$ be an element in $A$. Note that for all $i$, the image $a(v_i)$ of $v_i$ is only connected to $a(w_i)$, so there is a $j$ such that $\{a(v_i),a(w_i)\}=\{v_j,w_j\}$. Therefore we have a group homomorphism $\gamma\colon A\longrightarrow S_r$, given by 
$$a\longmapsto \left(i\mapsto j \mbox{ for }a(v_i)\in\{v_j,w_j\}\right).$$
Note that $\gamma$ is surjective, and its kernel consists of all elements $a\in A$ such that, for all $i\in\{1,\ldots,r\}$, either $a(v_i)=v_i$, or $a(v_i)=w_i$.  We conclude that the kernel of $\gamma$ is isomorphic to the group $\mu_2^r$. So we have a short exact sequence
$$1\longrightarrow\mu_2^r\longrightarrow A\overset{\gamma}{\longrightarrow} S_r\longrightarrow1.$$ Moreover, we have a section $S_r\longrightarrow A$ given by $g\longmapsto \{v_i\mapsto v_{g(i)},\;w_i\mapsto w_{g(i)}\}$, so the statement follows. 
\end{proof}

\begin{lemma}\label{index2}Let $c=\{\{e_1,f_1,\},\ldots,\{e_7,f_7\}\}$ be an element in the set $C$ that is defined above Lemma \ref{sevenfaces}, and let~$s$ be the set $\{e_1,\ldots,e_7,f_1,\ldots,f_7\}$. Let $A$ the automophism group of the colored graph associated to $s$, and let $\varphi\colon A\xrightarrow{\sim}\mu_2^7\rtimes S_7$ be the isomorphism from Lemma \ref{semidirect product}.
Let $W_s$ be the stabilizer in $W$ of $s$. Then there is an injective map $W_s\longrightarrow A$, whose image has index 2 in $A$, and its image after composing with $\varphi$ is given by $$\left\{((m_1,\ldots,m_7),g)\in \mu_2^7\rtimes S_7\;|\;\prod_{i=1}^7m_i=1\right\}.$$
\end{lemma}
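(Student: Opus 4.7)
The plan is to construct a natural homomorphism $\rho\colon W_s\longrightarrow A$, show it is injective, use a counting argument to see its image has index $2$, and then pin down which index-$2$ subgroup this is by eliminating the other candidates. Any $w\in W_s$ permutes the roots in $s$ while preserving inner products, hence colors; since the only color-$0$ edges on $s$ are the seven orthogonal pairs $\{e_i,f_i\}$, the permutation induced by $w$ lies in the automorphism group of the abstract graph on those pairs, giving an element of $A$. Injectivity is immediate from Lemma \ref{sevengenerate}: if $w$ fixes every root of $s$, then it fixes the finite-index sublattice $L=\langle s\rangle\subseteq\Lambda$, hence by $\mathbb{Q}$-linearity and the torsion-freeness of $\Lambda$ it fixes all of $\Lambda$ and must be the identity.

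To see the index is $2$, I would invoke transitivity of $W$ on $C$ (Corollary \ref{transsevenface}) to obtain
\[
|W_s|=\tfrac{|W|}{|C|}=\tfrac{696729600}{2160}=322560,
\]
and compare with $|A|=2^7\cdot 7!=645120$. So $\rho(W_s)$ is an index-$2$ subgroup of $A$, and is therefore the kernel of some surjection $\chi\colon A\to\mu_2$. The abelianization of $A\cong\mu_2^7\rtimes S_7$ is $\mu_2\times\mu_2$, generated by the product map $\chi_1\colon((m_i),g)\mapsto\prod_i m_i$ and the sign map $\chi_2\colon((m_i),g)\mapsto\mathrm{sign}(g)$, so there are exactly three surjections $A\to\mu_2$: $\chi_1$, $\chi_2$, and $\chi_3=\chi_1\chi_2$. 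The task reduces to showing $\chi=\chi_1$.

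By $W$-equivariance and Corollary \ref{transsevenface}, it suffices to verify this for the explicit element $c=\{\{c_1,d_1\},\ldots,\{c_7,d_7\}\}$ of $C$ defined above Lemma \ref{sevenfaces}. The coordinate transposition $\sigma$ of $\mathbb{R}^8$ swapping the second and third coordinates preserves $\Lambda$ and so belongs to $W=\mathrm{Aut}(\Lambda)$; a direct inspection of the list of $c_i$ and $d_i$ shows that $\sigma$ interchanges $c_1\leftrightarrow c_2$ and $d_1\leftrightarrow d_2$ while fixing each $c_i,d_i$ for $i\geq 3$, so $\sigma\in W_s$. Under $\rho$, the element $\sigma$ corresponds to $g=(1\,2)$ with all $m_i=1$, so $\chi_2(\rho(\sigma))=-1$ and $\chi_3(\rho(\sigma))=-1$. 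This single element therefore rules out $\ker\chi_2$ and $\ker\chi_3$ as candidates for the image, forcing $\rho(W_s)=\ker\chi_1$, which is precisely the description in the lemma.

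The main obstacle is conceptual rather than computational: once one knows that the index is $2$ and that only three subgroups are eligible, one must produce a Weyl-group element whose sign data can be read off on both $(m_i)$ and $g$ simultaneously. The coordinate transposition is ideal because it is visibly in $W$ and visibly permutes $s$. An alternative would be to argue directly that $\prod m_i=1$ for every $w\in W_s$, for instance by examining the action of $w$ on half-sums like $\tfrac12\sum_i(e_i-f_i)$ in $\Lambda\otimes\mathbb{Q}$, but the bookkeeping with the arithmetic of $\Lambda$ makes the elimination approach cleaner.
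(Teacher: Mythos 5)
Your proof is correct, and its first two thirds (the restriction map $W_s\to A$, injectivity via Lemma \ref{sevengenerate} and torsion-freeness of $\Lambda$, and the index computation $|W_s|=|W|/|C|=322560=\tfrac12|A|$) coincide with the paper's argument. Where you diverge is in identifying \emph{which} index-$2$ subgroup the image is. The paper works from the inside out: it exhibits explicit reflections in $W_s$ — $r_{ij}$ in the hyperplane orthogonal to the root $e_i-e_j$ (using $e_i+f_i=e_j+f_j$ to see that $r_{ij}$ swaps $e_i\leftrightarrow e_j$ and $f_i\leftrightarrow f_j$) and $t_{ij}$ orthogonal to $e_i-f_j$ — and checks that the elements $r_{ij}$ and $t_{ij}\circ r_{ij}$ generate the full subgroup $\{\prod_i m_i=1\}$, which then must be everything by the order count. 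You work from the outside in: since the abelianization of $\mu_2^7\rtimes S_7$ is $\mu_2\times\mu_2$, there are exactly three index-$2$ subgroups, namely the kernels of $\chi_1=\prod m_i$, $\chi_2=\mathrm{sign}$, and $\chi_1\chi_2$; a single witness in $W_s$ with $\prod m_i=1$ and $\mathrm{sign}(g)=-1$ (your coordinate transposition, or equally the paper's $r_{12}$) kills the latter two candidates. Both are sound; your elimination argument is shorter, though it does require the small observation that the subgroup $\ker\chi_1$ is independent of the labeling choices implicit in $\varphi$ so that the reduction to the explicit representative of $C$ is legitimate, and it produces less by-product — the paper's explicit generators $r_{ij}$, $t_{ij}$ are reused immediately afterwards (e.g.\ in the proof of Lemma \ref{driehoek}), so the constructive route earns its keep there.
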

\begin{proof}
Elements in $W_s$ respect the dot product, so we have a map $\beta\colon W_s\longrightarrow A$. If an element $w\in W_s$ fixes every element in $s$, then it fixes a sublattice of $\Lambda$ of finite index by Lemma \ref{sevengenerate}, and since $\Lambda$ is torsion free this implies that $w$ is the identity. So the action of $W_s$ on $s$ is faithful, hence $\beta$ is injective, and $|\beta(W_s)|=|W_s|$. Since~$W$ acts transitively on $C$ by Corollary \ref{transsevenface}, and $|C|=2160$ by Remark \ref{numbercrosspolytopes}, we have $|W_s|=|W_c|=\frac{|W|}{|C|}=\frac{|W|}{2160}=322560$. Moreover, we have $|A|=2^7\cdot7!=645120$, so $|\beta(W_s)|=|W_s|=322560=\frac{1}{2}\cdot |A|$. Hence $\beta(W_s)$ is a subgroup of index two in~$A$. We will now determine which subgroup. Note that $\|e_1-e_2\|=\sqrt{2}$, so $e_1-e_2$ is an element $e\in E$, and the reflection in the hyperplane orthogonal to $e$ gives an element in $W$, say $r_{12}$. Note that $e_1+f_1=e_2+f_2$ by Remark \ref{facetcrosspolytopehyperplane}, so $e_1-e_2=f_2-f_1$. Therefore $r_{12}$ interchanges $e_1$ with $e_2$ and $f_1$ with $f_2$. Moreover, since all roots in $\{e_3,\ldots,e_7,f_3,\ldots,f_7\}$ are orthogonal to $e$, the element $r_{12}$ acts trivially on them. Analogously, for $i,j\in\{1,\ldots,7\}$, $i\neq j$, the reflection $r_{ij}$ is an element in $W_s$ that interchanges $e_i$ and $e_j$, and $f_i$ with $f_j$. Let $\gamma\colon A\longrightarrow S_7$ be the projection of $\varphi(A)$ to~$S_7$, then it follows that $\gamma(\beta(W_s))=S_7$. Now consider for $i,j\in\{1,\ldots,7\}, i\neq j$, the element $e_i-f_j$. Again, this is an element in $E$, and the reflection $t_{ij}$ in the hyperplane orthogonal to it is an element in $W_s$ interchanging $e_i$ with $f_j$, and $e_j$ with $f_i$, and leaving all other roots in $s$ fixed. It follows that the composition $t_{ij}\circ r_{ij}$ is an element in $W_s$ with $\varphi(\beta(t_{ij}\circ r_{ij}))=((-1,-1,1,1,1,1,1),\mbox{id})\in \mu_2^7\rtimes S_7$. By composing these automorphisms $t_{ij}\circ r_{ij}$ for different $i,j$, we see that $\varphi(\beta(W_c))$ contains all elements $((m_1,\ldots,m_7),g)\in \mu_2^7\rtimes S_7$ with $\prod_{i=1}^7m_i=1$. Therefore, the reflections  $r_{ij},t_{ij}$ generate a subgroup of $A$ of order $7!\cdot 2^6=\tfrac{1}{2}|A|$, and we conclude that this is all of $W_s$. 
\end{proof}

\begin{corollary}\label{thm2crosspolytopes}
Let $K_1$ and $K_2$ be two cliques in $\Gamma$ whose vertices correspond to a $7$-crosspolytope in the $E_8$ root polytope. Let $f\colon K_1\longrightarrow K_2$ be an isomorphism between them. Then~$f$ extends to an automorphism of $\Lambda$ if and only if for every subclique $S=\{e_1,\ldots,e_7\}$ of $K_1$ of 7 vertices that are pairwise connected with edges of color~1, the vectors $\sum_{i=1}^7e_i$ and $\sum_{i=1}^7f(e_i)$ are either both in $2\Lambda$, or neither are.
\end{corollary}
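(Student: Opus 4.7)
The plan is to combine the transitivity of $W$ on the set $C$ of $7$-crosspolytopes (Corollary~\ref{transsevenface}) with the explicit description of the stabilizer in Lemma~\ref{index2}, and then to translate the resulting index-$2$ condition into the stated criterion about sums modulo~$2\Lambda$.

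By Corollary~\ref{transsevenface}, I first pick $w\in W$ with $w(K_1)=K_2$; then $f$ extends to an automorphism of $\Lambda$ if and only if $g:=f\circ w^{-1}\in W$. Since $g$ is an automorphism of the colored graph on $K_2$, Lemma~\ref{index2} says this holds exactly when the image $\varphi(g)=((m_1,\ldots,m_7),\pi)\in\mu_2^7\rtimes S_7$ satisfies $\prod_{i=1}^7 m_i=1$, for some labeling $\{\{e'_i,f'_i\}\}_{i=1}^7$ of the orthogonal pairs in~$K_2$.

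Next, the $7$-subcliques of color $1$ in $K_2$ are precisely the $2^7$ sets $S_J:=\{e'_i:i\notin J\}\cup\{f'_i:i\in J\}$ indexed by subsets $J\subseteq\{1,\ldots,7\}$. By Remark~\ref{facetcrosspolytopehyperplane} the vector $n=e'_i+f'_i$ is independent of $i$ and lies in $\Lambda$ with $\|n\|^2=4$; hence $n\notin 2\Lambda$, since otherwise $n/2\in\Lambda$ would have squared norm $1$, contradicting that $\Lambda$ is even. Writing $f'_i-e'_i=n-2e'_i\equiv n\pmod{2\Lambda}$ gives
\[
\sum_{e\in S_J}e\;\equiv\;\sum_{i=1}^7 e'_i\;+\;|J|\cdot n\pmod{2\Lambda},
\]
so whether $\sum_{e\in S_J}e\in 2\Lambda$ depends only on the parity of $|J|$ and takes different truth values on the two parities. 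A direct check of how $g$ acts shows $g(S_J)=S_{J'}$ with $|J'|\equiv|J|+\#\{i:m_i=-1\}\pmod 2$, so $g$ preserves the function ``$\sum_{e\in S}e\in 2\Lambda$'' on these $2^7$ subcliques if and only if $\prod_i m_i=1$.

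To conclude, since $w\in W$ preserves $2\Lambda$, for every $7$-subclique $S$ of $K_1$ of color $1$ one has $\sum_{e\in S}e\in 2\Lambda$ if and only if $\sum_{e\in w(S)}e\in 2\Lambda$. As $S$ ranges over all such subcliques of $K_1$, the image $w(S)$ ranges over all such subcliques of $K_2$, so the condition of the corollary is equivalent to $g$ preserving the above function on $K_2$, equivalently to $\prod_i m_i=1$, equivalently to $f\in W$. The step requiring most care is the parity bookkeeping relating $|J'|$ to $|J|$ and the $m_i$; the rest assembles cited results.
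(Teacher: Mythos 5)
Your overall strategy is the same as the paper's: reduce to a single crosspolytope via Corollary~\ref{transsevenface}, invoke the index-$2$ description of the image of $W_s$ in $\Aut(H)$ from Lemma~\ref{index2}, and then show that the ``sum in $2\Lambda$'' invariant separates the two cosets. Your parity bookkeeping is correct and is in fact a cleaner, more conceptual version of the paper's explicit coordinate computation: the identification of the $2^7$ color-$1$ subcliques with subsets $J$, the congruence $\sum_{e\in S_J}e\equiv\sum_i e'_i+|J|\,n\pmod{2\Lambda}$, and the relation $|J'|\equiv|J|+\#\{i:m_i=-1\}\pmod 2$ all check out.

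There is, however, one genuine gap. From $n\notin 2\Lambda$ you conclude that the predicate ``$\sum_{e\in S_J}e\in 2\Lambda$'' \emph{takes different truth values on the two parities of $|J|$}. What actually follows is only that the two residues $\sum_i e'_i$ and $\sum_i e'_i+n$ are \emph{distinct} modulo $2\Lambda$; it does not follow that one of them is zero. If neither parity class had its sums in $2\Lambda$, the predicate would be identically false, every $g$ would trivially preserve it, and your converse direction (that $\prod_i m_i=-1$ forces a violation of the criterion) would collapse — indeed the corollary itself would then be false, since its condition would be vacuous. So you must additionally verify that some color-$1$ subclique of a $7$-crosspolytope has vertex sum in $2\Lambda$. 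This is exactly what the paper supplies by computing $\sum_{i=1}^7 c_i=(5,3,3,3,-1,1,1,1)\in 2\Lambda$ for the standard crosspolytope (by transitivity this then holds for every crosspolytope); alternatively you could cite Proposition~\ref{max178}(ii) together with the observation that $\{c_1,\ldots,c_7\}$ is a maximal $7$-clique in $\Gamma_1$. With that one line added, your proof is complete.
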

\begin{proof}
Consider the set $H=\{c_1\ldots,c_7,d_1,\ldots,d_7\}$, where the elements are defined above Lemma \ref{sevenfaces}. Note that the vertices in $H$ correspond to a 7-crosspolytope, and since $W$ acts transitively on the set of cliques corresponding to 7-crosspolytopes (Corollary \ref{transsevenface}), there are elements $\alpha$, $\beta$ in $W$ such that $\alpha(K_1)=\beta(K_2)=H$. So $\beta\circ f\circ\alpha^{-1}$ is an element in the automorphism group $\Aut(H)$ of $H$. Of course, $f$ extends to an element in $W$ if and only if $\beta\circ f\circ\alpha^{-1}$ does. Moreover, since $\alpha$ and~$\beta$ are automorphisms of $\Lambda$, the two sums $\sum_{i=1}^7f(e_i)$ and $\sum_{i=1}^7(\beta\circ f\circ\alpha^{-1})(e_i)$ are either both in or both not in $2\Lambda$. We conclude that we can reduce to the case where $K_1=K_2=H$, and $f$ is an element in $\Aut(H)$. \\
Let $W_H$ be the stabilizer of $H$ in $W$. By Lemma \ref{index2}, there exists an injective map $\psi\colon W_H\longrightarrow\Aut(H)$, whose image has index 2 in $\Aut(H)$. Of course, for all elements~$w$ in the image of $\psi$, and for all cliques $S=\{s_1,\ldots,s_7\}$ as in the statement, the sums $\sum_{i=1}^7s_i$ and $\sum_{i=1}^7w(s_i)$ are either both in, or both not in $2\Lambda$. We will show that this completely determines the image of $\psi$, that is, we will show that every element in $\Aut(H)\setminus\psi(W_H)$ does not have this property for all cliques $S$ as in the statement. To this end, consider the element $h$ in $\Aut(H)$ that exchanges $c_1$ and~$d_1$, and fixes all other vertices. Since $h$ exchanges an odd number of $c_i$ with $d_i$, it is not in the image of $\psi$. Note that $S=\{c_1,\ldots,c_7\}$ is a clique as in the statement. The sum $\sum_{i=1}^7c_i=(5,3,3,3,-1,1,1,1)$ is an element in $2\Lambda$, and its image under $h$, which is $\sum_{i=1}^7h(c_i)=d_1+\sum_{i=2}^7c_i=(4,2,4,4,-1,1,1,1)$, is not. Since all elements in $\Aut(H)\setminus\psi(W_H)$ are compositions of $h$ with elements in $W_H$, we conclude that for all elements $a$ in $\Aut(H)\setminus\psi(W_H)$, the sum $\sum_{i=1}^7a(c_i)$ is not an element in $2\Lambda$. Since the image of $\psi$ consists exactly of those elements in $\Aut(H)$ extending to an element in $W$, this finishes the proof.
\end{proof}

\begin{lemma}\label{driehoek}
The group $W$ acts transitively on the set $$B=\{(e_1,e_2,e_3)\in E^3\;|\;e_1\cdot e_2=0,\;e_2\cdot e_3=e_1\cdot e_3=1\}.$$
\end{lemma}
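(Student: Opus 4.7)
The plan is to apply Lemma~\ref{action}(iv) to the $W$-equivariant projection $\lambda\colon B\to A_0$, $(e_1,e_2,e_3)\mapsto (e_1,e_2)$, where $A_0 = \{(e_1,e_2)\in E^2 \mid e_1\cdot e_2 = 0\}$. By Proposition~\ref{pair}, $W$ already acts transitively on $A_0$, so it suffices to fix one pair $(e_1,e_2)\in A_0$ and prove that its stabilizer $W_{(e_1,e_2)}$ acts transitively on the fiber $\lambda^{-1}((e_1,e_2))$.

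I would first describe the fiber. By Lemma~\ref{sevenfaces}, the roots $e\in E$ with $e\cdot e_1 = e\cdot e_2 = 1$ are precisely the $12$ remaining vertices of the unique $7$-crosspolytope $H\in C$ containing $\{e_1,e_2\}$. Identifying $(e_1,e_2)$ with $(c_1,d_1)$ in the notation above Lemma~\ref{sevenfaces}, this fiber is the set $\{c_2,d_2,\ldots,c_7,d_7\}$.

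Next I would pin down $W_{(e_1,e_2)}$. Since $H$ is uniquely determined by the unordered pair $\{e_1,e_2\}$, every element of $W_{(e_1,e_2)}$ preserves $H$, and hence $W_{(e_1,e_2)}\subseteq W_H$. By Lemma~\ref{index2}, the image of $W_H$ in $\Aut(H)\cong\mu_2^7\rtimes S_7$ is the index-$2$ subgroup $\{((m_i),g)\mid \prod_{i=1}^7 m_i = 1\}$. Imposing the additional conditions $g(1)=1$ and $m_1=1$, which characterize those elements fixing the ordered pair $(c_1,d_1)$, one obtains the image of $W_{(e_1,e_2)}$: it acts on $\{c_2,d_2,\ldots,c_7,d_7\}$ via the subgroup $\{((m_2,\ldots,m_7),h)\in \mu_2^6\rtimes S_6 \mid \prod_{i=2}^7 m_i = 1\}$.

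Finally I would verify that this subgroup acts transitively on the twelve vertices, which is a direct combinatorial check: for any $j\in\{2,\ldots,7\}$, the transposition $h=(2\;j)\in S_6$ together with a parity-compatible choice of signs sends $c_2$ to $c_j$ (by taking all $m_i=1$) or to $d_j$ (by taking $m_2=m_j=-1$ and all other $m_i=1$). Lemma~\ref{action}(iv) then delivers transitivity of $W$ on $B$. The main technical obstacle is the bookkeeping involved in translating from $W_{(e_1,e_2)}\subseteq W$ to its image in $\mu_2^7\rtimes S_7$ via Lemma~\ref{index2} and keeping the parity constraint $\prod m_i = 1$ visible throughout; once this is spelled out, the remaining verification is elementary.
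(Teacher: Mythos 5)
Your proof is correct, but it takes a genuinely different route from the paper's. The paper fixes a single triple $b=(e_1,f_1,e_2)$ inside the crosspolytope, computes $|B|=240\cdot126\cdot12$, and then squeezes the stabiliser: $|W_b|\geq |W|/|B|=1920$ from orbit--stabiliser, and $|W_b|\leq\tfrac12\cdot2^5\cdot5!=1920$ by letting $W_b$ act faithfully on the ten remaining vertices $s'$ and intersecting the index-$2$ image of $W_s$ from Lemma~\ref{index2} with $\mu_2^5\rtimes S_5$; equality forces $Wb=B$. You instead run the base-plus-fiber argument of Lemma~\ref{action}(iv) over the projection to $A_0$, which shifts all the work to showing that $W_{(e_1,e_2)}$ acts transitively on the twelve remaining crosspolytope vertices, and you verify this by exhibiting explicit elements of the image of $W_{(e_1,e_2)}$ in $\mu_2^6\rtimes S_6$. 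Both arguments rest on the same two structural inputs (Lemma~\ref{sevenfaces} for the unique crosspolytope through an orthogonal pair, Lemma~\ref{index2} for its stabiliser), but yours is more constructive --- the reflections $r_{2j}$ and $t_{2j}\circ r_{2j}$ from the proof of Lemma~\ref{index2} already realise every element you need, so you never have to compute $|B|$ or prove an upper bound on a stabiliser order --- while the paper's counting version needs only the index-$2$ statement and no bookkeeping about which signed permutations are actually realised. One tiny slip: your recipe ``take $m_2=m_j=-1$ and all other $m_i=1$'' for sending $c_2$ to $d_j$ breaks down when $j=2$, where only one sign would be flipped and the parity constraint $\prod_{i=2}^7 m_i=1$ would fail; for that case you must flip $m_2$ together with some third index (this is exactly the image of $t_{2k}\circ r_{2k}$ for any $k\neq 2$, which fixes $c_1,d_1$ and sends $c_2$ to $d_2$). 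Since you already flag that the signs must be ``parity-compatible,'' this is a one-line repair rather than a gap.
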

\begin{proof}
By Proposition \ref{intersection} and Lemma \ref{sevenfaces}, we have $|B|=240\cdot126\cdot12=362880$. 
Let $c,s,A$ be as defined in Lemma \ref{index2}, and note that $b=(e_1,f_1,e_2)$ is an element in $B$. 
Let $W_b$ be the stabilizer in $W$ of $b$. 
Then we have $$|W_b|=\frac{|W|}{|Wb|}\geq\frac{|W|}{|B|}=1920.$$ We want to show that this is an equality. \\
Since $c$ is the unique element in $C$ containing $e_1,f_1$ by Lemma~\ref{sevenfaces}, the stabilizer $W_{b}$ of $b$ acts on the set $s$. 
If an element $w\in W_b$ fixes all the roots in $s$, then it fixes a full rank sublattice of finite index in $\Lambda$, and since $\Lambda$ is torsion free this implies that $w$ is the identity. Therefore the action of $W_b$ on $s$ is faithful, so there is an injective map $W_b\longrightarrow W_s$. Note that $f_2$ is uniquely determined in $s$ as the root that is orthogonal to $e_2$, so every element in $W_b$ fixes $e_1,e_2,f_1,f_2$, hence $W_b$ acts faithfully on $s'=\{e_3,\ldots,e_7,f_3,\ldots,f_7\}$. Let $A'$ be the automorphism group of the colored graph associated to $s'$. We know there is an isomorphism $\varphi':A'\longrightarrow\mu_2^5\rtimes S_5$ by Lemma \ref{semidirect product}. Since elements in $W_b$ respect the dot product, we have an injective map $\beta'\colon W_b\longrightarrow A'$. Let $\beta\colon W_s\longrightarrow A$ be the injective map from Lemma \ref{index2}. together with the injective maps $W_b\longrightarrow W_s$ and $A'\longrightarrow A$, we have the following commutative diagram.

\begin{center}
\begin{tikzcd}
W_b \arrow[r, hook, "\beta'"] \arrow[d, hook]
& A' \arrow[d,hook] \arrow[r,"\varphi'" ,"\sim"']& \mu_2^5\rtimes S_5 \arrow[d,hook]\\
W_s \arrow[r,hook,"\beta"] & A\arrow[r, "\varphi" ,"\sim"']& \mu_2^7\rtimes S_7
\end{tikzcd}
\end{center}

By Lemma \ref{index2}, the image $\varphi(\beta(W_s))$ is a subset of index 2 in $\mu_2^7\rtimes S_7$, given by subset $\left\{((m_1,\ldots,m_7),g)\in \mu_2^7\rtimes S_7\;|\;\prod_{i=1}^7m_i=1\right\}.$ Intersecting this subset with $\mu_2^5\rtimes S_5$ gives a subset of index 2 in $\mu_2^5\rtimes S_5$, so by the diagram above, the image $\varphi'(\beta'(W_b))$ has index at least 2 in $\mu_2^5\rtimes S_5$. We find $|W_b|\leq\frac{1}{2}\cdot2^5\cdot5!=1920$, so together with the inequality above we conclude that $|W_b|=1920$. So we find $|Wb|=\frac{|W|}{|W_b|}=362880=|B|$, and $W$ acts transitively on $B$. \end{proof}

We can now prove Proposition \ref{B}. 

\begin{proofofpropositionB}\label{prp2}
Note that for $a,b,c$ fixed and $\sigma$ any permutation of them, there is a bijection between the sets $V_{a,b,c}$ and $V_{\sigma(a),\sigma(b),\sigma(c)}$, so if we prove that $W$ acts transitively on one of them, then $W$ also acts transitively on the other by Lemma \ref{action}. Therefore, we only consider the sets $V_{a,b,c}$ where $a\leq b\leq c$.\\
There are 4 different sets with $a=b=c$. There are 12 different sets where two of $a,b,c$ are equal to each other and unequal to the third, and 4 different sets with $a,b,c$ all distinct. So there are 20 different sets $V_{a,b,c}$ with $a\leq b\leq c$.  
\begin{itemize}
\item[]$\bullet$ If $V_{a,b,c}$ is a non-empty set with $a=-2$, then every element $(e_1,e_2,e_3)$ in $V_{a,b,c}$ has $e_1=-e_2$, so $b=-c$. Therefore the set $V_{a,b,c}$ is empty for $(a,b,c)$ in \begin{align*}\{(-2,-2,-2),&(-2,-2,-1),(-2,-2,0),(-2,-2,1),\\
&(-2,-1,-1),(-2,-1,0),(-2,0,1),(-2,1,1)\}.
\end{align*}
\item[]$\bullet$ We have proved that $W$ acts transitively on the sets $V_{-1,-1,-1}$ (Corollary \ref{trans-1-1-1}), $V_{0,0,0}$ (Proposition \ref{three}), $V_{0,0,1}$ (Lemma \ref{lemma}), $V_{0,1,1}$ (Lemma \ref{driehoek}), and $V_{1,1,1}$ (Proposition~\ref{transitief}). 
\item[]$\bullet$ We have the following bijections.
\begin{align*}
&\{(e_1,e_2)\in E^2\;|\;e_1\cdot e_2=-1\}\longrightarrow V_{-2,-1,1},\;&(e_1,e_2)\longmapsto (-e_1,e_1,e_2);\\
&\{(e_1,e_2)\in E^2\;|\;e_1\cdot e_2=0\}\longrightarrow V_{-2,0,0},\;&(e_1,e_2)\longmapsto (-e_1,e_1,e_2);\\
&V_{0,1,1}\longrightarrow V_{-1,-1,0},\; &(e_1,e_2,e_3)\longmapsto (e_1,-e_3,e_2);\\
&V_{1,1,1}\longrightarrow V_{-1,-1,1},\; &(e_1,e_2,e_3)\longmapsto (e_1,-e_2,e_3);\\
&V_{0,0,1}\longrightarrow V_{-1,0,0},\;&(e_1,e_2,e_3)\longmapsto (-e_1,e_3,e_2);\\
&V_{0,1,1}\longrightarrow V_{-1,0,1},\; &(e_1,e_2,e_3)\longmapsto (-e_3,e_2,-e_1);\\
&V_{-1,-1,-1}\longrightarrow V_{-1,1,1},\; &(e_1,e_2,e_3)\longmapsto (e_1,e_2,-e_3).
\end{align*}
We proved that $W$ acts transitively on the six different sets on the left-hand sides. From Lemma \ref{action} it follows that $W$ acts transitively on $V_{-2,-1,1}$, $V_{-2,0,0}$, $V_{-1,-1,0}$, $V_{-1,-1,1}$, $V_{-1,0,0}$, $V_{-1,0,1}$, and $V_{-1,1,1}$, too.  
\end{itemize}
Since we proved that $V_{a,b,c}$ is either empty or $W$ acts transitively on it for 20 different sets, we conclude that we proved the proposition.
\end{proofofpropositionB}

The following corollary proves Theorem \ref{main2} for cliques of Type III. 

\begin{corollary}\label{corB}
Let $K_1$ and $K_2$ be two cliques of type III, and let $f\colon K_1\longrightarrow K_2$ be an isomorphism between them. Then $f$ extends to an automorphism of $\Lambda$.
\end{corollary}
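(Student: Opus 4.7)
The plan is to reduce directly to the transitivity statements already proved, namely Lemma \ref{weyltranse8}, Proposition \ref{A}, and Proposition \ref{B}. A clique of type III has by definition at most three vertices, so there are only three cases to consider, distinguished by the size of $K_1$ (which equals the size of $K_2$ since $f$ is an isomorphism).

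First I would dispose of the case $|K_1|=1$. Writing $K_1=\{e\}$ and $K_2=\{e'\}$, Lemma \ref{weyltranse8} gives a $w\in W$ with $w(e)=e'$, which extends $f$ because any automorphism of $\Lambda$ restricts to an automorphism of $\Gamma$. Next, for $|K_1|=2$, write $K_1=\{e_1,e_2\}$ and $K_2=\{e_1',e_2'\}$ with $e_i'=f(e_i)$. Since $f$ is an isomorphism of colored graphs, $e_1\cdot e_2 = e_1'\cdot e_2'$, so the ordered pairs $(e_1,e_2)$ and $(e_1',e_2')$ lie in the same set $\{(x_1,x_2)\in E^2\mid x_1\cdot x_2=a\}$. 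By Proposition \ref{A}, $W$ acts transitively on this set, so there is $w\in W$ with $w(e_i)=e_i'$ for $i=1,2$.

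The case $|K_1|=3$ is entirely analogous: writing $K_1=\{e_1,e_2,e_3\}$ and $e_i'=f(e_i)$, the colored-graph isomorphism gives $e_i\cdot e_j=e_i'\cdot e_j'$ for all $i\neq j$, so the ordered triples $(e_1,e_2,e_3)$ and $(e_1',e_2',e_3')$ lie in the same set $\{(x_1,x_2,x_3)\in E^3\mid x_1\cdot x_2=a,\;x_2\cdot x_3=b,\;x_1\cdot x_3=c\}$, which is non-empty by construction. By Proposition \ref{B}, $W$ acts transitively on this set, yielding an element $w\in W$ extending $f$.

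There is no real obstacle here; the substantive work has already been done in establishing transitivity on ordered pairs and triples. The only minor point to note is that an isomorphism $f\colon K_1\to K_2$ of colored cliques is the same datum as a bijection on vertex sets preserving all pairwise dot products, so the hypothesis is precisely what Propositions \ref{A} and \ref{B} ask for. Since $W$ coincides with the automorphism group of $\Lambda$ (see Section \ref{Background}), the conclusion is immediate.
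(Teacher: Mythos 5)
Your proposal is correct and follows exactly the same route as the paper: the corollary is deduced from the transitivity of $W$ on ordered sequences of at most three roots (Propositions \ref{A} and \ref{B}, with Lemma \ref{weyltranse8} covering the trivial size-one case). You merely spell out the three cases by size, which the paper compresses into one sentence.
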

\begin{proof}
Since the group $W$ acts transitively on the set of ordered sequences of $n$ roots for $1\leq n \leq3$ by , there exists an automorphism $w\in W$ of $\Lambda$ such that $w$ restricted to $K_1$ equals $f$.
\end{proof}

\section{Monochromatic cliques}\label{monocliques}

In this section we study the cliques of type I, that is, cliques in $\Graph{-2},\;\Graph{-1},\;\Graph{0},\;$ and~$\Graph{1}$. We describe the orbits under the action of $W$ of \textsl{sequences} of roots that form a clique, thus obtaining the results in Theorem \ref{main2} for cliques of type I (see Corollaries~\ref{thm2mono0} and~\ref{thm2mono1}). We also describe all maximal cliques per color. For $\Graph{-2}$ and $\Graph{-1}$, everything follows from the previous sections. For $\Graph{1}$ we already have Proposition~\ref{transitief}; we show moreover that there are no cliques of size bigger than eight, and describe the maximal cliques in Proposition~\ref{max178}. Finally, in this section we prove that $W$ acts transitively on ordered sequences of orthogonal roots of length~$r$ for $r\geq5$. The result is in Proposition \ref{orbitstuplesgamma0}. Throughout this section we do not use any computer. 

\vspace{11pt}

\textbf{Cliques in $\Graph{-2}$}\\
The maximal size of a clique in $\Graph{-2}$ is two, since such a maximal clique consists of an element in $E$ and its inverse (see Proposition \ref{intersection}). There are therefore 120 such cliques. In Lemma \ref{twosets} we showed that $W$ acts transitively on the set of ordered pairs $\{(e_1,e_2)\in E^2\;|\;e_1=-e_2\}$, so $W$ acts transitively on the set of maximal cliques in~$\Graph{-2}$.

\vspace{11pt}

\textbf{Cliques in $\Graph{-1}$}\\
In $\Graph{-1}$, the maximal size of a clique is three, and there are no maximal cliques of smaller size, by Lemma \ref{unique-1-1-1}. From Proposition \ref{intersection} and Lemma \ref{unique-1-1-1} it follows that there are $\frac{240\cdot56}{3!}=2240$ maximal cliques. By Corollary \ref{trans-1-1-1}, the group $W$ acts transitively on the set of sequences $\{(e_1,e_2,e_3)\in E^3\;|\;e_1\cdot e_2=e_2\cdot e_3=e_1\cdot e_3=-1\}$, so $W$ acts transitively on the set of maximal cliques in $\Graph{-1}$. By Lemma \ref{twosets}, the group $W$ acts transitively on the set $\{(e_1,e_2)\in E^2\;|\;e_1\cdot e_2=-1\}$, so $W$ acts also transitively on the set of cliques of size two in $\Graph{-1}$, of which there are $\frac{240\cdot56}{2}=6720$ (Proposition~\ref{intersection}).

\vspace{11pt}

\textbf{Cliques in $\Graph{0}$}\\
Cliques in $\Graph{0}$ are studied in \cite{DynMin10}, where they are called \textsl{orthogonal subsets}. In their article, the authors show that the maximal size of cliques in $\Graph{0}$ is eight \cite[Table~1]{DynMin10}, that two cliques of the same size $r$ are conjugate if $r\neq4$, and that there are two orbits of cliques of size 4 \cite[Corollary 2.3]{DynMin10}. In the previous section we showed that $W$ acts transitively on the set of \textsl{ordered sequences} of length at most~3 of orthogonal roots, and that there are two orbits of sequences of length~4. In this section we use this to conclude the same results as in \cite{DynMin10} for cliques of size $r\leq4$, and we compute the number of these cliques. Moreover, we study the action of $W$ on ordered sequences of length $\geq5$ of orthogonal roots (Proposition~\ref{orbitstuplesgamma0}), and compute the number of cliques of size $\geq5$ (Proposition \ref{gamma0}).

\vspace{11pt}

The following proposition deals with the cliques of size at most 4. 

\begin{proposition}\label{orbitskleinerdan4}
\begin{itemize}
\item[]
\item[](i) There are $15120$ cliques of size two in $\Graph{0}$, and the group $W$ acts transitively on the set of all of them.
\item[](ii) There are $302400$ cliques of size three in $\Graph{0}$, and the group $W$ acts transitively on the set of all of them.
\item[](iii) There are $1965600$ cliques of size four in $\Graph{0}$, and they form two orbits under the action of $W$: one of size $151200$, in which all cliques have vertices whose roots sum up to a vector in $2\Lambda$, and one of size $1814400$, in which all cliques have vertices whose roots sum op to a vector that is not in $2\Lambda$. 
\end{itemize}
\end{proposition}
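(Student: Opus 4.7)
The plan is to reduce each part of the proposition to the corresponding transitivity results from Section~\ref{facetsthreecliques} (on the $W$-sets $A_0$, $V_3$, and $V_4$) by passing between unordered cliques and ordered sequences. For each size $r\in\{2,3,4\}$, the roots in an orthogonal clique are pairwise distinct, so forgetting the ordering gives a surjective $W$-equivariant map from the set of ordered $r$-tuples of pairwise orthogonal roots to the set of $r$-cliques in $\Gamma_0$, whose fibers all have cardinality $r!$. Lemma~\ref{action} then lets us transfer both the transitivity and the counts.

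For (i), cliques of size two in $\Gamma_0$ are images of $A_0=\{(e_1,e_2)\in E^2\mid e_1\cdot e_2=0\}$ under forgetting the order. By Proposition~\ref{pair}, $W$ acts transitively on $A_0$, so by Lemma~\ref{action}(i) it acts transitively on the set of $2$-cliques, and there are $|A_0|/2=30240/2=15120$ of them. For (ii), the analogous argument with $V_3$ in place of $A_0$ applies: Proposition~\ref{three}(i) gives transitivity on $V_3$ with $|V_3|=1814400$, so there is a single $W$-orbit of $|V_3|/6=302400$ cliques of size three.

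For (iii), the set of $4$-cliques in $\Gamma_0$ is the image of $V_4$ under the map forgetting the order. By Proposition~\ref{orbits}, the $W$-set $V_4$ decomposes into two orbits, $Y$ of size $43545600$ and $V_4\setminus Y$ of size $3628800$, and the characterization $(e_1,\ldots,e_4)\in V_4\setminus Y \iff \sum_{i=1}^4 e_i\in 2\Lambda$ depends only on the underlying set (since the sum is symmetric in the $e_i$). Consequently the two $W$-orbits on $V_4$ descend to two disjoint $W$-orbits on $4$-cliques, one of size $|V_4\setminus Y|/24=151200$ consisting of cliques whose vertices sum to an element of $2\Lambda$, and one of size $|Y|/24=1814400$ consisting of the remaining cliques; the total is $1965600$.

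The only non-routine point is verifying that the distinction between the two orbits in (iii) is genuinely a property of the underlying clique rather than of the chosen ordering, but this is immediate because $\sum_{i=1}^4 e_i$ depends only on the set $\{e_1,\ldots,e_4\}$; combined with Proposition~\ref{orbits} this also shows that two cliques lie in distinct orbits whenever exactly one has vertex-sum in $2\Lambda$, so no further work is needed.
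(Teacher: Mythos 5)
Your proof is correct and follows essentially the same route as the paper: all three parts are reduced to the transitivity results for the ordered sets $A_0$, $V_3$, and $V_4$ (Propositions~\ref{pair}, \ref{three}, and \ref{orbits}) via the order-forgetting map with fibers of size $r!$, and the observation that the sum-in-$2\Lambda$ criterion is symmetric in the four roots is exactly the point the paper also makes to descend the two orbits on $V_4$ to two orbits on $4$-cliques. No gaps.
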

\begin{proof}
\begin{itemize}
\item[]
\item[](i) We have shown that the group $W$ acts transitively on the set $$A_0~=~\{\left(e_1,e_2\right)\in E^2\;|\;e_1\cdot e_2=0\}$$ (Proposition \ref{pair}), and $|A_0|=240\cdot126=30240$ (Proposition \ref{intersection}). It follows that there are $\frac{30240}{2}=15120$ cliques of size two in $\Graph{0}$, and the group $W$ acts transitively on the set of all of them.
\item[](ii) The group $W$ acts transitively on the set $$V_3~=~\{\left(e_1,e_2,e_3\right)\in E^3\;|\;\forall i\neq j:e_i\cdot e_j=0\},$$ and we have $|V_3|=1814400$ (Proposition \ref{three} (i)). It follows that there are $\frac{1814400}{6}=302400$ cliques of size three in $\Graph{0}$, and the group $W$ acts transitively on the set of all of them.
\item[](iii) By Proposition \ref{orbits} there are two orbits under the action of $W$ on the set$$V_4=\{\left(e_1,e_2,e_3,e_4\right)\in E^4\;|\;\forall i\neq j:e_i\cdot e_j=0\};$$ one of size $3628800$ where all elements have coordinates that sum up to a vector that is in $2\Lambda$, and one orbit of size $43545600$ where all elements have coordinates that sum up to a vector that is not in $2\Lambda$. Since the orbit in which an element is contained does not depend on the order of its coordinates, we conclude that this also gives two orbits with the same properties under the action of $W$ on the set of all cliques of size four in $\Graph{0}$, of sizes $\frac{3628800}{4!}=151200$ and $\frac{43545600}{4!}=1814400$, respectively.\qedhere
\end{itemize}
\end{proof}

We continue by studying the sequences of orthogonal roots of length greater than four. Recall the set~$V_4$ and its orbits under the action of $W$, given by $Y$ of size $43545600$ and $O=V_4\setminus Y$ of size $3628800$ (Proposition \ref{orbits}).

\begin{lemma}\label{ydisjoint}
For an element $y=(e_1,\ldots,e_4)\in Y$, define the set $$C_y=\{e\in E\;|\;e\cdot e_i=0\mbox{ for }i\in\{1,2,3,4\} \}.$$ The following hold. 
\begin{itemize}
\item[] (i) The set $C_y$ is the union of four sets $\{f_1,-f_1\},\{f_2,-f_2\},\{f_3,-f_3\},\{f_4,-f_4\}$ with $f_i\cdot f_j=0$ for $i\neq j$. For such a set $\{f_i,-f_i\}$, there is exactly one triple $\{e_{i_1},e_{i_2},e_{i_3}\}$ of elements in $y$ such that the permutations of $(e_{i_1},e_{i_2},e_{i_3},f_i)$ (or equivalently of $(e_{i_1},e_{i_2},e_{i_3},-f_i)$) form elements in $O$. Moreover, for $j\neq i$, and $j_1,j_2,j_3$ such that the permutations of $(e_{j_1},e_{j_2},e_{j_3},f_j)$ form elements in $O$, the sets $\{e_{i_1},e_{i_2},e_{i_3}\}$ and $\{e_{j_1},e_{j_2},e_{j_3}\}$ are different.
\item[](ii) The stabilizer of $y$ is generated by the reflections in the hyperplanes orthogonal to $f_i$ for $i\in\{1,2,3,4\}$.
\end{itemize}  
\end{lemma}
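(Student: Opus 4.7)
The plan is to exploit the transitivity of $W$ on $Y$ (Remark \ref{image}) to reduce both parts of the lemma to a single representative. A convenient choice is the tuple $y = (f_1, f_2, f_3, f_4)$ constructed in the proof of Lemma \ref{cardfiber}, namely $f_1 = (1,1,0,0,0,0,0,0)$, $f_2 = (0,0,1,1,0,0,0,0)$, $f_3 = (0,0,0,0,1,1,0,0)$, $f_4 = (1,-1,0,0,0,0,0,0)$. Both the decomposition of $C_y$ and the reflections arising from its roots are equivariant under $W$, and the property that a sum of roots lies in $2\Lambda$ is $W$-invariant, so it suffices to verify everything for this $y$.

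For part (i), I would first determine $C_y$ explicitly. Writing a candidate root as $(a_1, \ldots, a_8)$, the conditions $e \cdot f_1 = e \cdot f_4 = 0$ force $a_1 = a_2 = 0$, which eliminates all half-integer roots, and $e \cdot f_2 = e \cdot f_3 = 0$ give $a_3 = -a_4$ and $a_5 = -a_6$. A short case analysis produces exactly the eight integer roots $\pm g_1, \pm g_2, \pm g_3, \pm g_4$ with $g_1 = (0,0,1,-1,0,0,0,0)$, $g_2 = (0,0,0,0,1,-1,0,0)$, $g_3 = (0,0,0,0,0,0,1,1)$, $g_4 = (0,0,0,0,0,0,1,-1)$, which are pairwise orthogonal by direct inspection. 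For each $i$ and each of the four triples $T \subset \{f_1, f_2, f_3, f_4\}$, I would compute the sum of $g_i$ and the roots in $T$ and, using the criterion of Proposition \ref{orbits}, check whether this sum lies in $2\Lambda$; a direct tabulation of the sixteen sums produces the bijection $g_1 \leftrightarrow \{f_1, f_2, f_4\}$, $g_2 \leftrightarrow \{f_1, f_3, f_4\}$, $g_3 \leftrightarrow \{f_1, f_2, f_3\}$, $g_4 \leftrightarrow \{f_2, f_3, f_4\}$, giving four distinct triples. The equivalence with $-f_i$ in the statement is automatic, since replacing $f_i$ by $-f_i$ changes the sum by $2 f_i \in 2\Lambda$.

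For part (ii), I would combine an order count with a direct construction of reflections. From $|Y| = 43545600$ (Proposition \ref{orbits}) and $|W| = 696729600$, the orbit-stabilizer theorem gives $|W_y| = 16$. For each $i$, the root $g_i$ is orthogonal to every $f_j$, so the reflection $r_i \in W$ in the hyperplane $g_i^{\perp}$ pointwise fixes $y$ and belongs to $W_y$. Because the $g_i$ are pairwise orthogonal, the $r_i$ pairwise commute, and being distinct non-trivial involutions in independent directions they generate a subgroup of $W_y$ isomorphic to $(\mathbb{Z}/2\mathbb{Z})^4$, of order $16$. This forces equality with $W_y$. The main obstacle is the bookkeeping in part (i): one must carry out the sixteen sum computations carefully, paying attention to the coset structure of $2\Lambda$ inside $\Lambda$ (in particular that a half-integer vector lies in $\Lambda$ only when its coordinate sum is even), and then verify both that exactly one triple arises for each $g_i$ and that the four triples are pairwise distinct.
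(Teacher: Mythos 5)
Your proposal is correct and follows essentially the same route as the paper: reduce to the same explicit representative via transitivity on $Y$, compute the eight orthogonal roots directly, detect the triples via the $2\Lambda$ criterion of Proposition \ref{orbits}, and for (ii) combine the orbit--stabilizer count $|W_y|=16$ with the four commuting reflections in the hyperplanes orthogonal to those roots. The only differences are cosmetic (your labeling of the $g_i$ versus the paper's $f_i$, and your explicit remark that replacing $f_i$ by $-f_i$ shifts the sum by an element of $2\Lambda$, which the paper handles with the $\pm$ sign).
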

\begin{proof}
Since $W$ acts transtively on $Y$, it suffices to show this for a fixed element $y\in Y$. Set \begin{align*}
&e_1=\left(1,1,0,0,0,0,0,0\right),  & &e_3=\left(0,0,0,0,1,1,0,0\right),\\ &e_2=\left(0,0,1,1,0,0,0,0\right),  & &e_4=\left(1,-1,0,0,0,0,0,0\right).\end{align*}
Then $(e_1,e_2,e_3,e_4)$ is an element in $V_4$ and since $\sum_{i=1}^4 e_i\notin 2\Lambda$, it is an element in~$Y$ as well by Proposition \ref{orbits}. Take $e=(a_1,\ldots,a_8)\in E$ such that $e\cdot e_i=0$ for $i\in \{1,2,3,4\}$. Then we have $a_1+a_2=a_1-a_2=a_3+a_4=a_5+a_6=0$. We find the following possibilities.
\begin{align*}
&\pm f_1=\pm\left(0,0,0,0,0,0,1,-1\right),  & &\pm f_3=\pm\left(0,0,1,-1,0,0,0,0\right),\\ 
&\pm f_2=\pm\left(0,0,0,0,1,-1,0,0\right),& &\pm f_4=\pm\left(0,0,0,0,0,0,1,1\right).\end{align*}
It is an easy check that $f_i\cdot f_j=0$ for $i\neq j$, and for $i,k\in {1,2,3,4}$, the sum $\left(\sum_{j\neq i}e_j\right)\pm f_k$ is contained in $2\Lambda$ if and only if $i=k$. This proves (i). We continue with (ii). Take $i\in\{1,2,3,4\}$. Since $f_i$ is orthogonal to the elements in $y$ the reflection $r_i$ in the hyperplane orthogonal to $f_i$ is an element of $W_y$. For $i\neq j$, the reflections $r_i$ and $r_j$ commute, since $f_i$ and $f_j$ are orthogonal. Therefore the elements $r_1,r_2,r_3,r_4$ generate a subgroup of $W_y$ of order 16. Since we have $$|W_y|=\frac{|W|}{|Y|}=\frac{696729600}{43545600}=16,$$ they generate the whole group $W_y$. 
\end{proof}

\begin{corollary}\label{uniqueY}
Set $n_5=1,\;n_6=3,\;n_7=7,$ and $n_8=14$. Let $K$ be a clique of size $r\in\{5,6,7,8\}$ in $\Graph{0}$. Then the number of sets of four vertices $e_1,e_2,e_3,e_4$ in $K$ such that the permutations of $(e_1,e_2,e_3,e_4)$ are elements in $O$ is equal to $n_r$.
\end{corollary}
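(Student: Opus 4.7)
The strategy is to reduce to a single representative per size and then count directly. By the Dynkin--Minchenko result quoted at the start of this section, for each $r\in\{5,6,7,8\}$ the cliques of size $r$ in $\Gamma_0$ form a single $W$-orbit; since $W$ preserves $\Lambda$, the condition $\sum_{i=1}^4 e_i\in 2\Lambda$ is $W$-invariant, so the number of ``bad'' 4-subsets of $K$ depends only on $r$, and it suffices to verify the formula on one explicit $K_r$ for each size.

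I would take the $D_4\oplus D_4$-style representative. Let $\epsilon_1,\ldots,\epsilon_8$ be the canonical basis of $\mathbb{R}^8$, set $a_i=\epsilon_{2i-1}+\epsilon_{2i}$ and $b_i=\epsilon_{2i-1}-\epsilon_{2i}$ for $i=1,2,3,4$, and let $K_8=\{a_1,b_1,\ldots,a_4,b_4\}$; all $a_i,b_i$ are roots, and pairwise orthogonality is immediate from the disjointness of the pairs of nonzero coordinates (together with $a_i\cdot b_i=0$). Take nested subcliques $K_7=K_8\setminus\{b_4\}$, $K_6=K_7\setminus\{a_4\}$, $K_5=K_6\setminus\{b_3\}$ as representatives for $r=7,6,5$. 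For a 4-subset $S\subset K_r$ with sum $\sigma=\sum_{e\in S}e$, observe that $\sigma\in2\Lambda$ is equivalent to $\sigma/2\in\Lambda$, and classify $S$ by the number $k\in\{0,1,2\}$ of complete pairs $\{a_i,b_i\}$ it contains. Using the definition of $\Lambda$ (integer or shifted half-integer with even coordinate sum), one checks case by case that $k=1$ never gives a bad subset, because $\sigma/2$ then mixes integer and half-integer entries; $k=2$ always does, because $\sigma/2=\epsilon_{2i-1}+\epsilon_{2j-1}$ is an integer vector with coordinate sum $2$; and for $k=0$, writing $\sigma/2=\tfrac12(1,\delta_1,1,\delta_2,1,\delta_3,1,\delta_4)$ with $\delta_i\in\{\pm1\}$ encoding the choice $a_i$ vs $b_i$, the shifted-half-integer condition gives $\sigma/2\in\Lambda$ precisely when $\sum_i\delta_i\in\{-4,0,4\}$.

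Once this trichotomy is in hand, the final enumeration is routine. For $K_8$ there are $\binom{4}{2}=6$ two-pair subsets, together with the $1+\binom{4}{2}+1=8$ admissible zero-pair sign patterns, for a total of $14=n_8$. Successively removing $b_4$, $a_4$, $b_3$ kills precisely those bad subsets that use the discarded vertex: for $K_7$ one pair becomes incomplete and $\delta_4$ is forced to $+1$, leaving $\binom{3}{2}+4=7$; for $K_6$ only the two-pair case survives, giving $\binom{3}{2}=3$; and for $K_5$ only pairs $1,2$ are complete, yielding a unique bad subset $\{a_1,b_1,a_2,b_2\}$. The only delicate point is the characterization of $2\Lambda$-membership through the sign-sum condition in the $k=0$ case, which follows directly from the definition of $\Lambda$; after that the case analysis presents no real obstacle.
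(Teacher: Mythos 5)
Your proof is correct --- I checked the $D_4\oplus D_4$ representative, the trichotomy on the number $k$ of complete pairs $\{a_i,b_i\}$, and the resulting counts $6+8=14$, $3+4=7$, $3$, $1$, and all of it holds up (using, implicitly, Proposition \ref{orbits} to identify membership in $O$ with $\sum_{i=1}^4 e_i\in2\Lambda$). But it takes a genuinely different route from the paper, and the difference matters for the logical architecture. The paper argues without any transitivity on cliques of size $r\ge5$: it first shows $n_5=1$ by a uniqueness argument (two $O$-quadruples sharing three vertices would force a pair of opposite roots, via Proposition \ref{three}(iii)), and then runs an induction on $r$ using Lemma \ref{ydisjoint} to derive the recursion $n_s=n_{s-1}+\bigl(\tbinom{s-1}{4}-n_{s-1}\bigr)/(s-4)$. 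This is deliberate: within the paper, transitivity of $W$ on size-$r$ cliques for $r\ge5$ is a \emph{consequence} of this corollary (Theorem \ref{gamma0}(iii) for $r=5$, and Proposition \ref{G0,678} for $r=6,7,8$ via Lemma \ref{maxclique} and Corollary \ref{trans16}, both of which cite the values $n_r$). So your reduction step cannot lean on anything proved later in the paper --- it would be circular --- and is only valid because you import the transitivity statement from the external reference \cite{DynMin10}, Corollary 2.3, as you do. Granting that import, your argument is shorter and makes the numbers transparent ($\tbinom{k}{2}$ two-pair subsets plus the admissible sign patterns $\sum_i\delta_i\equiv0\pmod4$); what the paper's longer induction buys is self-containedness plus the finer structural information in Lemma \ref{ydisjoint}, which is reused elsewhere (e.g.\ in Proposition \ref{orbitstuplesgamma0} and Theorem \ref{gamma0}).
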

\begin{proof} First let $K$ be a clique of size $5$ in $\Graph{0}$. Assume in contradiction that there are two distinct subsets, say $y_1,y_2$, of four vertices in $K$ that form an element in~$O$. Then there are three vertices of $K$, say $e_1,e_2,e_3$, that are contained both in $y_1$ and~$y_2$. Write $y_1=\{e_1,e_2,e_3,f_1\},\;y_2=\{e_1,e_2,e_3,f_2\}$. By applying Proposition~\ref{three}~(iii) to the triple $(e_1,e_2,e_3)$, it follows that $f_1=-f_2$, so $f_1\cdot f_2=-2$. But this gives a contradiction, since $f_1,f_2$ are both in $K$. So the number of sets of four vertices in~$K$ that form an element in $O$ is at most $1$, which means that there is at least one subset $\{g_1,g_2,g_3,g_4\}$ of $K$ of four roots such that $(g_1,g_2,g_3,g_4)$ is an element in $Y$. For the fifth element in $K$, say $g_5$, it follows from the previous lemma that there is exactly one triple $\{g_{\alpha},g_{\beta},g_{\gamma}\}$ of elements in $\{g_1,\ldots,g_4\}$ that it forms an element in~$O$ with. We conclude that there is exactly 1 set of four vertices in $K$ that form an element in $O$; this proves the statement for $r=5$.\\
We proceed by induction. Take $s\in\{6,7,8\}$. Assume that the statement holds for $5\leq r<s$, and let $K~=~\{e_1,\ldots,e_s\}$ be a clique of size $s$ in $\Graph{0}$. By induction we know that $\{e_1,\ldots,e_{s-1}\}$ contains $n_{s-1}$ subsets of size four that form an element in~$O$. That means that there are $\binom{s-1}{4}-n_{s-1}$ subsets of size four in $\{e_1,\ldots,e_{s-1}\}$ that form an element in $Y$. By Lemma \ref{ydisjoint}, each of these $\binom{s-1}{4}-n_{s-1}$ subsets contains exactly three elements that, together with $e_s$, form an element in~$O$. Let $d_1,d_2,d_3$ be three elements in $\{e_1,\ldots,e_{s-1}\}$ such that $(d_1,d_2,d_3,e_s)$ is an element in~$O$. Then for every element $d\in\{e_1,\ldots,e_{s-1}\}\setminus\{d_1,d_2,d_3\}$, the set $\{d_1,d_2,d_3,e_s,d\}$ forms a clique of size 5 in~$\Graph{0}$, and since $n_5=1$, it follows that $(d_1,d_2,d_3,d)$ is an element in~$Y$. This means that every set of three roots in $\{e_1,\ldots,e_{s-1}\}$ that forms an element in $O$ with $e_s$ forms an element in $Y$ with all other roots in $\{e_1,\ldots,e_{s-1}\}$. Since every set of three roots in $\{e_1,\ldots,e_{s-1}\}$ is contained in $(s-1)-3$ subsets of size four of $\{e_1,\ldots,e_{s-1}\}$, this gives $\tfrac{\binom{s-1}{4}-n_{s-1}}{s-4}$ distinct sets of three that form an element in $O$ with $e_s$. In total this gives $n_{s-1}+\tfrac{\binom{s-1}{4}-n_{s-1}}{s-4}$ sets of four vertices in $K$ that form an element in $O$. This is exactly equal to $n_s$ for $s=6,7,8$.
\end{proof}

For $1\leq r\leq 8$, let $V_r$ be the set $$V_r=\{(e_1,\ldots,e_r)\in E^r\;|\;\forall i\neq j:e_i\cdot e_j=0\}.$$

\begin{proposition}\label{orbitstuplesgamma0}
For $5\leq r\leq8$, two elements $(e_1,\ldots,e_r)$, $(f_1,\ldots,f_r)$ in $V_r$ are in the same orbit under the action of $W$ if and only if for all $1\leq i<j<k<l\leq r$, the elements $(e_i,e_j,e_k,e_l)$ and $(f_i,f_j,f_k,f_l)$ are conjugate in $V_4$ under the action of~$W$. 
\end{proposition}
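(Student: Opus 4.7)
The ``only if'' direction is immediate: if $w\in W$ sends $(e_1,\ldots,e_r)$ to $(f_1,\ldots,f_r)$, then it sends every 4-sub-sequence $(e_i,e_j,e_k,e_l)$ to $(f_i,f_j,f_k,f_l)$, so these are conjugate in $V_4$.

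For the non-trivial direction I proceed by induction on $r$. The strategy at each step is to align the first $r-1$ coordinates using the inductive hypothesis, and then to adjust the last coordinate within the stabilizer of the aligned tuple. A recurring observation, used in each step, is that the stabilizer $W_0$ in $W$ of an ordered tuple $(a_1,\ldots,a_k)\in V_k$ coincides with the Weyl group $W(L)$ of the root system $L\cap E$, where $L$ is the orthogonal complement of $\mathrm{span}(a_1,\ldots,a_k)$ in $\Lambda$: the reflection in any root of $L\cap E$ fixes each $a_i$, which gives the inclusion $W(L)\subseteq W_0$, and equality of orders follows from a counting argument using the iteratively established sizes of orbits on $V_k$.

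For the base case $r=5$, Corollary~\ref{uniqueY} attaches to each 5-tuple the unique index $j_0\in\{1,\ldots,5\}$ whose omission produces a 4-subtuple lying in $O$; the matching-pattern hypothesis forces this index to agree between $(e_i)$ and $(f_i)$. Using transitivity of $W$ on $O$ (Proposition~\ref{orbits}), I align the four non-distinguished coordinates, so that both fifth coordinates lie in the set of roots orthogonal to the common aligned 4-tuple. A direct enumeration in the spirit of Lemma~\ref{emptyfibers} shows that this set consists of 24 roots forming a $D_4$ root system; the stabilizer of the aligned 4-tuple then has order $|W|/|O|=192=|W(D_4)|$ and hence equals $W(D_4)$, which acts transitively on its roots, so the fifth coordinates can be matched.

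For the inductive step $r\in\{6,7,8\}$, I apply the inductive hypothesis to $(e_1,\ldots,e_{r-1})$ and $(f_1,\ldots,f_{r-1})$ to align the first $r-1$ coordinates. Since unordered $(r-1)$-cliques in $\Gamma_0$ form a single $W$-orbit by \cite{DynMin10}, the root system $L\cap E$ depends only on $r$, and direct inspection in a representative tuple yields types $A_1^3$, $A_1^2$, and $A_1$ for $r=6,7,8$ respectively. The stabilizer of the aligned $(r-1)$-tuple is therefore $W(A_1^{9-r})$, acting on $L\cap E$ with one orbit per $A_1$-factor. For $r=8$ there is only one factor, so the last coordinate is matched automatically. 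For $r\in\{6,7\}$ one verifies by direct computation that distinct $A_1$-factors yield distinct patterns of 4-subtuples lying in $O$, where these 4-subtuples are obtained by augmenting a representative root of the factor with a triple from $\{e_1,\ldots,e_{r-1}\}$; the matching-pattern hypothesis then forces $e_r$ and $f_r$ to lie in the same factor, hence in the same $W_0$-orbit. The main obstacle is this last verification, which is a finite but mildly intricate computation, since for each component one must determine precisely which triples from $\{e_1,\ldots,e_{r-1}\}$, when augmented by a representative root, produce a sum in $2\Lambda$.
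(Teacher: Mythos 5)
Your proof is correct, and its inductive skeleton --- align the first $r-1$ coordinates, then move the last coordinate inside the stabilizer of the aligned tuple --- is the same as the paper's, which phrases the alignment as a projection onto the first $r-1$ coordinates and invokes Lemma \ref{action}. The genuine differences lie in how the stabilizers are handled. For the base case $r=5$ the paper projects onto a $4$-subtuple lying in $Y$ and uses the explicit order-$16$ stabilizer $W_y$ of Lemma \ref{ydisjoint}(ii), which forces a case distinction according to whether the first four coordinates land in $Y$ or in $O$; you instead align the unique $O$-subtuple and observe that its orthogonal complement consists of $24$ roots forming a $D_4$ system, on which the order-$192$ stabilizer, identified with $W(D_4)$, acts transitively. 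This avoids the case split and is arguably cleaner, at the small cost of verifying that the $24$ roots really form a $D_4$ and that $|W|/|O|=192$. In the inductive step the two arguments coincide in substance: the paper deduces from the equivalence-class constraint and Lemma \ref{ydisjoint}(i) that the fiber over the aligned $(r-1)$-tuple is a pair $\{f,-f\}$ swapped by a single reflection, while you identify the full stabilizer as $W(A_1^{9-r})$ and use the same $O$-membership patterns of Lemma \ref{ydisjoint}(i) to single out the correct $A_1$-factor. The one point where you are terser than you should be is the claim that the stabilizer of an ordered $(r-1)$-tuple has order exactly $2^{9-r}$: your counting argument does work, because the relevant orbit sizes become available once the preceding case of the induction is established, but it is quicker to note that this stabilizer sits inside the group $W_y\cong\mu_2^4$ of Lemma \ref{ydisjoint}(ii) attached to any $Y$-subtuple and is visibly the product of the $\mu_2$-factors corresponding to the unused roots $f_i$.
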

\begin{proof}
For $5\leq r\leq 8$, define the relation $\sim$ on $V_r$ by $(e_1,\ldots,e_r)\sim(f_1,\ldots,f_r)$ if and only if for all $1\leq i<j<k<l\leq r$, the elements $(e_i,e_j,e_k,e_l)$ and $(f_i,f_j,f_k,f_l)$ are conjugate in $V_4$. Note that $\sim$ is an equivalence relation on $V_r$, and the group $W$ acts on the equivalence classes. Our goal is to show that each equivalence class is an orbit in $V_r$ under the action of $W$. We do this by induction on $r$.\\
For $r=5$, let $X_5\subset V_5$ be an equivalence class with respect to $\sim$. We distinguish two cases. If for every element in $X_5$ the first four coordinates form an element in $Y$, we let $p\colon X_5\longrightarrow Y$ be the projection to the first four coordinates. Note that this is surjective by Lemma \ref{ydisjoint}. Set $y=(y_1,\ldots,y_4)\in Y$. Since the elements in the fiber $p^{-1}(y)$ are equivalent under $\sim$, there are exactly two elements $(y_1,\ldots,y_4,f),(y_1,\ldots,y_4,-f)$ in $p^{-1}(y)$ by Lemma \ref{ydisjoint} (i). Moreover, the stabilizer $W_y$ acts transitively on these two elements by Lemma \ref{ydisjoint} (ii). From Lemma \ref{action} it follows that $W$ acts transitively on $X_5$. If, on the other hand, for every element in~$X_5$ the first four coordinates form an element in $O$, then the last four coordinates of every element in $X_5$ form an element in $Y$ by Corollary \ref{uniqueY}. We now let $p\colon X_5\longrightarrow Y$ be the projection to the last four coordinates, and the proof is the same. \\    
Now assume that $r>5$, and that each equivalence class in $V_{r-1}$ is an orbit under the action of $W$. Let $X_r$ be an equivalence class in $V_r$, and $p_r\colon X_r\longrightarrow V_{r-1}$ the projection on the first $r-1$ coordinates. Then $W$ acts on $p_r(X_r)$, and $p_r(X_r)$ is contained in an equivalence class $X_{r-1}$ with respect to $\sim$ in $V_{r-1}$. Since $W$ acts transitively on $X_{r-1}$ by hypothesis, it follows that $p_r(X_r)=X_{r-1}$, and $W$ acts transitively on $p_r(X_r)$. Since $r>5$, by Corollary \ref{uniqueY} there exist $i,j,k,l\in\{1,\ldots,r-1\}$ such that for all elements $(e_1,\ldots,e_r)\in X_r$ we have $(e_i,e_j,e_k,e_l)\in Y$. Fix such $i,j,k,l$, and let $v=(v_1,\ldots,v_{r-1})$ be an element in $p_r(X_r)$. Then $(v_i,v_j,v_k,v_l)$ is an element in $Y$. Let $(v_1,\ldots,v_{r-1},f)$, $(v_1,\ldots,v_{r-1},g)$ be elements in the fiber $p_r^{-1}(v)$. Since $(v_1,\ldots,v_{r-1},f)$ is equivalent to $(v_1,\ldots,v_{r-1},g)$ with respect to $\sim$, by applying Lemma \ref{ydisjoint} to $(v_i,v_j,v_k,v_l)$ we see that $f=-g$, and the fiber $p_r^{-1}(v)$ consists of the two elements $(v_1,\ldots,v_{r-1},f)$ and $(v_1,\ldots,v_{r-1},-f)$. Moreover, the reflection in the hyperplane orthogonal to $f$ fixes $v_1,\ldots,v_{r-1}$, hence is an element in the stabilizer of $v$ that switches $f$ and $-f$. So the stabilizer of $v$ acts transitively on $p_r^{-1}(v)$, and again from Lemma \ref{action} we conclude that $W$ acts transitively on $X_r$.    
\end{proof}

\begin{corollary}\label{thm2mono0}
Let $K_1$ and $K_2$ be two cliques in $\Graph{0}$, and $f\colon K_1\longrightarrow K_2$ an isomorphism between them. Then $f$ extends to an automorphism of $\Lambda$ if and only if for every subclique $S$ of size 4 in $K_1$, the image $f(S)$ in $K_2$ is conjugate to $S$ under the action of~$W$.
\end{corollary}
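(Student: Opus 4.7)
The forward implication is trivial: any automorphism of $\Lambda$ extending $f$ sends each size-$4$ subclique of $K_1$ to its $f$-image, so $S$ and $f(S)$ are automatically $W$-conjugate. The content is in the converse. My plan is to fix an enumeration $K_1=\{e_1,\ldots,e_r\}$ and show that under the stated hypothesis, the ordered tuples $S=(e_1,\ldots,e_r)$ and $f(S)=(f(e_1),\ldots,f(e_r))$ lie in the same $W$-orbit of the set $V_r$ of ordered orthogonal $r$-tuples. The element of $W$ realizing this conjugacy will then send $e_i$ to $f(e_i)$ for all $i$, and hence extend $f$ to an automorphism of $\Lambda$.

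For $r\leq 3$ the hypothesis on $4$-subcliques is vacuous, and transitivity of $W$ on $V_r$ is immediate from Propositions \ref{A} and \ref{B} (applied with all inner products equal to $0$). For $r=4$, the hypothesis says exactly that $K_1$ and $f(K_1)$ belong to the same $W$-orbit as unordered $4$-cliques; by Proposition \ref{orbits}, these two orbits are distinguished by whether the sum of the four roots lies in $2\Lambda$, an invariant that depends only on the underlying set. Hence the ordered tuples $S$ and $f(S)$ lie in the same orbit ($Y$ or $V_4\setminus Y$), and the $W$-transitivity on each orbit delivers the desired $w$.

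For $r\geq 5$, the plan is to invoke Proposition \ref{orbitstuplesgamma0}, which reduces $W$-conjugacy in $V_r$ to $W$-conjugacy of every $4$-element subsequence in $V_4$. By the $r=4$ argument, for any $1\le i<j<k<l\le r$, the fact that the unordered subclique $\{e_i,e_j,e_k,e_l\}$ is $W$-conjugate to $\{f(e_i),f(e_j),f(e_k),f(e_l)\}$ (which is our hypothesis) upgrades to $W$-conjugacy of the ordered tuples $(e_i,e_j,e_k,e_l)$ and $(f(e_i),f(e_j),f(e_k),f(e_l))$ in $V_4$. The key subtlety throughout, and the only point that requires care, is precisely this upgrade from set-conjugacy to tuple-conjugacy: it works because the distinguishing invariant of Proposition \ref{orbits} is symmetric in the four coordinates, so for $4$-cliques set-conjugacy and ordered-tuple-conjugacy coincide. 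With that observation in hand, Proposition \ref{orbitstuplesgamma0} closes the argument and produces the extension.
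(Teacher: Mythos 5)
Your proposal is correct and follows essentially the same route as the paper: handle sizes at most $3$ via the transitivity results of Propositions \ref{A} and \ref{B} (the paper packages this as Corollary \ref{corB}), reduce sizes at least $5$ to $4$-subtuples via Proposition \ref{orbitstuplesgamma0}, and use the fact from Proposition \ref{orbits} that the two orbits of $V_4$ are distinguished by the order-independent invariant $\sum e_i\in 2\Lambda$ to upgrade set-conjugacy of $4$-cliques to conjugacy of ordered $4$-tuples. The subtlety you flag is exactly the one the paper addresses with the remark that the two orbits of ordered sequences ``do not depend on the order of the roots.''
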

\begin{proof}
If $K_1$ and $K_2$ have size $\leq3$, then $f$ extends always by Corollary \ref{corB}. From Proposition \ref{orbitstuplesgamma0} it follows that if $K_1$ and $K_2$ have size at least four, the isomorphism~$f$ extends to an element in $W$ exactly when $f$ sends every sequence of four roots that form an element in $V_4$ to a conjugate element in $V_4$. By Proposition \ref{orbits}, there are two orbits of ordered sequences of four pairwise orthogonal roots, that do not depend on the order of the roots. We conclude that if $S$ and $f(S)$ are conjugate under the action of $W$ for every set $S$ of four vertices in $K_1$, there exists an automorphism~$w\in W$ of $\Lambda$ such that $w$ restricted to $K_1$ equals~$f$.
\end{proof}

\begin{theorem}\label{gamma0}
In $\Graph{0}$, the following hold.
\begin{itemize}
\item[] (i) There are no maximal cliques of size smaller than eight. 
\item[](ii) There are $3628800$ cliques of size five, $3628800$ cliques of size six, $2073600$ cliques of size seven, and $518400$ cliques of size eight.
\item[](iii) The group $W$ acts transitively on the cliques of size $5$.
\end{itemize}
\end{theorem}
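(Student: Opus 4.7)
The plan is to prove (iii) first, then (i), and finally (ii), since the transitivity and extension results feed naturally into the count. For (iii), I apply Corollary \ref{uniqueY} with $n_5 = 1$: each $5$-clique contains a unique $4$-subset whose tuples lie in $O$. Ordering both cliques so that this distinguished $4$-subset occupies positions $1$--$4$, the four remaining $4$-subsequences are all forced to lie in $Y$, so the two orderings exhibit the same $Y/O$ pattern across all $\binom{5}{4} = 5$ indexed $4$-subsequences. Proposition \ref{orbitstuplesgamma0} then yields $W$-conjugacy of the ordered tuples, hence of the cliques as sets.

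For (i) I show directly that every clique of size $r < 8$ admits an orthogonal extension. For $r \leq 3$ the orthogonal-root counts from Proposition \ref{intersection}, Lemma \ref{60}, and Proposition \ref{three}(ii) already guarantee many candidates. For $r = 4$, since membership in $Y$ or $O$ is invariant under reordering (Proposition \ref{orbits}), I split by whether $(e_1,e_2,e_3,e_4) \in Y$ or $\in O$: in the former case Lemma \ref{ydisjoint}(i) yields $8$ extensions; in the latter, Proposition \ref{three}(iii) shows that any of the $24$ roots $e$ with $(e_1,e_2,e_3,e) \in Y$ is orthogonal to the fourth vertex, which equals $\pm u$ in the notation of that proposition. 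For $5 \leq r \leq 7$, Corollary \ref{uniqueY} provides a reordering with $(e_1,e_2,e_3,e_4) \in Y$; Lemma \ref{ydisjoint}(i) displays the orthogonal roots as $\{\pm f_1, \pm f_2, \pm f_3, \pm f_4\}$ with the $f_i$ mutually orthogonal, and the remaining $r - 4 < 4$ pairwise orthogonal vertices occupy fewer than four pairs $\{f_i, -f_i\}$, leaving an unused pair from which to adjoin.

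For (ii) I double-count the set $V_r^Y = \{(e_1, \ldots, e_r) \in V_r : (e_1, e_2, e_3, e_4) \in Y\}$ for each $r \in \{5, 6, 7, 8\}$. By Lemma \ref{ydisjoint}(i) a given element of $Y$ has exactly $\prod_{i=0}^{r-5}(8-2i)$ extensions to an ordered tuple in $V_r^Y$ (equal to $8$, $48$, $192$, $384$ for $r = 5, 6, 7, 8$), so $|V_r^Y| = |Y| \cdot \prod_{i=0}^{r-5}(8-2i)$. On the other hand, by Corollary \ref{uniqueY} each $r$-clique contributes exactly $(\binom{r}{4} - n_r) \cdot 4! \cdot (r-4)!$ tuples to $V_r^Y$, since $\binom{r}{4} - n_r$ of its $4$-subsets are $Y$-subsets and each admits $4!$ orderings for the first four coordinates and $(r-4)!$ for the remaining ones. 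Dividing and using $|Y| = 43545600$ together with $n_5 = 1$, $n_6 = 3$, $n_7 = 7$, $n_8 = 14$ yields the stated counts. The only real obstacle is clerical: one must track the $Y/O$ distinction and verify four arithmetic identities, with all conceptual content supplied by Lemma \ref{ydisjoint}, Corollary \ref{uniqueY}, and Proposition \ref{orbitstuplesgamma0}.
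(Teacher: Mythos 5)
Your argument is correct and follows essentially the same route as the paper's: part (ii) is the identical double count of tuples in $V_r$ whose first four coordinates lie in $Y$ (yielding the same formula $|D_r|=|Y|\cdot m_r/\bigl(4!\,(\binom{r}{4}-n_r)\,(r-4)!\bigr)$ via Lemma \ref{ydisjoint} and Corollary \ref{uniqueY}), and part (iii) is the same normalization placing the unique $O$-type quadruple in the first four positions before invoking Proposition \ref{orbitstuplesgamma0}. The only (harmless) deviation is in part (i), where for a $4$-clique of type $O$ you extend directly via Proposition \ref{three}(iii) rather than via transitivity of $W$ on $O$, and where your explicit ``unused pair of $\{\pm f_i\}$'' argument for $5\le r\le 7$ spells out a step the paper leaves implicit.
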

\begin{proof}
\begin{itemize}\item[]
\item[](i) We know that every root in $E$ is orthogonal to 126 other roots (Proposition~\ref{intersection}). Moreover, we know that in $\Graph{0}$ every clique of size 2 extends to a clique of size~3 (Lemma \ref{60}), and every clique of size 3 extends to a clique of size 4 (Proposition~\ref{three}~(ii)). Since $n_5=1<\binom54$ by Corollary \ref{uniqueY}, every clique of size 5 in $\Graph{0}$ contains both a subclique whose vertices form an element in $O$, and a subclique whose vertices form an element in $Y$. Since $W$ acts transitively on $O$ and on $Y$, and $V_4=O\cup Y$, this means that every clique of size 4 in $\Graph{0}$ extends to a clique of size~5. Moreover, by Lemma \ref{ydisjoint} (i), every extension of a clique of size 4 whose vertices form an element in $Y$ can be extended to a clique of size 8. Since every clique of size at least 5 contains a clique of size~4 whose vertices form an element in $Y$, there are no maximal cliques of size smaller than 8.
\item[](ii) By Lemma \ref{ydisjoint}, if we fix an element $y=(e_1,e_2,e_3,e_4)\in Y$, there are exactly 8 elements in $V_5$, and $8\cdot6$ elements in $V_6$, and $8\cdot6\cdot4$ elements in~$V_7$, and  $8\cdot6\cdot4\cdot2$ elements in $V_8$, that have $e_i$ as the $i^{\mbox{\tiny{th}}}$ coordinate. We call this number~$m_r$ for $r=5,6,7,8$. For all $5\leq r\leq8$, for $S$ a clique of size $r$, it follows from Corollary~\ref{uniqueY} that $S$ contains $\binom{r}{4}-n_r$ cliques of size 4 that, together, form $4!\cdot(\binom{r}{4}-n_r)$ different elements in $Y$; for such a subclique of size 4 in $S$, the other $r-4$ elements can be permuted in $(r-4)!$ ways. For all $5\leq r\leq 8$, let $D_r$ be the set of cliques of size $r$ in $\Graph{0}$. It follows that we have $$|D_r|=\frac{|Y|\cdot m_r}{4!\cdot(\binom{r}{4}-n_r)\cdot(r-4)!}.$$ We find the following results. $$|D_5|=\frac{|Y|\cdot 8}{4!\cdot4}=3628800,\;|D_6|=\frac{|Y|\cdot 8\cdot6}{4!\cdot12\cdot2}=3628800,$$
$$|D_7|=\frac{|Y|\cdot 8\cdot6\cdot4}{4!\cdot28\cdot3!}=2073600,\;|D_8|=\frac{|Y|\cdot 8\cdot6\cdot4\cdot2}{4!\cdot56\cdot4!}=518400.$$ 
\item[](iii) Let $K_1=\{e_1,\ldots,e_5\},\;K_2=\{f_1,\ldots,f_5\}$ be two cliques in $\Graph{0}$. From Corollary \ref{uniqueY} we have $n_5=1$, so without loss of generality we can assume that $e_1,e_2,e_3,e_4$ and $f_1,f_2,f_3,f_4$ are the unique four elements in $K_1$ and $K_2$, respectively, that form an element in $O$. Then $(e_1,e_2,e_3,e_4,e_5)$ and $(f_1,f_2,f_3,f_4,f_5)$ are conjugate under the action of $W$ by Proposition~\ref{orbitstuplesgamma0}, hence so are $K_1$ and~$K_2$.\qedhere
\end{itemize}
\end{proof}

\textbf{Cliques in $\Graph{1}$}\\
We know that cliques in $\Graph{1}$ form $k$-simplices that are $k$-faces of the $E_8$ root polytope (Proposition \ref{facesform}), hence Corollary \ref{numberfacets} states how many cliques of size $n$ there are in $\Graph{1}$ for $n\leq 8$. Moreover, we know that $W$ acts transitively on these cliques for $n\leq8,\;n\neq7$ (Proposition \ref{transitief}). Proposition \ref{max178} shows that there are no cliques of size bigger than eight in $\Graph{1}$, and that there are two orbits of cliques of size seven (which was already known, for example by \cite{Cox30} and \cite{Man74}); it shows that all maximal cliques are of size 7 or 8. 

\begin{proposition}\label{max178}In $\Graph{1}$, the following hold. 
\begin{itemize}
\item[](i) There are only maximal cliques of size 7 and 8.
\item[](ii) There are two orbits of cliques of size 7 in $\Graph{1}$; one of size 138240, which is given by non-maximal cliques, and one of size 69120, which is given by maximal cliques. A clique of size seven in $\Graph{1}$ is maximal if and only if the sum of its vertices is an element in $2\Lambda$.
\item[](iii) There are 17280 cliques of size 8.
\end{itemize}
\end{proposition}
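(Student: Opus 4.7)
My plan is to handle (iii), (i), and (ii) in that order, since each leans on the previous. For (iii), cliques of size $8$ in $\Gamma_1$ are exactly $7$-simplices of the root polytope by Proposition~\ref{facesform}, and Corollary~\ref{numberfacets} gives the count $17280$. For (i), I first rule out cliques of size at least $9$: nine roots with pairwise dot product $1$ would have Gram matrix $I+J$ (where $J$ is the $9\times 9$ all-ones matrix), whose eigenvalues $10,1,\ldots,1$ are all nonzero, so the nine roots would be linearly independent in $\mathbb{R}^8$, a contradiction. Then I verify that every clique of size at most $6$ is non-maximal by combining the transitivity of $W$ on such cliques (Proposition~\ref{transitief}) with the explicit chain $\{u_1\}\subset\{u_1,u_2\}\subset\cdots\subset\{u_1,\ldots,u_7\}$ of extensions among the roots defined above Lemma~\ref{voortbrengen1}.

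For the counting in (ii), the essential step is that every non-maximal $7$-clique extends uniquely to an $8$-clique. If $\{e_1,\ldots,e_7,f\}$ and $\{e_1,\ldots,e_7,f'\}$ are both $8$-cliques, then applying Corollary~\ref{normal} to each gives $\tfrac{1}{3}(f-f')\in\Lambda$; since $\|f-f'\|^2\leq 8$ as $f$ and $f'$ are roots in $E$, we have $\|\tfrac{1}{3}(f-f')\|^2\leq 8/9<2$, and the only element of the even positive-definite lattice $\Lambda$ with squared norm less than $2$ is $0$. Hence $f=f'$. Counting pairs (size-$7$ clique, size-$8$ clique containing it) then yields $17280\cdot 8=138240$ non-maximal $7$-cliques out of the $207360$ total from Corollary~\ref{numberfacets}, leaving $69120$ maximal ones. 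In particular, maximal $7$-cliques exist, so together with the size-$8$ cliques the maximal cliques in $\Gamma_1$ have size exactly $7$ or $8$, completing (i).

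For the orbit structure in (ii), Proposition~\ref{transitief} asserts exactly two $W$-orbits on ordered $7$-tuples of pairwise dot-product-$1$ roots, hence at most two orbits on unordered $7$-cliques. Since non-maximality is a $W$-invariant property and both classes are nonempty, the sets of non-maximal and of maximal $7$-cliques are precisely the two orbits. To identify the $2\Lambda$ criterion, I check on one representative of each orbit: the non-maximal clique $\{u_1,\ldots,u_7\}$ has sum $(7,1,1,1,1,1,1,1)$, whose coordinate-sum $14$ is not in $4\mathbb{Z}$, so the sum is not in $2\Lambda$; and the clique $\{c_1,\ldots,c_7\}$ from Lemma~\ref{index2} is maximal (a direct check on $e=(a_1,\ldots,a_8)\in E$ with $e\cdot c_i=1$ for $i=1,\ldots,7$ produces only vectors outside $\Lambda$) and has sum $(5,3,3,3,-1,1,1,1)$, whose coordinate-sum $16$ lies in $4\mathbb{Z}$, so the sum lies in $2\Lambda$. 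Since ``sum in $2\Lambda$'' is $W$-invariant, this distinguishes the orbits.

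The main obstacle will be the unique-extension lemma in the counting step; once that is in hand, the rest reduces to bookkeeping with the transitivity results already proved together with the $2\Lambda$-check on two explicit representatives.
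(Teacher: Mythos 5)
Your proof is essentially correct and reaches all three statements, but by a noticeably different route from the paper's. The paper works entirely from one explicit $6$-clique: it computes the three roots meeting all six vertices with dot product $1$, observes that two of them have dot product $1$ with each other while the third is orthogonal to both, and reads everything off from this local picture together with transitivity on $6$-cliques --- each $6$-clique sits in exactly two non-maximal and one maximal $7$-clique, whence the counts $483840\cdot 2/7=138240$ and $483840\cdot 1/7=69120$, as well as part (i). Your replacements are attractive: the Gram-matrix argument (the matrix $I+J$ is positive definite, so nine pairwise-adjacent roots would be independent in $\mathbb{R}^8$) rules out cliques of size at least $9$ cleanly, and your unique-extension lemma via Corollary \ref{normal} (so that $\tfrac13(f-f')\in\Lambda$ has squared norm at most $8/9<2$, forcing $f=f'$ in an even lattice) is a genuinely nice lattice-theoretic substitute for the paper's explicit computation. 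One consequence of pulling $17280$ and $207360$ directly out of Corollary \ref{numberfacets} is that Remark \ref{aantalexckrommen}'s claim that these two counts ``also follow from'' Proposition \ref{max178} would become circular; the paper deliberately derives them from the $483840$ count of $6$-cliques instead. That is a dependency-hygiene issue rather than a logical one, since Corollary \ref{numberfacets} is proved by citation to Coxeter.

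There is one step whose justification is wrong, although its conclusion is true. You argue that $\sum_i c_i=(5,3,3,3,-1,1,1,1)$ lies in $2\Lambda$ ``because its coordinate sum $16$ lies in $4\mathbb{Z}$.'' Coordinate sum in $4\mathbb{Z}$ is only a necessary condition for membership in $2\Lambda$, not a sufficient one: $(1,1,1,1,0,0,0,0)$ has coordinate sum $4$ but is not in $2\Lambda$. Your use of the contrapositive for the non-maximal representative ($14\notin 4\mathbb{Z}$, hence the sum is not in $2\Lambda$) is fine; for the maximal representative you must instead check directly that $\tfrac12(5,3,3,3,-1,1,1,1)=\left(\tfrac52,\tfrac32,\tfrac32,\tfrac32,-\tfrac12,\tfrac12,\tfrac12,\tfrac12\right)$ lies in $\mathbb{Z}^8+\left\langle\left(\tfrac12,\ldots,\tfrac12\right)\right\rangle$ and has even coordinate sum, hence lies in $\Lambda$. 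This is a one-line repair, and with it the identification of the two orbits by the $2\Lambda$ criterion goes through exactly as in the paper; note also that once the sum is known to lie in $2\Lambda$, maximality of that representative already follows from the orbit dichotomy, so your separate direct maximality check is a harmless redundancy.
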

\begin{proof}
Consider the clique of size six in $\Graph{1}$ given by $\{e_1,\ldots,e_6\}$, where we define 
\begin{align*}
e_1=\left(1,1,0,0,0,0,0,0\right),\;\;\;\;\; & e_4=\left(1,0,0,0,1,0,0,0\right)\\
e_2=\left(1,0,1,0,0,0,0,0\right),\;\;\;\;\; & e_5=\left(1,0,0,0,0,1,0,0\right)\\
e_3=\left(1,0,0,1,0,0,0,0\right),\;\;\;\;\; & 
e_6=\left(1,0,0,0,0,0,1,0\right).
\end{align*}
Since $W$ acts transitively on the set of cliques of size smaller than $6$ in $\Graph{1}$ by Proposition \ref{transitief}, it follows that every clique of size smaller than $6$ in $\Graph{1}$ is contained in a clique of size 6 in $\Graph{1}$. The elements in $E$ that have dot product one with all $e_1,\ldots,e_6$ are given by:
\begin{align*}
c_1=\left(\tfrac{1}{2},\tfrac{1}{2},\tfrac{1}{2},\tfrac{1}{2},\tfrac{1}{2},\tfrac{1}{2},\tfrac{1}{2},\tfrac{1}{2}\right), \;\; c_2=(1, 0, 0, 0, 0, 0, 0, 1 ), \;\; c_3=( 1, 0, 0, 0, 0, 0, 0, -1 ).
\end{align*} 
Note that $c_1\cdot c_2=1$ and $c_3\cdot c_1=c_3\cdot c_2=0$, so $\{e_1,\ldots,e_6,c_1,c_2\}$ is a maximal clique of size 8 in $\Graph{1}$, and $\{e_1,\ldots,e_6,c_3\}$ is a maximal clique of size 7 in $\Graph{1}$. 
Since~$W$ acts transitively on the cliques of size 6 in $\Graph{1}$ by Proposition \ref{transitief}, all maximal cliques in $\Graph{1}$ are of size 7 or 8. This proves part (i). Moreover, it follows that every non-maximal clique of size 7 is contained in a unique clique of size 8, so there are $\frac{138240}{8}=17280$ cliques of size 8. This proves part (iii). We will now prove (ii). From part (i) it follows that there exist maximal and non-maximal cliques of size 7 in $\Graph{1}$. It is obvious that they can not be in the same orbit under the action of $W$. Moreover, there are two orbits of ordered sequences of length 7, hence at most two orbits of cliques of size 7 by Proposition \ref{transitief}. We conclude that the orbits are given exactly by the maximal cliques and the non-maximal cliques. Since there are 483840 cliques of size 6 (Corollary \ref{numberfacets}), from the above it follows that there are $\frac{483840\cdot 2}{7}=138240$ non-maximal cliques, and $\frac{483840\cdot1}{7}=69120$ maximal cliques. Now consider the set $\{e_1,\ldots,e_7\}$, where the elements are defined above Lemma \ref{sevenfaces}. This is a clique of size~7 in $\Graph{1}$, and it is not hard to check that it is maximal. Moreover, we have $$\sum_{i=1}^7e_i=(5,3,3,3,1,1,1,1)\in2\Lambda.$$ Since $W$ acts transitively on all maximal cliques of size~7 in $\Graph{1}$, for all such cliques the sum of the vertices is an element in $2\Lambda$. On the other hand, consider the set $d=\{d_1,\ldots,d_7\}$ as defined above Lemma \ref{sevenfaces}. This is a non-maximal clique of size~7 in $\Graph{1}$, since the union of $d$ with the root $\left(-\frac12, \frac12, \frac12, \frac12, \frac12, -\frac12, -\frac12, -\frac12\right)$ is a clique of size 8 in $\Graph{1}$. Moreover, we have $$\sum_{i=1}^7d_i=(2,4,4,4,1,-1,-1,-1)\not\in2\Lambda.$$ Since $W$ acts transitively on all non-maximal cliques of size 7 in $\Graph{1}$, for all such cliques the sum of the vertices is not an element in $2\Lambda$.
\end{proof}

\begin{remark}
Note that $138240+69120=207360$, which is the total number of cliques of size 7 by Corollary \ref{numberfacets}.
\end{remark}

\begin{corollary}\label{thm2mono1}
Let $K_1$ and $K_2$ be two cliques in $\Graph{1}$, and $f\colon K_1\longrightarrow K_2$ an isomorphism between them. If $K_1$ and $K_2$ have size unequal to 7, then $f$ extends to an automorphism of $\Lambda$. If $K_1$ and $K_2$ have size 7, then $f$ extends if and only if the sum of the vertices of $K_1$ and the sum of the vertices of $K_2$ are either both in $2\Lambda$, or both not.
\end{corollary}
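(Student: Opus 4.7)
The plan is to reduce the statement about isomorphisms of cliques to the transitivity results for ordered sequences in Proposition \ref{transitief}, combined with the characterization of maximal 7-cliques in Proposition \ref{max178}. First, I would fix an arbitrary ordering $(e_1,\dots,e_r)$ of the vertices of $K_1$. Since $f$ is an isomorphism of colored graphs, the image $(f(e_1),\dots,f(e_r))$ belongs to the same ambient set $R_r=\{(g_1,\dots,g_r)\in E^r\mid g_i\cdot g_j=1 \text{ for } i\neq j\}$. An extension of $f$ to an element $w\in W$ is exactly an element of $W$ sending the first sequence to the second.

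For $r\neq 7$ (so $r\leq 8$ automatically, since larger $r$ is impossible by Proposition \ref{max178}(i)), the group $W$ acts transitively on $R_r$ by Proposition \ref{transitief}, so the desired $w$ always exists and $f$ extends unconditionally. This handles all sizes outside the exceptional one in a single line.

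For $r=7$, the forward direction is easy: if $f$ extends to $w\in W$, then $K_2=w(K_1)$, so $K_1$ and $K_2$ lie in the same $W$-orbit of unordered $7$-cliques; by Proposition \ref{max178}(ii) this orbit is characterized by whether the vertex sum lies in $2\Lambda$, giving the stated condition. For the converse, I would argue that the two orbits of ordered sequences in $R_7$ from Proposition \ref{transitief} correspond bijectively, under the $W$-equivariant forget-ordering map $R_7\to\{\text{7-cliques of }\Gamma_1\}$, to the two orbits of unordered $7$-cliques (maximal and non-maximal) described in Proposition \ref{max178}(ii). Equivariance plus surjectivity, together with the fact that there are exactly two orbits on each side, force this correspondence by a simple pigeonhole: neither orbit of ordered sequences can cover both orbits of unordered cliques, and they cannot both cover the same one. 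Hence if $K_1$ and $K_2$ lie in the same $W$-orbit of $7$-cliques (which is what the $2\Lambda$-condition guarantees), then the sequences $(e_1,\dots,e_7)$ and $(f(e_1),\dots,f(e_7))$ lie in the same $W$-orbit of $R_7$, and the conjugating element provides the required extension.

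The main obstacle is this last correspondence between orbits of ordered and unordered sequences in the $r=7$ case; everything else is immediate from the cited propositions. The pigeonhole argument above dispatches it cleanly, but if desired one could alternatively verify it concretely by computing stabilizer orders or by checking that the two explicit representatives $\{e_1,\dots,e_7\}$ and $\{d_1,\dots,d_7\}$ constructed in the proof of Proposition \ref{max178} have vertex sums of opposite parity modulo $2\Lambda$, so that the $2\Lambda$-invariant already separates their $W$-orbits.
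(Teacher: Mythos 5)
Your proposal is correct and follows essentially the same route as the paper: reduce the extension problem to conjugacy of ordered sequences, apply Proposition \ref{transitief} directly for $r\neq 7$, and for $r=7$ match the two orbits of ordered sequences with the two orbits of unordered $7$-cliques from Proposition \ref{max178}(ii) via the $W$-equivariant forget-ordering map. Your explicit pigeonhole argument just spells out the counting step that the paper leaves implicit in the sentence ``we conclude that the two orbits of ordered sequences are distinguished in the same way.''
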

\begin{proof}Another way of saying that the morphism $f$ extends, is that for $\{e_1,\ldots,e_r\}$ the roots in $K_1$, the sequences $(e_1,\ldots,e_r)$ and $(f(e_1),\ldots,f(e_r))$ are conjugate. 
By Proposition \ref{transitief}, for $r\leq8,\;r\neq7$, there is only one orbit of ordered sequences of length $r$ of roots that have pairwise dot product 1. This implies that $f$ extends to an element in $W$ if $K_1$, $K_2$ have size unequal to 7. Furthermore, by the same proposition, there are two orbits of ordered sequences of roots of length~7. By Proposition~\ref{max178}, there two orbits of cliques of size 7, that are distinguished by whether the sum of the 7 roots is an element in $2\Lambda$ or not. We conclude that the two orbits of ordered sequences are distinguished in the same way. This implies that $f$ extends if and only if the sum of the vertices in $K_1$ and the sum of the vertices in $f(K_1)=K_2$ are both in $2\Lambda$ or both not. 
\end{proof}

\begin{remark}\label{6-faces}
We know that the cliques of size 7 in $\Graph{1}$ are 6-faces of the $E_8$ root polytope. We can describe the two orbits of these cliques in this framework as well. A $6$-face of the polytope is an intersection of two facets. There are two types of facets of the $E_8$ root polytope: $7$-crosspolytopes and $7$-simplices (Proposition \ref{facets}). Since the maximal cliques of size 7 in $\Graph{1}$ are not contained in a $7$-simplex, these are exactly the intersections of two $7$-crosspolytopes. \\
Consider the set $c=\{c_1,\ldots,c_7,d_1,\ldots,d_7\}$ as defined above Lemma \ref{sevenfaces}. Note that $d=\{d_1,\ldots,d_7\}$ is a non-maximal clique of size 7 in $\Graph{1}$ that is contained in the $7$-crosspolytope with vertices in $c$, but also in the $7$-simplex with the eight vertices 
$d\;\cup\;\left\{\left(-\frac12, \frac12, \frac12, \frac12, \frac12, -\frac12, -\frac12, -\frac12\right)\right\}$. It follows that every non-maximal clique of size~7 in~$\Graph{1}$ is the intersection of a $7$-crosspolytope with a $7$-simplex.\\
From this it also follows that two $7$-simplices in the $E_8$ root polytope never intersect. 
\end{remark}

\begin{remarkdp1}\label{dp15}
Let $X$ be a del Pezzo surface of degree one over an algebraically closed field, and $C$ the set of exceptional classes in Pic $X$. Through the bijection between $C$ and $E$, cliques in $\Graph{1}$ are related to sets of exceptional classes that are pairwise disjoint (see Remark \ref{dp11}). These are called \textsl{exceptional sets}, and can be blown down so that we obtain a del Pezzo surface of higher degree \cite[Chapter IV]{Man74}. Since a del Pezzo surface can have degree at most 9 (in which case it is $\mathbb{P}^2$), it is clear that the maximal size of a clique in $\Graph{1}$ is eight. We can also describe the two orbits of size seven in this setting; cliques that are maximal correspond to exceptional sets that blow down to a del Pezzo surface of degree eight that is isomorphic to $\mathbb{P}^1\times\mathbb{P}^1$, and cliques that are not maximal correspond to exceptional sets that blow down to a del Pezzo surface of degree eight that is isomorphic to $\mathbb{P}^1$ blown up in one point \cite[remark below Corollary~26.8]{Man74}. 
\end{remarkdp1}

\section{Maximal cliques}\label{maximalcliques}

In this section we describe all maximal cliques in $\Gamma_c$ for $c\neq\{-1,0,1\}$ (cliques of type~IV), and their orbits under the action of $W$. Note that $\Gamma_{-1,0,1}$ is the graph $\Gamma$ after removing all edges between roots and their inverses. This means that the maximal cliques in $\Gamma_{-1,0,1}$ are all of size 120: for each root you can either choose the root or its inverse. There are therefore $2^{120}$ maximal cliques in $\Gamma_{-1,0,1}$, and at least $\left\lceil \frac{2^{120}}{|W|} \right\rceil =1907810427151244719477695595$ orbits in the set of maximal cliques under the action of $W$. Because of the size of these cliques and their orbits, we did not compute the orbits.

\vspace{11pt}

In the first two subsections of this section we describe all maximal cliques in $\Graph{-2}$, $\Graph{-1}$, $\Graph{0}$, $\Graph{1}$, $\Graph{-2,-1}$, $\Graph{-2,1}$, $\Graph{-2,0}$, and $\Graph{-2,-1,0,1}=\Gamma$. Cliques in $\Graph{-2,-1}$ and $\Graph{-2,1}$ are monochromatic (Lemma \ref{mono-1-2}), and maximal cliques in $\Graph{-2,0}$ are in bijection with maximal cliques in $\Graph{0}$ (Lemma \ref{maxin-20}). Therefore, everything before Section \ref{-10} follows from results in Section \ref{monocliques} and is done without a computer. From Section \ref{-10} onwards, we used \texttt{magma} for some computations. The code that we used can be found in \cite{magma}.

\vspace{11pt}

Our motivation to study the cliques in $\Gamma$ comes from del Pezzo surfaces of degree one (see Remark \ref{dp11}), and because of that, the maximal cliques in $\Graph{-2,0}$ and $\Graph{-1,0}$ are of special interest to us, which is explained in Remark \ref{dp12}. For these two graphs we have some extra results. We compute the structure of the largest cliques in the graphs, see Propositions \ref{trans16} and \ref{maxnopairs}. We also show that for these largest cliques, their stabilizer in $W$ acts transitively on the clique itself (Corollaries \ref{eenK} and \ref{corollary11}). The techniques in Sections \ref{-20} and~\ref{-10} show how one could prove similar results for graphs with other colors. 

\vspace{11pt}

The main results of this section are summarized in the tables in Appendix \ref{list} and Remark \ref{welkedingenwelkeklieken}. 

\vspace{11pt}

\noindent\textbf{Notation}
To denote cliques of $\Gamma$ in a compact way, we order the root system $E$ as follows. Roots of the form $\left(\pm\tfrac12,\ldots,\pm\tfrac12\right)$ are ordered lexicographically and denoted by numbers $1-128$; for example, $\left(-\tfrac12,\ldots,-\tfrac12\right)$ is number 1, and $\left(\tfrac12,\ldots,\tfrac12\right)$ number 128. Roots that are permutations of $(\pm1,\pm1,0,0,0,0,0,0)$ are ordered lexicographically and denoted by the numbers $129-240$; for example, $(-1,-1,0,0,0,0,0,0)$ is number 129, and $(1,1,0,0,0,0,0,0)$ is number 240.

\vspace{11pt}

The table in Appendix \ref{list} contains the following information.

\begin{itemize}\label{legenda}
\item Graph: a graph $\Gamma_c$ where $c$ is a set of colors in $\{-2,-1,0,1\}.$
\item $K$: a clique in $\Gamma_c$; we denote vertices by their index as in the notation above.
\item $|K|$: the size of $K$.
\item $|W_K|$: the size of the stabilizer of clique $K$ in the group $W$.
\item $|\Aut(K)|$: the size of the automorphism group of $K$ as a colored graph.
\item $\#O$: the number of orbits of the set of all maximal cliques of size $|K|$ in $\Gamma_c$ under the action of $W$.
\end{itemize}

For each graph $\Gamma_c$, the list of cliques in $\Gamma_c$ in the table in Appendix \ref{list} gives exactly one representative for each orbit of the set of maximal cliques in $\Gamma_c$ under the action of $W$. The proofs of these results are in Proposition \ref{nomagma}, Corollary \ref{G-20}, Proposition~\ref{G-10}, Lemma \ref{easy}, Proposition \ref{-2-10}, and Proposition~\ref{hard}. 

\vspace{11pt}

The following remark shows the connection between del Pezzo surfaces and cliques in $\Graph{-2,0}$ and $\Graph{-1,0}$. 

\begin{remarkdp1}\label{dp12}
Let $X$ be a del Pezzo surface of degree one over an algebraically closed field, and let $C$ be the set of exceptional classes in~Pic~$X$. The question that led us to study the $E_8$ root system was how many elements of $C$ can go through the same point on $X$. The linear system $|-2K_X|$ realizes $X$ as a double cover of a cone in $\mathbb{P}^3$, ramified over a smooth sextic curve $B$ that does not contain the vertex of the cone. There are 120 hyperplanes that are tritangent to $B$, and such a hyperplane pulls back to the sum of two elements in $C$ that intersect with multiplicity three. It follows that two elements in $C$ intersecting with multiplicity three correspond to curves on $X$ intersecting in three points on the ramification curve. Conversely, if an element $c$ in $C$ corresponds to a curve on~$X$ that goes through a point $P$ on the ramification curve, then the unique element $c'\in C$ with $c\cdot c'=3$ corresponds to a curve on $X$ going through $P$ as well. \\
Through the bijection $C\longrightarrow E,\;c\longmapsto c+K_X$, two elements in $C$ that intersect with multiplicity $a$ correspond to two roots $e_1,e_2\in E$ with $e_1\cdot e_2=1-a$. Therefore, cliques in $\Gamma$ that correspond to sets of pairwise intersecting lines on $X$ have edges of colors $-2,-1,0$. Since elements in $C$ with intersection multiplicity 3 correspond to two roots in $E$ with dot product $-2$, it follows that a set of lines on $X$ that all go through one point on the ramification curve forms a clique in $\Graph{-2,0}$, and a set of lines on $X$ that all go through one point outside the ramification curve forms a clique in $\Graph{-1,0}$. This motivates why we have studied these two graphs extensively, and especially the biggest cliques in them (with respect to number of vertices).
\end{remarkdp1}

\subsection{Maximal cliques in \texorpdfstring{$\Graph{-2},\Graph{-1},\Graph{1},\Graph{-2,-1},\Graph{-2,1},$ and $\Graph{-2,-1,0,1}$}{G(-2),G(-1),G(0),G(1),G(-2,-1),G(-2,1), and G(-2,-1,0,1)}}

\begin{lemma}\label{mono-1-2}
Cliques in $\Graph{-2,-1}$ and in $\Graph{-2,1}$ are monochromatic. 
\end{lemma}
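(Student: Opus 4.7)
The plan is to exploit Proposition \ref{intersection}, which says that for any two roots $e,e'\in E$ we have $e\cdot e'=-2$ if and only if $e'=-e$. In other words, every edge of color $-2$ in $\Gamma$ connects a root to its negative, and these are the only color-$(-2)$ edges.

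I would first dispose of the trivial cases: a clique with at most two vertices has at most one edge and so is automatically monochromatic. So I can assume the clique $K$ has at least three vertices. Suppose for contradiction that $K\subseteq \Gamma_{-2,-1}$ (resp. $K\subseteq \Gamma_{-2,1}$) contains an edge of color $-2$, say between $e$ and $-e$. For any third vertex $f\in K$, the two edges from $f$ must also have colors in $\{-2,-1\}$ (resp. $\{-2,1\}$). But $f\cdot(-e)=-(f\cdot e)$, so the two values $f\cdot e$ and $-(f\cdot e)$ would both have to lie in the color set.

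The key observation is now a one-line numerical check: in the set $\{-2,-1\}$, negating any element gives an element of $\{1,2\}$, which is disjoint from $\{-2,-1\}$; similarly negating any element of $\{-2,1\}$ gives an element of $\{-1,2\}$, disjoint from $\{-2,1\}$. So the situation is impossible in either case, and $K$ contains no edge of color $-2$. Consequently every edge of $K$ has the remaining color ($-1$ in the first case, $1$ in the second), so $K$ is monochromatic.

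I do not expect any real obstacle: the entire argument is a direct application of Proposition \ref{intersection} together with the observation that the only roots with inner product $-2$ are antipodal pairs, for which the other inner products with a third root come in $\pm$ pairs that cannot both sit inside $\{-2,-1\}$ or $\{-2,1\}$.
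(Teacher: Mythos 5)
Your proof is correct and follows essentially the same route as the paper: both rest on Proposition \ref{intersection} (the only color-$(-2)$ edges join antipodal roots) and the observation that a third vertex $f$ would need $f\cdot e$ and $f\cdot(-e)=-(f\cdot e)$ to lie in the same color set, which is impossible for $\{-2,-1\}$ and $\{-2,1\}$. The paper phrases this as the non-existence of the triangle $\{e,f,-e\}$ rather than as a contradiction from an assumed $-2$ edge, but the content is identical.
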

\begin{proof}
For an element $e\in E$, its inverse $-e$ is the unique element intersecting it with multiplicity $-2$ (Proposition \ref{intersection}). Take $e,f\in E$ with $e\cdot f=-1$, then $-e\cdot f=1$, hence $e,f,-e$ do not form a clique in $\Graph{-2,-1}$. Therefore all cliques in $\Graph{-2,-1}$ are monochromatic. Analogously, the cliques in $\Graph{-2,1}$ are monochromatic. \end{proof}

\begin{proposition}\label{nomagma}
For $$c\in\{\{-2\},\{-1\},\{1\},\{-2,-1\},\{-2,1\},\{-2,-1,0,1\}\},$$ the table in Appendix \ref{list} gives the complete list of orbits of the maximal cliques in~$\Gamma_c$, as well as a correct representative for each orbit, the size of its stabilizer in~$W$, and the size of its automorphism group.
\end{proposition}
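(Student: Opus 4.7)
The plan is to handle the six color sets individually, with each case reducing to results already established in Sections \ref{facetsthreecliques} and \ref{monocliques} together with (in the multi-color cases) a short maximality check. Throughout, orbit sizes and stabilizer sizes are related by $|W_K|=|W|/|WK|$, and the colored automorphism group of a monochromatic clique of size $n$ is the full symmetric group $S_n$.

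First I will dispose of the three monochromatic cases $c\in\{\{-2\},\{-1\},\{1\}\}$. By Proposition \ref{intersection}, each vertex of $\Gamma_{-2}$ has a unique neighbour (its inverse), so maximal cliques have size $2$, and Lemma \ref{twosets} places the $120$ such pairs in a single $W$-orbit. By Lemma \ref{unique-1-1-1}, every edge of $\Gamma_{-1}$ lies in a unique triangle $\{e,f,-e-f\}$, so maximal cliques have size $3$, and Corollary \ref{trans-1-1-1} puts the $240\cdot 56/6=2240$ of them in a single orbit. For $\Gamma_{1}$, Proposition \ref{max178} directly yields the two orbits: $69120$ maximal cliques of size $7$ and $17280$ of size $8$. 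The stabilizer sizes then follow from $|W_K|=|W|/|WK|$, and the automorphism groups are the corresponding symmetric groups.

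Second, for the mixed cases $c\in\{\{-2,-1\},\{-2,1\}\}$, Lemma \ref{mono-1-2} shows every clique in $\Gamma_c$ is monochromatic, so every maximal clique in $\Gamma_c$ is a maximal monochromatic clique in $\Gamma_{\alpha}$ for some $\alpha\in c$. It remains to verify that such a clique cannot be extended by a root producing an edge of the other color in $c$. If $\alpha=-2$, any extension of $\{e,-e\}$ by a root $f$ would require both $f\cdot e$ and $-f\cdot e$ to lie in $c$; but for each of the two values of $c$ one checks directly that $c$ contains no opposite pair $\{x,-x\}$ of nonzero integers, so no such $f$ exists. If $\alpha\in\{-1,1\}$, the only way to adjoin a color-$-2$ edge is to include the inverse $-e$ of some vertex $e$ of the clique, but then $-e$ has edges of color $-\alpha\notin c$ to the remaining vertices, a contradiction. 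Consequently the orbits and their invariants are inherited from the monochromatic classification, giving two orbits for $\Gamma_{-2,-1}$ and three for $\Gamma_{-2,1}$ (sizes $2$, $7$ and $8$, in the latter case).

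Finally, for $c=\{-2,-1,0,1\}$, Proposition \ref{intersection} says that any two distinct roots have dot product in $c$, so the full set $E$ is a single clique of size $240$ and hence the unique maximal clique; its stabilizer is all of $W$ and its colored automorphism group is $\Aut(\Gamma)=W$ by the definition of $W$. The specific representatives in Appendix \ref{list} are then checked against the descriptions above. The only non-routine step is the short maximality verification in the mixed cases, which I anticipate as the mild obstacle; everything else is bookkeeping with orbit sizes, and no computer computation is involved, matching the claim made at the beginning of the section.
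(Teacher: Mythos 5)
Your proposal is correct and follows essentially the same route as the paper: reduce the monochromatic cases to the results of Sections \ref{facetsthreecliques} and \ref{monocliques}, use Lemma \ref{mono-1-2} for $\Gamma_{-2,-1}$ and $\Gamma_{-2,1}$, and observe that $\Gamma_{-2,-1,0,1}=\Gamma$ is complete. The only difference is that you spell out explicitly why a maximal monochromatic clique stays maximal after adjoining the second color, a verification the paper treats as immediate; this is a welcome bit of extra care rather than a deviation.
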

\begin{proof}
We showed in Section \ref{monocliques} that all maximal cliques in $\Graph{-2}$ have size 2, and that they form one orbit of size 120. We also showed that all maximal cliques in $\Graph{-1}$ have size 3, and they form one orbit of size 2240. In Proposition \ref{max178} we showed that there are two orbits of maximal cliques in $\Graph{1}$; one of size $69120$, which consists of cliques of size 7, and one of size $17280$, which consists of cliques of size 8. For $\Graph{-2,-1}$ and $\Graph{-2,1}$ we proved that all cliques are monochromatic in Lemma \ref{mono-1-2}, so the maximal cliques and their orbits are found by looking at the monochromatic subgraphs $\Graph{-2}$, $\Graph{-1}$, and $\Graph{1}$. \\
It is an easy check that for these five graphs, the cliques in the table are correct representatives of the orbits. The sizes of their stabilizers are found by dividing the order of $W$ by the size of their orbit. Since all the cliques in these five graphs are monochromatic, their automorphism group is the permutation group on their vertices. \\
Finally, note that $\Graph{-2,-1,0,1}=\Gamma$. The only maximal clique in $\Graph{-2,-1,0,1}$ is therefore the whole graph, which forms an orbit of size 1 under the action of $W$.\end{proof}

\subsection{Cliques in \texorpdfstring{$\Graph{0}$}{G(0)} and  \texorpdfstring{$\Graph{-2,0}$}{G(-2,0)}}\label{-20}

The following lemma describes the maximal cliques in $\Graph{-2,0}$.

\begin{lemma}\label{maxin-20}
In $\Graph{-2,0}$, the following hold. 
\begin{itemize}
\item[](i) The maximal size of a clique in $\Graph{-2,0}$ is 16, and there are no maximal cliques of smaller size. 
\item[](ii) The set of maximal cliques in $\Graph{-2,0}$ is given by $$\left\{\{e_1,\ldots,e_8,-e_1,\ldots,-e_8\}\;|\;\forall i:e_i\in E;\;\forall i\neq j: e_i\cdot e_j=0\right\}.$$
\end{itemize}
\end{lemma}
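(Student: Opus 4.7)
The plan is to reduce the classification of maximal cliques in $\Gamma_{-2,0}$ to the classification of maximal cliques in $\Gamma_0$, which has already been established in Theorem \ref{gamma0}. The starting observation is that, by Proposition \ref{intersection}, two distinct roots $e,f\in E$ satisfy $e\cdot f=-2$ precisely when $f=-e$. Therefore a clique in $\Gamma_{-2,0}$ is a subset $K\subseteq E$ such that any two distinct elements of $K$ are either orthogonal or antipodal.

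Let $K$ be any maximal clique in $\Gamma_{-2,0}$ and choose a subset $S\subseteq K$ that is maximal with respect to inclusion among those subsets of $K$ consisting of pairwise orthogonal roots (in particular no two elements of $S$ are antipodal). I claim $K\subseteq S\cup(-S)$: for any $f\in K\setminus S$, maximality of $S$ forces some $e\in S$ with $f\cdot e\neq 0$, which combined with $f\cdot e\in\{-2,0\}$ gives $f=-e$. Note also that $S$ and $-S=\{-e:e\in S\}$ are disjoint, since for $e\in S$ one has $(-e)\cdot e=-2\ne 0$, so $-e\notin S$.

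Now $S$ itself is a clique in $\Gamma_0$, so by Theorem \ref{gamma0} (i) we have $|S|\leq 8$, and if $|S|<8$ then $S$ can be extended to a strictly larger orthogonal subset of $E$ by some $g\in E$. Such a $g$ is orthogonal to every element of $S$, hence also to every element of $-S$, so $K\cup\{g\}$ is still a clique in $\Gamma_{-2,0}$ containing $K$ properly (as $g\notin S\cup(-S)\supseteq K$), contradicting maximality of $K$. Therefore $|S|=8$, and the set $S\cup(-S)$ of size $16$ is itself a clique in $\Gamma_{-2,0}$; maximality of $K$ then forces $K=S\cup(-S)$. This simultaneously establishes (ii) and the fact that every maximal clique has size exactly $16$, which is (i), once we observe conversely that any set of the stated form is a clique of size $16$ and therefore maximal by the upper bound just proved.

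The only genuinely non-trivial input in this plan is the appeal to Theorem \ref{gamma0} (i), which supplies both the bound $|S|\leq 8$ and the extension property when $|S|<8$; everything else is bookkeeping about the two possible colors. I therefore expect no real obstacle beyond invoking the previous section cleanly.
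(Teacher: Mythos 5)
Your proof is correct and follows essentially the same route as the paper: both arguments rest on the observation that color $-2$ edges only join a root to its inverse, so a clique in $\Gamma_{-2,0}$ is controlled by a maximal pairwise-orthogonal subset, and both then invoke Theorem \ref{gamma0}(i) to get the size-$8$ bound and the extension property. Your version merely packages the upper bound and the structural description into the single decomposition $K\subseteq S\cup(-S)$, whereas the paper proves the bound $16$ by a separate counting step before describing the maximal cliques.
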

\begin{proof}
By Theorem \ref{gamma0}, all maximal cliques in $\Graph{0}$ are of size 8. Let $\{e_1,\ldots,e_8\}$ be a maximal clique in $\Graph{0}$. Then $\{e_1,\ldots,e_8,-e_1,\ldots,-e_8\}$ is a clique in $\Graph{-2,0}$ of size~16. Now assume that $\{c_1,\ldots,c_r\}$ is a clique in $\Graph{-2,0}$ of size bigger than 16. Since edges of color $-2$ connect a root and its inverse, the clique $\{c_1,\ldots,c_r\}$ contains a subclique of size at least $\left\lceil\frac{r}{2}\right\rceil$ with only edges of color $0$. But this would give a clique in $\Graph{0}$ of size at least $\left\lceil\frac{17}{2}\right\rceil=9$, contradicting Theorem \ref{gamma0}. We conclude that the maximal size of a clique in $\Graph{-2,0}$ is 16. Now assume that $S$ is a maximal clique in $\Graph{-2,0}$ of size smaller than 16. Let $K$ be the biggest (with respect to number of vertices) subclique of $S$ with only edges of color 0. Let $K'$ be a maximal clique in $\Graph{0}$ containing $K$, so $K'$ has size 8. Then the clique consisting of all vertices of $K'$ and their inverses is a clique in $\Graph{-2,0}$ of size 16 that strictly contains $S$, contradicting the maximality of~$S$. We conclude that there are no maximal cliques of size smaller than 16 in $\Graph{-2,0}$, concluding the proof of (i). Part (ii) is now obvious.
\end{proof}

To show that the group $W$ acts transitively on the maximal cliques in $\Graph{-2,0}$, we use the following lemma, which builds on results in previous sections. Recall the set $Y$ as defined above Lemma \ref{mapalpha}. 

\begin{lemma}\label{maxclique}The following hold.
\begin{itemize}
\item[](i) For every element $y=(e_1,\ldots,e_4)\in Y$, there is a unique maximal clique in $\Graph{-2,0}$ containing $e_1,\ldots,e_4$.
\item[](ii) Every maximal clique in $\Graph{-2,0}$ contains 896 distinct subsets of four roots $e_1,\ldots,e_4$ such that $(e_1,\ldots,e_4)$ is an element in $Y$.
\end{itemize}
\end{lemma}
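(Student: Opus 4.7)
The plan for (i) is to combine the structural description of $C_y$ in Lemma \ref{ydisjoint} with Lemma \ref{maxin-20}. Given $y = (e_1,\ldots,e_4)\in Y$, Lemma \ref{ydisjoint}(i) provides pairwise orthogonal roots $f_1, f_2, f_3, f_4\in E$ such that $C_y = \{\pm f_1,\pm f_2,\pm f_3, \pm f_4\}$. The set $\{\pm e_1,\ldots,\pm e_4\}\cup C_y$ has 16 elements, any two of which are either negatives of one another (dot product $-2$) or orthogonal (dot product $0$), so it is a clique in $\Gamma_{-2,0}$, and it is maximal by Lemma \ref{maxin-20}(i). For uniqueness, let $K$ be any maximal clique in $\Gamma_{-2,0}$ containing $\{e_1,\ldots,e_4\}$. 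By Proposition \ref{intersection}, a root can have dot product $-2$ only with its own negative, so any root in $K\setminus\{\pm e_1,\ldots,\pm e_4\}$ must be orthogonal to each $e_i$ and hence lie in $C_y$. This forces $K\subseteq \{\pm e_1,\ldots,\pm e_4\}\cup C_y$; since $|K|=16$ by Lemma \ref{maxin-20}(i), equality holds.

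For (ii), the plan is to write a maximal clique $M\subset\Gamma_{-2,0}$ as $\{\pm f_1,\ldots,\pm f_8\}$ with $\{f_1,\ldots,f_8\}$ a maximal clique in $\Gamma_0$ (again by Lemma \ref{maxin-20}), and then count four-element subsets of $M$ whose elements, in some order, form an element of $Y$. Any four-element subset of $M$ with pairwise orthogonal elements cannot contain both $f_i$ and $-f_i$ for any $i$ (their dot product is $-2$), so it has the form $\{\epsilon_1 f_{i_1},\ldots,\epsilon_4 f_{i_4}\}$ with $i_1<\cdots<i_4$ and $\epsilon_j\in\{\pm 1\}$. The identity
$$\sum_{j=1}^4 \epsilon_j f_{i_j} \;=\; \sum_{j=1}^4 f_{i_j} \;-\; 2\!\!\sum_{\epsilon_j=-1} f_{i_j}$$
shows that the two sums agree modulo $2\Lambda$, so by Proposition \ref{orbits} membership in $Y$ versus $O$ depends only on the index set $\{i_1,\ldots,i_4\}$, not on the choice of signs.

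It remains to count index sets. By Corollary \ref{uniqueY}, the clique $\{f_1,\ldots,f_8\}$ contains exactly $n_8 = 14$ four-element subsets in $O$, hence $\binom{8}{4}-14 = 56$ four-element subsets in $Y$. Multiplying by the $2^4 = 16$ independent sign choices yields $56\cdot 16 = 896$ four-element subsets of $M$ in $Y$, as claimed.

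The only delicate step is the sign-invariance reduction in part (ii), that is, the observation that the property of lying in $Y$ descends from signed tuples to unsigned index sets. Once that reduction is in place, both parts reduce to bookkeeping using Lemmas \ref{ydisjoint} and \ref{maxin-20} together with Corollary \ref{uniqueY}, and no further case analysis is needed.
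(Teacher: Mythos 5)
Your proof is correct and follows essentially the same route as the paper's: both parts rest on Lemma \ref{ydisjoint}, Lemma \ref{maxin-20}, and the count $\binom{8}{4}-n_8=56$ from Corollary \ref{uniqueY}, multiplied by the $2^4$ sign choices. Your write-up is in fact slightly more careful than the paper's — you justify the sign-invariance in (ii) via the $2\Lambda$ criterion of Proposition \ref{orbits} rather than asserting it, and your uniqueness argument in (i) proceeds by trapping $K$ inside a fixed $16$-element set, avoiding the paper's imprecise intermediate claim that an element of $Y$ lies in a \emph{unique} size-$8$ clique of $\Gamma_0$ (there are $16$ such cliques, all extending to the same maximal clique of $\Gamma_{-2,0}$).
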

\begin{proof}\begin{itemize}
\item[]
\item[](i) From Lemma \ref{ydisjoint} it follows that an element in $Y$ is contained in a unique clique of size 8 in $\Graph{0}$. But such a clique extends uniquely to a maximal clique in $\Graph{-2,0}$ by adding all inverses of the roots.
\item[](ii) By Lemma \ref{maxin-20}, a maximal clique in $\Graph{-2,0}$ consists of eight pairwise orthogonal roots and their inverses. Let $K$ be such a clique. Eight pairwise orthogonal roots in $K$ contain $\binom84-14=56$ distinct subsets of four roots that form an element in $Y$ by Corollary \ref{uniqueY}. Let $D=\{e_1,e_2,e_3,e_4\}$ be such a subset. If we replace a root in $D$ by its inverse, then the roots in $D$ still form an element in $Y$. This gives $56\cdot 2^4=896$ distinct subsets of $K$ of that form an element in $Y$. Since a set of four roots that contains both a root and its inverse never forms an element in $Y$, these are all of them.    \qedhere \end{itemize} \end{proof}

Let $\mathcal{S}$ be the set of all cliques of size 16 in $\Graph{-2,0}$. By Lemma \ref{maxin-20}, this is exactly the set of maximal cliques in $\Graph{-2,0}$. By Lemma \ref{maxclique} we have a surjective map $$s\colon Y\longrightarrow\mathcal{S}.$$

\begin{corollary}\label{trans16}
The group $W$ acts transitively on $\mathcal{S}$, and we have $|\mathcal{S}|=2025$.
\end{corollary}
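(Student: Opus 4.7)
The plan is to apply Lemma \ref{action}(iii) to the surjection $s\colon Y\twoheadrightarrow\mathcal{S}$, together with the fiber count from Lemma \ref{maxclique}(ii) and the cardinality of $Y$ from Proposition \ref{orbits}.

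First I would observe that the map $s$ is a morphism of $W$-sets. Indeed, if $y=(e_1,\ldots,e_4)\in Y$ and $w\in W$, then $w$ preserves the dot product, so $w\cdot e_1,\ldots,w\cdot e_4$ are again four pairwise orthogonal roots whose sum is primitive, and the unique maximal clique in $\Gamma_{-2,0}$ containing them (from Lemma \ref{maxclique}(i)) is exactly the image of the unique maximal clique containing $e_1,\ldots,e_4$ under $w$. So $s(w\cdot y)=w\cdot s(y)$.

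Next I would invoke Remark \ref{image} (equivalently Proposition \ref{orbits}) to conclude that $W$ acts transitively on $Y$. By Lemma \ref{action}(i) applied to $s$, the group $W$ then acts transitively on $s(Y)$, and by Lemma \ref{maxclique}(i) we have $s(Y)=\mathcal{S}$, giving the transitivity statement.

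Finally, to compute $|\mathcal{S}|$, I would apply Lemma \ref{action}(iii): all non-empty fibers of $s$ have the same cardinality $n$, and $|\mathcal{S}|=|Y|/n$. To compute $n$, I fix a maximal clique $K\in\mathcal{S}$ and count $|s^{-1}(K)|$. An ordered quadruple $(e_1,\ldots,e_4)\in Y$ lies in $s^{-1}(K)$ if and only if $\{e_1,\ldots,e_4\}\subseteq K$ (any four pairwise orthogonal roots summing to a primitive vector determine the clique $K$ uniquely by Lemma \ref{maxclique}(i)). By Lemma \ref{maxclique}(ii) there are exactly $896$ such four-element subsets of $K$, each contributing $4!=24$ orderings, so $n=896\cdot 24=21504$. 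Combined with $|Y|=43545600$ from Proposition \ref{orbits}, this gives
\[
|\mathcal{S}|=\frac{43545600}{21504}=2025,
\]
completing the proof. No step is a genuine obstacle here; the work has been done in Lemma \ref{maxclique} and Proposition \ref{orbits}, and the corollary is a bookkeeping application of Lemma \ref{action}.
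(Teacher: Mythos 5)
Your proof is correct and follows essentially the same route as the paper: both use the surjection $s\colon Y\to\mathcal{S}$, transitivity of $W$ on $Y$ from Proposition \ref{orbits}, Lemma \ref{action}, and the fiber count $896\cdot 4!$ from Lemma \ref{maxclique} to get $|\mathcal{S}|=|Y|/(896\cdot 4!)=2025$. Your additional verifications (that $s$ is a $W$-map and that the fiber over $K$ consists precisely of the orderings of the $896$ subsets) are details the paper leaves implicit but are entirely in the spirit of its argument.
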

\begin{proof}
Since the map $s$ is surjective and $W$ acts transitively on $Y$ (Proposition~\ref{orbits}), it follows from Lemma \ref{action} that $W$ acts transitively on $\mathcal{S}$. From Lemma \ref{maxclique} it follows that $|\mathcal{S}|=\frac{|Y|}{|896\cdot4!|}=2025$.  
\end{proof}

Let $K$ be an element of $\mathcal{S}$, and $W_K$ its stabilizer in $W$. Now that we fully described all maximal cliques in $\Graph{-2,0}$ and the action of $W$ on the set of these maximal cliques, we finish the study of $\Graph{-2,0}$ by studying the action of $W_K$ on $K$, and concluding that $W$ acts transitively on cliques of sizes $6,7,8$ in $\Graph{0}$ in Proposition~\ref{G0,678}. Consider the sets 
$$I=\{(e_1,e_2,e_3)\in K^3\;|\;e_1\cdot e_2=e_1\cdot e_3=e_2\cdot e_3=0\},$$
and 
$$J=\{(e_1,e_2)\in K^2\;|\;e_1\cdot e_2=0\}.$$

\begin{proposition}\label{drieK}
The group $W_K$ acts transitively on $I$.
\end{proposition}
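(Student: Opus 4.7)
The plan is to lift the transitivity of $W$ on $Y$ (Remark \ref{image}) to transitivity of $W_K$ on $I$, using that a single element of $Y$ determines its containing maximal clique uniquely (Lemma \ref{maxclique}(i)).

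Let $(x_1,x_2,x_3)$ and $(y_1,y_2,y_3)$ be two elements of $I$. First I would extend each triple to a 4-tuple in $Y$ whose last coordinate is still a vertex of $K$. Since $K$ consists of $8$ pairwise orthogonal roots together with their negatives (Lemma \ref{maxin-20}(ii)), the $10$ roots in $K\setminus\{\pm x_1,\pm x_2,\pm x_3\}$ are each orthogonal to $x_1,x_2,x_3$; they furnish $10$ candidates $x_4$ with $(x_1,x_2,x_3,x_4)\in V_4$. On the other hand, Proposition \ref{three}(ii)--(iii) says that among the $26$ roots in $E$ orthogonal to $x_1,x_2,x_3$, exactly $2$ yield elements of $V_4\setminus Y$, so at least $8$ of the $10$ candidates actually give $(x_1,x_2,x_3,x_4)\in Y$; the same reasoning applies to $(y_1,y_2,y_3)$.

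Having fixed extensions $(x_1,x_2,x_3,x_4),(y_1,y_2,y_3,y_4)\in Y$ with all coordinates in $K$, transitivity of $W$ on $Y$ (Remark \ref{image}) yields an element $w\in W$ with $w(x_i)=y_i$ for $i=1,\ldots,4$. Because $w$ is a color-preserving automorphism of $\Gamma$, it sends the maximal clique $K$ of $\Gamma_{-2,0}$ to another maximal clique $w(K)$ of $\Gamma_{-2,0}$. Both $K$ and $w(K)$ contain $\{y_1,y_2,y_3,y_4\}$, so by the uniqueness assertion in Lemma \ref{maxclique}(i) they coincide; hence $w\in W_K$, and its restriction carries $(x_1,x_2,x_3)$ to $(y_1,y_2,y_3)$.

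The only nontrivial step is the extension from a triple to a 4-tuple in $Y$ within $K$, and as just sketched this is a direct consequence of the counts $|\pi^{-1}(v)|=26$ and $|\pi^{-1}(v)\cap Y|=24$ established in Proposition \ref{three}(ii). Once the extension is in hand, transferring transitivity from $Y$ to $I$ via the uniqueness of the maximal clique is immediate.
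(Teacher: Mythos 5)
Your proof is correct, but it takes a genuinely different route from the paper's. The paper argues by a pure orbit count: it computes $|I|=16\cdot 14\cdot 12=2688$, fixes $\iota\in I$, and sandwiches the orbit $W_K\iota$ between a lower bound obtained from the index chain $\tfrac{|W|}{|W_{K,\iota}|}=\tfrac{|W|}{|W_\iota|}\cdot|W_\iota K|\geq 1814400\cdot 3$ (the factor $3$ coming from the fact that $\iota$ lies in exactly three maximal cliques of $\Gamma_{-2,0}$) and the trivial upper bound $|W_K\iota|\leq|I|$, forcing equality. You instead give a direct construction: extend each triple to a quadruple in $Y$ whose fourth entry still lies in $K$ --- which is legitimate, since the ten roots of $K\setminus\{\pm x_1,\pm x_2,\pm x_3\}$ all lie in $\pi^{-1}((x_1,x_2,x_3))$ and at most two elements of that fiber fall outside $Y$ by Proposition \ref{three}(ii)--(iii) --- then invoke transitivity of $W$ on $Y$ (Remark \ref{image}) and use the uniqueness assertion of Lemma \ref{maxclique}(i) to force the conjugating element into $W_K$. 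In effect you exploit the $W$-equivariance of the map $s\colon Y\to\mathcal{S}$ from Corollary \ref{trans16}: any $w$ carrying one $Y$-element of $K$ to another must fix $s$ of that element, namely $K$ itself. Your argument is shorter, avoids all the index bookkeeping, and makes the mechanism more transparent; the paper's version follows the Lemma \ref{action}-style counting template used throughout and yields the exact size of the stabilizer $W_{K,\iota}$ as a byproduct. Both proofs are valid.
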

\begin{proof}
From Lemma \ref{maxin-20} we know that $K$ consists of eight pairwise orthogonal roots and their inverses, so we have $|I|=16\cdot14\cdot12=2688$. Fix an element $\iota=(e_1,e_2,e_3)$ in $I$. We want to show that its orbit $W_K\iota$ has size $2688$, hence is equal to $I$. Let $W_{K,\iota}$ be the stabilizer in $W_K$ of $\iota$. We have $|W_K\iota|=\frac{|W_K|}{|W_{K,\iota}|}$, and 
$$\frac{|W|}{|W_{K,\iota}|}=\frac{|W|}{|W_K|}\cdot\frac{|W_K|}{|W_{K,\iota}|}.$$ 
By Corollary \ref{trans16} we have $\frac{|W|}{|W_K|}=|WK|=2025$. Moreover, we have $$\frac{|W|}{|W_{K,\iota}|}=\frac{|W|}{|W_\iota|}\cdot\frac{|W_\iota|}{|W_{\iota,K}|}.$$ By Proposition \ref{three} we have $\frac{|W|}{|W_\iota|}=|W \iota|=240\cdot126\cdot60=1814400$. We now compute $\frac{|W_\iota|}{|W_{\iota,K}|}=|W_{\iota}K|$. From Proposition \ref{three} we know that there are 24 roots $e\in E$ such that $(e_1,e_2,e_3,e)$ is an element in $Y$. Since $W_{\iota}$ acts transitively on those~24 roots by Proposition \ref{orbits}, the orbit $W_{\iota}K$ contains the cliques $s((e_1,e_2,e_3,e))$ for all 24 roots~$e$. Now fix $e$ and set $y=(e_1,e_2,e_3,e)$, and $L=s(y)$. From Lemma (i) we know that $L$ contains exactly eight roots $f$ such that $(e_1,e_2,e_3,f)$ is an element in $Y$. Therefore, they determine the same unique clique of size sixteen as $e$. We conclude that there are $\frac{24}{8}=3$ different cliques containing $\iota$. So we have $|W_{\iota}K|\geq3$, and we find $\frac{|W|}{|W_{K,\iota}|}\geq1814400\cdot3=5443200.$ It follows that $\frac{|W_K|}{|W_{K,\iota}|}\geq\frac{5443200}{2025}=2688$. Since on the other hand we have $\frac{|W_K|}{|W_{K,\iota}|}=|W_K\iota|\leq |I|=2688$, we have equality everywhere and we conclude that $W_K\iota=I$. This finishes the proof. 
\end{proof}

\begin{corollary}\label{tweeK}
The group $W_K$ acts transitively on $J$.
\end{corollary}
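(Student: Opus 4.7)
The plan is to deduce transitivity on $J$ directly from the transitivity on $I$ that was just established in Proposition \ref{drieK}, using Lemma \ref{action}(i). To do this I will exhibit a surjective $W_K$-equivariant map from $I$ to $J$, namely the projection onto the first two coordinates $\pi\colon I \to J,\ (e_1,e_2,e_3) \mapsto (e_1,e_2)$. This is manifestly a morphism of $W_K$-sets, so the only thing to check is surjectivity.

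To verify surjectivity, I will use the explicit structure of $K$ given in Lemma \ref{maxin-20}(ii): every maximal clique in $\Gamma_{-2,0}$ is of the form $\{g_1,\ldots,g_8,-g_1,\ldots,-g_8\}$ for pairwise orthogonal roots $g_1,\ldots,g_8$. Given any $(e_1,e_2) \in J$, each $e_i$ equals some $\pm g_{j_i}$ with $j_1 \neq j_2$, and then for any index $k \notin \{j_1,j_2\}$ both $g_k$ and $-g_k$ are orthogonal to $e_1$ and $e_2$ and lie in $K$; picking any such root as $e_3$ produces a preimage of $(e_1,e_2)$ under $\pi$ (in fact twelve such preimages). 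Hence $\pi$ is surjective.

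Applying Lemma \ref{action}(i) to $\pi$ together with Proposition \ref{drieK} then yields that $W_K$ acts transitively on $\pi(I) = J$, which completes the proof. There is no real obstacle here; the argument is just the natural one-step reduction from triples to pairs, and the only ingredient beyond Proposition \ref{drieK} is the elementary observation that every orthogonal pair inside a maximal clique of $\Gamma_{-2,0}$ extends to an orthogonal triple inside the same clique.
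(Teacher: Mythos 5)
Your argument is correct and is essentially the same as the paper's: both project $I$ onto the first two coordinates, verify surjectivity from the structure of $K$ as eight pairwise orthogonal roots and their inverses (yielding the same twelve preimages), and conclude via Lemma \ref{action} and Proposition \ref{drieK}.
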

\begin{proof} 
We have a projection map $\lambda\colon I\longrightarrow J$ on the first two coordinates. Since $K$ consists of eight pairwise orthogonal roots and their inverses, if we fix two elements $e_1,e_2$ such that $(e_1,e_2)\in J$, there are $16-4=12$ elements $e\in K$ such that $(e_1,e_2,e)$ is contained in $I$. Therefore, $\lambda$ is surjective. From Proposition~\ref{drieK} and Lemma \ref{action}, it follows that $W_K$ acts transitively on $J$. 
\end{proof} 

\begin{corollary}\label{eenK}
The group $W_K$ acts transitively on $K$. 
\end{corollary}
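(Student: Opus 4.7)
The plan is to deduce transitivity of $W_K$ on $K$ directly from transitivity on $J$ established in Corollary \ref{tweeK}. Consider the projection $\pi\colon J\longrightarrow K$ on the first coordinate. This is a morphism of $W_K$-sets, since the action of $W_K$ on $J$ is simply the diagonal action and so commutes with projection.

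The key observation is that $\pi$ is surjective. Indeed, by Lemma \ref{maxin-20}, the clique $K$ consists of eight pairwise orthogonal roots $e_1,\ldots,e_8$ together with their inverses $-e_1,\ldots,-e_8$. For any vertex $e\in K$, say $e=\pm e_i$, we have $e\cdot(\pm e_j)=0$ for every $j\neq i$, so $e$ appears as the first coordinate of $14$ pairs in $J$. In particular $\pi(J)=K$.

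By Corollary \ref{tweeK}, the group $W_K$ acts transitively on $J$. Applying Lemma \ref{action}(i) to the surjective $W_K$-map $\pi\colon J\longrightarrow K$ then yields that $W_K$ acts transitively on $K$. There is no serious obstacle here; the only point that needed to be in place was the transitivity on ordered pairs in $J$, which was the content of Corollary \ref{tweeK}, and the explicit description of $K$ from Lemma \ref{maxin-20} that guarantees surjectivity of $\pi$.
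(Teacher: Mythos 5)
Your proof is correct and follows essentially the same route as the paper: project $J$ onto its first coordinate, note that surjectivity follows from the explicit description of $K$ as eight pairwise orthogonal roots and their inverses (each vertex has $14$ orthogonal partners), and conclude via Corollary \ref{tweeK} and Lemma \ref{action}(i). No gaps.
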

\begin{proof}
We have a projection map $\lambda\colon J\longrightarrow K$ on the first coordinate. For every element $e$ in $K$ there are 14 elements $c$ such that $(e,c)\in J$, so $\lambda$ is surjective. From Corollary \ref{tweeK} and Lemma \ref{action} it follows that $W_K$ acts transitively on $K$. 
\end{proof}

\begin{proposition}\label{transitivepairs}For $n\in\{2,3,5,6,7,8\}$, the group $W$ acts transitively on the set $$D_n=\left\{\{e_1,\ldots,e_n,-e_1,\ldots,-e_n\} \;|\;\forall i:e_i\in E;\;\forall i\neq j:e_i\cdot e_j=0\right\}.$$ \end{proposition}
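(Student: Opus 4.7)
The plan is to treat $n=8$, the small cases $n=2,3$, and the intermediate cases $n=5,6,7$ separately, in each case leveraging transitivity results already established together with Corollary \ref{eenK} and Proposition \ref{drieK}.

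For $n=8$ the set $D_8$ coincides with the set $\mathcal{S}$ of maximal cliques of $\Gamma_{-2,0}$ by Lemma \ref{maxin-20}, so transitivity is exactly Corollary \ref{trans16}. For $n=2$ and $n=3$, an element of $D_n$ is recovered from any ordered tuple $(e_1,\ldots,e_n)$ of pairwise orthogonal roots it contains by pair-completion; Proposition \ref{pair} (for $n=2$) and Proposition \ref{three}(i) (for $n=3$) give transitivity of $W$ on such ordered tuples, so a witnessing $w\in W$ carries one element of $D_n$ onto another.

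For $n\in\{5,6,7\}$ I propose the following reduction. Given $D,D'\in D_n$, the underlying orthogonal $n$-clique in $\Gamma_0$ contained in $D$ extends by Theorem \ref{gamma0}(i) to a maximal clique of size eight in $\Gamma_0$, and adjoining all eight inverses produces by Lemma \ref{maxin-20} a maximal clique $K\in\mathcal{S}$ containing $D$; similarly $D'\subset K'$ for some $K'\in\mathcal{S}$. Corollary \ref{trans16} supplies $w\in W$ with $w(K)=K'$, and after replacing $D$ by $w(D)$ we are reduced to showing that the stabilizer $W_{K'}$ acts transitively on the $n$-element subsets of the eight pairs of $K'$.

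For $n=7$ this is transitivity on the pairs themselves, which follows from Corollary \ref{eenK} since any element of $W_{K'}$ sending a root $e$ to $e'$ automatically sends $\{e,-e\}$ to $\{e',-e'\}$. For $n=5$ and $n=6$ I would invoke Proposition \ref{drieK}: transitivity of $W_{K'}$ on ordered triples of pairwise orthogonal roots in $K'$ implies (since orthogonal roots always lie in distinct pairs) transitivity on ordered triples of distinct pairs, hence on unordered 3-subsets of pairs (covering $n=5$ by complementation) and, by extending any unordered pair of pairs to a triple, on unordered 2-subsets of pairs (covering $n=6$ by complementation). The main conceptual point worth flagging is that the excluded value $n=4$ is exactly where this strategy fails: it corresponds to the two-orbit splitting $V_4=Y\sqcup O$ of Proposition \ref{orbits}, which propagates to $W_{K'}$ having two orbits on 4-subsets of pairs of $K'$.
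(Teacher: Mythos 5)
Your proposal is correct, and for $n=2,3,8$ and $n=6,7$ it is essentially the paper's own argument: the paper also disposes of $n=8$ via Corollary \ref{trans16}, handles $n=2,3$ by pushing forward transitivity on (un)ordered orthogonal tuples, and treats $n=6$ and $n=7$ by reducing, inside a fixed $16$-clique $K\in\mathcal{S}$, to transitivity of $W_K$ on the complementary $2$- and $1$-subsets of the eight pairs (quoting Corollary \ref{tweeK} and Corollary \ref{eenK} respectively). The one place you genuinely diverge is $n=5$: the paper simply cites transitivity of $W$ on cliques of size $5$ in $\Gamma_0$ (Theorem \ref{gamma0}(iii)) together with the surjection onto $D_5$ given by adjoining inverses, whereas you stay inside the $16$-clique and use Proposition \ref{drieK} to get transitivity of $W_K$ on unordered $3$-subsets of pairs, then complement. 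Both are valid; your route has the small advantage of uniformity (all of $n=5,6,7$ fall out of the single chain Proposition \ref{drieK} $\Rightarrow$ Corollary \ref{tweeK} $\Rightarrow$ Corollary \ref{eenK} plus complementation), while the paper's citation of Theorem \ref{gamma0}(iii) is shorter but leans on the separate analysis of $\Gamma_0$. Your closing remark correctly identifies why $n=4$ must be excluded: the splitting $V_4=Y\sqcup O$ of Proposition \ref{orbits} descends to two $W_K$-orbits on $4$-subsets of pairs, which is exactly the content of the Remark following this proposition in the paper.
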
\begin{proof}For $n=2,3,5$, this follows from the fact that $W$ acts transitively on the cliques of size $n$ in $\Graph{0}$ (Propositions \ref{orbitskleinerdan4} and \ref{gamma0}), and the fact that there is a surjective map from the set of cliques in $\Graph{0}$ of size $n$ to $D_n$. The case $n=8$ is Corollary \ref{trans16}. From Proposition \ref{tweeK}, it follows that the stabilizer $W_K$ in $W$ of $K$ acts transitively on the set$$\{(e_1,e_2,-e_1,-e_2)\in K^4\;|\;e_1\cdot e_2=0\}$$ Since $K$ consists of eight pairwise orthogonal roots and their inverses, the cliques of six pairwise orthogonal roots and their inverses in $K$ are the complements of the cliques of two orthogonal roots and their inverses in $K$, so this implies that $W_K$ acts transitively on the set of cliques of six pairwise orthogonal roots and their inverses contained in $K$, too. From Corollary \ref{trans16}, the statement now follows for $n=6$. The case $n=7$ is proved analogously since we showed that $W_K$ acts transitively on~$K$. \end{proof}

\begin{remark}
There are two orbits under the action of $W$ on the set $$\left\{\{e_1,\ldots,e_4,-e_1,\ldots,-e_4\} \;|\;\forall i:e_i\in E;\;\forall i\neq j:e_i\cdot e_j=0\right\}.$$ Indeed, this follows from Proposition \ref{orbitskleinerdan4} and the fact that there is a surjective map from the set of cliques of size $4$ in $\Graph{0}$ to this set. 
\end{remark}

As we mentioned before, the fact that $W$ acts transitively on the set of cliques of size $r$ for $1\leq r\leq8$ in $\Graph{0}$ is in \cite{DynMin10}. The following proposition shows how it follows from our results about $\Graph{-2,0}$ as well. 

\begin{proposition}\label{G0,678}
for $n=6,7,8$, the group $W$ acts transitively on the cliques of size $n$ in $\Graph{0}$. 
\end{proposition}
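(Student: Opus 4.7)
The plan is to deduce the result directly from Proposition~\ref{transitivepairs} together with the existence of sign-flipping reflections in $W$, using the orbit-lifting principle in Lemma~\ref{action}(iv). To each size-$n$ clique $S=\{f_1,\ldots,f_n\}$ in $\Gamma_0$ I associate the set $D(S)=\{f_1,-f_1,\ldots,f_n,-f_n\}\in D_n$. This assignment $S\mapsto D(S)$ is $W$-equivariant and surjective onto $D_n$, since any clique $S$ with $D(S)=D$ is obtained from $D$ by picking one root from each of its $n$ orthogonal $\pm$-pairs. By Proposition~\ref{transitivepairs}, $W$ acts transitively on $D_n$ for $n\in\{6,7,8\}$, so it remains only to produce, for one element $D\in D_n$, enough elements of the stabilizer $W_D$ in $W$ to act transitively on the fiber over $D$.

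Writing $D=\{e_1,-e_1,\ldots,e_n,-e_n\}$, the fiber over $D$ consists of the $2^n$ sign choices $\{\epsilon_1 e_1,\ldots,\epsilon_n e_n\}$ with $\epsilon_i\in\{\pm 1\}$. For each $i$, the reflection $r_{e_i}$ in the hyperplane orthogonal to $e_i$ is an element of $W$ that sends $e_i$ to $-e_i$ and, because $e_i\cdot e_j=0$ for $j\neq i$, fixes every other $e_j$. Hence $r_{e_i}$ stabilizes $D$ (it preserves each $\pm$-pair setwise) and on the fiber acts by flipping the single sign $\epsilon_i$ while leaving the others unchanged. Composing appropriate subsets of these $n$ reflections realizes every sign pattern, so $W_D$ acts transitively on the fiber. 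Applying Lemma~\ref{action}(iv) to the map $S\mapsto D(S)$ concludes the proof.

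I do not anticipate any genuine obstacle: Proposition~\ref{transitivepairs} carries the substantive content, and the sign-flipping reflections are manifestly in $W$ and in $W_D$. It is worth remarking that exactly the same argument recovers the $n=2,3,5$ cases (Proposition~\ref{orbitskleinerdan4}(i)--(ii) and Theorem~\ref{gamma0}(iii)), matching precisely the range for which Proposition~\ref{transitivepairs} provides transitivity on $D_n$; the omitted case $n=4$ genuinely splits into two orbits both in $D_4$ and among size-$4$ cliques in $\Gamma_0$, consistent with Proposition~\ref{orbits}.
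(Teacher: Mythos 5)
Your proof is correct and follows essentially the same route as the paper: the same map from size-$n$ cliques to $D_n$, transitivity on $D_n$ via Proposition~\ref{transitivepairs}, the sign-flipping reflections $r_{e_i}$ generating a transitive action of the stabilizer on the $2^n$-element fiber, and Lemma~\ref{action} to conclude.
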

\begin{proof}
We know that $W$ acts transitively on the set $$D_n=\left\{\{e_1,\ldots,e_n,-e_1,\ldots,-e_n\} \;|\;\forall i:e_i\in E;\;\forall i\neq j:e_i\cdot e_j=0\right\}$$ from Proposition \ref{transitivepairs}. Let $F_n$ be the set of cliques of size $n$ in $\Graph{0}$. We have an obvious map $f\colon F_n\longrightarrow D_n$ which adds adds the inverses to all roots in an element in~$F_n$. Let $D=\{e_1,\ldots e_n,-e_1,\ldots,-e_n\}$ be an element in $D_n$ and consider its fiber $f^{-1}(D)$ in~$F_n$. This consists of all cliques $\{\pm e_1,\ldots,\pm e_n\}$, where for each root either itself or its inverse is chosen. The stabilizer $W_{D}$ of $D$ acts on $f^{-1}(D)$. Note that for $i\in\{1,\ldots,n\}$, the reflection in the hyperplane orthogonal to $e_i$ switches $e_i$ and $-e_i$ and fixes all other roots in $D$, hence it is an element in $W_D$. Therefore, $W_D$ acts transitively on $f^{-1}(D)$, and by Lemma \ref{action}, $W$ acts transitively on $F_n$.
\end{proof}

\begin{corollary}\label{G-20}
The table in Appendix \ref{list} gives the complete list of orbits of the maximal cliques in $\Graph{0}$ and $\Graph{-2,0}$, as well as a correct representative for each orbit, the size of its stabilizer in $W$, and the size of its automorphism group.
\end{corollary}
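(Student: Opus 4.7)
The plan is to assemble both claims directly from results already established. For $\Gamma_0$, Theorem \ref{gamma0} shows that every maximal clique has size $8$ and that there are $518400$ such cliques, and Proposition \ref{G0,678} shows that $W$ acts transitively on them. Hence there is a single orbit; by the orbit--stabilizer formula the stabilizer in $W$ of any representative has order $|W|/518400 = 1344$. Since such a clique is monochromatic of color~$0$, its colored automorphism group is the full symmetric group on its $8$ vertices, of order $8! = 40320$. The only remaining task is to verify that the specific vertex list given in the table is indeed an $8$-element orthogonal set of roots, which is immediate by computing the relevant dot products from the encoded numbering.

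For $\Gamma_{-2,0}$, Lemma \ref{maxin-20} shows that every maximal clique has size~$16$ and consists of $8$ pairwise orthogonal roots together with their inverses, and Corollary \ref{trans16} shows that $W$ acts transitively on the $2025$ such cliques. Hence again there is a single orbit and the stabilizer in $W$ has order $|W|/2025 = 344064$. To compute $|\Aut(K)|$ I will use the structural description: the $8$ pairs $\{e_i,-e_i\}$ are precisely the edges of color $-2$ in $K$, and together they form a perfect matching on the $16$ vertices, while every remaining edge has color~$0$. A colored automorphism must preserve this matching, so by Lemma \ref{semidirect product} applied to the graph whose edges are this matching, $\Aut(K)$ embeds into $\mu_2^8 \rtimes S_8$; conversely every permutation in $\mu_2^8 \rtimes S_8$ preserves the coloring because the off-matching edges are all of one color. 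Thus $|\Aut(K)| = 2^8 \cdot 8! = 10321920$, and the table entry is verified once the listed $16$ vertices are checked to form such a configuration.

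The only non-mechanical step is the automorphism computation for $\Gamma_{-2,0}$, but that will reduce cleanly to Lemma \ref{semidirect product} once one observes that the color-$-2$ edges give a perfect matching. Everything else is orbit--stabilizer applied to the transitivity results of Proposition \ref{G0,678} and Corollary \ref{trans16}, and I do not foresee any substantive obstacle.
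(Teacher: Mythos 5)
Your proposal is correct and follows essentially the same route as the paper: both parts are orbit--stabilizer applied to Theorem \ref{gamma0} with Proposition \ref{G0,678} (for $\Gamma_0$) and to Lemma \ref{maxin-20} with Corollary \ref{trans16} (for $\Gamma_{-2,0}$), with the automorphism group of the $16$-clique identified as $\mu_2^8\rtimes S_8$ via Lemma \ref{semidirect product}. Your observation that the color-$-2$ edges form a perfect matching whose complement is monochromatic, so that preserving the coloring is equivalent to preserving the matching, just makes explicit the step the paper leaves implicit.
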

\begin{proof}Al maximal cliques in $\Graph{-2,0}$ are of size 16 (Lemma \ref{maxin-20}) and there is only one orbit of them, of size 2025 (Corollary \ref{trans16}). It is an easy check that the clique in the table is a representative of this orbit. Its stabilizer size is $\frac{|W|}{|2025|}=344064$. Its automorphism group is isomorphic to $\mu_2^8\rtimes S_8$ by Lemma \ref{semidirect product}, hence has size $2^8\cdot 8!$. In Theorem \ref{gamma0} we showed that all maximal cliques in $\Graph{0}$ have size 8, and that there are $518400$ of them. In Proposition \ref{G0,678} we showed that $W$ acts transitively on the set of these cliques. Therefore the stabilizer of the clique in the table has size $\frac{|W|}{518400}=1344$. Its automorphism group is the symmetric group on the 8 vertices. \end{proof}

We finish this subsection by proving Theorem \ref{main2} for maximal cliques in $\Graph{-2,0}$.

\begin{lemma}\label{thm2max-20}
Let $K_1$ and $K_2$ be two maximal cliques in $\Graph{-2,0}$, and let $f\colon K_1\longrightarrow K_2$ be an isomorphism between them. Then $f$ extends to an automorphism of $\Lambda$ if and only if for every subclique $S$ of four pairwise orthogonal roots in $K_1$, the image $f(S)$ in $K_2$ is conjugate to $S$ under the action of $W$.
\end{lemma}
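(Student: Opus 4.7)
The plan is to reduce this statement to Corollary \ref{thm2mono0} (which handles isomorphisms between monochromatic cliques in $\Gamma_0$) by exploiting the rigid pair structure of maximal cliques in $\Gamma_{-2,0}$. First I would observe that any isomorphism $f\colon K_1\to K_2$ of colored graphs automatically satisfies $f(-e)=-f(e)$ for every $e\in K_1$: the edge $\{e,-e\}$ has color $-2$, so $\{f(e),f(-e)\}$ must have color $-2$, and by Proposition \ref{intersection} the only root with dot product $-2$ with $f(e)$ is $-f(e)$. Thus $f$ respects the pairing, and in particular sends any $8$-element ``half'' of $K_1$ to an $8$-element half of $K_2$.

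The forward direction is immediate: an extension $w\in W$ of $f$ conjugates every subset of $K_1$ to its image. For the backward direction, by Lemma \ref{maxin-20} I would write $K_1=\{e_1,\dots,e_8,-e_1,\dots,-e_8\}$ with the $e_i$ pairwise orthogonal, and set $S_1=\{e_1,\dots,e_8\}$, which is a maximal clique in $\Gamma_0$. Then $f(S_1)$ is again a maximal clique in $\Gamma_0$, and $f|_{S_1}\colon S_1\to f(S_1)$ is an isomorphism of cliques in $\Gamma_0$. By hypothesis, for every $4$-element subset $T\subseteq S_1$, the image $f(T)$ is conjugate to $T$ under $W$, so Corollary \ref{thm2mono0} produces a $w\in W$ extending $f|_{S_1}$.

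It remains to check that this same $w$ agrees with $f$ on the negatives $-e_1,\dots,-e_8$, which is automatic: since $w$ is an isometry, $w(-e_i)=-w(e_i)=-f(e_i)=f(-e_i)$, using the pair-preservation property of $f$ established at the start. Hence $w$ restricts to $f$ on all of $K_1$, and $f$ extends to an automorphism of $\Lambda$.

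I do not anticipate a serious obstacle, since the real content lies in Corollary \ref{thm2mono0}; the only subtlety worth flagging is verifying that the hypothesis of the lemma (``every $4$-subclique of pairwise orthogonal roots in $K_1$''), which a priori is stronger than what Corollary \ref{thm2mono0} requires for $S_1$, does in fact imply the weaker condition, and conversely is no stronger: if $T'\subseteq K_1$ is any $4$-clique of pairwise orthogonal roots, replacing some $e_i\in T'$ by $-e_i$ changes the sum by $-2e_i\in 2\Lambda$, so by Proposition \ref{orbits} the $W$-orbit of $T'$ depends only on the choice of pairs, not on the choice of sign, and we may transport the statement between $K_1$ and $S_1$ without loss.
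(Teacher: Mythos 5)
Your proof is correct, and it takes a noticeably more direct route than the paper's. Both arguments hinge on the same key ingredient, Corollary \ref{thm2mono0}, applied to a monochromatic ``half'' of the clique; the difference is in how the passage from the $16$-element clique to an $8$-element half is organized. The paper first reduces to the case $K_1=K_2$ via transitivity of $W$ on maximal cliques in $\Gamma_{-2,0}$ (Corollary \ref{trans16}), then decomposes an automorphism as $a\circ r|_{K_1}$ using the $\mu_2^8\rtimes S_8$ structure from Lemma \ref{semidirect product}, handles the reflection part $r$ separately, and only then applies Corollary \ref{thm2mono0} to $a|_H$. You bypass all of this by observing that any colored-graph isomorphism automatically satisfies $f(-e)=-f(e)$ (via Proposition \ref{intersection}), so that $f$ is determined by its restriction to a half $S_1$, and that Corollary \ref{thm2mono0} applies to the isomorphism $f|_{S_1}\colon S_1\to f(S_1)$ between \emph{arbitrary} cliques in $\Gamma_0$ --- no need to normalize the target half. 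Linearity of the resulting $w\in W$ then gives agreement on the negatives for free. This buys a shorter proof with no computation of automorphism groups; the paper's decomposition buys nothing extra here, though it mirrors the structure used elsewhere in Section \ref{maximalcliques}. Your closing remark about sign changes and Proposition \ref{orbits} is a correct consistency check but is not actually needed: the backward direction only uses the hypothesis for $4$-subcliques contained in $S_1$, which is a special case of the stated hypothesis, and the forward direction is immediate for all $4$-subcliques.
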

\begin{proof}
By Corollary \ref{trans16}, the group $W$ acts transitively on the set of maximal cliques in $\Graph{-2,0}$. Therefore there is an element $\alpha$ in $W$ such that $\alpha(K_1)=K_2$. So $\alpha^{-1}\circ f$ is an element in the automorphism group $\Aut(K_1)$ of $K_1$. Of course, $f$ extends to an element in $W$ if and only if $\alpha^{-1}\circ f$ does. Moreover, for every set $S$ of four pairwise orthogonal roots, $f(S)$ and $(\alpha^{-1}\circ f)(S)$ are conjugate. We conclude that we can reduce to the case where $K_1=K_2$, and $f$ is an element in $\Aut(K_1)$. \\
By Lemma \ref{maxin-20}, we can choose a subclique $H=\{e_1,\ldots,e_8\}$ of $K_1$ of eight pairwise orthogonal roots, such that we have $K_1=\{e_1,\ldots,e_8,-e_1,\ldots,-e_8\}$. Let $\Aut(H)$ be the automorphism group of $H$ as colored graph, and let $(\Aut(K_1))_{H}$ be the stabilizer of $H$ in $\Aut(K_1)$. Since for every element $e\in K_1$ we have $e\in H$ or $-e\in H$, an element in $\Aut(H)$ determines a unique element in $(\Aut(K_1))_{H}$, and conversely, every element in $(\Aut(K_1))_{H}$, when restricted to $H$, determines a unique element in $\Aut(H)$. So we have an isomorphism $\varphi\colon \Aut(H)\xrightarrow{\sim} (\Aut(K_1))_{H}$. Let $f$ be an element in $\Aut(K_1)$. Using Lemma \ref{semidirect product}, write $f=a\circ r|_{K_1}$, where $a$ is an element in $\varphi(\Aut(H))$, and $r$ is a composition of reflections $r_{i}$ in the hyperplanes orthogonal to $e_i$ for certain $i$ in $\{1,\ldots,8\}$. By definition, $r|_{K_1}$ extends to the element $r$ in $W$, and $r(S)$ and $S$ are conjugate for all cliques $S$ of four orthogonal roots, so the statement in the lemma is true for $f$ if and only if it is true for $a$. Of course, if $a$ extends to an automorphism of~$\Lambda$, then $a$ and $a(S)$ are conjugate for all subcliques $S$ of $K_1$ of four orthogonal roots. Conversely, assume that $a(S)$ and $S$ are conjugate for all such $S$. Then in particular, for every subclique $S'$ of size 4 in $H$, the sets $a|_H(S')$ and $S'$ are conjugate. From Corollary~\ref{thm2mono0} it follows that $a|_H$ extends to an element in $W$. Write $w$ for an element in $W$ with $w|_H=a|_H$. Then $w|_{K_1}$ and $a$ are both elements in $(\Aut(K_1))_{H}$, that are identical on $H$, hence also on $K_1$. We conclude that $w|_{K_1}$ and $a$ are the same, so $a$ extends to $w\in W$. This finishes the proof. 
\end{proof}

\subsection{Cliques in \texorpdfstring{$\Graph{-1,0}$}{G(-1,0)}}\label{-10}

Consider the following twelve elements in $E$. \label{kliek12}
\begin{align*}
&t_1=( 1, 1, 0, 0, 0, 0, 0, 0 ); & &t_7=\left(-\tfrac{1}{2}, \tfrac{1}{2}, \tfrac{1}{2}, -\tfrac{1}{2}, -\tfrac{1}{2}, -\tfrac{1}{2}, -\tfrac{1}{2},-\tfrac{1}{2}\right)\\
&t_2=(0, 0, 1, 1, 0, 0, 0, 0);	& &t_8=\left(-\tfrac{1}{2}, \tfrac{1}{2}, -\tfrac{1}{2}, \tfrac{1}{2}, \tfrac{1}{2}, -\tfrac{1}{2}, \tfrac{1}{2}, -\tfrac{1}{2}\right)\\
&t_3=(0, 0, 0, 0, 1, 1, 0, 0); & &t_9=\left(-\tfrac{1}{2}, -\tfrac{1}{2}, \tfrac{1}{2}, -\tfrac{1}{2}, \tfrac{1}{2}, -\tfrac{1}{2}, \tfrac{1}{2}, \tfrac{1}{2}\right)\\
&t_4=( 0, 0, 0, 0, 0, 0, -1, 1);	& &t_{10}=\left(\tfrac{1}{2}, -\tfrac{1}{2}, -\tfrac{1}{2}, -\tfrac{1}{2}, \tfrac{1}{2}, -\tfrac{1}{2}, -\tfrac{1}{2}, -\tfrac{1}{2}\right)\\
&t_5=\left(-\tfrac{1}{2}, -\tfrac{1}{2}, -\tfrac{1}{2}, \tfrac{1}{2}, -\tfrac{1}{2}, \tfrac{1}{2}, -\tfrac{1}{2}, -\tfrac{1}{2}\right); & &t_{11}=\left(\tfrac{1}{2}, -\tfrac{1}{2}, -\tfrac{1}{2}, \tfrac{1}{2}, -\tfrac{1}{2}, -\tfrac{1}{2}, \tfrac{1}{2}, \tfrac{1}{2}\right)\\
&t_6=\left(-\tfrac{1}{2}, \tfrac{1}{2}, -\tfrac{1}{2}, -\tfrac{1}{2}, -\tfrac{1}{2}, \tfrac{1}{2}, \tfrac{1}{2}, \tfrac{1}{2}\right); & &t_{12}=\left(\tfrac{1}{2}, -\tfrac{1}{2}, \tfrac{1}{2}, -\tfrac{1}{2}, -\tfrac{1}{2}, \tfrac{1}{2}, \tfrac{1}{2}, -\tfrac{1}{2}\right)
\end{align*}

One can easily check that these twelve elements form a clique in $\Graph{-1,0}$, depicted below (where edges of color 0 are not drawn). We call this clique $T$.

\begin{center}
\begin{tikzpicture}[scale=0.5]
 \node [label=below:$t_1$,draw,circle,fill,inner sep=0pt,minimum size=4pt](t1) at (1,1.5) {};
 \node [label=below:$t_5$,draw,circle,fill,inner sep=0pt,minimum size=4pt](t5) at (4,1.5) {};
 \node [label=above:$t_9$,draw,circle,fill,inner sep=0pt,minimum size=4pt](t9) at (2.5,4) {};
 \node [label=below:$t_3$,draw,circle,fill,inner sep=0pt,minimum size=4pt](t3) at (10,1.5) {};
 \node [label=below:$t_7$,draw,circle,fill,inner sep=0pt,minimum size=4pt](t7) at (13,1.5) {};
 \node [label=above:$t_{11}$,draw,circle,fill,inner sep=0pt,minimum size=4pt](t11) at (11.5,4) {};
 \node [label=below:$t_4$,draw,circle,fill,inner sep=0pt,minimum size=4pt](t4) at (14.5,1.5) {};
 \node [label=below:$t_8$,draw,circle,fill,inner sep=0pt,minimum size=4pt](t8) at (17.5,1.5) {};
 \node [label=above:$t_{12}$,draw,circle,fill,inner sep=0pt,minimum size=4pt](t12) at (16,4) {};
 \node [label=below:$t_2$,draw,circle,fill,inner sep=0pt,minimum size=4pt](t2) at (5.5,1.5) {};
 \node [label=below:$t_6$,draw,circle,fill,inner sep=0pt,minimum size=4pt](t6) at (8.5,1.5) {};
 \node [label=above:$t_{10}$,draw,circle,fill,inner sep=0pt,minimum size=4pt](t10) at (7,4) {};
\path[every node/.style={font=\sffamily\small}]
     (1,1.5) edge node [midway,below]{$-1$} (4,1.5)
     (4,1.5) edge node [midway,right]{$-1$} (2.5,4)
     (2.5,4) edge node [midway,left]{$-1$} (1,1.5)
     (10,1.5) edge node [midway,below]{$-1$} (13,1.5)
     (13,1.5) edge node [midway,right]{$-1$} (11.5,4)
     (11.5,4) edge node [midway,left]{$-1$} (10,1.5)
     (14.5,1.5) edge node [midway,below]{$-1$} (17.5,1.5)
     (17.5,1.5) edge node [midway,right]{$-1$} (16,4)
     (16,4) edge node [midway,left]{$-1$} (14.5,1.5) 
     (5.5,1.5) edge node [midway,below]{$-1$} (8.5,1.5)
     (8.5,1.5) edge node [midway,right]{$-1$} (7,4)
     (7,4) edge node [midway,left]{$-1$} (5.5,1.5) ;
\end{tikzpicture}
\end{center}

The existence of this clique implies that the maximal size of cliques in $\Graph{-1,0}$ is at least twelve. We will show that this is in fact the maximum. Moreover, we will show that all cliques of size twelve in $\Graph{-1,0}$ are isomorphic, and that $W$ acts transitively on the set of cliques of size twelve (Propositions \ref{maxnopairs} and \ref{trans12}). To describe all maximal cliques of smaller size in $\Graph{-1,0}$ and their orbits under the action of $W$, we use \texttt{magma} for part of the computations.

\begin{lemma}\label{blablabla}
Take $e_1,e_2,e_3\in E$ with $e_1\cdot e_2=e_2\cdot e_3=e_1\cdot e_3=-1$. For $e\in E$ with $e\neq e_1,e_2,e_3$, we have $e\cdot e_i\neq 1$ for all $i=1,2,3$ if and only if $e\cdot e_1=e\cdot e_2=e\cdot e_3=0$. 
\end{lemma}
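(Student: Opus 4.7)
The plan is to use the identity $e_1+e_2+e_3=0$, which follows immediately from Lemma \ref{unique-1-1-1} (since $e_3$ is the unique root with $e_3\cdot e_1=e_3\cdot e_2=-1$, and that lemma identifies it as $-e_1-e_2$). Taking the dot product with $e$ gives the linear constraint
\[
e\cdot e_1 + e\cdot e_2 + e\cdot e_3 \;=\; 0,
\]
which will do most of the work.

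The ``if'' direction is trivial: if all three inner products vanish, none of them equals $1$. For the ``only if'' direction, I would argue that each $e\cdot e_i$ lies in $\{-2,-1,0,1\}$ by Proposition \ref{intersection}, and under our hypothesis none of them is $1$, so each lies in $\{-2,-1,0\}$. Next I would rule out the value $-2$: by Proposition \ref{intersection} we would have $e=-e_i$ for some $i$, and then for $j\neq i$ we would get $e\cdot e_j = -e_i\cdot e_j = 1$, contradicting the hypothesis. Hence each $e\cdot e_i\in\{-1,0\}$. Combined with the linear constraint $\sum_i e\cdot e_i=0$, the only nonnegative solution in this set is $(0,0,0)$, so $e\cdot e_1=e\cdot e_2=e\cdot e_3=0$.

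There is no real obstacle; the proof is a short case analysis driven by the identity $e_3=-e_1-e_2$. The only thing to be careful about is invoking Lemma \ref{unique-1-1-1} to get this identity, and then Proposition \ref{intersection} both to restrict the possible values of $e\cdot e_i$ and to handle the $-2$ case.
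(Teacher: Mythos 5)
Your proof is correct and follows essentially the same route as the paper: both rest on the identity $e_1+e_2+e_3=0$ (the paper gets it from $\|e_1+e_2+e_3\|=0$, you from Lemma \ref{unique-1-1-1}) and then observe that three nonpositive integers summing to zero must all vanish. Your separate exclusion of the value $-2$ is harmless but unnecessary, since the argument ``all $e\cdot e_i\leq 0$ and $\sum_i e\cdot e_i=0$'' already forces each to be $0$; also note the values $e\cdot e_i$ are nonpositive, not ``nonnegative'' as written.
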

\begin{proof}
Take $e_1,e_2,e_3\in E$ such that $e_1\cdot e_2=e_2\cdot e_3=e_1\cdot e_3=-1$. Then we have $\|e_1+e_2+e_3\|=0$, so $e_1+e_2+e_3=0$. For an element $e\in E$ with $\;e\neq e_1,e_2,e_3$ we have $e\cdot e_i\in\{-2,-1,0,1\}$ for $i=1,2,3$, so $e\cdot e_i\neq1$ for $i=1,2,3$ implies $e\cdot e_i\leq0$ for $i=1,2,3$. But $e\cdot(e_1+e_2+e_3)=e\cdot0=0$, so we have $e\cdot e_i\neq1$ for $i=1,2,3$ if and only if $e\cdot e_i=0$ for $i=1,2,3$. 
\end{proof}

\begin{lemma}\label{no221}
The maximum size of a clique in $\Graph{-1,0}$ that contains $e_1,e_2,e_3\in E$ with $e_1\cdot e_2=0$ and $e_1\cdot e_3=e_2\cdot e_3=-1$, is ten.
\end{lemma}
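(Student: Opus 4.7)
The plan is to exploit Weyl-transitivity, partition the admissible extra vertices by their inner products with $(e_1,e_2,e_3)$, and bound the clique size via a short case analysis. By Proposition~\ref{B} I may fix a specific triple, say $e_1=(1,1,0,\ldots,0)$, $e_2=(0,0,1,1,0,\ldots,0)$, and $e_3=(-1,0,-1,0,\ldots,0)$, so that $f:=e_1+e_2+e_3=(0,1,0,1,0,\ldots,0)$ has $\|f\|^2=2$ and hence $f\in E$. For any $v\in E\setminus\{e_1,e_2,e_3\}$ lying in a clique $K\subseteq\Gamma_{-1,0}$ containing $e_1,e_2,e_3$ we have $v\cdot e_i\in\{-1,0\}$ for each~$i$, and hence $v\cdot f=\sum_i v\cdot e_i\in\{-2,-1,0\}$ (the value $-3$ is excluded by $|v\cdot f|\le 2$). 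This partitions such $v$ into $S_0$ (all three $v\cdot e_i=0$), $S_i$ for $i=1,2,3$ (exactly $v\cdot e_i=-1$, the other two $0$), and the singleton $\{-f\}$. A direct enumeration gives $|S_0|=40$, $|S_1|=|S_2|=16$, $|S_3|=10$, and $S_0$ is the $D_5$ root system orthogonal to the $A_3$-sublattice $\langle e_1,e_2,e_3\rangle\subset E_8$.

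Three key estimates then suffice. First, $-f\cdot s=-\sum_j s\cdot e_j=1$ for every $s\in S_1\cup S_2\cup S_3$, so if $-f\in K$ then $K\cap(S_1\cup S_2\cup S_3)=\emptyset$. Second, no two distinct roots in the same $S_i$ have inner product $-1$: if $s,s'\in S_i$ satisfied $s\cdot s'=-1$, then Lemma~\ref{unique-1-1-1} applied to $(s,e_i)$ would force $s'=-s-e_i$, but then $(-s-e_i)\cdot e_j=1$ for the remaining $j\neq i$, contradicting $s'\in S_i$; combined with a short inspection of the possible inner products within $S_i$ (the half-integer roots of $S_i$ all share the same first four coordinates, and the integer roots share a fixed non-zero doubleton), this yields $|K\cap S_i|\le 2$ for each~$i$. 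Third, the maximum clique of $\Gamma_{-1,0}$ inside $S_0\cong D_5$ has size~$6$; writing the $D_5$ roots as $\pm w_i\pm w_j$ for $1\le i<j\le 5$, one shows that at most four roots can pass through any single index, and whenever four do, the remaining clique is confined to a disjoint pair of indices and contributes at most two further vertices.

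Putting these together: if $-f\in K$, then $|K|\le 3+1+|K\cap S_0|\le 4+6=10$, with equality realized by adjoining a maximum six-element clique of $S_0\cong D_5$ to $\{e_1,e_2,e_3,-f\}$. If $-f\notin K$, the bounds above give $|K|\le 3+|K\cap S_0|+\sum_i|K\cap S_i|\le 9+|K\cap S_0|$, and the remaining task is to close the gap via cross-constraints. Any $s\in K\cap S_i$ imposes on each $v\in K\cap S_0$ the constraint $v\cdot s\in\{-1,0\}$, thereby restricting $K\cap S_0$ to a proper subset of $S_0$ whose maximum $\Gamma_{-1,0}$-clique is strictly smaller than~$6$; iterating over those $i$ with $K\cap S_i\neq\emptyset$ gives the desired bound $|K|\le 10$. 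The main obstacle is this last step of Case~$2$: while each individual bound is easy, simultaneously tracking how non-trivial intersections with $S_1$, $S_2$, $S_3$ shrink the admissible portion of $S_0$ requires careful bookkeeping, and is the natural place to invoke a short finite verification, consistent with the paper's use of \texttt{magma} from this section onwards.
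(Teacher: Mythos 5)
Your setup coincides with the paper's: both fix the same representative triple by transitivity (the paper via Lemma~\ref{driehoek}, you via Proposition~\ref{B}) and both work inside the same set of $83$ admissible roots (your count $40+16+16+10+1$ matches the paper's $|A|=83$). The paper then simply computes with \texttt{magma} that the largest $\Gamma_{-1,0}$-clique supported on those $83$ roots has size $7$. You instead try to replace that computation by a structural argument, and several of your ingredients are correct and checkable: the partition by $v\cdot f$, the fact that $-f$ has dot product $1$ with every element of $S_1\cup S_2\cup S_3$ and hence excludes them, the absence of $-1$-edges inside each $S_i$ and the resulting bound $|K\cap S_i|\le 2$, and the identification of $S_0$ with $D_5$. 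The case $-f\in K$ is then genuinely complete (granting the $D_5$ bound) and exhibits the extremal $10$-clique.

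The gap is in the case $-f\notin K$, which is the main case. There your bounds only give $|K|\le 3+6+2+2+2=15$, and the mechanism you propose to close the remaining gap of five --- that each nonempty $K\cap S_i$ strictly shrinks the maximal clique available inside $S_0$ --- is asserted rather than proved. To reach $10$ you would need quantitative statements such as $|K\cap S_0|\le 1$ whenever all three sets $S_i$ each contribute two roots, and nothing in the proposal establishes anything of that strength; you concede the point yourself and defer to ``a short finite verification'', which is precisely the computation the paper performs. So as written the proposal does not prove the lemma; it reorganizes the search space and then stops where the real work begins. A secondary, repairable issue: your justification that the maximal $\Gamma_{-1,0}$-clique inside $S_0\cong D_5$ has size $6$ only treats the configuration in which four roots pass through a single index. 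Viewing each root $\pm w_i\pm w_j$ as a signed edge, one finds six-root configurations in which every index carries at most three roots (e.g.\ a path $1$--$2$--$3$--$4$--$5$ with the two end edges doubled), and ruling out a seventh root in that regime needs a separate degree count (a doubled edge forces one endpoint to be a leaf, so degrees $(3,3,3,3,2)$ are impossible). The claim is true, but your one-sentence argument does not yet establish it.
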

\begin{proof} 
Consider the elements $e_1=(1,1,0,0,0,0,0,0),\;e_2=(0,0,1,1,0,0,0,0)$, and $e_3=(-1,0,-1,0,0,0,0,0)$. By Lemma \ref{driehoek}, it is enough to prove that the maximal size of all cliques in $\Graph{-1,0}$ containing $e_1,e_2,e_3$ is ten. Let $A$ be the set $$\{e\in E\:|\:\mbox{for }i\in \{1,2,3\}: e\cdot e_i\in\{-1,0\}\}.$$For an element $e=(a_1\ldots,a_8)$ in $A$, we have $a_1+a_2\in\{-1,0\},\;a_3+a_4\in\{-1,0\}$, and $-a_1-a_3\in\{-1,0\}$. This gives the following possibilities for $(a_1,a_2,a_3,a_4)$:
\begin{align*}
(a_1,a_2,a_3,a_4) = & \left(-\tfrac{1}{2},\pm\tfrac{1}{2},\tfrac{1}{2},-\tfrac{1}{2}\right) & & \mbox{ (16 roots)}\\
& \left(\tfrac{1}{2},-\tfrac{1}{2},\pm\tfrac{1}{2},-\tfrac{1}{2}\right) & & \mbox{ (16 roots)}\\
& \left(\tfrac{1}{2},-\tfrac{1}{2},-\tfrac{1}{2},\tfrac{1}{2}\right) & & \mbox{ (8 roots)}\\ 
& ( 0, -1, 0, -1) & & \mbox{ (1 roots)}\\
& (0,0,1,-1) & & \mbox{ (1 root)}\\
& (1,-1,0,0) & & \mbox{ (1 root)}\\
& (0,-1,0,0) & & \mbox{ (8 roots)}\\
& (0,0,0,-1) & & \mbox{ (8 roots)}\\
& (0,0,0,0) & & \mbox{ (24 roots)}
\end{align*}
We conclude that the cardinality of $A$ is $83$. As it is too tedious to compute the maximal size of the cliques in $\Graph{-1,0}$ with only vertices in $A$ by hand, we compute this with \texttt{magma}. This number is seven, which implies that the maximal size of a clique in $\Graph{-1,0}$ containing $e_1,\;e_2$ and $e_3$ is ten.    
\end{proof}

\begin{lemma}\label{no51}
The maximum size of a clique in $\Graph{-1,0}$ that contains a clique of five pairwise orthogonal vertices is ten.
\end{lemma}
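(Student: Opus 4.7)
The plan parallels the proof of Lemma~\ref{no221}. By Proposition~\ref{G0,678} the group $W$ acts transitively on the set of cliques of size five in $\Gamma_0$, so it suffices to fix one particular set of five pairwise orthogonal roots and bound the size of any clique in $\Gamma_{-1,0}$ containing it. A convenient choice is $e_1=(1,1,0,0,0,0,0,0)$, $e_2=(0,0,1,1,0,0,0,0)$, $e_3=(0,0,0,0,1,1,0,0)$, $e_4=(0,0,0,0,0,0,1,1)$, and $e_5=(1,-1,0,0,0,0,0,0)$; these are pairwise orthogonal.

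The first step is to enumerate explicitly the set
$$A=\{e\in E\setminus\{e_1,\ldots,e_5\}\mid e\cdot e_i\in\{-1,0\}\text{ for every }i\in\{1,\ldots,5\}\}.$$
Writing $e=(a_1,\ldots,a_8)\in E$, the conditions from $e_1,\ldots,e_4$ constrain each of the four pair-sums $a_1+a_2,\,a_3+a_4,\,a_5+a_6,\,a_7+a_8$ to lie in $\{-1,0\}$, while the condition from $e_5$ constrains $a_1-a_2$ to lie in $\{-1,0\}$. A short case analysis, splitting into integer roots (with two nonzero $\pm 1$ entries) and half-integer roots (subject to the even-parity condition), exactly as in the proof of Lemma~\ref{no221}, produces $A$ as a concrete, small list of roots.

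The decisive step is then to feed this list into \texttt{magma} and compute the largest clique in the subgraph of $\Gamma_{-1,0}$ induced on $A$; I expect the output to be five. Adding the five fixed orthogonal roots yields the upper bound of ten, while the maximum clique found by \texttt{magma} on $A$ simultaneously produces an explicit clique of size ten in $\Gamma_{-1,0}$ containing $\{e_1,\ldots,e_5\}$, which shows the bound is attained. The main obstacle, exactly as in Lemma~\ref{no221}, is that the clique-search inside $A$ is combinatorially too large to carry out by hand; once the enumeration of $A$ is written down, however, the remaining computation is completely routine on the computer, so the essential work is the finite but careful case analysis producing $A$.
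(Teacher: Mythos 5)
Your proposal follows essentially the same route as the paper: reduce to one fixed quintuple of pairwise orthogonal roots by transitivity, enumerate by hand the set $A$ of roots having dot product in $\{-1,0\}$ with all five (the paper finds $|A|=51$), and let \texttt{magma} verify that the largest clique of $\Gamma_{-1,0}$ supported on $A$ has size five, giving the bound ten. The only quibble is the citation: transitivity on size-five orthogonal cliques is Theorem~\ref{gamma0}(iii), not Proposition~\ref{G0,678} (which covers sizes $6$, $7$, $8$); with that corrected, your argument matches the paper's.
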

\begin{proof}
Consider the set $$V_5=\left\{\{e_1,\ldots,e_5\} \;|\;
\forall i: e_i\in E;\;\forall i\neq j:e_i\cdot e_j=0\right\}.$$ 
The group $W$ acts transitively on $V_5$ by Theorem \ref{gamma0}, so it suffices to take \begin{align*} 
&e_1=(1,1,0,0,0,0,0,0);  & &e_4=(0,0,0,0,0,0,1,1); \\
&e_2=(0,0,1,1,0,0,0,0);  & &e_5=(0,0,0,0,0,0,1,-1), \\
&e_3=(0,0,0,0,1,1,0,0); & &
\end{align*} and show that a clique in $\Graph{-1,0}$ containing $e_1,\ldots,e_5$ has size at most ten. Let $A$ be the set $$\{e\in E\:|\:\mbox{for }i\in \{1,\ldots,5\}: e\cdot e_i\in\{-1,0\}\}.$$ For an element $e=(a_1,\ldots,a_8)\in A$, we have $a_i+a_{i+1}\in\{-1,0\}$ for $i\in\{1,3,5,7\}$, and $a_7-a_8\in\{-1,0\}$. If $e$ is of the form $\left(\pm\tfrac{1}{2},\ldots,\pm\tfrac{1}{2}\right)$, then the fact that $a_7+a_8$ and $a_7-a_8$ are contained in $\{-1,0\}$ implies $a_7=-\tfrac{1}{2}$. Moreover, for $i\in\{1,3,5\}$, we have either $a_i=a_{i+1}=-\tfrac{1}{2}$ or $a_i=-a_{i+1}$. This gives three possibilities for each tuple $(a_i,a_{i+1})$ for $i\in\{1,3,5\}$, and $a_8$ is then determined since an even number of the entries of $e$ should be negative. We find $3^3=27$ possibilities.\\
If $e$ has two non-zero entries that are $\pm1$, then $a_7+a_8,\;a_7-a_8\in\{-1,0\}$ implies that either $(a_7,a_8)=(-1,0)$, or $(a_7,a_8)=(0,0)$. Moreover, for $i\in\{1,3,5\}$ we have $\{a_i,a_{i+1}\}=\{-1,0\}$ or $\{a_i,a_{i+1}\}=\{-1,1\}$. It is easy to check that this gives $24$ possibilities.\\
We find that the cardinality of $A$ is 51. As it is too tedious to compute the maximal size of the cliques in $\Graph{-1,0}$ with all vertices in $A$ by hand, we compute this with \texttt{magma}. The maximal size of a clique in $\Graph{-1,0}$ with all vertices in $A$ is five, so the maximal size of a clique in $\Graph{-1,0}$ containing $e_1,\ldots,e_5$ is ten.
\end{proof}

We recall some known Ramsey numbers.

\begin{theorem}\label{Ramsey}(Ramsey Numbers). For two integers $l,k$, let $R(l,k)$ be the least positive integer $n$ such that every undirected graph with $n$ vertices contains either a clique of order four or an independent set of order five. Then we have $R(3,3)=6$, $R(3,4)=9$, and $R(4,5)=25$.
\end{theorem}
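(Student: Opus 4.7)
The statement collects three classical Ramsey numbers, so my plan is to treat each separately, moving from the routine to the highly nontrivial, and in the end I would likely cite published sources rather than reprove everything from scratch. I will also note in passing that the theorem statement as written has a small typo (``clique of order $4$ or independent set of order $5$'' should read ``clique of order $l$ or independent set of order $k$''); I would fix this before producing the proof.

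For $R(3,3)=6$, I would give the standard pigeonhole argument. Pick any vertex $v$ in a $2$-edge-colored $K_6$; among its $5$ incident edges, at least $\lceil 5/2\rceil = 3$ share a color, say red, connecting $v$ to vertices $a,b,c$. If any edge among $\{a,b,c\}$ is red, I get a red triangle with $v$; otherwise the triangle $abc$ is blue. For the lower bound $R(3,3)>5$, the $5$-cycle with its complementary pentagram gives a $2$-coloring of $K_5$ with no monochromatic triangle.

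For $R(3,4)=9$, the upper bound follows from a similar neighborhood split: in a graph $G$ on $9$ vertices with no triangle and no independent set of size $4$, take any vertex $v$; its neighborhood $N(v)$ must be an independent set (no triangle through $v$), so $|N(v)|\le 3$, and its non-neighborhood $\overline{N(v)}\setminus\{v\}$ has size $\ge 5$ and contains no independent $4$-set, so by $R(3,3)=6$ and a careful count one forces a contradiction. The lower bound $R(3,4)>8$ is witnessed by the $8$-vertex graph $C_8^2$ (the circulant with connections $\pm 1,\pm 2$), which is triangle-free with independence number $3$. I would write this case out in about half a page.

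The case $R(4,5)=25$ is the main obstacle and the one I expect to be unable to prove by hand. The upper bound $R(4,5)\le 25$ was established by Kalbfleisch and refined through a long sequence of combinatorial bounds on Ramsey graphs; the matching lower bound $R(4,5)\ge 25$ is witnessed by a specific $(4,5)$-Ramsey graph on $24$ vertices (related to the Paley-type construction). The exact equality $R(4,5)=25$ was finally established by McKay and Radziszowski via an enormous computer-assisted enumeration of possible Ramsey graphs. My plan here would be simply to cite their paper (\emph{The First Classical Ramsey Number for Hypergraphs is Computed}, or the companion $R(4,5)=25$ article), rather than reprove the result; for the purposes of the present paper this theorem is invoked as a black-box bound, so a citation is the natural way to record it. In the final write-up I would therefore state ``proof'' with one line for $R(3,3)$, a short paragraph for $R(3,4)$, and a citation for $R(4,5)$.
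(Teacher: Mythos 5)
The paper itself gives no proof beyond a citation: $R(3,3)$ and $R(3,4)$ are referred to Table 4.1 of \cite{GrRoSp} and $R(4,5)=25$ to \cite{McRa}. Your plan to cite McKay--Radziszowski for $R(4,5)$ therefore coincides exactly with what the paper does (just make sure to cite their $R(4,5)=25$ paper and not the hypergraph one), and reproving the two small cases by hand is a reasonable, more self-contained alternative. Your $R(3,3)$ argument (pigeonhole at a vertex of $K_6$, pentagon/pentagram for the lower bound) is correct and standard, and you are right that the theorem statement has a typo: the defining sentence hard-codes ``clique of order $4$ or independent set of order $5$'' where it should read $l$ and $k$.

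However, the part you chose to prove for $R(3,4)$ has two concrete problems. First, your lower-bound witness is wrong: $C_8^2$, the circulant on $\mathbb{Z}/8\mathbb{Z}$ with connection set $\{\pm1,\pm2\}$, contains the triangle $\{0,1,2\}$, so it is not triangle-free. The graph you want is the Wagner graph $C_8(1,4)$, with connection set $\{\pm1,4\}$; one checks that no two elements of $\{\pm1,4\}$ sum to a third modulo $8$, so it is triangle-free, and its independence number is $3$. Second, your upper-bound sketch stops short of the actual crux. The correct line is: in a graph on $9$ vertices with no triangle and no independent $4$-set, every neighborhood is independent, so every degree is at most $3$; a vertex of degree at most $2$ would have at least $6$ non-neighbors, which by $R(3,3)=6$ contain either a triangle or an independent $3$-set that together with $v$ gives an independent $4$-set; hence every vertex has degree exactly $3$, and a $3$-regular graph on $9$ vertices is impossible because the degree sum would be odd. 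The phrase ``a careful count forces a contradiction'' hides precisely this parity argument, which is the only nonroutine step. With those two repairs your write-up would be correct; as it stands, the $R(3,4)$ portion does not go through.
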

\begin{proof}See \cite[Table 4.1]{GrRoSp} for $R(3,3)$ and $R(3,4)$, and \cite{McRa} for $R(4,5)$.
\end{proof}

\begin{proposition}\label{atleast41}
Every clique in $\Graph{-1,0}$ of size bigger than ten contains a subclique of size four with only edges of color $0$. 
\end{proposition}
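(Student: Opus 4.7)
Let $K$ be a clique in $\Gamma_{-1,0}$ with $|K|\ge 11$. My plan is to analyse the color-$-1$ subgraph $G$ on $K$ and combine Lemmas \ref{unique-1-1-1}, \ref{no221}, \ref{no51} and \ref{blablabla} to extract an independent set of size four in $G$, i.e.\ a set of four pairwise orthogonal roots in $K$. First I would observe that for every $e\in K$, Lemma \ref{no221} (using $|K|>10$) forbids two distinct $-1$-neighbours of $e$ in $K$ from being orthogonal, so they must themselves be joined by a $-1$-edge. Combined with the fact that a color-$-1$ clique has at most three vertices (Lemma \ref{unique-1-1-1}), this yields the two structural consequences I will use throughout: $G$ has maximum degree at most two, and moreover whenever $G$ is triangle-free it has maximum degree at most one, i.e.\ it is a matching.

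Next I would rule out the triangle-free case for $G$ itself. If $G$ were a matching on $|K|\ge 11$ vertices, its independence number would be at least $\lceil |K|/2\rceil\ge 6$, giving in particular five pairwise orthogonal roots in $K$ and contradicting Lemma \ref{no51}. Hence $K$ contains a color-$-1$ triangle $\{v,a,b\}$, and Lemma \ref{blablabla} then forces every vertex of $K':=K\setminus\{v,a,b\}$ to be orthogonal to each of $v,a,b$. Since $|K'|\ge 8$, the same structural analysis (justified by applying Lemma \ref{no221} inside the same clique $K$, where the hypothesis $|K|>10$ still holds) applies to the color-$-1$ subgraph $G'$ on $K'$: it has maximum degree at most two, and is a matching whenever it is triangle-free.

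If $G'$ is a matching, its independence number is at least $\lceil |K'|/2\rceil\ge 4$, directly producing four pairwise orthogonal roots inside $K'$ (and hence in $K$), as required. Otherwise $K'$ contains a color-$-1$ triangle $\{f_1,f_2,f_3\}$, and a second application of Lemma \ref{blablabla} in $K$ shows that every vertex of $K'':=K'\setminus\{f_1,f_2,f_3\}$, of which there are at least five, is orthogonal to each element of $\{v,a,b,f_1,f_2,f_3\}$. Since a color-$-1$ clique has size at most three, the five or more roots of $K''$ cannot all be pairwise $-1$-connected, so there exist $w,w'\in K''$ with $w\cdot w'=0$. The quadruple $\{v,f_1,w,w'\}$ is then pairwise orthogonal, completing the proof. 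The main obstacle I anticipate is the initial structural reduction, namely deducing from Lemma \ref{no221} that triangle-freeness of $G$ forces $G$ to be a matching; once that is established, the remainder is a short two-level case analysis culminating in an explicit construction of the four pairwise orthogonal roots.
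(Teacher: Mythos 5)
Your argument is correct, and it is genuinely different from the one in the paper. The paper decomposes the color-$-1$ subgraph of $K$ into connected components: with at least four components one picks a vertex from each; with at most three it shows (via $e_1+e_2+e_3=0$, essentially Lemma \ref{blablabla}) that any component containing a $-1$-triangle \emph{is} that triangle, deletes one vertex from each of the at most two triangle components, and applies the Ramsey number $R(3,4)=9$ to the remaining $\geq 9$ triangle-free vertices to extract an independent set of size four. You instead exploit Lemma \ref{no221} to show that in a clique of size $>10$ any two color-$-1$ neighbours of a common vertex must themselves be $-1$-adjacent, so every triangle-free induced piece of the $-1$-graph is a matching; the matching cases are then dispatched by counting (or by Lemma \ref{no51}), and the at most two triangles are peeled off via Lemma \ref{blablabla}, ending with an explicit orthogonal quadruple $\{v,f_1,w,w'\}$. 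All the lemmas you invoke are proved before this proposition, and your applications of Lemma \ref{no221} correctly take place inside the ambient clique $K$ where the size hypothesis holds, so there is no circularity. The trade-off: the paper's route needs only the classical Ramsey number and the elementary triangle identity, and in particular does not depend on the computer-verified Lemmas \ref{no221} and \ref{no51} for this step; your route avoids Ramsey theory altogether but leans on those two \texttt{magma}-assisted lemmas (which the paper needs anyway for Proposition \ref{maxnopairs}, so nothing extra is really spent), and it is more constructive about where the four pairwise orthogonal roots actually sit. One small stylistic point: in your matching cases the independence number is already at least four, so you could conclude directly rather than detouring through a contradiction with Lemma \ref{no51}; this is harmless but slightly roundabout.
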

\begin{proof}
Let $K$ be a clique in $\Graph{-1,0}$ of size bigger than ten. Consider the subgraph $K'$ of $K$ whose vertex set consists of all vertices of $K$, and whose edge set is obtained by taking only the edges in $K$ of color $-1$. We consider different cases depending on the number of connected components of $K'$.\\
If $K'$ has at least four connected components, then we can take four vertices, each from a different connected component, and these vertices form a clique of size four with only edges of color $0$ in $K$.\\
Now assume that $K'$ has at most three connected components. We first show that every connected component of $K'$ that contains a clique of size three is a clique of size three in itself. To this end, assume that $K'$ contains a clique of size three, given by $\{e_1,e_2,e_3\}$. By Lemma \ref{unique-1-1-1}, we have $e_1+e_2+e_3=0$. If $e$ is another vertex of~$K'$, then $e\cdot e_i\in\{-1,0\}$ for $i\in\{1,2,3\}$, and $e\cdot(e_1+e_2+e_3)=0$, from which it follows that $e\cdot e_i=0$ for $i\in\{1,2,3\}$. We conclude that the vertices $e_1,e_2,e_3$ form a connected component of $K'$. Since there are at most three connected components by assumption, and $K'$ has more than ten vertices, we conclude that not all components contain a clique of size three. Now remove a vertex from every connected component in $K'$ that is a clique of size three (of which there are at most two), then we are left with a subgraph of $K'$ with at least 9 vertices, and no cliques of size three left. Hence by Theorem \ref{Ramsey}, there must be a set of four vertices that are pairwise disjoint in~$K'$, meaning that they form a clique with edges of color $0$ in $K$.
\end{proof}

Let $V_3$, $V_4$, $Z$, $\alpha$, $\pi$ and $Y$ be as in the diagram above Lemma \ref{cardfiber}. 

\begin{proposition}\label{maxnopairs}
The following hold.
\begin{itemize}
\item[](i) Let $v=(e_1,e_2,e_3,e_4)$ be an element in $V_4$. Then $e_1,e_2,e_3$ and $e_4$ are contained in a clique of size bigger than ten in $\Graph{-1,0}$ if and only if $v$ is an element of $Y$.
\item[](ii) Every maximal clique of size at least eleven in $\Graph{-1,0}$ is of the form $$\left\{\left\{\begin{array}{c}e_1,\ldots,e_4,\\f_1,\ldots,f_4,\\-e_1-f_1,\ldots,-e_4-f_4\end{array}\right\}\left|\begin{array}{c}\forall i\neq j:e_i\cdot e_j=f_i\cdot f_j=0;\\\forall i:e_i\cdot f_i=-1;\\\forall i\neq j:e_i\cdot f_j=0.\end{array}\right.\right\}.$$ 
\item[](iii) The maximal size of a clique in $\Graph{-1,0}$ is twelve, and there are no maximal cliques of size eleven in $\Graph{-1,0}$. 
\item[](iv) For an element $v\in Y$, there are eight cliques of size twelve in $\Graph{-1,0}$ containing the elements of $v$.
\item[](v) For $K$ a clique of size twelve in $\Graph{-1,0}$, we have $|K^4\cap V_4|=|K^4\cap Y|=1944$. 
\end{itemize}
\end{proposition}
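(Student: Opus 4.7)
My plan is to establish (i) first and then use it, together with Lemmas \ref{no51}, \ref{no221}, and \ref{unique-1-1-1}, to pin down the structure of any clique of size $\geq 11$; the remaining counting statements then reduce to combinatorics.

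For (i), the ``if'' direction is straightforward: the displayed clique $T$ has size $12$ and contains $(t_1,\dots,t_4)$, which is easily seen to lie in $Y$ (the sum has norm $\sqrt{8}$ and is primitive), so by transitivity of $W$ on $Y$ (Proposition \ref{orbits}) every $v\in Y$ extends to some clique of size $12$. For the converse, assume $v=(e_1,\dots,e_4)\in O$ and argue by contradiction. Since $v\in O$, the vector $m=\tfrac12\sum e_i$ lies in $\Lambda$, and a quick computation gives $\|m\|^2=2$ and $m\cdot e_i=1$, so $m$ is a root. If $K\supset\{e_1,\dots,e_4\}$ were a clique in $\Gamma_{-1,0}$ of size $>10$, Lemma \ref{no51} would forbid any further vertex of $K$ orthogonal to all four $e_i$, and Lemma \ref{no221} would forbid any vertex with color $-1$ to two distinct $e_i$; every additional $g$ would then satisfy $g\cdot e_i=-1$ for a unique $i$ and $g\cdot e_j=0$ otherwise, giving $g\cdot m=-\tfrac12$, which contradicts $g\cdot m\in\mathbb{Z}$.

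Now assume $|K|\ge 11$ and choose four pairwise orthogonal roots $e_1,\dots,e_4\in K$ via Proposition \ref{atleast41}, ordered so that $(e_1,\dots,e_4)\in Y$ by (i). The argument above partitions $K\setminus\{e_1,\dots,e_4\}$ into classes $K_i=\{g\in K:g\cdot e_i=-1\}$. Within $K_i$, two distinct elements cannot be orthogonal (else together with the three other $e_j$'s they would form five pairwise orthogonal roots, violating Lemma \ref{no51}), so their product is $-1$, and Lemma \ref{unique-1-1-1} forces the second to equal $-e_i$ minus the first; thus $|K_i|\le 2$. Across $K_i$ and $K_j$ with $i\ne j$, a product of $-1$ would produce a triangle of colors $-1,-1,0$ and cap the clique at size $10$ by Lemma \ref{no221}; so the cross-products all vanish. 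This bounds $|K|\le 12$, and equality gives exactly the form claimed in (ii). For (iii), if $|K|=11$ then some $|K_i|$ equals $1$, say $K_4=\{f_4\}$, and the root $h=-e_4-f_4$ extends $K$: it has product $-1$ with $e_4$ and $f_4$, $0$ with every other vertex (using the vanishing of all $f_i\cdot f_j$ just established), and it is not already in $K$.

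For (v), the structure in (ii) splits the vertices of a size-$12$ clique into four triangles whose internal pairs have product $-1$ and whose pairs across different triangles are orthogonal, so every ordered tuple in $K^4\cap V_4$ takes one vertex from each triangle: $3^4\cdot 4!=1944$ tuples, all lying in $Y$ by (i). For (iv), by $W$-transitivity on $Y$ I may take $v=(t_1,\dots,t_4)$, and by (ii) the cliques of size $12$ through $\{t_1,\dots,t_4\}$ correspond bijectively to choices of an unordered pair $P_i=\{f_i,-t_i-f_i\}$ in each $K_i$-space subject to $f_i\cdot f_j=0$ for $i\ne j$. A direct enumeration shows each $K_i$-space consists of eight half-integer roots parametrized by three $\pm1$ signs; the flip $g\leftrightarrow-t_i-g$ identifies the four resulting pairs with $(\mathbb{Z}/2)^2$, and an explicit dot-product computation turns each of the six compatibility conditions into a single affine equation over $\mathbb{F}_2$ in the eight pair parameters. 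The main technical step I foresee is checking that this system has rank exactly $5$---the six equations sum to a trivial identity on both sides, while any five are linearly independent---so the solution space has $2^{8-5}=8$ elements, yielding eight cliques through $v$.
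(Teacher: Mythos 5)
Your proposal is correct, and parts (i), (ii), (iii), and (v) follow essentially the same route as the paper: Proposition \ref{atleast41} plus Lemmas \ref{no221} and \ref{no51} force every extra vertex to meet exactly one $e_i$ in color $-1$, the parity of $e\cdot\sum e_i$ (your $g\cdot m=-\tfrac12$ is the same computation phrased via $m=\tfrac12\sum e_i$) rules out $O$, and Lemma \ref{unique-1-1-1} caps each class $K_i$ at a pair $\{f_i,-e_i-f_i\}$; your completion argument for size-$11$ cliques via $h=-e_4-f_4$ is exactly the paper's. The one genuine divergence is (iv). The paper counts greedily: it writes out $F_1$ and $F_2$ explicitly, finds $4$ choices for the pair in $F_1$, then $2$ compatible pairs in $F_2$, and checks that the pairs in $F_3,F_4$ are then forced, giving $4\cdot2\cdot1\cdot1=8$. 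You instead encode each of the four pair-spaces by two $\mathbb{F}_2$-parameters and each of the six compatibility conditions $f_i\cdot f_j=0$ as one affine equation, and claim the system has rank $5$, hence $2^{8-5}=8$ solutions. This is a sound and rather more symmetric argument --- and your structural claims do hold: each variable occurs in exactly two of the six equations, the incidence graph on the six equations is connected, so the only dependency is the sum of all six, which is trivially satisfied by the constants --- but as written you have only announced the rank computation rather than performed it, whereas the paper's sequential count carries its own verification. To make (iv) complete you would need to write down the six equations explicitly (they come out as, e.g., $v_1v_2=-1$, $u_1v_1v_3=-1$, $u_1v_4=-1$, etc.) and confirm the dependency structure; everything else in your proposal is already a full proof.
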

\begin{proof}
Let $K$ be a clique of size bigger than ten in $\Graph{-1,0}$. By Proposition \ref{atleast41}, we know that $K$ contains a subclique of size four with only edges of color~$0$. Let $\{e_1,e_2,e_3,e_4\}$ be such a subclique in $K$. Let $e$ be another element in~$K$. By Lemmas~\ref{no221} and \ref{no51}, there is exactly one $i\in\{1,2,3,4\}$ such that $e\cdot e_i=-1$, and $e\cdot e_j=0$ for $i\neq j\in\{1,2,3,4\}$. It follows that $e\cdot(e_1+e_2+e_3+e_4)=-1$, hence $\sum_{i=1}^4e_i\notin 2\Lambda$. By Proposition \ref{orbits}, this implies that $(e_1,e_2,e_3,e_4)$ is an element in $Y$. Conversely, the tuple $(t_1,t_2,t_3,t_4)$ is an element in $Y$ and it is contained in the clique $T$ (page~\pageref{kliek12}), so by Proposition \ref{orbits}, every element in $Y$ is contained in a clique of size twelve in $\Graph{-1,0}$. This proves (i).\\
Recall the clique $T$ defined above Lemma \ref{blablabla}. We define the following sets for $i$ in $\{1,2,3,4\}$.$$F_i=\left\{e\in E\;\left|\begin{array}{c}e\cdot t_i=-1,\\e\cdot t_j=0\text{ for }j\in\{1,2,3,4\},\;j\neq i\end{array}\right.\right\}.$$ 
Let $K$ be a clique in $\Graph{-1,0}$ of size at least eleven. Such a $K$ exists, since the clique~$T$ is an example. By Proposition \ref{atleast41}, the clique $K$ contains four vertices that form an element of $V_4$, and by part (i) this is an element of $Y$. By Proposition \ref{orbits} we can without loss of generality assume that $K$ contains the four vertices $t_1,t_2,t_3,t_4$. By Lemma \ref{no221} and Lemma~\ref{no51}, for every element $t$ in $K\setminus\{t_1,t_2,t_3,t_4\}$ there is an $i$ in $\{1,2,3,4\}$ such that $t\cdot t_i=-1$ and $t\cdot t_j=0$ for $i\neq j\in\{1,2,3,4\}$. Therefore we have  
%One can easily find the elements in these sets, and we have
%\begin{align*} 
%&F_1=\left\{\left(-\tfrac{1}{2},-\tfrac{1}{2},a_3,a_4,a_5,a_6,a_7,a_8\right)\left|\begin{array}{c}
%\{a_3,a_4\}=\left\{-\tfrac{1}{2},\tfrac{1}{2}\right\},\\
%\{a_5,a_6\}=\left\{-\tfrac{1}{2},\tfrac{1}{2}\right\},\\
%a_7=a_8
%\end{array}\right.\right\}; \\
%&F_2=\left\{\left(a_1,a_2,-\tfrac{1}{2},-\tfrac{1}{2},a_5,a_6,a_7,a_8\right)\left|\begin{array}{c}
%\{a_1,a_2\}=\left\{-\tfrac{1}{2},\tfrac{1}{2}\right\},\\
%\{a_5,a_6\}=\left\{-\tfrac{1}{2},\tfrac{1}{2}\right\},\\
%a_7=a_8
%\end{array}\right.\right\}; \\
%&F_3=\left\{\left(a_1,a_2,a_3,a_4,-\tfrac{1}{2},-\tfrac{1}{2},a_7,a_8\right)\left|\begin{array}{c}
%\{a_1,a_2\}=\left\{-\tfrac{1}{2},\tfrac{1}{2}\right\},\\
%\{a_3,a_4\}=\left\{-\tfrac{1}{2},\tfrac{1}{2}\right\},\\
%a_7=a_8
%\end{array}\right.\right\}; \\
%&F_4=\left\{\left(a_1,a_2,a_3,a_4,a_5,a_6,\tfrac{1}{2},-\tfrac{1}{2}\right)\left|\begin{array}{c}
%\{a_1,a_2\}=\left\{-\tfrac{1}{2},\tfrac{1}{2}\right\},\\
%\{a_3,a_4\}=\left\{-\tfrac{1}{2},\tfrac{1}{2}\right\},\\
%\{a_5,a_6\}=\left\{-\tfrac{1}{2},\tfrac{1}{2}\right\},\\
%\end{array}\right.\right\}.
%\end{align*} 
 $$K\setminus\{t_1,t_2,t_3,t_4\}=\bigcup_{i\in\{1,2,3,4\}}K\cap F_i.$$
Fix $i\in\{1,2,3,4\}$. For an element $f\in F_i$ we have $f\cdot t_i=-1$, so by Lemma~\ref{unique-1-1-1} there is a unique element $g\in E$ such that $f\cdot g=t_i\cdot g=-1$, given by $g~=~-t_i~-~f$. Note that this element is also in $F_i$, since $(-t_i-f)\cdot t_j=0$ for $j\in\{1,2,3,4\}$ with $j~\neq~i$. So for $i\in\{1,2,3,4\}$, the set $F_i$ is the union of different sets $\{f,-t_i-f\}$, and we claim that $K\cap F_i$ is contained in one of these sets. To prove this, fix $i$ and $f\in K\cap F_i$. Assume by contradiction that there is an element $h\in\left( K\cap F_i\right)\setminus\{f,-t_i-f\}$. Then~$h$ is in $F_i$, so $h\cdot f\neq-1$ by uniqueness of $g$. But $h,f$ are both elements in $K$, so this implies $h\cdot f=0$. But then we have $h\cdot t_i=f\cdot t_i=-1$ and $h\cdot f=0$, so by Lemma~\ref{no221}, the clique $K$ has size at most ten, which gives a contradiction. So for $i\in\{1,2,3,4\}$, there are $f_i\in F_i$ such that $K\cap F_i\subseteq\{f_i,-t_i-f_i\}$, and we have $$K\subseteq \bigcup_{i\in\{1,2,3,4\}}\{t_i,f_i,-t_i-f_i\}.$$
Fix such $f_i\in F_i$ for $i\in\{1,2,3,4\}$. We have $f_i\cdot f_j=0$ for $i\neq j\in\{1,2,3,4\}$, because if this were not the case then $K$ would contain a triple $t_i,f_i,f_j$ such that we would have $t_i\cdot f_i=f_i\cdot f_j=-1,\;f_j\cdot t_i=0$, which contradicts the fact that $K$ has size bigger than ten by Lemma \ref{no221}. Hence $\bigcup_{i\in\{1,2,3,4\}}\{t_i,f_i,-t_i-f_i\}$ forms a clique in $\Graph{-1,0}$ of the required form, and if $K$ is maximal, it is equal to this clique. This proves part (ii), and part (iii) follows directly. \\
We proceed by proving (iv). Note that $(t_1,t_2,t_3,t_4)$ is an element in $Y$. We count the number of cliques of size twelve in $\Graph{-1,0}$ containing $t_1,\ldots,t_4$. By (ii), we know that such a clique is of the form $\bigcup_{i\in\{1,2,3,4\}}\{t_i,f_i,-t_i-f_i\}$, where $f_i$ and $-t_i-f_i$ are elements in $F_i$ for $i\in\{1,2,3,4\}$. By simply considering all elements in $E$ we find $$F_1=\left\{\left(-\tfrac{1}{2},-\tfrac{1}{2},a_3,a_4,a_5,a_6,a_7,a_8\right)\left|\begin{array}{c}
\{a_3,a_4\}=\left\{-\tfrac{1}{2},\tfrac{1}{2}\right\},\\
\{a_5,a_6\}=\left\{-\tfrac{1}{2},\tfrac{1}{2}\right\},\\
a_7=a_8
\end{array}\right.\right\}.$$
Since $|F_1|=8$, there are four choices for the set $\{f_1,-t_1-f_1\}$. Fix $f_1$, and write $f_1~=~\left(-\tfrac{1}{2},-\tfrac{1}{2},a_3,\ldots,a_8\right)$. Then $f_2,\;-t_2-f_2$ are elements in $F_2$ that are orthogonal to $f_1$ by (ii). Again, by considering all elements in $E$ we find $$F_2=\left\{\left(b_1,b_2,-\tfrac{1}{2},-\tfrac{1}{2},b_5,b_6,b_7,b_8\right)\left|\begin{array}{c}
\{b_1,b_2\}=\left\{-\tfrac{1}{2},\tfrac{1}{2}\right\},\\
\{b_5,b_6\}=\left\{-\tfrac{1}{2},\tfrac{1}{2}\right\},\\
b_7=b_8
\end{array}\right.\right\}.$$
Let $f=(b_1,\ldots,b_8)$ be an element in $F_2$. Then  $f$ is orthogonal to $f_1$ if and only if $0=\sum_{i=5}^8a_ib_i=2(a_5b_5+a_7b_7)$, which holds if and only if $\frac{b_5}{b_7}=-\frac{a_7}{a_5}$. This gives two choices for the tuple $(b_5,b_7)$, and together with the two choices for $(b_1,b_2)$ we find four elements in $F_2$ that are orthogonal to $f_1$. This gives two choices for the set $\{f_2,-t_2-f_2\}$. Fix one. Then $f_3,\;-t_3-f_3$, and $\;f_4,\;-t_4-f_4$, are elements in $F_3$ and $F_4$ respectively, that are orthogonal to $f_1$ and $f_2$. It is an easy check that this determines the sets $\{f_3,-t_3-f_3\}$ and $\{f_4,-t_4-f_4\}$ uniquely. So for $f_1$ we had four choices, for $f_2$ we had two, and the set $\{f_3,-t_3-f_3,f_4,-t_4-f_4\}$ is determined after choosing $f_1,f_2$. We conclude that there are $4\cdot2=8$ cliques of size twelve in $\Graph{-1,0}$ containing $t_1,\ldots,t_4$. By Proposition \ref{orbits}, this holds for every element in $Y$. This proves (iv). \\
Let $K$ be a clique of size twelve in $\Graph{-1,0}$. Using the notation in (ii), write $$K=\left\{e_1,\ldots,e_4,f_1,\ldots,f_4,-e_1-f_1,\ldots,-e_4-f_4\right\}.$$ It follows from (ii) that the sets of four pairwise orthogonal roots in $K$ are given by $$\{\{a_1,a_2,a_3,a_4\}\;|\;a_i\in\{e_i,f_i,-e_i-f_i\}\mbox{ for }i\in\{1,2,3,4\}\}.$$ This gives $3^4=81$ such sets, and these give rise to $81\cdot 4!=1944$ elements in $K^4\cap V_4$. From (i) it follows that $K^4\cap V_4=K^4\cap Y$. This proves (v).
\end{proof}

\begin{proposition}\label{trans12}
Let $\mathcal{T}$ be the set of all cliques of size twelve in $\Graph{-1,0}$, and $R$ an element in $\mathcal{T}$. The following hold. 
\begin{itemize}
\item[](i) We have $|\mathcal{T}|=179200$, and the group $W$ acts transitively on $\mathcal{T}$.
\item[](ii) The stabilizer $W_R$ in $W$ of $R$ acts transitively on $R^4\cap Y$.
\end{itemize}
\end{proposition}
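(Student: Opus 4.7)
The plan is to deduce both statements from the transitivity of $W$ on an auxiliary incidence set. Let $S \subseteq Y \times \mathcal{T}$ consist of the pairs $(y, R)$ with $y \in R^4$, and let $\pi_1 \colon S \to Y$ and $\pi_2 \colon S \to \mathcal{T}$ be the two projections; both are $W$-equivariant. By Proposition \ref{maxnopairs}(iv) every fiber of $\pi_1$ has size $8$, and by Proposition \ref{maxnopairs}(v) every fiber of $\pi_2$ has size $1944$. Double counting $|S|$ yields $|\mathcal{T}| \cdot 1944 = |Y| \cdot 8$, from which $|\mathcal{T}| = 179200$.

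For the transitivity in (i), I would show first that $W$ acts transitively on $S$; since $\pi_2$ is surjective, Lemma \ref{action}(i) then gives transitivity on $\mathcal{T}$. Since $W$ is already transitive on $Y$ by Proposition \ref{orbits}, Lemma \ref{action}(iv) reduces this to proving that, for one $y \in Y$, the stabilizer $W_y$ acts transitively on the $8$-element fiber $\pi_1^{-1}(y)$. Once this is done, part (ii) is automatic: the transitive action of $W$ on $S$ forces the stabilizer $W_R$ to act transitively on $\pi_2^{-1}(R)$, which is in natural bijection with $R^4 \cap Y$.

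The main obstacle is the transitivity of $W_y$ on this $8$-element fiber. I would choose $y = (t_1, t_2, t_3, t_4)$, so that the fiber is the set of $8$ cliques of size $12$ in $\Gamma_{-1,0}$ containing these four roots. By Lemma \ref{ydisjoint}(ii), $W_y \cong (\mathbb{Z}/2)^4$ is generated by the four commuting reflections $r_1, \ldots, r_4$ in the pairwise orthogonal roots $f_1, \ldots, f_4$ spanning $C_y$. Using the parametrization from the proof of Proposition \ref{maxnopairs}(iv), each clique in the fiber is determined by a consistent choice of one pair $P_i \subset F_i$ for each $i$ subject to the pairwise orthogonality conditions; rewriting these conditions as six linear equations in the $8$ sign-invariants that label the four pairs shows that the set of consistent choices is an affine space over $\mathbb{F}_2$ of dimension $3$, accounting for the $8$ cliques. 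For each $j$, the reflection $r_j$ fixes every element of $F_j$ and acts on each $F_i$ with $i \neq j$ by negating the two coordinates in the support of $f_j$; tracking the resulting effect on the pair-invariants, one verifies that the induced action of $W_y$ on the $8$ cliques factors through a surjection $W_y \twoheadrightarrow (\mathbb{Z}/2)^3$ with kernel $\langle r_1 r_2 r_3 r_4 \rangle$, acting by translations on the affine $3$-space. Transitivity then follows. This bookkeeping is routine and can be carried out by hand, or alternatively checked in \texttt{magma} from the explicit coordinates of $t_1, \ldots, t_4$ and $f_1, \ldots, f_4$.
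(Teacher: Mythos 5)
Your proposal is correct and follows essentially the same route as the paper: the same incidence set $S\subseteq Y\times\mathcal{T}$, the same double count giving $|\mathcal{T}|=179200$, the same reduction via Lemma \ref{action} to transitivity of $W_y$ on the $8$-element fiber, and the same derivation of (ii) from transitivity on $S$. The only difference is cosmetic: where the paper verifies directly that the stabilizer of $(y,T)$ is $\{\mathrm{id}, r_1r_2r_3r_4\}$ so the orbit has size $16/2=8$, you package the same computation as a translation action on an affine $\mathbb{F}_2$-space of dimension $3$.
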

\begin{proof}Let $T$ be the clique $\{t_1,\ldots,t_{12}\}$, as defined above Lemma~\ref{blablabla}. Define the set $$S=\{((e_1,e_2,e_3,e_4),K)\in Y\times \mathcal{T}\;|\;e_1,\ldots,e_4\in K\}.$$We have projections $\lambda\colon S\longrightarrow Y$ and $\mu\colon S\longrightarrow\mathcal{T}$. \\
From the previous proposition we know that the fibers of $\lambda$ have cardinality~$8$, and the fibers of $\mu$ have cardinality $1944$. Therefore we have $|S|=|Y|\cdot8=348364800$ (Proposition \ref{orbits}), and $|\mathcal{T}|=\frac{|S|}{1944}=179200$. We will show that $W$ acts transitively on $S$, which implies that it acts transitively on $\mathcal{T}$ by the projection $\mu$. Consider the clique $T\in\mathcal{T}$, and set $y=(t_1,t_2,t_3,t_4)\in T^4\cap Y$. Then $(y,T)$ is in the fiber of $\lambda$ above~$y$. The stabilizer $W_y$ in $W$ of $y$ acts on this fiber. We show that this action is transitive, that is, that the orbit $W_{y}T$ is equal to the whole fiber. We have $|W_yT|=\frac{|W_y|}{|W_{y,T}|}$, and $|W_y|=\frac{|W|}{|Wy|}=\frac{|W|}{|Y|}=16$. Note that $t_1,t_2,t_3,t_4$ are all orthogonal to the four roots \begin{align*}
&e_1=(1,-1,0,0,0,0,0,0),\;\;\;\; & & e_2=(0,0,1,-1,0,0,0,0),\\
&e_3=(0,0,0,0,1,-1,0,0),\;\;\;\; & & e_4=(0,0,0,0,0,0,1,1).\end{align*} Therefore, for $i\in\{1,2,3,4\}$, the reflection $r_i$ in the hyperplane orthogonal to~$e_i$ is contained in the stabilizer $W_y$. Since the subgroup generated by these four reflections has cardinality 16, we conclude that this is the whole group~$W_y$. We can now compute that for every element $r$ in $W_y$ we have $rT\neq T$, except for the identity and the composition of all four reflections $r_1,r_2,r_3,r_4$. So $|W_{y,T}|=2$, and we have $|W_yT|=\frac{|W_y|}{|W_{y,T}|}=\frac{16}{2}=8$. Since the fiber of $\lambda$ above $y$ has cardinality $8$, we conclude that $W_y$ acts transitively on this fiber. Since $W$ acts transitively on $Y$, we conclude from Lemma \ref{action} that $W$ acts transitively on $S$. Finally, from the surjective projection $\mu$ and Lemma \ref{action}, it follows that~$W$ acts transitively on $\mathcal{T}$. This proves (i). Since $W$ acts transitively on $S$, the stabilizer~$W_R$ in $W$ of the clique $R$ acts transitively on the fiber~$\mu^{-1}(R)$. Since there is a bijection $\mu^{-1}(R)\longrightarrow R^4\cap Y$ given by the projection~$\lambda$, the group $W$ acts transitively on $R^4\cap Y$ by Lemma \ref{action}. This proves~(ii).
\end{proof} 

\begin{corollary}\label{corollary11}
Let $R$ be a clique of size twelve in $\Graph{-1,0}$. Let $W_R$ be its stabilizer in~$W$. Then $W_R$ acts transitively on $R$.
\end{corollary}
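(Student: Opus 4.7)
My plan is to exploit the transitivity result already established in Proposition \ref{trans12}~(ii), together with the structural description of $R$ given by Proposition \ref{maxnopairs}. By Proposition \ref{maxnopairs}~(ii), there exist roots $e_1,\ldots,e_4,f_1,\ldots,f_4$ in $R$ such that
\[
R=\{e_1,\ldots,e_4,\,f_1,\ldots,f_4,\,-e_1-f_1,\ldots,-e_4-f_4\},
\]
with $e_i\cdot e_j=f_i\cdot f_j=0$ and $e_i\cdot f_j=0$ for $i\ne j$, while $e_i\cdot f_i=-1$. In particular the vertices of $R$ split into four ``triangles'' $\{e_i,f_i,-e_i-f_i\}$, and any two vertices belonging to different triangles are orthogonal.

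The idea is to consider the projection $\pi_1\colon R^4\cap Y\longrightarrow R$ onto the first coordinate; this is a map of $W_R$-sets, since $W_R$ acts on $R$ (and hence coordinatewise on $R^4$). I will first observe that $\pi_1$ is surjective: given any vertex $v\in R$, $v$ lies in one of the four triangles, and picking an arbitrary vertex from each of the three other triangles yields three roots pairwise orthogonal to each other and to $v$; the resulting tuple lies in $R^4\cap V_4$, which by Proposition~\ref{maxnopairs}~(v) equals $R^4\cap Y$. Hence every element of $R$ arises as $\pi_1$ of some element of $R^4\cap Y$.

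Finally I will combine this surjectivity with Proposition \ref{trans12}~(ii), which states that $W_R$ acts transitively on $R^4\cap Y$, and invoke Lemma \ref{action}~(i) to conclude that $W_R$ acts transitively on $\pi_1(R^4\cap Y)=R$. There is essentially no obstacle here: both the transitivity on ordered $4$-tuples and the explicit structure of $R$ have already been done in the preceding results, so the argument reduces to checking surjectivity of the projection, which is immediate from the triangle decomposition. This makes the proof a short corollary rather than requiring any new case analysis or computer computation.
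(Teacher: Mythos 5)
Your proposal is correct and follows exactly the paper's own argument: project $R^4\cap Y$ onto the first coordinate, note surjectivity, and combine the transitivity of $W_R$ on $R^4\cap Y$ from Proposition \ref{trans12}~(ii) with Lemma \ref{action}. The only difference is that you spell out the surjectivity of the projection via the triangle decomposition, which the paper leaves implicit.
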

\begin{proof}We have a surjective map $R^4\cap Y\longrightarrow R$ projecting on the first coordinate, so this follows from the previous proposition and Lemma \ref{action}.\end{proof}

Now that we described all the largest cliques (with respect to number of vertices) in $\Graph{-1,0}$, we continue to describe all other maximal cliques. Since the size of the stabilizer of a clique is the same for every two cliques that are in the same orbit, we make the following definition.

\begin{definition}\label{stabilizersize}
The stabilizer size of an orbit is the size of the stabilizer of any of the elements in the orbit.
\end{definition} 

As one can see in the table in Appendix \ref{list}, for a set $c$ that contains $0$ in combination with either $-1$ or $1$, there are many maximal cliques in $\Gamma_{c}$ with small stabilizer sizes, which means large orbits. This means that, even though we use \texttt{magma} to find all cliques and orbits, computations can become very large and time consuming. Therefore we use the following lemma throughout.  

\begin{lemma}\label{HconjOfSsupA}
Let $H$ be a finite group acting on a finite set $X$ and consider its induced action on the power set of $X$. Let $A$ and $S$ be subsets of $X$ and let $m$ denote the number of $H$-conjugates of~$A$ that are contained in $S$. Then the number of $H$-conjugates of $S$ that contain $A$ equals 
\[
\frac{m \cdot |H_A|}{|H_S|},
\]
where $H_A$ and $H_S$ denote the stabilizer subgroups of $A$ and $S$, respectively.
\end{lemma}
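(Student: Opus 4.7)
The plan is to establish the identity by a double counting argument applied to a naturally defined set of pairs, using only the orbit--stabiliser theorem and part~(iii) of Lemma~\ref{action}.

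First I would introduce the set
\[
P = \{(A', S') \;|\; A' \in HA,\; S' \in HS,\; A' \subseteq S'\},
\]
where $HA$ and $HS$ denote the $H$-orbits of $A$ and $S$ respectively, and compute $|P|$ in two different ways. For the first way, I project onto the second coordinate: for each fixed $S' \in HS$, the number of $A' \in HA$ with $A' \subseteq S'$ is the number of $H$-conjugates of $A$ contained in $S'$. Since $H$ acts transitively on $HS$ and the relation ``is contained in'' is preserved by the $H$-action on the power set, this count is independent of $S'$ and equals $m$ by the hypothesis (applied to $S$ itself). Hence $|P| = m \cdot |HS|$.

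For the second way, I project onto the first coordinate: for each fixed $A' \in HA$, the number of $S' \in HS$ containing $A'$ is likewise independent of $A'$, by transitivity of $H$ on $HA$; call this number $n$. Then $|P| = n \cdot |HA|$, and taking $A' = A$ shows that $n$ is exactly the quantity we want to compute, namely the number of $H$-conjugates of $S$ that contain $A$.

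Equating the two expressions gives $n = m \cdot |HS|/|HA|$, and to finish I would invoke the orbit--stabiliser theorem to rewrite $|HA| = |H|/|H_A|$ and $|HS| = |H|/|H_S|$, so that
\[
n = \frac{m \cdot |H|/|H_S|}{|H|/|H_A|} = \frac{m \cdot |H_A|}{|H_S|},
\]
which is the claimed formula. There is no real obstacle here: the only point that needs to be checked carefully is that both fibre sizes are well-defined constants, and this is exactly Lemma~\ref{action}~(iii) applied to the two projections $P \to HS$ and $P \to HA$, combined with the fact that the $H$-action on the power set of $X$ preserves inclusion.
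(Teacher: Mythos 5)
Your proposal is correct and is essentially the paper's own proof: both define the incidence set of pairs $(A',S')$ with $A'\in HA$, $S'\in HS$, $A'\subseteq S'$, count it via the two projections using constancy of fibre sizes over a transitive $H$-set, and convert orbit lengths to stabiliser indices by orbit--stabiliser. The only cosmetic difference is that the relevant fibre-constancy statement is Lemma~\ref{action}~(ii) rather than (iii), but this does not affect the argument.
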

\begin{proof}
Let $Z$ denote the $H$-subset of the product $HA \times HS$ consisting of all pairs $(B,T)$ with $B\in HA$ and $T \in HS$  satisfying $B \subset T$. The group $H$ acts transitively on the codomains of the projection maps $\pi\colon  Z \to HA$ and $\rho \colon  Z \to HS$. This implies that all fibers of $\pi$ have the same size, say $r$, as the fiber above $A$, which is the number of $H$-conjugates of $S$ that contain $A$, that is, the number that we are looking for. 
All fibers of $\rho$ have the same size as the fiber above $S$, which equals $m$. Hence, we can express the size of $Z$ as both $|HA|\cdot r$ and $|HS| \cdot m$.
Since the  orbits $HA$ and $HS$ have size $|H|/|H_A|$ and $|H|/|H_S|$, respectively, we find 
\[
r = \frac{m \cdot |HS|}{|HA|} = \frac{m \cdot |H_A|}{|H_S|}.
\]
\end{proof}

Note that for $A = \emptyset$, we recover the well-known fact that the length of the orbit of~$S$ equals the index $[H:H_S]$. 

The following proposition describes all maximal cliques and their orbits in $\Graph{-1,0}$. 

\begin{proposition}\label{G-10}
For two maximal cliques $K_1$ and $K_2$ of the same size in $\Graph{-1,0}$, the following are equivalent.
\begin{itemize}
\item[](i) $K_1$ and $K_2$ are conjugate under the action of $W$.
\item[](ii) $K_1$ and $K_2$ are isomorphic.
\item[](iii) $K_1$ and $K_2$ have the same stabilizer size.
\item[](iv) The automorphism groups of $K_1$ and $K_2$ have the same cardinality, and, if this cardinality is 16 and $K_1$ and $K_2$ have size 9, then $K_1$ and $K_2$ both contain a monochromatic clique of size 7 and color 0, or they both do not.  
\end{itemize}
Moreover, the table in Appendix \ref{list} gives a complete list of representatives of the orbits of the maximal cliques in $\Graph{-1,0}$, as well as for each representative its stabilizer size and the size of its automorphism group.
\end{proposition}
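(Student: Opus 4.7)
The implications (i) $\Rightarrow$ (ii) $\Rightarrow$ (iii) are immediate, and (iii) $\Rightarrow$ (iv) follows from the fact that the stabilizer in $W$ of a clique embeds into its automorphism group. The substantive content is therefore the production of the table itself and the reverse implications, chiefly (iv) $\Rightarrow$ (i). My plan is to exhibit a complete list of $W$-orbit representatives of maximal cliques, together with their stabilizer sizes and automorphism groups, and then read off (i)--(iv) from the list.

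The first step handles the top of the size spectrum using results already proved. By Proposition \ref{maxnopayers} (iii) the maximum clique size in $\Gamma_{-1,0}$ is twelve and there are no maximal cliques of size eleven, and by Proposition \ref{trans12} (i) all size-twelve cliques form a single $W$-orbit, with stabilizer of order $|W|/179200 = 3888$. So one row of the table is immediate. For maximal cliques of size at most ten I proceed by computer, but after drastically reducing the search space. A maximal clique $K$ either (a) is contained in $\Gamma_0$, in which case it is simultaneously a maximal clique in $\Gamma_0$ (since maximality in the larger graph is a stronger condition), and by Theorem \ref{gamma0} together with Proposition \ref{G0,678} the single orbit consists of the monochromatic cliques of size eight; or (b) contains at least one edge of color $-1$. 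In case (b), Lemma \ref{twosets} lets me fix a representative pair $(e_1,e_2)\in E^2$ with $e_1\cdot e_2=-1$, and $K$ is then contained in the finite set $N=\{e\in E\mid e\cdot e_i\in\{-1,0\}\text{ for }i=1,2\}$, whose size I bound and compute explicitly as in Lemmas \ref{no221} and \ref{no51}.

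The second step is the \texttt{magma} enumeration: list all maximal cliques of $\Gamma_{-1,0}$ that contain $\{e_1,e_2\}$ by searching inside the induced colored subgraph on $N$, group them into orbits under the stabilizer $W_{\{e_1,e_2\}}$ (using Lemma \ref{HconjOfSsupA} to correctly pass between $W$-orbits and $W_{\{e_1,e_2\}}$-orbits while keeping orbit-counting honest), and for each orbit representative $K$ compute $|W_K|$ directly from the orbit size via the orbit-stabilizer theorem and compute $|\Aut(K)|$ as a combinatorial automorphism count on the colored subgraph. The output is the portion of the table corresponding to $\Gamma_{-1,0}$.

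Finally, verify (iv) $\Rightarrow$ (i) by inspection of the resulting list: within each fixed clique size the orbits are almost always distinguished by the value of $|\Aut(K)|$, and a direct check shows that the only collision of automorphism-group orders occurs at size nine with $|\Aut(K)|=16$, where exactly two orbits appear and are separated by whether $K$ contains a monochromatic $\Gamma_0$-subclique of size seven. Since this last property is invariant under isomorphism of colored graphs, the refined invariant of (iv) distinguishes all orbits, which gives (iv) $\Rightarrow$ (i) and closes the cycle. The main obstacle will be the explicit enumeration in step two: even with $N$ of bounded size, the number of maximal cliques in the induced subgraph is large, and the practical work lies in choosing invariants (size, multiset of colored-degree sequences, and orbit of the underlying $\Gamma_0$-subclique) that let one group cliques into $W$-orbits cheaply before the final stabilizer computation.
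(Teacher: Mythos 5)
Your overall strategy coincides with the paper's: handle size twelve (and the nonexistence of size eleven) via Propositions \ref{maxnopairs} and \ref{trans12}, reduce the remaining search to maximal cliques containing a fixed edge, enumerate with \texttt{magma}, and use Lemma \ref{HconjOfSsupA} to certify completeness of the resulting table. The one structural difference is that you anchor the search on an edge of color $-1$, whereas the paper first proves that every maximal clique contains an \emph{orthogonal} pair and anchors on that; either works, but your version leaves a case unexamined. Your case (a) — a maximal clique of $\Gamma_{-1,0}$ contained in $\Gamma_0$ — is not dismissed: you must check that the unique orbit of maximal cliques of $\Gamma_0$ (eight pairwise orthogonal roots, Theorem \ref{gamma0}) is \emph{not} maximal in $\Gamma_{-1,0}$. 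It is not: by Corollary \ref{uniqueY} such an $8$-clique contains a $4$-subset $\{e_{i_1},\dots,e_{i_4}\}$ whose sum lies in $2\Lambda$, and then $-\tfrac12(e_{i_1}+e_{i_2}+e_{i_3}+e_{i_4})$ is a root having dot product $-1$ with those four and $0$ with the other four, so the clique extends. Without this check your case analysis either misses cliques or inserts a spurious row; with it, case (a) is vacuous and every maximal clique does contain a color-$(-1)$ edge, validating your reduction.

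The more serious problem is the logical frame. You assert that (i) $\Rightarrow$ (ii) $\Rightarrow$ (iii) is immediate and that (iii) $\Rightarrow$ (iv) follows because the stabilizer embeds into the automorphism group. Neither claim survives scrutiny. That isomorphic cliques have the same stabilizer size ((ii) $\Rightarrow$ (iii)) is not immediate — it is essentially the content of the proposition, and indeed fails for other clique types in this paper (Theorem \ref{main} (i), (ii)). And an embedding $W_K \hookrightarrow \Aut(K)$ says nothing about $|\Aut(K)|$ being determined by $|W_K|$: in the table for $\Gamma_{-1,0}$ the index of the stabilizer in the automorphism group varies ($1$, $2$, or $3$ among the size-nine orbits), so two cliques with equal stabilizer size could a priori have automorphism groups of different orders. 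The repair is cheap and is what the paper does: take only (i) $\Rightarrow$ (ii), (i) $\Rightarrow$ (iii), (i) $\Rightarrow$ (iv), (ii) $\Rightarrow$ (iv) as immediate, and then read \emph{both} (iii) $\Rightarrow$ (i) and (iv) $\Rightarrow$ (i) off the completed table — for (iii) by observing that within each clique size the listed stabilizer sizes are pairwise distinct, and for (iv) by your invariant involving the monochromatic size-seven color-$0$ subclique. As written, your cycle of implications does not close without these two table inspections.
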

\begin{proof}The implications (i)$\Rightarrow$(ii), (i)$\Rightarrow$(iii), (i) $\Rightarrow$(iv), and (ii)$\Rightarrow$(iv) are immediate. We will show (iii)$\Rightarrow$(i) and (iv)$\Rightarrow$(i), which together with the immediate implications prove all equivalences. To this end, we first show that the table is complete and correct as stated. From Propositions \ref{maxnopairs} and \ref{trans12} we know that the maximal size of all cliques in $\Graph{-1,0}$ is twelve, that there are 179200 cliques of size twelve, and that these cliques form one orbit under the action of $W$, proving the equivalences for $K_1,\;K_2$ of size at least 12. The clique of size 12 in the table is the clique $T$ that is defined above Lemma \ref{blablabla}. The size of its stabilizer in $W$ is $\frac{|W|}{179200}=3888$. From the description of $T$ we see that its automorphism group is isomorphic to the semidirect product $S_3^4\rtimes S_4$, where $S_4$ acts on $S_3^4$ by permuting the four coordinates. This group has order $6^4\cdot24=31104.$ \\
To find maximal cliques in $\Graph{-1,0}$ of size smaller than 12, note that there are no maximal cliques in $\Graph{-1,0}$ of size 11 by Proposition \ref{maxnopairs}, so we only have to look at the cliques of size at most ten. To make computations easier, we first show that every maximal clique in $\Graph{-1,0}$ contains at least one edge of color $0$. We know that the only maximal cliques in $\Graph{-1}$ are the cliques of size three. 
Set $e_1=(1,1,0,0,0,0,0,0,0,0)$, $e_2=(-1,0,1,0,0,0,0,0)$, and $e_3=(0,-1,-1,0,0,0,0,0)$, then $\{e_1,e_2,e_3\}$ is a maximal clique in $\Graph{-1}$. Note that for $e_4=(0,0,0,0,0,0,1,1)$, the set $\{e_1,e_2,e_3,e_4\}$ forms a clique in $\Graph{-1,0}$, hence $\{e_1,e_2,e_3\}$ is not a maximal clique in $\Graph{-1,0}$. Since $W$ acts transitively on the set of maximal cliques in $\Graph{-1}$ (Corollary \ref{trans-1-1-1}), it follows that all maximal cliques in $\Graph{-1}$ are not maximal in $\Graph{-1,0}$. Thus we can assume that the maximal cliques in $\Graph{-1,0}$ contain at least one pair of orthogonal roots. Fix the roots $c_1=(1,1,0,0,0,0,0,0),\;c_2=(0,0,1,1,0,0,0,0)$. Since $W$ acts transitively on the pairs of orthogonal roots, every maximal clique in $\Graph{-1,0}$ is conjugate to a clique containing $c_1,c_2$, so by considering only the maximal cliques in $\Graph{-1,0}$ that contain $c_1$ and $c_2$, we find representatives for all orbits of the maximal cliques in $\Graph{-1,0}$ under the action of $W$. This reduces computations, since there are only 136 roots that have dot product $-1$ or $0$ with both $c_1$ and $c_2$, which is quickly computed with \texttt{magma}, as well as the number of maximal cliques containing $c_1,c_2$. We find the following.

\vspace{11pt}
\begin{center}
\begin{tabular}{c|c}
$r$ & Number of maximal cliques of size $r$\\
&in $\Graph{-1,0}$ containing $c_1$ and $c_2$\\
\hline
$\leq7$ & 0\\ 
\hline
8 & 261600\\
\hline
9 & 2779392\\
\hline
10 & 228408\\
\end{tabular}
\end{center}
\vspace{11pt}

We now turn to the table in the appendix. One can easily check with \texttt{magma} that the sets in the table for $\Graph{-1,0}$ are indeed maximal cliques in $\Graph{-1,0}$; in Remark \ref{vindenklieken}. For each of these cliques we compute the automorphism groups with \texttt{magma}. We see that apart from the cliques $$L_1=\{19,41,48,50,65,150,172,214,240\}$$
and $$L_2=\{41,48,50,55,65,78,178,214,240\}$$ of size 9, which both have an automorphism group of size 16, every two cliques of the same size in the table have a different automorphism group. One can check that $L_2$ contains a subclique with only edges of color zero of size 7, and $L_1$ does not, so $L_1$ and $L_2$ are not isomorphic. This shows that any two cliques of the same size in the table are not isomorphic, and therefore not conjugate. \\
We claim that every maximal clique in $\Graph{-1,0}$ is conjugate to one of these cliques in the table. To this end, set $A=\{c_1,c_2\}$, and let $W_A$ be the stabilizer of $A$ in $W$. From Proposition~\ref{orbitskleinerdan4} it follows that $|W_A|=\frac{|W|}{|WA|}=\frac{|W|}{15120}=46080$. We now show how to proceed for the cliques of size 8, the proof for sizes 9 and 10 goes completely analogously. For each of the five cliques of size 8 in the table we compute the size of its stabilizer (144,128,16,14, and 8) and the number of conjugates of $A$ contained in it (21,20,20,21, and 21, respectively), with \texttt{magma}. Lemma \ref{HconjOfSsupA} now gives us the number of conjugates of each clique that contain $A$. This sums up to the number 261600 we find in the table above, proving our claim.\\
We have showed that the table in the appendix gives exactly one representative for each orbit of the maximal cliques in $\Graph{-1,0}$, so $K_1$ and $K_2$ are both conjugate to an element in the table. If either (iii) or (iv) holds, then by looking at the table we see that this implies that $K_1$ and $K_2$ are conjugate to the same clique in the table, and in particular, they are conjugate to each other, implying (i). This finishes the proof.
\end{proof}

\begin{remark}\label{vindenklieken}
In the proof of Proposition \ref{G-10} we found 261600 cliques of size 8 in $\Graph{-1,0}$ containig $c_1=(1,1,0,0,0,0,0,0)$ and $c_2=(0,0,1,1,0,0,0,0)$. One can check for any two of them whether they are conjugate with \texttt{magma}, but this takes a very long time. To reduce computations, we first sort the cliques by size of their stabilizer. We then go through each set of cliques with the same stabilizer size by taking one clique, and removing all cliques that are conjugate to it from the set.
\end{remark}

\subsection{Maximal cliques of other colors}\label{othercolors}

In this subsection we prove Theorem \ref{main} and \ref{main2} for all maximal cliques in $\Gamma_c$ with $c\in\{\{-1,1\},\{-2,-1,1\},\{0,1\},\{-2,-1,0\},\{-2,0,1\}\}$. We make use of \texttt{magma} in all cases. The following lemma deals with the cases for which this is straightforward. 

\begin{lemma}\label{easy}For $c\in\{\{-1,1\},\{-2,-1,1\}\},$ and for two maximal cliques $K_1$ and $K_2$ of the same size in $\Gamma_c$, the following are equivalent. 
\begin{itemize}
\item[](i) $K_1$ and $K_2$ are conjugate under the action of $W$.
\item[](ii) $K_1$ and $K_2$ are isomorphic. 
\item[](iii) $K_1$ and $K_2$ have the same stabilizer size.
\item[](iv) The automorphism groups of $K_1$ and $K_2$ have the same cardinality.
\end{itemize}
Moreover, for $c\in\{\{-1,1\},\{-2,-1,1\}\},$ the table in Appendix \ref{list} gives a complete list of representatives of the orbits of maximal cliques in $\Gamma_c$, as well as for each representative its stabilizer size and the size of its automorphism group.
\end{lemma}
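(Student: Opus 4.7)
The strategy mirrors the proof of Proposition~\ref{G-10}. The implications (i)$\Rightarrow$(ii)$\Rightarrow$(iv) and (i)$\Rightarrow$(iii) are immediate, so the substance of the lemma lies in establishing (iii)$\Rightarrow$(i) and (iv)$\Rightarrow$(i), together with verifying that the table in Appendix~\ref{list} is a complete and correct list of orbit representatives of maximal cliques in $\Gamma_c$. Once the table is shown to be complete, and once one checks that within each fixed size the stabilizer sizes (respectively the automorphism group sizes) listed there are pairwise distinct, both implications follow: any maximal clique of size $r$ is conjugate to some representative in the table, and matching invariants forces the matching to pick out the same representative.

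To make the \texttt{magma} search tractable, I would first cut down the space of cliques by using the transitivity results of Section~\ref{facetsthreecliques}. Since Proposition~\ref{A} shows that $W$ acts transitively on ordered pairs with any fixed dot product, every maximal clique of size at least two in $\Gamma_c$ is conjugate to one containing a fixed pair of roots $\{c_1,c_2\}$ with a chosen dot product. Before doing that, one has to rule out (or handle directly) the possibility that some monochromatic maximal cliques from $\Gamma_{-1}$, $\Gamma_1$, or $\Gamma_{-2}$ remain maximal inside $\Gamma_c$, exactly as was done at the beginning of the proof of Proposition~\ref{G-10}; this is a short case distinction using the explicit maximal monochromatic cliques described in Section~\ref{monocliques}. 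After fixing the starting pair $A=\{c_1,c_2\}$, I would ask \texttt{magma} to list the set of roots having allowed dot products with both $c_1$ and $c_2$, and then enumerate all maximal cliques in $\Gamma_c$ that contain $A$, sorted by size.

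For each size $r$ and each clique $S$ listed in the appendix, I would then verify in \texttt{magma} that $S$ is a maximal clique of $\Gamma_c$, compute $|W_S|$ by acting on $E$, and compute $|\Aut(S)|$ as a colored graph. To establish completeness of the table, I would apply Lemma~\ref{HconjOfSsupA} with $H=W$: for each representative $S$ of size $r$ one counts the number $m_S$ of $W$-conjugates of $A$ contained in $S$, so that the number of conjugates of $S$ containing $A$ equals $m_S\cdot|W_A|/|W_S|$. Summing this quantity over all listed representatives of size $r$ should equal the total number of maximal cliques of size $r$ containing $A$, which \texttt{magma} computes directly; agreement proves that no orbit has been missed.

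The main obstacle is the point that caused the more elaborate condition in Proposition~\ref{G-10}~(iv): a priori, two non-conjugate maximal cliques of the same size could share the same stabilizer size and even the same automorphism group size. The claim of this lemma is precisely that for $c\in\{\{-1,1\},\{-2,-1,1\}\}$ this does not happen, so the check one has to perform carefully is that, for every fixed size $r$, the numbers $|W_S|$ (equivalently $|\Aut(S)|$) attached to the representatives in the table are pairwise distinct. As in Remark~\ref{vindenklieken}, for sizes where many orbits exist it is efficient to first group the enumerated maximal cliques by stabilizer size and then peel off $W$-orbits one representative at a time, which avoids the prohibitive cost of testing conjugacy between all pairs.
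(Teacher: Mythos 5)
Your argument is correct, but it takes a more elaborate route than the paper does for this particular lemma. The paper's proof of Lemma~\ref{easy} is a direct brute-force computation: because $\Gamma_{-1,1}$ and $\Gamma_{-2,-1,1}$ contain comparatively few maximal cliques, \texttt{magma} is asked to enumerate \emph{all} of them, partition them into $W$-orbits, and return one representative per orbit, after which the stabilizer sizes are read off as $|W|$ divided by the orbit lengths and the equivalence of (i)--(iv) follows by inspecting that cliques of equal size in the table have automorphism groups of different cardinality. You instead transplant the machinery of Proposition~\ref{G-10}: handle the monochromatic maximal cliques separately, reduce via transitivity on ordered pairs (Proposition~\ref{A}) to cliques through a fixed pair $A=\{c_1,c_2\}$, enumerate only those, and certify completeness of the table by the double-counting identity of Lemma~\ref{HconjOfSsupA}. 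Both are sound; yours scales better and is the method the paper actually needs for $\Gamma_{-1,0}$, $\Gamma_{-2,-1,0}$, $\Gamma_{0,1}$, and $\Gamma_{-2,0,1}$, while the paper's direct approach avoids the bookkeeping (choice of pair color, separate treatment of monochromatic cliques, conjugate-counting) in a situation where it is unnecessary. The one point to be careful with in your version, which you do flag but which deserves emphasis, is that fixing a single dot product for the pair $A$ only reaches cliques containing an edge of that color, so the monochromatic maximal cliques of the \emph{other} colors (e.g.\ the color-$1$ cliques of sizes $7$ and $8$ that remain maximal in $\Gamma_{-1,1}$) must genuinely be treated as separate cases and included in the final count; with that caveat your completeness check goes through.
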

\begin{proof}
In these two graphs there are not so many maximal cliques, and we can ask \texttt{magma} to compute them, compute the orbits under the action of $W$, and a representative of each orbit directly. The results are in the table. The size of the stabilizers is found by dividing the order of $W$ by the size of the orbit. The automorphism group of the cliques is also easily found with \texttt{magma}. Since cliques of the same size in the table have automorphism groups of different size, they are not isomorphic. The equivalence of the statements (i), (ii), (iii), and (iv) now follows from the table. 
\end{proof} 

\begin{corollary}\label{thm2max-11,-2-11}
For $c\in\{\{-1,1\},\{-2,-1,1\}\},$ let $K_1$ and $K_2$ be two maximal cliques in $\Gamma_c$, and $f\colon K_1\longrightarrow K_2$ an isomorphism between them. Then $f$ extends to an automorphism of $\Lambda$. 
\end{corollary}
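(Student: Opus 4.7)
The plan is to reduce to the case where $K_1 = K_2$ and then verify, case by case, that every automorphism of a given maximal clique extends to an element of $W$. Since $f$ is an isomorphism between $K_1$ and $K_2$, Lemma \ref{easy} provides an element $\alpha \in W$ with $\alpha(K_1) = K_2$, whence $g := \alpha^{-1} \circ f$ lies in the colored-graph automorphism group $\operatorname{Aut}(K_1)$. Because $\alpha$ itself extends to $\Lambda$, $f$ extends if and only if $g$ does, so it suffices to show that for each orbit representative $K$ in the table for $\Gamma_c$ (with $c \in \{\{-1,1\},\{-2,-1,1\}\}$), the restriction homomorphism $\phi \colon W_K \longrightarrow \operatorname{Aut}(K)$ is surjective.

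Surjectivity of $\phi$ can be read off from the data in Appendix \ref{list}. Let $K_0 = \ker \phi$; this is the subgroup of $W$ that fixes every vertex of $K$ pointwise, and it is canonically identified with the Weyl group of the root subsystem $E \cap \operatorname{span}(K)^{\perp}$. Surjectivity of $\phi$ is therefore equivalent to the numerical identity
\[
|W_K| \;=\; |K_0|\cdot|\operatorname{Aut}(K)|,
\]
which we verify directly for each row of the table. In the simplest situation, where $K$ spans $\mathbb{R}^8$ as a real vector space, $K_0 = \{1\}$ because $\Lambda$ is torsion-free (any element of $W$ fixing a full-rank sublattice of $\Lambda$ is the identity), and the check collapses to the one-line comparison $|W_K| = |\operatorname{Aut}(K)|$ that is visible on inspection of the relevant columns.

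The main obstacle is the case in which $K$ spans only a proper subspace $V \subsetneq \mathbb{R}^8$, as happens for instance for the monochromatic $\Gamma_{-1}$-triangles, which are already maximal inside $\Gamma_{-1,1}$ because a fourth root would have to satisfy $\sum_{i=1}^3 e_4\cdot e_i = 0$ with each summand in $\{-1,1\}$. For such $K$ one must compute $|K_0|$ by identifying the rank-$(8-\dim V)$ root subsystem $E \cap V^{\perp}$ of $E_8$ and its Weyl group, and then combine this with the entries of the table to confirm the identity above. Since the tables list only finitely many orbit representatives, these identifications can be performed either by hand or via the \texttt{magma} computations already used in the proof of Lemma \ref{easy}; this finite and routine verification completes the proof. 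The corresponding statement for cliques of type III follows as a very special case, already recorded in Corollary \ref{corB}.
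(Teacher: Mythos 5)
Your proposal follows the paper's proof almost exactly: both reduce, via the conjugacy statement of Lemma \ref{easy}, to a single orbit representative $H$ from the table in Appendix \ref{list} and then establish that the restriction map $W_H\longrightarrow \Aut(H)$ is surjective. The only difference is how surjectivity is certified: the paper constructs the map directly in \texttt{magma} and checks surjectivity, whereas you replace this by the counting identity $|W_H|=|K_0|\cdot|\Aut(H)|$, computing the pointwise stabilizer $K_0$ as the Weyl group of the root subsystem orthogonal to $\mathrm{span}(H)$ (this implicitly uses Steinberg's fixed-point theorem, which is standard but not cited in the paper). Your variant is correct and has the advantage of being checkable by hand from the table entries (e.g.\ $311040=51840\cdot 6$ with $E_6$ orthogonal to the $A_2$ spanned by a $(-1)$-triangle), at the cost of needing to identify the orthogonal subsystem for each of the eleven representatives.
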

\begin{proof}
Since $K_1$ and $K_2$ are isomorphic, from Lemma \ref{easy} it follows that they are both conjugate to the same clique in the table in de appendix; call this clique $H$. Then there are elements $\alpha$, $\beta$ in $W$ such that $\alpha(K_1)=\beta(K_2)=H$. So $\beta\circ f\circ\alpha^{-1}$ is an element in the automorphism group $\Aut(H)$ of $H$. Of course, $f$ extends to an element in $W$ if and only if $\beta\circ f\circ\alpha^{-1}$ does. We conclude that we can reduce to the case where $K_1=K_2=H$, and $f$ is an element in $\Aut(H)$. \\
For each clique $H$ in the table, we construct in \texttt{magma} the map $W_H\longrightarrow \Aut(H)$ from the stabilizer $W_H$ to the automorphism group $\Aut(H)$ given by restriction. For all these cliques, this is a surjective map. It follows that every element in $\Aut(H)$ extends to an element in $W$.
\end{proof}

The final three cases are much more work, because of the large numbers of maximal cliques and their sizes. The most extreme case is that of maximal cliques of size~29 in $\Graph{0,1}$ and $\Graph{-2,0,1}$; we treat this separately in Section \ref{2901}.

\begin{remark}\label{vergelijken}
Recall that the classification of isomorphism classes of maximal cliques in $\Graph{0,1}$ has already been done in \cite{CRS04}, where the authors classify all maximal exceptional graphs (Remark \ref{previouswork}). We compare their methods to ours. For maximal cliques in $\Graph{0,1}$ of size unequal to 29, we find the different isomorphism types by showing that each such clique contains a pair of orthogonal roots, fixing such a pair of orthogonal roots, and computing the set of all maximal cliques in $\Graph{0,1}$ of size unequal to 29 that contain these two roots. We then find the different isomorphism types by cutting this set op into smaller sets using invariants that differentiate the isomorphism type of a clique $C$: the combination of either the stabilizer size with the number of pairs or inverse roots contained in $C$, or the combination of the cardinality of the automorphism group with the number of pairs or inverse roots contained in $C$ (Proposition \ref{hard}). Each isomorphism class turns out to be a full orit under the action of $W$ as well. For the maximal cliques of size 29 we do a similar computation, but in this case we show that such a clique contains a monochromatic 5-clique of color 0, or a monochromatic 4-clique of color 1 for which the sum of the corresponding root is a double root in $\Lambda$, or a monochromatic 4-clique of color 1 for which this sum is not a double root in~$\Lambda$. We fix one clique of each of these three types, and compute the set of all maximal cliques in $\Graph{0,1}$ of size 29 that contain at least one of these fixed cliques. We then cut this big set up in smaller sets using for each clique $C$ the stabilizer size and the number of maximal monochromatic subcliques of color~1 of size $r$, for all $r\in\{1,\ldots,8\}$ contained in $C$. Each of these smaller sets is an isomorphism class, and a full orbit under the action of $W$ (Proposition \ref{29cliques}).\\
In \cite{CRS04}, the authors use a different way to search for all maximal exceptional graphs. They prove that every exceptional graph arises as an extension of an \textsl{exceptional star-complement}, and construct a list of 443 graphs that arise as the exceptional star complements for maximal exceptional graphs. In \cite[Chapter 6]{CRS04}, the authors find all maximal exceptional graphs with a computer search, by extending each of the 443 exceptional star complements. Since an exceptional graph can arise as extensions of different star complements, or as different extensions from the same star complement, being an extension of a certain star complement is not an invariant that differentiates between isomorphism types of graphs. Therefore the authors of \cite{CRS04} do an isomorphism check in all 443 sets of extensions from the 443 start complements (as an example they state that for one star complement there were 1048580 extensions, giving 457 isomorphism types). \\
Since we use different methods, it is nice to see that our results coincide, and an alternative approach for finding all orbits of maximal cliques in $\Graph{0,1}$ could be to use the isomorphism types of these graphs that were already known in \cite{CRS04}, and compute the orbits per isomorphism type. It is not obvious that this would have been faster, however, since we would still have to check if two cliques are conjugate for \textsl{every} two cliques of a certain isomorphism type, which can be many.  
%They show that the exceptional graphs are exactly those that can be \textsl{represented} in $E_8$ (i.e. there is a set of roots in $E_8$, with Gram matrix $2(I+\frac12 A)$, where $A$ is the adjacency matrix of the graph), but are not generalized line graphs \cite[Corollary 3.6.4]{CRS04}. This means that the exceptional graphs are exactly those subgraphs of $\Graph{0,1}$ that are not line graphs. 
\end{remark}

\begin{proposition}\label{-2-10}For two maximal cliques $K_1$ and $K_2$ of the same size in $\Graph{-2,-1,0}$, the following are equivalent. 
\begin{itemize}
\item[](i) $K_1$ and $K_2$ are conjugate under the action of $W$.
\item[](ii) $K_1$ and $K_2$ are isomorphic. 
\item[](iii) $K_1$ and $K_2$ have the same stabilizer size, and, if the stabilizer size is 32 and $K_1$ and $K_2$ have size 10, then $K_1$ and $K_2$ both contain a pair of inverse roots, or they both do not.
\item[](iv) The automorphism groups of $K_1$ and $K_2$ have the same cardinality, and, if this cardinality is 80 and $K_1$ and $K_2$ have size 9, or this cardinality is 64 and $K_1$ and $K_2$ have size 10, then $K_1$ and $K_2$ both contain a pair of inverse roots, or they both do not.
\item[](v) $K_1$ and $K_2$ have the same stabilizer size and their automorphism groups have the same cardinality. 
\end{itemize}
Moreover, the table in Appendix \ref{list} gives a complete list of representatives of the orbits of maximal cliques in $\Graph{-2,-1,0}$, as well as for each representative its stabilizer size and the size of its automorphism group.
\end{proposition}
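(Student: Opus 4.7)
The plan is to imitate the strategy of Proposition \ref{G-10}, making the immediate implications (i)$\Rightarrow$(ii), (i)$\Rightarrow$(iii), (i)$\Rightarrow$(iv), (i)$\Rightarrow$(v), (ii)$\Rightarrow$(iv), and (v)$\Rightarrow$(iii) clear at the start, and then proving (iii)$\Rightarrow$(i), (iv)$\Rightarrow$(i), and (v)$\Rightarrow$(i) simultaneously by showing that the table in the appendix lists exactly one representative per $W$-orbit and that the combined invariants listed in (iii), (iv), and (v) separate all entries of the same size. As a first reduction I would argue that every maximal clique in $\Gamma_{-2,-1,0}$ contains at least one edge of color $0$: by Lemma \ref{mono-1-2} a clique with only edges of colors $-2$ and $-1$ is monochromatic, and the monochromatic maximal cliques in $\Gamma_{-1}$ (size $3$) and $\Gamma_{-2}$ (size $2$) are all extendable in $\Gamma_{-2,-1,0}$ (exhibit an orthogonal root in each case). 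Since $W$ acts transitively on the set $A_0=\{(e_1,e_2)\in E^2\mid e_1\cdot e_2=0\}$ by Proposition \ref{pair}, it suffices to enumerate maximal cliques containing the fixed pair $c_1=(1,1,0,0,0,0,0,0),\;c_2=(0,0,1,1,0,0,0,0)$.

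Next I would feed the induced subgraph on the $126$ roots orthogonal to $c_1$ together with their interactions of colors $-2,-1,0$ with $c_1,c_2$ to \texttt{magma}, enumerate all maximal cliques of $\Gamma_{-2,-1,0}$ containing $\{c_1,c_2\}$, and group them by size. For each size this produces a count $N_r$ of such maximal cliques. The candidate representatives in the table can then be verified to be maximal by direct inspection, and for each one we compute in \texttt{magma} the size of its stabilizer $|W_K|$, the size of its automorphism group $|\mathrm{Aut}(K)|$, and the number $m_K$ of $W$-conjugates of $\{c_1,c_2\}$ contained in it. By Lemma \ref{HconjOfSsupA}, the number of $W$-conjugates of the representative $K$ that contain $\{c_1,c_2\}$ equals $m_K\cdot |W_{\{c_1,c_2\}}|/|W_K|=46080\cdot m_K/|W_K|$, where $|W_{\{c_1,c_2\}}|=46080$ as computed in Proposition \ref{G-10}. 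Summing these over the representatives of a given size should recover $N_r$; this verifies completeness of the list and shows that the representatives pairwise sit in distinct orbits.

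It remains to separate the listed orbits by the invariants in (iii), (iv), and (v). Generically the pair $(|W_K|,|\mathrm{Aut}(K)|)$ already suffices, but the table reveals two collisions: in size $9$ two orbits share $|\mathrm{Aut}(K)|=80$, and in size $10$ two orbits share either $|W_K|=32$ or $|\mathrm{Aut}(K)|=64$. For these exceptional pairs I would check directly from the listed vertex sets that one representative contains a pair $\{e,-e\}$ of inverse roots and the other does not; since the property ``contains a pair of inverse roots'' is both a $W$-invariant and a graph-isomorphism invariant of the colored graph (an inverse pair is precisely an edge of color $-2$), this separates the orbits and yields (iii)$\Rightarrow$(i), (iv)$\Rightarrow$(i), and (v)$\Rightarrow$(i) on a case-by-case reading of the table. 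The main obstacle is purely computational and organizational: the number of maximal cliques containing $\{c_1,c_2\}$ is large, so as in Remark \ref{vindenklieken} I would first sort by stabilizer size, then within each stabilizer class refine by automorphism group and by the presence/absence of an edge of color $-2$, and only then run pairwise \texttt{IsConjugate} tests to eliminate duplicates, rather than comparing all cliques naively.
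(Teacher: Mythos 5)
Your proposal is correct and follows essentially the same strategy as the paper's proof: reduce to maximal cliques containing a fixed pair of orthogonal roots (the paper uses a different but conjugate pair), enumerate with \texttt{magma}, verify completeness of the table via Lemma \ref{HconjOfSsupA}, and resolve the few invariant collisions in sizes 9 and 10 by the presence or absence of an edge of color $-2$. The only nitpick is that the candidate extension roots should be those with dot product in $\{-2,-1,0\}$ with \emph{both} fixed roots, not merely those orthogonal to the first, but this does not affect the argument.
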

\begin{proof}This proof follows the same steps as the proof of Proposition \ref{G-10}. See also Remark \ref{vindenklieken} on how we found the representatives of each orbit that are written in the table.\\
Cliques in $\Graph{-2,-1,0}$ without an edge of color 0 are monochromatic and not maximal in $\Graph{-2,-1,0}$ (follows from the results in $\Graph{-2,-1},\Graph{-2,0},\Graph{-1,0}$). Therefore, to find the maximal cliques in $\Graph{-2,-1,0}$, we only consider cliques that contain two orthogonal roots, and we can choose these arbitrarily since $W$ acts transitively on the set of pairs of orthogonal roots. Define the following roots. $$e_1=\left(-\tfrac12, -\tfrac12, -\tfrac12, -\tfrac12, -\tfrac12, -\tfrac12, -\tfrac12, -\tfrac12\right),\;e_2=\left(-\tfrac12, -\tfrac12, -\tfrac12, -\tfrac12, \tfrac12, \tfrac12, \tfrac12, \tfrac12\right).$$ We find the following. 

\begin{center}
\begin{tabular}{c|c}
$r$ & Number of maximal cliques of size $r$\\
&in $\Graph{-2,-1,0}$ containing $e_1$ and $e_2$\\
\hline
$\leq7$ & 0\\
\hline
8 & 192480\\
\hline
9 & 1961088\\
\hline
10 & 743536\\
\hline
11 & 111680\\
\hline
12 & 8290\\
\hline
13 & 2100\\
\hline
14--15 & 0\\
\hline
16 &  15\\
\hline
$\geq17$ & 0 
\end{tabular}
\end{center}
\vspace{11pt} 

We turn to the table in the appendix. One can check that all the sets in the table for $\Graph{-2,-1,0}$ are indeed maximal cliques in $\Graph{-2,-1,0}$. For each of these cliques we compute the automorphism group with \texttt{magma}. As one can see in the table, except from two cliques $$L_1=\{1, 8, 26, 47, 51, 86, 121, 128, 228\},\;\;L_2=\{1, 8, 26, 47, 51, 86, 124, 125, 228\}$$ of size 9 that both have an automorphism group of size 80, and two cliques $$M_1=\{1, 8, 26, 31, 43, 46, 84, 98, 103, 125\},$$$$M_2=\{1, 8, 26, 31, 43, 46, 84, 101, 226, 238\}$$ of size 10 that both have an automorphism group of size 64, any two cliques of the same size have different automorphism groups and are therefore not isomorphic. Moreover, $L_1$ contains the roots 1 and 128, which are each other's inverse, whereas $L_2$ contains no pairs of inverse roots. And $M_1$ contains the roots 26 and 103, which are each other's inverse, and $M_2$ contains no pairs of inverse roots. So also $L_1,L_2,M_1$ and $M_2$ are pairwise not isomorphic. We conclude that any two of the cliques in the table are not isomorphic, hence not conjugate. \\
For each size $r$ in the table above, as we do in the proof of Proposition \ref{G-10}, we compute with Lemma \ref{HconjOfSsupA} and \texttt{magma} the number of maximal cliques of size $r$ containing $e_1$ and $e_2$ that are conjugate to one of the cliques in the table in the appendix. This gives exactly the number of maximal cliques of size $r$ containing $e_1$ and $e_2$ in the table above. So every maximal clique in $\Graph{-2,0,1}$ containing $e_1$ and $e_2$ is conjugate to a clique in the table in the appendix, hence the same holds for every maximal clique in $\Graph{-2,0,1}$. We conclude that the table in the appendix gives a unique representative for each orbit of the set of maximal cliques under the action of~$W$. Finally, for each clique in the table, we compute the size of its stabilizer in~$W$. We see that except for $N_1=\{1,8,26,31,43,86,106,115,224,234\}$ and $N_2=\{1,8,26,31,43,46,84,101,226,238\},$ two cliques of the same size in the table have different stabilizer sizes. In $N_1$, we have roots $43$ and $86$, and these are each other's inverse; in $N_2$, there are no two roots that are each other's inverse. Finally, $N_1$ and~$N_2$ have different automorphism groups. \\
The equivalence of statements (i) -- (v) follows in a similar way as in the proof of Proposition \ref{G-10}. The implications (i)$\Rightarrow$(ii), (i)$\Rightarrow$(iii), (i)$\Rightarrow$(iv), (i)$\Rightarrow$(v) and (ii)$\Rightarrow$(iv) are immediate. Since both $K_1$ and $K_2$ are conjugate to one of the cliques in the table, if any of (iii) -- (v) are true, by looking at the table we see that this implies that $K_1$ and $K_2$ are conjugate to the same clique in the table, and in particular, they are conjugate to each other, implying (i). This proves that all 5 statements are equivalent.\end{proof} 

We can now prove Theorem \ref{main2} for maximal cliques in $\Graph{-1,0}$ and $\Graph{-2,-1,0}$; the statement is the same for these two graphs. Recall the following graphs that are defined in the introduction, where any two disjoint vertices have an edge of color 0 between them. 

\vspace{5pt}

\begin{center}
\begin{tabular}{ccccccccc}
&{
\begin{tikzpicture} [scale=0.3]
 \node [draw,circle,fill,inner sep=0pt,minimum size=4pt](t1) at (0,2) {};
 \node [draw,circle,fill,inner sep=0pt,minimum size=4pt](t2) at (0,6) {};
 \node [draw,circle,fill,inner sep=0pt,minimum size=4pt](t3) at (4,2) {};
 \node [draw,circle,fill,inner sep=0pt,minimum size=4pt](t3) at (4,6) {};
 \end{tikzpicture}}
& & & & & & 
\begin{tikzpicture}[scale=0.4]
\node [draw,circle,fill,inner sep=0pt,minimum size=4pt](t1) at (0,2) {};
 \node [draw,circle,fill,inner sep=0pt,minimum size=4pt](t2) at (0,5) {};
 \node [draw,circle,fill,inner sep=0pt,minimum size=4pt](t3) at (3,2) {};
 \node [draw,circle,fill,inner sep=0pt,minimum size=4pt](t4) at (3,5) {};
 \node [,draw,circle,fill,inner sep=0pt,minimum size=4pt](t5) at (5,3.5) {};
\path[every node/.style={font=\sffamily\small}]
     (0,2) edge node [midway,left]{$-1$} (0,5)
     (3,2) edge node [midway,left]{$-1$} (3,5) ;
\end{tikzpicture} 
&  \\
&&&&&&&&\\
& A &&&&&& $C_{-1}$ &\end{tabular}
\end{center}

\vspace{5pt}

\begin{lemma}\label{thm2max-10en-2-10}
Let $K_1$ and $K_2$ be two maximal cliques, both contained in $\Graph{-1,0}$ or both  contained in $\Graph{-2,-1,0}$, and let $f\colon K_1\longrightarrow K_2$ be an isomorphism between them. The following hold.
\begin{itemize}
\item[](i) The map $f$ extends to an automorphism of $\Lambda$ if and only if for every ordered sequence $S=(e_1,\ldots,e_r)$ of distinct roots in $K_1$ such that the colored graph on them is isomorphic to $A$ or $C_{-1}$, its image $f(S)=(f(e_1),\ldots,f(e_r))$  is conjugate to $S$ under the action of $W$;
\item[](ii) If $S=(e_1,\ldots,e_5)$ is a sequence of distinct roots in $K_1$ such that the colored graph on them is isomorphic to $C_{-1}$ with $e_1\cdot e_4=e_2\cdot e_5=-1$, then $S$ and $f(S)$ are conjugate under the action of $W$ if and only if both $e=e_1+e_2+e_3-e_4-e_5$ and $f(e)$ are in the set $\{2f_1+f_2\;|\;f_1,f_2\in E\}$, or neither are.
\end{itemize}
\end{lemma}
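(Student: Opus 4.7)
The forward direction of (i) is immediate: if $f$ extends to $w\in W$, then for every sequence $S=(e_1,\dots,e_r)$ in $K_1$ we have $w(S)=f(S)$, which exhibits the required $W$-conjugacy. For the converse, I would first reduce to the automorphism case, as in Lemma~\ref{thm2max-20} and Corollary~\ref{thm2crosspolytopes}: since $K_1\cong K_2$ as colored graphs, Propositions~\ref{G-10} and~\ref{-2-10} ensure that $K_1$ and $K_2$ are $W$-conjugate. Picking $\alpha\in W$ with $\alpha(K_2)=K_1$ and replacing $f$ by $\alpha\circ f$ does not affect the hypothesis or the conclusion, so we may assume $K_1=K_2=K$ and $f\in\mathrm{Aut}(K)$. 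The problem then becomes: determine the image of the restriction map $\rho\colon W_K\to\mathrm{Aut}(K)$, where $W_K$ is the stabilizer of $K$ in $W$.

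To finish (i), I would proceed clique by clique through Appendix~\ref{list}. For each orbit representative $K$, \texttt{magma} computes $W_K$, $\mathrm{Aut}(K)$, the image $\rho(W_K)$, and the subgroup $H\leq\mathrm{Aut}(K)$ consisting of all automorphisms that preserve the $W$-conjugacy class of every ordered $A$- or $C_{-1}$-subsequence of~$K$. The inclusion $\rho(W_K)\subseteq H$ is automatic. For the reverse inclusion, whenever $\rho$ fails to be surjective it suffices to display, for each nontrivial coset representative $\phi\in\mathrm{Aut}(K)\setminus\rho(W_K)$, some $A$- or $C_{-1}$-subsequence $S$ of $K$ such that $\phi(S)$ is not $W$-conjugate to $S$. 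Using Propositions~\ref{orbits} and~\ref{orbitstuplesgamma0} (for $A$) and part~(ii) below (for $C_{-1}$), these conjugacy checks reduce to easily computable invariants, so the verification is finite.

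For (ii), I would use the $W$-equivariant bijection
\[
\sigma\colon (e_1,e_2,e_3,e_4,e_5)\longmapsto (e_1,e_2,e_3,-e_4,-e_5)
\]
from ordered $C_{-1}$-sequences in $E^5$ to ordered $C_1$-sequences in $E^5$. A direct inner-product check shows that $\sigma$ lands in the target, and equivariance holds because every element of $W$ is linear and commutes with negation. Crucially, $\sigma$ carries the signed sum $e=e_1+e_2+e_3-e_4-e_5$ to the unsigned sum of the image, so the invariant ``$e\in\{2f_1+f_2\mid f_1,f_2\in E\}$'' passes unchanged through $\sigma$. The orbit structure of ordered $C_1$-sequences is classified in the analysis of maximal cliques in $\Gamma_{-2,0,1}$ (Section~\ref{2901}): there are exactly two $W$-orbits of $C_1$-sequences, distinguished by whether their sum lies in $\{2f_1+f_2\}$. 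Pulling back via $\sigma$ gives the characterisation in (ii).

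The main obstacle is the case analysis in (i): for each orbit of maximal cliques where $\rho$ fails to be surjective, one must locate enough internal $A$- and $C_{-1}$-subsequences to obstruct every non-extending automorphism, and argue that this is achieved uniformly across all orbits in the appendix. The argument is ultimately computer-aided, and its cleanness depends on having (ii) already in hand so that $C_{-1}$-conjugacy can be tested by the sum-membership criterion rather than by a direct search in $W$.
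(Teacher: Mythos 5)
Your part (i) follows the paper's proof essentially verbatim: reduce to $K_1=K_2=K$ a table representative via Propositions \ref{G-10} and \ref{-2-10}, study the restriction map $W_K\to\Aut(K)$, and obstruct each non-extending coset representative by exhibiting an $A$- or $C_{-1}$-subsequence whose conjugacy class it fails to preserve; this is exactly what the paper's \texttt{magma} routines do, and your $A$-case conjugacy test via Proposition \ref{orbits} is the one the paper uses.

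Part (ii) is where you diverge, and where there is a genuine gap. Your map $\sigma\colon(e_1,\dots,e_5)\mapsto(e_1,e_2,e_3,-e_4,-e_5)$ from ordered $C_{-1}$-sequences to ordered $C_1$-sequences is indeed a $W$-equivariant bijection and does carry the signed sum $e$ to the plain sum of the image. But the statement you then invoke --- that ordered $C_1$-sequences form exactly two $W$-orbits, distinguished by whether their sum lies in $\{2f_1+f_2\mid f_1,f_2\in E\}$ --- is nowhere established, and your citation of Section \ref{2901} is wrong: that section classifies maximal $29$-cliques in $\Gamma_{0,1}$ and $\Gamma_{-2,0,1}$ and says nothing about orbits of $C_1$-sequences. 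The closest statement in the paper, Lemma \ref{thm2max-201}, is itself only a computational verification for $C_1$-subsequences of the orbit representatives of maximal cliques in $\Gamma_{-2,0,1}$ and their images under specific coset representatives; it is not a global two-orbit classification, and in the paper's logical order it is proved after, and independently of, the present lemma. So your reduction replaces one unproved computational fact by another that is at least as strong. The paper instead proves (ii) directly: for each of the $34$ relevant orbit representatives $H$ and each coset representative $t$ of the image of $W_H$ in $\Aut(H)$, it checks by computer that whenever a $C_{-1}$-sequence $S$ in $H$ is not conjugate to $t(S)$, the membership of $e$ and $t(e)$ in $\{2f_1+f_2\mid f_1,f_2\in E\}$ differs (the converse direction being automatic since this invariant is $W$-stable). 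To make your route work you would have to actually prove the two-orbit classification of $C_1$-sequences (e.g.\ by a counting and stabilizer argument in the style of Section \ref{facetsthreecliques}, or by a separate computation); and since your part (i) uses part (ii) as its conjugacy test for $C_{-1}$-subsequences, this gap propagates into (i) as well.
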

\begin{proof}
Since $K_1$ and $K_2$ are isomorphic, from Propositions \ref{G-10}, and \ref{-2-10} it follows that they are both conjugate to the same clique in the table in the appendix; call this clique~$H$. Then there are elements $\alpha$, $\beta$ in $W$ such that $\alpha(K_1)=\beta(K_2)=H$, so $\beta\circ f\circ\alpha^{-1}$ is an element in the automorphism group $\Aut(H)$ of $H$. Of course, $f$ extends to an element in $W$ if and only if $\beta\circ f\circ\alpha^{-1}$ does. Moreover, for every sequence $S$ as in the statement, $f(S)$ and $(\beta\circ f\circ\alpha^{-1})(S)$ are conjugate. We conclude that we can reduce to the case where $K_1=K_2=H$, and $f$ is an element in $\Aut(H)$. Let $g\colon W_H\longrightarrow \Aut(H)$ be the map from the stabilizer of $H$ to the automorphism group that restricts elements in $W_H$ to $H$, and $T_H$ a set of representatives of the classes in the cokernel of $g$. Since $f$ is a composition of (restrictions of) elements in $W_H$ with an element in $T_H$, we can reduce further to the case where $f$ is an element in $T_H$. \\
For each of the 56 cliques $H$ in the table at $\Graph{-1,0}$ and $\Graph{-2,-1,0}$, we compute the map $g\colon W_H\longrightarrow \Aut(H)$ with \texttt{magma}. In all cases, this map is injective. This means that for all cliques with $|W_H|=|\Aut(H)|$, every element in the automorphism group of~$H$ extends to a unique automorphism of $\Lambda$. We see in the list that this holds for the first five cliques and the $11^{\mbox{th}},\; 12^{\mbox{th}},\;15^{\mbox{th}}$, and $16^{\mbox{th}}$ clique in $\Graph{-1,0}$, and the first five cliques and the $8^{\mbox{th}},\; 10^{\mbox{th}},\;11^{\mbox{th}},\;13^{\mbox{th}},\;17^{\mbox{th}},\; 20^{\mbox{th}},\;23^{\mbox{rd}}$, and $24^{\mbox{th}}$ clique in $\Graph{-2,-1,0}$.\\
For each clique $H$ of the remaining 34 cliques, we compute the following with the function \texttt{CokernelClassesTypeCminus1} \cite{magma}. First, we create a set $T_H$ of representatives of the classes of the cokernel of the map from $W_H$ to $\Aut(H)$. We then check for each $t$ in $T_H$, and for all sequences $S=(e_1,e_2,e_3,e_4,e_5)$ of distinct roots in $H$ such that the colored graph on $S$ is isomorphic to $C_{-1}$ with $e_1\cdot e_4=e_2\cdot e_5=-1$, whether $S$ and $t(S)$ are conjugate. For all $t$ and $S$ for which this is the case, we verify that either $e=e_1+e_2+e_3-e_4-e_5$ is in the set $F=\{2f_1+f_2\;|\;f_1,f_2\in E\}$ and $t(e)$ is not, or vice versa. This proves part (ii).\\ 
For $H$ equal to the $7^{\mbox{th}}-10^{\mbox{th}}\;,13^{\mbox{th}}\;,14^{\mbox{th}},$ and $18^{\mbox{th}}-23^{\mbox{rd}}$ clique in $\Graph{-1,0}$ and the 
$7^{\mbox{th}},\;9^{\mbox{th}},\;12^{\mbox{th}},\;14^{\mbox{th}},\;16^{\mbox{th}}\;,18^{\mbox{th}}\;,19^{\mbox{th}}\;,21^{\mbox{st}}\;,22^{\mbox{nd}}\;,25^{\mbox{th}}-29^{\mbox{th}}$, and $31^{\mbox{st}}$ clique in $\Graph{-2,-1,0}$, the check we just described gives us for all $t$ in $T_H$ a sequence $S$ with distinct roots in $H$ and graph isomorphic to $C_{-1}$, such that $S$ and $t(S)$ are not conjugate. For the remaining 7 cliques in the table, we do an almost analogous check with the function \texttt{CokernelClassesTypeA} in \texttt{magma} \cite{magma}, where $S$ is now a clique whose graph is isomorphic to $A$. For all 7 cliques $H$, for all elements in $T_H$, there exists such an $S$ with $S$ not conjugate to $t(S)$. This finishes the proof of~(i). 
\end{proof}

\begin{proposition}\label{hard}For $c\in\{\{0,1\},\{-2,0,1\}\},$ and $K_1$, $K_2$ two maximal cliques of the same size $r\neq29$ in $\Gamma_c$, the following are equivalent. 
\begin{itemize}
\item[] (i) $K_1$ and $K_2$ are conjugate under the action of $W$.
\item[] (ii) $K_1$ and $K_2$ are isomorphic.
\item[] (iii) $K_1$ and $K_2$ have the same stabilizer size, and they contain the same number of pairs of orthogonal roots. 
\item[](iv) The automorphism groups of $K_1$ and $K_2$ have the same cardinality, and $K_1$ and $K_2$ contain the same number of pairs of orthogonal roots.
\end{itemize}Moreover, the table in Appendix \ref{list} gives a complete list of representatives of the orbits of maximal cliques in $\Gamma_c$, as well as for each representative its stabilizer size and the size of its automorphism group.
\end{proposition}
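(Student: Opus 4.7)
The plan is to mimic the strategy of Propositions \ref{G-10} and \ref{-2-10}, with an extra invariant (the number of orthogonal pairs) since it turns out that for these two graphs the stabilizer size and automorphism group size alone do not separate all orbits. The implications (i)$\Rightarrow$(ii)$\Rightarrow$(iv), (i)$\Rightarrow$(iii), (i)$\Rightarrow$(iv), and (iii)$\Rightarrow$(iv) are immediate (the number of orthogonal pairs is an isomorphism invariant, and it is preserved by $W$ since $W$ preserves the dot product). So I only need to verify the remaining direction by checking that the table in Appendix \ref{list} is a complete and correct list of orbit representatives, and that entries of the same size differ either in stabilizer size or in number of orthogonal pairs (resp.\ in $|\Aut|$ or in number of orthogonal pairs).

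First I would reduce to cliques containing a fixed edge, exploiting the fact that $W$ acts transitively on ordered sequences of two roots of fixed dot product (Proposition \ref{A}). For $\Gamma_{0,1}$ every clique of size at least $2$ contains an edge of some color, and one checks that no maximal clique is monochromatic (by extending: any clique in $\Gamma_1$ of size $\leq 8$ extends by an orthogonal root, and any clique in $\Gamma_0$ of size $\leq 8$ extends by a root of dot product $1$ since there are $56$ such roots to any given root). Hence every maximal clique in $\Gamma_{0,1}$ contains a pair $(c_1,c_2)$ with $c_1\cdot c_2=1$; fix such a pair. Analogously for $\Gamma_{-2,0,1}$ one shows maximal cliques contain a pair of roots of dot product $1$ (maximal cliques in $\Gamma_{-2,0}$ are of size $16$ and have been classified in Section~\ref{-20}, but one sees directly that they extend in $\Gamma_{-2,0,1}$ by noting that if $\{e_1,\dots,e_8,-e_1,\dots,-e_8\}$ is such a clique, one can replace any $e_i$ by an appropriate root of dot product $1$ with the others; these are not maximal in $\Gamma_{-2,0,1}$). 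After fixing $(c_1,c_2)$ I enumerate in \texttt{magma} all maximal cliques in $\Gamma_{0,1}$ and $\Gamma_{-2,0,1}$ containing these two roots, organize them by size, and tabulate their count per size.

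Next I verify the table entries. For each clique $H$ in the table at $\Gamma_{0,1}$ and $\Gamma_{-2,0,1}$ I verify with \texttt{magma} that $H$ is indeed a maximal clique of the claimed size in the claimed graph, compute its stabilizer $W_H$, its automorphism group $\Aut(H)$ as a colored graph, and the number of orthogonal pairs in $H$. Then for each clique $H$ I compute via Lemma~\ref{HconjOfSsupA} the number of $W$-conjugates of $H$ that contain the fixed pair $(c_1,c_2)$: this equals $m \cdot |W_{\{c_1,c_2\}}|/|W_H|$, where $m$ is the number of $W$-conjugates of $\{c_1,c_2\}$ inside $H$. Summing these numbers over all listed cliques of a given size $r\neq 29$ must reproduce exactly the count produced in the first step; when it does, we conclude that every maximal clique in $\Gamma_c$ of size $r\neq 29$ is $W$-conjugate to exactly one clique in the table. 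Finally I inspect the table: I verify that for each size $r\neq 29$ any two distinct entries are distinguished either by $|W_H|$ or by the number of orthogonal pairs (for (iii)), and similarly for $|\Aut(H)|$ paired with the number of orthogonal pairs (for (iv)); this establishes (iii)$\Rightarrow$(i) and (iv)$\Rightarrow$(i). The equivalence (i)$\Leftrightarrow$(ii) follows because each row of the table is not isomorphic to any other row of the same size, which I check with \texttt{magma} by direct comparison of colored graph invariants (including, when needed, the number of orthogonal pairs, which is a coarser invariant that suffices to separate the ambiguous cases).

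The main obstacle is computational scale. Even after fixing $(c_1,c_2)$, the number of maximal cliques in $\Gamma_{-2,0,1}$ containing the edge can be very large, and enumerating them directly is prohibitive. To manage this, I would prune by stabilizer size (as explained in Remark \ref{vindenklieken}): first sort candidate cliques by some cheap invariant (size, number of orthogonal pairs, degree sequence in each color), bucket them, and within each bucket pick a representative, then remove all $W$-conjugates of that representative before picking the next. This avoids calling the \texttt{IsConjugate} routine on pairs of cliques that provably cannot be conjugate. The size $r=29$ case is set aside and handled in Section~\ref{2901} where it requires a dedicated argument, which is why it is explicitly excluded from the statement here.
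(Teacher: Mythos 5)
Your overall architecture (reduce to cliques through a fixed small subclique, enumerate with \texttt{magma}, certify completeness with Lemma~\ref{HconjOfSsupA}, separate orbits by stabilizer size / automorphism group size plus the count of orthogonal pairs) matches the paper. But your choice of the fixed subclique for $\Gamma_{-2,0,1}$ rests on a false claim, and this breaks the completeness of your classification.

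You assert that every maximal clique in $\Gamma_{-2,0,1}$ contains an edge of color $1$, arguing that a maximal clique $\{e_1,\dots,e_8,-e_1,\dots,-e_8\}$ of $\Gamma_{-2,0}$ is not maximal in $\Gamma_{-2,0,1}$ because "one can replace any $e_i$ by an appropriate root of dot product $1$ with the others." Replacing a vertex does not extend a clique; to show non-maximality you must \emph{add} a root $f$ with $f\cdot(\pm e_i)\in\{-2,0,1\}$ for all $i$. If $f\cdot e_i=1$ then $f\cdot(-e_i)=-1$, which is not an allowed color, and $f\cdot e_i=\pm2$ forces $f=\pm e_i$; so $f\cdot e_i=0$ for all $i$, and since the $e_i$ span $\mathbb{R}^8$ this forces $f=0$. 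Hence these $16$-cliques \emph{are} maximal in $\Gamma_{-2,0,1}$ and contain no edge of color $1$. They form a full orbit of $2025$ cliques (the row of size $16$ with stabilizer size $344064$ and automorphism group of order $2^8\cdot 8!$ in the table), which your enumeration over cliques containing a fixed dot-product-$1$ pair would never find; your completeness check would then either fail to balance or, worse, silently certify an incomplete table. The paper avoids this by splitting $\Gamma_{-2,0,1}$ according to whether a color $-2$ edge is present: cliques containing $\{e,-e\}$ are enumerated directly (all remaining vertices are forced to be orthogonal to $e$, which also keeps the search small), and cliques without such an edge are exactly the maximal cliques of $\Gamma_{0,1}$ that stay maximal, so that computation is reused.

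Two smaller points. For $\Gamma_{0,1}$ your reduction to a dot-product-$1$ pair happens to be valid, but your justification is not: the existence of $56$ roots at dot product $1$ to each single root does not produce one root at dot product $0$ or $1$ to all eight members of a maximal clique of $\Gamma_0$ simultaneously (such a root exists, but one must exhibit it, e.g.\ as half the sum of a suitable orthogonal quadruple in the orbit $O$). The paper instead fixes an \emph{orthogonal} pair and shows that the monochromatic color-$1$ cliques (sizes $7$ and $8$) are non-maximal by exhibiting the containments into the cliques of sizes $22$ and $33$. Finally, your parenthetical claim that (iii)$\Rightarrow$(iv) is immediate is not (stabilizer size does not a priori determine $|\Aut|$), though this is harmless since you ultimately derive both (iii)$\Rightarrow$(i) and (iv)$\Rightarrow$(i) from the table.
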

\begin{proof}We show that the table is correct and complete for each $c$. The steps in the proof are the same as in the proofs of Propositions \ref{G-10} and \ref{-2-10}, and the equivalence of statements (i) -- (iv) follows in the same way as in these propositions. See also Remark \ref{vindenklieken} on how we found the representatives of each orbit that are written in the table.\\
$\bullet$ $c=\{0,1\}$\\
We know that the maximal cliques in $\Graph{1}$ form two orbits: one with cliques of size~7 and one with the cliques of size 8 (Proposition \ref{max178}). Note that the clique of size~7 in $\Graph{1}$ in the table is contained in the clique of size 22 in $\Graph{0,1}$, and the clique of size~8 in $\Graph{1}$ is contained in the clique of size 33 in $\Graph{0,1}$. This means that there are no maximal cliques with only edges of color 1 in $\Graph{0,1}$. We fix two orthogonal roots $e_1=\left(-\tfrac12, -\tfrac12, -\tfrac12, -\tfrac12, \tfrac12, -\tfrac12, -\tfrac12, \tfrac12\right),\;e_2=(-1, 0, 0, 0, -1, 0, 0, 0)$. With \texttt{magma} we compute that there are only 136 roots that have dot product 0 or 1 with $e_1$ and $e_2$, and we find the following. 

\vspace{11pt}
\begin{center}
\begin{tabular}{c|c}
$r$ & Number of maximal cliques of size $r$\\
&in $\Graph{0,1}$ containing $e_1$ and $e_2$\\
\hline
$\leq21$ & 0 \\
\hline
22 &  3120\\
\hline
23--27 & 0\\
\hline
28 & 21120\\
\hline
30 & 16263276\\
\hline
31 & 2792800\\
\hline
32 & 655680\\
\hline
33 & 105120\\
\hline
34 & 18800\\
\hline
35 & 0\\
\hline
36 & 304\\
\hline
$\geq37$ & 0\\
\end{tabular}
\end{center}
\vspace{11pt}

For each set $K$ in the table in Appendix~\ref{list}, one can check that it is indeed a maximal clique in $\Graph{0,1}$. We compute the automorphism groups of all cliques. As we see in the table, for all sizes except 30, two cliques of the same size have a different automorphism group, so they are not isomorphic, hence not conjugate. For size 30, all cliques whose automorphism groups have the same cardinality have a different number of pairs of orthogonal roots that they contain; for example, the cliques of size 30 with stabilizer size 48 contain (in order of appearence in the table) $171,\;179,\;180,\;183,\;198$ subsets of two orthogonal roots. This shows that no two cliques in the table are isomorphic, hence not conjugate. Moreover, using the stabilizer size and the number of subsets of orthogonal roots of each clique $K$ in the list, we can find the number of conjugates of $K$ that contain $\{e_1,e_2\}$ with Lemma~\ref{HconjOfSsupA}. Adding all these numbers up we recover the numbers in the table above, which shows that every maximal clique in $\Graph{-1,0}$ of size unequal to 29 is conjugate to one of the cliques in the list. We conclude that the table in Appendix~\ref{list} is complete. Finally, we see that for each clique in the table, the stabilizer size and the cardinality of the automorphism group is the same. Therefore, by what we showed above, different cliques of the same size and with the same stabilizer size in the table have a different number of subsets of two orthogonal roots. 
\\
$\bullet$ $c=\{-2,0,1\}$\\
We start with cliques in $\Graph{-2,0,1}$ containing an edge of color $-2$. We fix a root $e$ and compute the maximal cliques in $\Graph{-2,0,1}$ containing $e$ and $-e$. We find the following.

\vspace{11pt}
\begin{center}
\begin{tabular}{c|c}
$r$ & Number of maximal cliques of size $r$\\
&in $\Graph{-2,0,1}$ containing $e$ and $-e$\\
\hline
$\leq12$ & 0\\
\hline
13 & 370440\\
\hline
14 & 250236\\
\hline
15 & 0\\
\hline
16 & 77895\\
\hline
17--18 & 0\\
\hline
19 & 7019208\\
\hline 
20 & 861840\\
\hline
21 & 120960\\
\hline
22 & 44352\\
\hline
23 & 0\\
\hline 
24 & 4032\\
\hline
25--28 & 0\\
\hline
$\geq30$ & 0
\end{tabular}
\end{center}

\vspace{11pt}

Since there are no maximal cliques of size bigger than 29 containing an edge of color~$-2$, we conclude that all the maximal cliques in $\Graph{0,1}$ of size at least 29 are also maximal cliques in $\Graph{-2,0,1}$. This leaves us with the maximal cliques in $\Graph{0,1}$ of size 22 and 28. Looking at the table in the appendix, we see that for both sizes there is only one orbit, and it is an easy check that for the listed representatives $L_{22}$ of size 22 and $L_{28}$ of size 28 of both these orbits, there are no roots that can be added to extend the clique in $\Graph{-2,0,1}$. Therefore $L_{22}$ and $L_{28}$ are still maximal in $\Graph{-2,0,1}$. We now turn to the cliques in $\Graph{-2,0,1}$ in the table. First of all, one can check easily with \texttt{magma} that these are indeed maximal cliques in $\Graph{-2,0,1}$. For $K_1$ and $K_2$ of size 28 or $\geq30$, everything is exactly the same as for $\Graph{0,1}$, and we showed that the proposition holds in these cases. For the other cliques, we see that for all sizes except 13, 19, and 20, two different cliques of the same size have different automorphism groups. For sizes 13, 19, and 20, we compute, completely analogously to what we did for $c=\{0,1\}$, that the number of subsets of two orthogonal roots in two different cliques whose automorphism groups have the same cardinality is different. For example, the cliques of size 19 whose automorphism group has size~96, contain (in order of appearence in the table) $91,\; 95,\; 94,\; 98,\; 103$ subsets of two orthogonal roots. This proves that all the cliques in the table are pairwise not isomorphic, hence not conjugate. Again using Lemma \ref{HconjOfSsupA}, we can check that every maximal clique in $\Graph{-2,0,1}$ that is conjugate to one of the cliques in the table, showing that the table is complete. 
Finally, except for the cliques $$L_1=\{1, 8, 12, 14, 15, 20, 22, 23, 36, 38, 39, 128, 136, 137, 138, 139, 149, 160, 169\}$$ and $$L_2=\{1, 8, 12, 14, 50, 68, 70, 74, 128, 136, 137, 154, 169, 170, 176, 177, 181, 182, 215\}$$ of size 19, any two different cliques of the same size that have the same stabilizer size have the same cardinality of their automorphism groups as well. We already showed that this means that they contain a different number of pairs of orthogonal roots. We compute that $L_1$ contains 109 such pairs, and $L_2$ contains 79. Therefore we can conclude that different cliques of the same size and with the same stabilizer size in the table have a different number of subsets of two orthogonal roots. 
\end{proof}

\subsubsection{Cliques of size 29 in \texorpdfstring{$\Graph{0,1}$}{G(0,1)} and \texorpdfstring{$\Graph{-2,0,1}$}{G(-2,0,1)}}\label{2901}

\indent\textbf{Cliques of size 29 in $\Graph{0,1}$}\\
The graph $\Graph{0,1}$ contains a surprisingly large number of maximal cliques of size~$29$, so we will treat this case separately in this section. As before, we say that the stabilizer size of an orbit is the size of the stabilizer of any of the elements in the orbit (Definition \ref{stabilizersize}). 

\begin{proposition}\label{29cliques}
In the graph $\Graph{0,1}$ there are $62825152320$ maximal cliques of size~$29$. They form $432$ orbits under the automorphism group $W$.  The multiset of their stabilizer sizes is 
\begin{align*}
\big\{ 1^{(8)}, &2^{(81)}, 4^{(107)}, 6^{(5)}, 8^{(50)}, 10, 12^{(41)}, 14^{(2)}, 16^{(28)}, 18^{(2)},  20^{(5)}, 24^{(28)}, 32^{(4)}, 36, \\
 &48^{(21)}, 60, 64^{(2)}, 72^{(7)}, 96^{(3)}, 120, 128^{(2)}, 144^{(4)}, 192^{(7)}, 240^{(6)}, 360, 384^{(3)}, \\
 &432^{(2)}, 720^{(2)}, 1152^{(2)}, 1440, 1920, 40320, 51840, 103680 \big\},
\end{align*}
where the superscripts indicate the multiplicity of the elements in the multiset. For two maximal cliques $K_1$ and $K_2$ of size 29 in $\Graph{0,1}$, the following are equivalent. 
\begin{itemize}
\item[](i) $K_1$ and $K_2$ are conjugate under the action of $W$.
\item[](ii) $K_1$ and $K_2$ are isomorphic.
\item[](iii) $K_1$ and $K_2$ have the same stabilizer size, and the same number of maximal monochromatic subcliques of color 1 of size r, for all $r\in\{1,\ldots,8\}$. 
\item[](iv) The automorphism groups of $K_1$ and $K_2$ have the same cardinality, and $K_1$ and $K_2$ have the same number of maximal monochromatic subcliques of color 1 of size r, for all $r\in\{1,\ldots,8\}$.
\end{itemize}
Moreover, the table in Appendix \ref{29in01} gives a complete list of representatives of the orbits of maximal cliques of size 29 in $\Graph{0,1}$.
\end{proposition}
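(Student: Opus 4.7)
The overall strategy is to mirror the approach used in Propositions \ref{G-10}, \ref{-2-10}, and \ref{hard}, but scaled up: fix a small colored subclique $A$, enumerate with \texttt{magma} all maximal cliques of size $29$ in $\Gamma_{0,1}$ containing $A$, and then use Lemma \ref{HconjOfSsupA} together with the transitivity of $W$ on conjugates of $A$ to pass between the count of $W$-conjugates containing $A$ and the orbit sizes. Because cliques of size $29$ in $\Gamma_{0,1}$ must contain at least one pair of orthogonal roots (otherwise they would be monochromatic of color~$1$ and hence of size at most $8$ by Proposition~\ref{max178}), the natural choice is $A=\{e_1,e_2\}$ with $e_1\cdot e_2=0$. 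By Proposition~\ref{pair}, $W$ acts transitively on such pairs, so every maximal clique of size $29$ is conjugate to one containing our fixed $A$, and it suffices to analyse those. Since $A$ has only $136$ neighbors in $\Gamma_{0,1}$, the enumeration of all maximal cliques of size $29$ containing $A$ is feasible.

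First I would produce, with \texttt{magma}, the full list $\mathcal L$ of maximal cliques of size $29$ in $\Gamma_{0,1}$ containing $A$, compute their stabilisers in $W_A$, and group them into $W_A$-orbits by testing conjugacy pairwise. Each $W_A$-orbit in $\mathcal L$ corresponds to exactly one $W$-orbit of maximal cliques of size $29$. From each $W_A$-orbit I would pick a representative; the resulting list is the content of the table in Appendix~\ref{29in01}, which should have $432$ entries. For each representative $K$ I would record $|W_K|$ (taken from the orbit length via the orbit–stabiliser relation) and $|\operatorname{Aut}(K)|$ (computed directly from the colored graph). Summing $[W:W_K]$ over the $432$ orbits should give $62825152320$, confirming the total count; as a consistency check, Lemma~\ref{HconjOfSsupA} applied orbit-by-orbit should recover $|\mathcal L|$ from the stabiliser data, and the multiset of $|W_K|$ over the $432$ orbits should match the multiset displayed in the proposition.

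For the equivalence of (i)--(iv), the implications (i)$\Rightarrow$(ii)$\Rightarrow$(iv) and (i)$\Rightarrow$(iii) are immediate. The remaining implications (iii)$\Rightarrow$(i) and (iv)$\Rightarrow$(i) are proved by showing that the pair of invariants
\[
\bigl(|W_K|,\; (m_1(K),\ldots,m_8(K))\bigr)
\qquad\text{or}\qquad
\bigl(|\operatorname{Aut}(K)|,\; (m_1(K),\ldots,m_8(K))\bigr),
\]
where $m_r(K)$ denotes the number of maximal monochromatic cliques of color~$1$ and size~$r$ contained in $K$, already separates the $432$ representatives in the table. This reduces to a finite verification: for each pair of representatives $K,K'$ with matching stabiliser size (or matching automorphism-group cardinality), I would check in \texttt{magma} that the $8$-tuples $(m_r(K))$ and $(m_r(K'))$ differ. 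Because two non-conjugate representatives are, by construction, not isomorphic as colored graphs, the separation by the $m_r$ invariants is the only nontrivial content here.

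The main obstacle is the sheer size of the computation: $62825152320$ maximal cliques is far too many to enumerate globally, and even the sublist of those containing a fixed orthogonal pair $A$ will be large. The step that must be engineered carefully is the orbit decomposition of $\mathcal L$ under $W_A$; a naive pairwise conjugacy test inside $W$ is infeasible, so I would sort $\mathcal L$ first by the cheap invariants $(|W_K|,|\operatorname{Aut}(K)|,(m_r(K)))$ and only test $W$-conjugacy within the resulting buckets, exactly as is done in Remark~\ref{vindenklieken}. The plausibility of this working hinges on the empirical claim, to be verified, that the invariants $(|W_K|,(m_r(K)))$ are already complete; once that is checked, the $432$ orbits and all four equivalences follow simultaneously, and the table in Appendix~\ref{29in01} serves as the certificate.
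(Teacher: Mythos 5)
Your overall template (anchor, enumerate, deduplicate, completeness check via Lemma \ref{HconjOfSsupA}, separation by invariants) matches the paper's, but two things go wrong. The first is a logical gap: your claim that each $W_A$-orbit in $\mathcal L$ corresponds to exactly one $W$-orbit of maximal $29$-cliques is false. The map from $W_A$-orbits on $\mathcal L$ to $W$-orbits is surjective but far from injective: two cliques $K, K'\in\mathcal L$ are $W_A$-conjugate if and only if they are $W$-conjugate \emph{and} the corresponding orthogonal pairs are conjugate under the stabiliser of the clique. Concretely, if $K$ has trivial stabiliser (which happens for $8$ of the $432$ orbits) and contains $m$ orthogonal pairs, its $W$-orbit meets $\mathcal L$ in $m\cdot|W_A|/|W_K|$ cliques, which split into $m$ distinct $W_A$-orbits; since $m$ is on the order of $150$--$200$, your table would contain enormously many $W$-conjugate duplicates, and the orbit count, the stabiliser sizes read off from orbit lengths, and all four equivalences would come out wrong. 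You must deduplicate by testing $W$-conjugacy and compute $|W_K|$ as the full stabiliser; Lemma \ref{HconjOfSsupA} then correctly accounts for the multiplicity $m$ when you verify completeness. Relatedly, ``two non-conjugate representatives are, by construction, not isomorphic'' is backwards: non-isomorphism is precisely what must be proved for (ii)$\Rightarrow$(i), and it is proved by checking that isomorphism invariants separate the representatives, not by construction.

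The second problem is that your choice of anchor misses the idea that makes the computation tractable. The paper invokes the Ramsey number $R(4,5)=25$ (Theorem \ref{Ramsey}) to show that every $29$-clique contains a monochromatic $5$-clique of color $1$ or a monochromatic $4$-clique of color $0$, and therefore anchors on representatives of types $\typeone$, $\typetwoa$ and $\typetwob$ (three anchors are needed because none of them individually is guaranteed to occur, and orthogonal $4$-cliques form two orbits by Proposition \ref{orbitskleinerdan4}). This shrinks the enumeration to $127168449$ maximal $24$-cliques in a graph on $109$ vertices, plus two much smaller lists, and the paper's remarks make clear that even this is at the limit of what can be deduplicated. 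With an orthogonal pair as anchor, a double count over the $15120$ orthogonal pairs shows that the list $\mathcal L$ has about $62825152320\cdot k/15120$ elements, where $k$ is the number of color-$0$ edges of a $29$-clique; with $k$ in the observed range of roughly $150$--$200$ this is $6$--$7\times 10^8$, several times larger than the paper's list. Your anchor does buy a cleaner reduction (a single anchor suffices, since a $29$-clique without an orthogonal pair would be monochromatic of color $1$ and hence of size at most $8$ by Proposition \ref{max178}), but without the Ramsey input the proposal does not explain how an enumeration and deduplication of that size is to be carried out.
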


\begin{remark}
The large number of isomorphism types of maximal cliques of size 29 in $\Graph{0,1}$ is also noted in \cite[p.139--140]{CRS04}. The authors show that there are 473 isomorphism types of maximal exceptional graphs (corresponding to our 473 isomorphism types of maximal cliques in $\Graph{0,1}$, see Appendices \ref{list} and \ref{29in01}). The subset $S_1$ of isomorphism types of maximal exceptional graphs that have 29 vertices has cardinality 432 (corresponding to what we find in Proposition \ref{29cliques}). Moreover, the subset $S_2$ of isomorphism types of maximal exceptional graphs that have maximal vertex degree 28 has cardinality 467. The intersection of $S_1$ and $S_2$ consists of 430 isomorphism types of maximal exceptional graphs, and these form their own class (a) in the classification of all maximal exceptional graphs in \cite{CRS04}. They are listed in \cite[Table A6.1]{CRS04}. These 430 graphs arise as cones over graph-switching equivalents of the line graph $L(K_8)$ of the complete graph $K_8$ on 8 vertices \cite[Proposition 6.2.1]{CRS04}. The remaining 2 isomorphism types of maximal exceptional graphs of size 29 have maximal vertex degree less than 28. 
\end{remark}

The number of cliques mentioned in Proposition~\ref{29cliques} is too large to fit in most computers' memory: even if we were to use only 30 bytes per clique to store the vertices in the clique, then all cliques together would still require close to two terabytes of storage. Instead of doing this, we will use the fact that each $29$-clique contains a monochromatic $5$-clique of color $0$ or a monochromatic $4$-clique of color $1$. 
 
\begin{proof}The Ramsey number $R(4,5)$ equals $25$ (Theorem \ref{Ramsey}). This implies 
that a $29$-clique in $\Graph{0,1}$ contains a $5$-clique of edges of color $1$ or a $4$-clique of edges of color~$0$. 
Under the action of the automorphism group $W$ there is only one orbit of $5$-cliques with only edges of color $1$ (see Proposition \ref{transitief}); we call these cliques of type $\typeone$, and there are two orbits of $4$-cliques with pairwise orthogonal roots (see Proposition~\ref{orbitskleinerdan4}); we call the $4$-cliques of which the sum is a double root of type $\typetwoa$ and those of which the sum is not a double of type $\typetwob$. Therefore, if we fix a representative clique for each of these three orbits, then each $29$-clique is conjugate to a $29$-clique that contains one of our three cliques of size 4 or 5. \\
We pick the clique $A=\{1, 2, 129, 130, 131\}$ of type $\typeone$. There are $109$ other vertices that are connected with color $0$ or $1$ to each of the $5$ vertices of $A$. With \texttt{magma}, we count that the graph on these $109$ vertices with only edges of color $0$ or~$1$ has exactly $n_1=127168449$ maximal cliques of size $24$. After adding to each the vertices of $A$, this yields $n_1$ maximal $29$-cliques that contain $A$ in the graph~$\Graph{0,1}$. Similarly, for the cliques $B_1 = \{1,8,26,31\}$ and $B_2=\{1,8,26,43\}$ of type $\typetwoa$ and $\typetwob$, respectively, we count with \texttt{magma} that there are $n_2=16685128$ maximal $29$-cliques in $\Graph{0,1}$ that contain $B_1$, and $n_3=504$ maximal $29$-cliques that contain~$B_2$. \\
One can easily verify with \texttt{magma} that the 432 cliques of size 29 in the table in Appendix \ref{29in01} are maximal cliques in $\Graph{0,1}$. For each clique $K$ of size 29, for each integer $1 \leq r \leq 8$, we can consider the number $\chi_r$ of maximal monochromatic subcliques of $K$ of color $1$ of size $r$. These eight invariants together pin down 430 out of the 432 cliques in the table. Only the sequence $(\chi_1, \chi_2, \ldots, \chi_8) = ( 0, 0, 0, 0, 0, 4, 138, 17 )$ occurs twice: for the $67$-th and $299$-th cliques in the table. These two cliques have 16 and 18 subcliques of type $\typetwoa$, respectively, so they are not isomorphic. We conclude that any two cliques in the table are not isomorphic, hence not conjugate. So there are at least 432 orbits of maximal $29$-cliques. We know that there are $483840$ cliques of size 5 in $\Graph{1}$ from Corollary \ref{numberfacets}, so the stabilizer of $A$ has size $\frac{|W|}{|WA|}=\frac{|W|}{483840}=1440$. The table also lists for each clique~$c$ the number of subcliques of type $\typeone$, as well as the stabilizer size, so we can use Lemma \ref{HconjOfSsupA} to calculate the number of conjugates of $c$ that contain $A$. Summing over all these 432 cliques, we obtain exactly the number $n_1$, so we conclude that all $n_1$ maximal $29$-cliques in $\Graph{0,1}$ that contain~$A$ are accounted for in these 432 orbits. Similarly, the stabilizers of $B_1$ and $B_2$ have sizes $4608$ and $384$, respectively. The table lists the number of subcliques of type $\typetwoa$ and $\typetwob$ for every given clique~$c$, so we can use Lemma~\ref{HconjOfSsupA} again to calculate the number of conjugates of $c$ that contain~$B_i$ for $i=1,2$. Summing over all 432 cliques, we find again that all maximal $29$-cliques containing $B_1$ or $B_2$ are accounted for in these $432$ orbits. \\
We conclude that there are $432$ orbits of $29$-cliques in $\Graph{0,1}$, as claimed, and since no two cliques in the table are isomorphic, this proves (i) $\Leftrightarrow$ (ii). The multiset of stabilizer sizes follows from the table. The length of the orbit of any clique $c$ is $\frac{|W|}{|W_c|}$. Summing over all $432$ cliques in the table, we find that the total number of $29$-cliques is also as claimed. Finally, as we saw before, the invariant $\chi_r$ is different for all cliques except for the $67$-th and $299$-th cliques in the table. These two cliques have stabilizer size $4$ and $8$, respectively, so the stabilizer size, together with the $\chi_r$ form a set of invariants that uniquely determine each of the 432 orbits of maximal $29$-cliques. This proves (i) $\Leftrightarrow$ (iii). The stabilizer of a clique maps to the automorphism group of this clique as a colored graph. In all 432 cases, the clique generates a full rank sublattice of our lattice, so this map is injective. It turns out that in all cases, it is in fact a bijection. This proves (iii) $\Leftrightarrow$ (iv). 
\end{proof}

\begin{corollary}\label{thm2max01}
Let $K_1$ and $K_2$ be two maximal cliques in $\Graph{0,1}$, and $f\colon K_1\longrightarrow~K_2$ an isomorphism between them. Then $f$ extends to a unique automorphism of $\Lambda$. 
\end{corollary}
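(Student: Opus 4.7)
The plan is to reduce to an equivalent statement about orbit representatives and then read off the result from the tabulated data already gathered in Propositions \ref{hard} and \ref{29cliques}. Since $K_1 \cong K_2$, Proposition \ref{hard} (for size $r \neq 29$) and Proposition \ref{29cliques} (for size $r = 29$) together give that $K_1$ and $K_2$ are conjugate under the action of $W$. Hence there exist $\alpha,\beta \in W$ with $\alpha(K_1) = \beta(K_2) = H$ for some representative $H$ in Appendix \ref{list} or Appendix \ref{29in01}. Setting $g = \beta \circ f \circ \alpha^{-1} \in \Aut(H)$, the map $f$ extends uniquely to $W$ if and only if $g$ does, so we may assume $K_1 = K_2 = H$ and $f = g \in \Aut(H)$. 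The task is then to show that the restriction map $\rho_H\colon W_H \to \Aut(H)$ is a bijection.

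For injectivity of $\rho_H$ (which gives uniqueness of the extension once it exists), I would use that each maximal clique in $\Gamma_{0,1}$ generates a full-rank sublattice of $\Lambda$. For the $432$ orbits of size $29$ this is stated explicitly in the proof of Proposition \ref{29cliques}; for each of the finitely many other orbit representatives the same can be read off the representative in the table, since all these cliques are large (sizes at least $22$) and contain subcliques already known to span, e.g.\ a $7$-simplex or a monochromatic $0$-clique of size $8$. Because $W$ acts faithfully on $\Lambda$, any element of $W$ fixing $H$ pointwise is then the identity, so $\rho_H$ is injective.

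For surjectivity, I would simply compare the columns $|W_H|$ and $|\Aut(H)|$ in the tables. Proposition \ref{hard} records that for every $H$ of size $\neq 29$ these two integers coincide, and Proposition \ref{29cliques} does the same for all $432$ representatives of size $29$ (this equality is packaged in the equivalence (iii) $\Leftrightarrow$ (iv) of that proposition, where the map $\rho_H$ is explicitly described as a bijection). Combined with injectivity from the previous step, this makes $\rho_H$ a bijection, so every $g \in \Aut(H)$ extends uniquely to an element of $W$, as required.

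The main obstacle is really conceptual preparation rather than new computation: the heavy lifting lives in Propositions \ref{hard} and \ref{29cliques}, which already encode both the transitivity of $W$ on each isomorphism class and the equality $|W_H| = |\Aut(H)|$ for every orbit representative. Once those are in place, the present corollary follows essentially by unwinding definitions, with the full-rank spanning property of maximal cliques providing the only extra ingredient (and also ensuring that the word \emph{unique} in the statement is justified).
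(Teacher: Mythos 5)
Your proposal follows essentially the same route as the paper: reduce via Propositions \ref{hard} and \ref{29cliques} to the case $K_1=K_2=H$ with $H$ an orbit representative from the tables, then show that the restriction map $W_H\longrightarrow\Aut(H)$ is an isomorphism. The only difference is in how that last fact is justified: the paper verifies it directly in \texttt{magma} for every representative, whereas you deduce it from injectivity (full-rank spanning, hence faithfulness of the action of $W_H$ on $H$) together with the tabulated equality $|W_H|=|\Aut(H)|$. That argument is sound, but note that the full-rank property is only asserted in the paper for the $432$ representatives of size $29$ (in the proof of Proposition \ref{29cliques}); for the representatives of the other sizes it is plausible and easy to check, but it is not recorded anywhere, so that small verification would still have to be carried out to make your injectivity step complete.
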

\begin{proof}
Since $K_1$ and $K_2$ are isomorphic, from Propositions \ref{hard} and \ref{29cliques} it follows that they are conjugate to each other; this means that they are both conjugate to the same clique in the tables in de appendix; call this clique $H$. Then there are elements $\alpha$, $\beta$ in $W$ such that $\alpha(K_1)=\beta(K_2)=H$, so $\beta\circ f\circ\alpha^{-1}$ is an element in the automorphism group $\Aut(H)$ of $H$. Of course, $f$ extends to an element in $W$ if and only if $\beta\circ f\circ\alpha^{-1}$ does. We conclude that we can reduce to the case where $K_1=K_2=H$, and $f$ is an element in $\Aut(H)$. \\
In Proposition \ref{hard} we computed the stabilizers and automorphism groups of all cliques in $\Graph{0,1}$ of size unequal to 29, and in Proposition \ref{29cliques} we did the same for cliques of~29. In \texttt{magma} we construct for each clique in the table the map between the stabilizer and the automorphism group that is given by restriction. In all cases, this is an isomorphism. We conclude that all automorphisms of the cliques in the table extend to an element in $W$.
\end{proof}

The table in Appendix \ref{29in01} contains the results of the previous proposition, with a representative of each orbit. The notation in the table means the following.

\begin{itemize}\label{legenda2}
\item $K$: a clique in $\Graph{0,1}$; we denote vertices by their index as in the notation above Remark \ref{dp12}. 
\item $|W_K|$: the size of the stabilizer of clique $K$ in the group $W$.
\item $\#K_5(1)$: the number of cliques of size 5 with only edges of color 0 in $K$.
\item $\#K_4^a(1)$: the number of cliques in $K$ of four roots that sum up to a double root in $\Lambda$, with only edges of color 1. 
\item $\#K_4^b(1)$: the number of cliques in $K$ of four roots that do not sum up to a double root in $\Lambda$, with only edges of color 1.
\end{itemize}

\begin{remark}
In the proof of Proposition~\ref{29cliques}, we found more than 127 million cliques of size $29$ that contain $A=\{1, 2, 129, 130, 131\}$. To find that they represent exactly 432 different orbits, one might naively try to just verify for each pair whether they are conjugate. This takes too much time; as described in Remark \ref{vindenklieken}, we divided the big set into smaller sets according to the stabilizer sizes.\end{remark}

\textbf{Cliques of size 29 in $\Graph{-2,0,1}$}\\
It is an easy check that all 432 cliques of size 29 in $\Graph{0,1}$ in the table are maximal in~$\Graph{-2,0,1}$ as well. We conclude that the orbits of maximal cliques of size~29 in $\Graph{-2,0,1}$ are exactly the 432 that we found in $\Graph{0,1}$, and the orbits of maximal cliques of size~29 that contain an edge of color $-2$.\\
As we did in Proposition~\ref{hard}, we fix a root $e$ and compute all maximal cliques of size 29 in $\Graph{-2,0,1}$ that contain $e$ and $-e$ with \texttt{magma}. There are 56 of these, and they form one orbit under the action of the stabilizer $W_e$ of $e$. Since $W$ acts transitively on pairs of inverse roots, we conclude that all maximal cliques of size 29 in $\Graph{-2,0,1}$ that contain an edge of color $-2$ are in the same orbit; call this orbit $A$. One can easily check with \texttt{magma} that the clique of size 29 that is written in the table for~$\Graph{-2,0,1}$ is maximal, and moreover, it contains the roots 1 and 128, that are each other's inverse. We conclude that it is a representative of $A$. The stabilizer and automorphism group are computed with \texttt{magma}.

\vspace{11pt}

We finish with the proof of Theorem \ref{main2} for maximal cliques in $\Graph{-2,0,1}$. This is very similar to the proof of Lemma \ref{thm2max-10en-2-10}. Recall the graphs $A$, $C_1$, $D$, and $F$ as defined before Theorem \ref{main2}.

\begin{lemma}\label{thm2max-201}
Let $K_1$ and $K_2$ be two maximal cliques in $\Graph{-2,0,1}$, and $f\colon K_1\longrightarrow K_2$ an isomorphism between them. The following hold. 
\begin{itemize}
\item[](i) The map $f$ extends to an automorphism of $\Lambda$ if and only if for every subclique $S=\{e_1,\ldots,e_r\}$ of $K_1$ that is isomorphic to $A$, $C_1$, $D$, or $F$, its image $f(S)$ in $K_2$ is conjugate to $S$ under the action of $W$.
\item[]Let $S$ be a subclique of $K_1$. 
\item[](ii) If $S$ is isomorphic to $C_{1}$, then $S$ and $f(S)$ are conjugate if and only if both $\sum_{i=1}^5e_i$ and $\sum_{i=1}^5f(e_i)$ are in the set $\{2f_1+f_2\;|\;f_1,f_2\in E\}$, or neither are.
\item[](iii) If $S$ is isomorphic to $D$, then $S$ and $f(S)$ are conjugate if and only if both $\sum_{i=1}^5e_i$ and $\sum_{i=1}^5f(e_i)$ are in the set $\{2f_1+2f_2\;|\;f_1,f_2\in E\}$, or neither are.
\item[](iv) If $S$ is isomorphic to $F$, then $S$ and $f(S)$ are conjugate if and only if both $\sum_{i=1}^5e_i$ and $\sum_{i=1}^6f(e_i)$ are in $2\Lambda$, or neither are.
\end{itemize} 
\end{lemma}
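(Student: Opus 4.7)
The plan is to follow the same strategy as in Lemma~\ref{thm2max-10en-2-10}, adapted to the list of maximal cliques in $\Gamma_{-2,0,1}$ tabulated in Proposition~\ref{hard} together with the extra orbit of size~$29$ found above. First, since $K_1$ and $K_2$ are isomorphic, Propositions~\ref{hard} and the classification of $29$-cliques show that they lie in the same $W$-orbit, so there exist $\alpha,\beta\in W$ with $\alpha(K_1)=\beta(K_2)=H$ for some representative $H$ from the appendix. Replacing $f$ by $\beta\circ f\circ\alpha^{-1}\in\mathrm{Aut}(H)$, and noting that the hypotheses in (i)--(iv) are preserved under applying automorphisms of $\Lambda$ (since $W$ preserves the sets $\{2f_1+f_2\}$, $\{2f_1+2f_2\}$, and $2\Lambda$, and conjugacy of sequences is a $W$-invariant notion), we may assume $K_1=K_2=H$ and $f\in\mathrm{Aut}(H)$.

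For each of the representatives $H$ in the list, I would use \texttt{magma} to construct the restriction map $g\colon W_H\to\mathrm{Aut}(H)$. This map is injective because $H$ generates a finite-index sublattice of $\Lambda$ in every case (a property which is built into every maximal clique of $\Gamma_{-2,0,1}$ of the sizes appearing, and can be verified from the explicit representatives). If $g$ is surjective for a given $H$, then every automorphism of $H$ extends, and the condition in (i) is satisfied vacuously by Propositions~\ref{A},~\ref{B} together with Corollary~\ref{corB} (since conjugate sequences in $K_1$ have conjugate images). Otherwise, choose a set $T_H$ of coset representatives of $g(W_H)$ in $\mathrm{Aut}(H)$, and it suffices to show that for every non-identity $t\in T_H$, there exists an ordered sequence $S$ of distinct roots of $H$ whose colored graph is isomorphic to one of $A,C_1,D,F$ such that $S$ and $t(S)$ are not $W$-conjugate. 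This is the computational heart: analogous to the functions \texttt{CokernelClassesTypeA} and \texttt{CokernelClassesTypeCminus1}, I would write routines \texttt{CokernelClassesTypeC1}, \texttt{CokernelClassesTypeD}, and \texttt{CokernelClassesTypeF} that loop over all such $S\subset H$ and test whether $(S,t(S))$ fails the numerical criterion in the corresponding part (ii), (iii), or (iv), each of which certifies non-conjugacy.

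The justification that the numerical invariants in (ii)--(iv) actually distinguish $W$-orbits of ordered sequences of the given type is the main obstacle, and is what makes parts (ii)--(iv) substantive. For each of the three graph types, I would argue as follows. For $C_1$, for any $(e_1,\ldots,e_5)$ whose graph is $C_1$ with $e_1\cdot e_4=e_2\cdot e_5=0$, the vector $v=\sum e_i$ satisfies $\|v\|^2$ and $v\cdot e_i$ fixed, so $v$ lies in a finite set of $W$-orbits; by an enumeration (analogous to what is done in Lemmas~\ref{32},~\ref{intersection2}, etc.) there are exactly two orbits of such sequences, distinguished by whether $v\in\{2f_1+f_2\mid f_1,f_2\in E\}$ or not. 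For $D$ (a triangle of color~$1$ plus two disjoint vertices), the sum is the sum of three roots with pairwise product $1$ plus two orthogonal roots; I would likewise enumerate the possibilities for this sum and verify that membership in $\{2f_1+2f_2\}$ cuts out exactly one of the two orbits. For $F$ (a clique of size $6$ of color~$1$ minus the edges within a pair), I would use Proposition~\ref{transitief} together with a case analysis of the sum's image in $\Lambda/2\Lambda$, which again splits the sequences into exactly the two orbits distinguished by membership in $2\Lambda$.

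With (ii)--(iv) in place as $W$-invariants that separate the relevant orbits, the reverse direction of (i) is automatic. For the forward direction in (i), decompose an arbitrary $f\in\mathrm{Aut}(H)$ as $g(w)\cdot t$ with $w\in W_H$ and $t\in T_H$. The conditions on $f$ reduce to conditions on $t$, and by the \texttt{magma} computation above, the non-identity classes $t$ are ruled out by the existence of a witnessing $S$ of type $A$, $C_1$, $D$, or $F$ on which the criterion fails. Thus $f$ must lift to $W_H\subset W$, completing the proof of~(i). I expect the main obstacle to be simply managing the size and number of the cliques of type IV in $\Gamma_{-2,0,1}$ (especially the $29$-cliques) inside \texttt{magma}; conceptually the argument is identical to Lemma~\ref{thm2max-10en-2-10}, but the verifications in (ii)--(iv) for types $C_1$, $D$, $F$ must be done from scratch since only the $C_{-1}$ and $A$ cases were handled previously.
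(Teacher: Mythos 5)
Your overall skeleton is the same as the paper's: reduce via transitivity on each orbit of maximal cliques to a single representative $H$ from the appendix, pass to $f\in\Aut(H)$, use the injectivity of the restriction map $g\colon W_H\to\Aut(H)$ (which, as you say, follows from $H$ generating a full-rank sublattice), dispose of the cliques where $g$ is surjective, and for the rest hunt with \texttt{magma} for a witnessing sequence of type $A$, $C_1$, $D$, or $F$ for each nontrivial cokernel representative $t$. Where you genuinely diverge is in the justification of (ii)--(iv). The paper does \emph{not} classify the $W$-orbits of sequences of type $C_1$, $D$, or $F$; it only verifies, for the finitely many pairs $(S,t(S))$ that actually arise with $S\subseteq H$ and $t$ a cokernel representative, that non-conjugacy is always accompanied by a discrepancy in the stated numerical invariant. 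That weaker check suffices, because any $f\in\Aut(H)$ factors as $g(w)\circ t$, so $f(S)$ is $W$-conjugate to $t(S)$ and the invariant is $W$-stable. You instead propose to prove the stronger global statement that each of the three graph types admits exactly two $W$-orbits of ordered sequences, separated by the invariant. If true, this is cleaner and immediately yields (ii)--(iv) without restricting to subcliques of maximal cliques; but you assert it only by analogy with the counting lemmas of Section 3, and classifying orbits of $5$- and $6$-tuples requires stabilizer arguments (as in Propositions \ref{three}, \ref{orbits}, \ref{max178}), not just counting. If the global orbit count turned out to exceed two, your argument for (ii)--(iv) would collapse while the paper's restricted computational check would still go through. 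So either carry out the orbit classification properly (this would be a genuine strengthening worth stating separately), or fall back to the paper's strategy of verifying the dichotomy only on the pairs $(S,t(S))$ produced by the cokernel computation, which is all the lemma actually needs.
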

\begin{proof}This proof is very similar to the proof of Lemma \ref{thm2max-10en-2-10}, so will sketch what we did and refer to the other proof for details. \\
We reduce again to the case $K_1=K_2=H$, with $H$ one of the 54 cliques in the list for $\Graph{-2,0,1}$ in the appendix, and $f$ a representative of a class of the cokernel of the map $g\colon W_H\longrightarrow \Aut(H)$, where $W_H$ is the stabilizer of $H$ in $W$, and $\Aut(H)$ is the automorphism group of $H$. \\
For each clique $H$ of those 54 in the table, we check with \texttt{magma} that the map $g\colon W_H\longrightarrow\Aut_H$ is injective; for the $13^{\mbox{th}},\;15^{\mbox{th}}$, and $17^{\mbox{th}}-54^{\mbox{th}}$ clique it is an isomorphism. It follows that for those cliques, every automorphism extends to an element in $W$, so we are done. Here we refer to Lemma \ref{thm2max01} for the cliques that are the same as in $\Graph{0,1}$.  \\
For each clique $H$ of the remaining 14 cliques in the list, we do the following in \texttt{magma} with the functions \texttt{CokernelClassesTypeF}, \texttt{CokernelClassesTypeD}, and \texttt{CokernelClassesTypeC1} \cite{magma}. We construct a set $T_H$ of representatives of the classes of the cokernel of the map from $W_H$ to $\Aut(H)$. We then check for each $t$ in $T_H$, and for all subcliques $S=\{e_1,\ldots,e_r\}$ of $H$ that are isomorphic to $F$ (or $D$, or $C_1$, respectively), whether $S$ and $t(S)$ are not conjugate. For all $t$ and $S$ for which this is the case, we verify that $\sum_{i=1}^re_i$ is in $2\Lambda$ (or in the set $\{2f_1+2f_2\;|\;f_1,f_2\in E\}$, or in the set $\{2f_1+f_2\;|\;f_1,f_2\in E\}$, respectively), and $\sum_{i=1}^rt(e_i)$ is not, or vice versa. This proves (ii), (iii), and (iv).\\
For $H$ equal to the $4^{\mbox{th}},\;8^{\mbox{th}},\;10^{\mbox{th}},\;14^{\mbox{th}}$, and $16^{\mbox{th}}$ clique, for each non-trivial element $t$ in $T_H$ there is a subclique $S$ of $H$ that is isomorphic to $F$, and such that $t$ and $t(S)$ are not conjugate. Similarly, for each clique $H$ of the remaining 9 cliques in the list, for each non-trivial element $t$ of $T_H$, there is a subclique $S$ of $H$ that is isomorphic to either $C_1$, $D$, or $A$, and such that $S$ and $t(S)$ are not conjugate. This finishes the proof of (i).
\end{proof}

\section{Proof of main theorems}\label{proofthm1}

We now put together all the results that form the proofs of Theorem~\ref{main} and Theorem~\ref{main2}, which are both stated in the introduction.

\begin{proofofthm1}
Part (i) is Proposition \ref{orbitskleinerdan4} (iii), and part (ii) is Proposition \ref{max178} (ii). We proceed with (iii). Of course, if $K_1$ and $K_2$ are conjugate under the action of $W$, they are isomorphic as colored graphs, since $W$ respects the dot product. Now assume that $K_1$ and $K_2$ are isomorphic as colored graphs. We will show that they are conjugate under the action of $W$. First of all, by Lemma \ref{step0}, we can assume that there is a type I, II, III, or IV, that both $K_1$ and $K_2$ belong to. Therefore we continue to prove the result per type. \\
For type I, the results for colors $-2$ and $-1$ are at the beginning of Section \ref{monocliques}; the results for color 0 are in Propositions \ref{orbitskleinerdan4}, \ref{gamma0} (iii), and \ref{G0,678}, and the results for color~1 are in Proposition \ref{transitief}. \\
For type II, from Proposition \ref{facets} we know what the cliques look like, and the results are then in Proposition \ref{transitief} and Corollary \ref{transsevenface}. \\
For type III, the results follow from Propositions \ref{A} and \ref{B}.\\
Finally, for type IV, the results follow from Propositions \ref{nomagma}, \ref{G-10}, Lemma~\ref{easy}, Propositions \ref{-2-10} and  \ref{hard}, and Section \ref{2901}.
\end{proofofthm1}

\begin{proofofthm2}
By Lemma \ref{step0}, we can assume that there is a type I, II, III, or IV, that both $K_1$ and $K_2$ belong to. Therefore we continue to prove (i) per type. First of all, if $K_1$ and $K_2$ are of type III, then $f$ always extends; this is shown in Corollary \ref{corB}. \\
If $K_1$ and $K_2$ are of type I, they are monochromatic. If they have color $-2$ or $-1$, then they are of type III (See Section~\ref{monocliques}). For color 0 the proof is in Corollary \ref{thm2mono0}, and for color 1 in Corollary \ref{thm2mono1}. \\
For type II, by Proposition \ref{facets}, $K_1$ and $K_2$ are either monochromatic of color~0, hence of type I, or they are both sets of the vertices of a $7$-crosspolytope, in which case the statement is in 
Corollary \ref{thm2crosspolytopes}.\\
If $K_1$ and $K_2$ are of type IV, they are maximal cliques in a graph $\Gamma_c$, where there are 14 different possibilities for $c$. For $c\in\{\{-2\},\{-1\},\{0\},\{1\}\}$, the cliques $K_1$ and $K_2$ are of type I, which we already covered (note that for $K_1$ and $K_2$ \textsl{maximal} in $\Graph{1}$, there is always an automorphism extending $f$!). For $c$ in $\{\{-2,-1\},\{-2,1\}\}$, the cliques $K_1$ and $K_2$ are of type I as well (Lemma \ref{mono-1-2}). For $c=\{-2,0\}$, the proof is in Lemma \ref{thm2max-20}. For $c\in\{\{-1,1\},\{-2,-1,1\}\}$, an isomorphism of maximal cliques always extends, see Corollary \ref{thm2max-11,-2-11}. The same holds for $c=\{0,1\}$, see Corollary~\ref{thm2max01}.
For $c\in\{\{-1,0\},\{-2,-1,0\}\}$, the statement is in Lemma \ref{thm2max-10en-2-10}.\\
For $c=\{-2,0,1\}$ the statement is Lemma \ref{thm2max-201}.\\
Finally, for $c=\{-2,-1,0,1\}$ there is one clique, which is $\Gamma$ itself, and every automorphism of $\Gamma$ is an element in $W$. This finishes (i). \\
Part (ii) follows from Propositions \ref{orbits} and \ref{orbitskleinerdan4} for type $A$, and it follows from Propositions \ref{transitief} and \ref{max178} for type $B$. Finally, part (iii) is in Lemma \ref{thm2max-10en-2-10}, and part~(iv) is in Lemma~\ref{thm2max-201}.
\end{proofofthm2}

\begin{remark}\label{welkedingenwelkeklieken}
From Theorem \ref{main2} it follows that for an isomorphism $f$ of two cliques $K_1$ and $K_2$ of types I, II, III, or IV, one can determine whether $f$ extends to an automorphism of $\Lambda$ by checking for all subcliques of $K_1$ of the form $A$, $B$, $C_{\alpha}$, $D$, or $F$, if $f$ restricted to an associated ordered sequence extends. However, one never has to check all subcliques of those six forms. The following table shows for each type of $K_1$ and $K_2$ which subcliques are sufficient to check.  

\begin{center}
\resizebox{\columnwidth}{!}{
\begin{tabular}{|c|c|c|c|c|c|c|c|c|}
\hline
Type & Subtype & All isomorphisms extend & A & B & C$_{-1}$ & C$_{1}$ & D & F \\
\hline
I & $\Graph{-2}$ & x & &&&&&\\
I & $\Graph{-1}$ & x & &&&&&\\
I & $\Graph{0}$ & & x &&&&&\\
I & $\Graph{1}$ & &  & x &&&&\\
\hline
II & $k$-simplex, $k\leq7$ &  &x &&&&&\\ 
II & $7$-crosspolytope & & &x&&&&\\
\hline
III & all & x &&&&&&\\
\hline
IV & $\Graph{-2}$  & x & &&&&&\\
IV & $\Graph{-1}$ & x & &&&&&\\
IV & $\Graph{0}$ & & x &&&&&\\
IV & $\Graph{1}$ & x &  & &&&&\\
IV & $\Graph{-2,-1}$ & x & &&&&&\\
IV & $\Graph{-2,0}$ & & x &&&&&\\
IV & $\Graph{-2,1}$ & x & &&&&&\\
IV & $\Graph{-1,0}$ & & x && x &&&\\
IV & $\Graph{-1,1}$ & x & &&&&&\\
IV & $\Graph{0,1}$ & x & &&&&&\\
IV & $\Graph{-2,-1,0}$ &  & x && x &&&\\
IV & $\Graph{-2,-1,1}$ & x & &&&&&\\
IV & $\Graph{-2,0,1}$ &  & x &&&x&x&x\\
IV & $\Graph{-2,-1,0,1}$ & x & &&&&&\\
\hline
\end{tabular}}
\end{center}
\end{remark}

\textbf{Acknowledgements}\\
We want to thank David Madore, who gave us useful references for results on $E_8$ and the action of $W$. Moreover, there is a great interactive view of $E_8$ on his website \url{http://www.madore.org/\textasciitilde david/math/e8w.html}, which has been very insightful. We want to thank Martin Bright and Marta Pieropan for useful comments. We want to thank an anonymous referee for telling us about the work in \cite{CRS04}.

\bibliographystyle{plain} 
\bibliography{WeylGroupE8}

\begin{landscape}
\appendix
\section*{Appendices}
\addcontentsline{toc}{section}{Appendices}
\renewcommand{\thesubsection}{\Alph{subsection}}

\subsection{Table results Section \ref{maximalcliques}}\label{list}

See Section \ref{maximalcliques}, above Remark \ref{dp12}, for an explanation of this table.

\setlength\LTleft{-1in}
\pagestyle{empty}
\footnotesize
% [inline block 0: 2 envs, 87566 chars -> data_tex | \begin{longtable}{cccccl}\label{lijst} Graph & $|K|$ & $\#O$ & $|W_K|$ & $|\mbox{Aut}(K)|$ & $K$\\...]
\end{center}\end{landscape}

\end{document}